\pgfplotsset{compat=1.6}
\newcommand{\ii}{\mathrm{i}}
\newcommand{\ee}{\mathrm{e}}
\newcommand{\dd}{\mathrm{d}}
\newcommand{\TT}{\mathds{T}} 
\newcommand{\NN}{\mathds{N}}                                     
\newcommand{\RR}{\mathds{R}}                                     
\newcommand{\ZZ}{\mathds{Z}}                                     
\newcommand{\ex}{\:\exists\:}                                    
\newcommand{\abs}[1]{\left\lvert#1\right\rvert}                  
\newcommand{\abss}[1]{\lvert#1\rvert}                            
\newcommand{\mnorm}[1]{\left\lVert#1\right\rVert}                
\newcommand{\setn}[1]{\left\{#1\right\}}                         
\newcommand{\setcond}[2]{\left\{#1 \::\: #2\right\}}  
\newcommand{\defeq}{\mathrel{\mathop:}=}                         
\newcommand{\lr}[1]{\!\left(#1\right)}                           
\newcommand{\p}{\partial}                                        
\newcommand{\skpr}[2]{\left\langle#1 \,,\, #2\right\rangle}               
\newcommand{\eqdef}{=\mathrel{\mathop:}}     
\newcommand{\floor}[1]{\left\lfloor #1\right\rfloor} 
\newcommand{\ceil}[1]{\left\lceil #1\right\rceil}
\newcommand{\priortwo}{Besov--Bernoulli} 
\let \eps \varepsilon
\let \piup \uppi
\newcommand{\one}{\mathds{1}}
\newcommand{\lf}{\gamma}
\newcommand{\hf}{\beta}
\newcommand{\CalS}{\mathcal{S}}
\newcommand{\EE}{\mathds{E}}
\newcommand{\PP}{\mathds{P}}
\newcommand{\VV}{\mathds{V}}
\DeclareMathOperator{\erf}{erf}
\DeclareMathOperator{\esssup}{ess\,sup}
\definecolor{pyblue}{HTML}{1f77b4}
\theoremstyle{plain} 
\newtheorem{Satz}{Theorem}[section]
\newtheorem{Lem}[Satz]{Lemma}
\newtheorem{Prop}[Satz]{Proposition}
\theoremstyle{definition} 
\newtheorem{Def}[Satz]{Definition}
\newtheorem{Bem}[Satz]{Remark}
\crefname{Satz}{Theorem}{Theorems}
\crefname{Prop}{Proposition}{Propositions}
\crefname{Lem}{Lemma}{Lemmas}
\crefname{Kor}{Corollary}{Corollaries}
\crefname{Bem}{Remark}{Remarks}
\crefname{Bsp}{Example}{Examples}
\crefname{Def}{Definition}{Definitions}
\numberwithin{equation}{section}
\renewcommand*{\eqref}[1]{%
  \hyperref[{#1}]{\textup{\tagform@{\ref*{#1}}}}%
}
  \DeclareTextCommandDefault\textcommabelow[1]
\hmode@bgroup\ooalign{\null#1\crcr\hidewidth\raise-.31ex
     \hbox{\check@mathfonts\fontsize\ssf@size\z@
     \math@fontsfalse\selectfont,}\hidewidth}\egroup}%
\begin{document}
\allowdisplaybreaks

\title{Besov regularity of random wavelet series}


\author{Andreas Horst}
\address[A. Horst]{Technical University of Denmark, Department of Applied Mathematics and Computer Science, Matematiktorvet 303B, 2800 Kgs. Lyngby, Denmark}
\email{ahor@dtu.dk}
\thanks{}


\author{Thomas Jahn}
\address[T. Jahn]{Katholische Universität Eichstätt--Ingolstadt, Mathematical Institute for Machine Learning and Data Science (MIDS), Auf der Schanz 49, 85049 Ingolstadt, Germany}
\email{thomas.jahn@ku.de}
\thanks{}

\author{Felix Voigtlaender}
\address[F. Voigtlaender]{Katholische Universität Eichstätt--Ingolstadt, Mathematical Institute for Machine Learning and Data Science (MIDS), Auf der Schanz 49, 85049 Ingolstadt, Germany}
\email{felix.voigtlaender@ku.de}
\thanks{}

\subjclass[2010]{42B35,42C40,46F25,60G60,60H50}


\keywords{Besov space, Karhuhen--Loève-type expansion, prior, wavelet}

\date{}

\begin{abstract}
We study the Besov regularity of wavelet series on $\RR^d$ with randomly chosen coefficients.
More precisely, each coefficient is a product of a random factor and a parameterized deterministic factor (decaying with the scale $j$ and the norm of the shift $m$).
Compared to the literature, we impose relatively mild conditions on the moments of the random variables in order to characterize the almost sure convergence of the wavelet series in Besov spaces $B^s_{p,q}(\RR^d)$ and the finiteness of the moments as well as of the moment generating function of the Besov norm.
In most cases, we achieve a complete characterization, i.e., the derived conditions are both necessary and sufficient.
\end{abstract}

\maketitle


\section{Introduction}

We study the Besov regularity of functions $f:\RR^d\to\RR$ (or tempered distributions $f\in \CalS^\prime(\RR^d)$) given by Karhunen--Loève-type expansions of the form
\begin{equation}\label{eq:expansion-0}
f=\sum_{j,t,m}2^{j\alpha}(j+1)^\theta \lr{1+\frac{\mnorm{m}_\infty}{2^j}}^{\hf}\xi_{j,t,m}\Psi_{j,t,m}
\end{equation}
and
\begin{equation}\label{eq:expansion-0-bernoulli}
f=\sum_{j,t,m}2^{j\alpha} \lr{1+\frac{\mnorm{m}_\infty}{2^j}}^{\hf}\lambda_{j,t,m}\xi_{j,t,m}\Psi_{j,t,m}
\end{equation}
with respect to a sufficiently \enquote{nice} compactly supported orthonormal wavelet basis $(\Psi_{j,t,m})_{j,t,m}$ of $L_2(\RR^d)$.
The functions given by \cref{eq:expansion-0,eq:expansion-0-bernoulli} are deemed \emph{random} as the factors $\xi_{j,t,m}$ are chosen to be independent and identically distributed random variables, and the factors $\lambda_{j,t,m}$ are Bernoulli-distributed random variables with success probabilites depending on $j$ and $m$.
The functions in \eqref{eq:expansion-0-bernoulli} can be seen as a model for random \emph{sparse} wavelet series. (A similar approach for periodic wavelet series has been investigated in \cite{Abramovich1998}.)
In addition to purely mathematical interest, studying such random wavelet series is motivated by their use to define prior distributions in Bayesian inversion \cite{LassasSaSi2009,DashtiStuart2012}; see also \cref{chap:related-work}.

Our first main result is to show that under mild conditions on the moments of the random variables $\xi_{j,t,m}$, the series \eqref{eq:expansion-0} almost surely belongs to the Besov space $B^s_{p,q}(\RR^d)$ \emph{if and only if} the parameters $s$, $p$, $q$ of the Besov space and the parameters $\alpha$, $\hf$, and $\theta$ of the prior \eqref{eq:expansion-0} satisfy
 \begin{equation}\label{eq:property-a-0-1}
\hf<-\frac{d}{p},\qquad s+\frac{d}{2}+\alpha<0
\end{equation}
or 
 \begin{equation}\label{eq:property-a-0-2}
\hf<-\frac{d}{p},\qquad s+\frac{d}{2}+\alpha=0,\qquad \theta  <-\frac{1}{q},
\end{equation}
see \cref{thm:target-statement-sufficiency,thm:target-statement-necessity,thm:target-statement-necessity-functions}.
(When $p=\infty$, the condition $\hf<-\frac{d}{p}$ has to be replaced by $\hf\leq 0$.
When $q=\infty$, the condition $\theta  <-\frac{1}{q}$ has to be replaced by $\theta\leq 0$.)
A similar analysis for the series in \cref{eq:expansion-0-bernoulli} is given in \cref{thm:powers-bernoulli,thm:necessity-bernoulli,thm:necessity-bernoulli-functions}.
Furthermore, given $r>0$ and assuming that \eqref{eq:property-a-0-1} or \eqref{eq:property-a-0-2} holds, we show that under (different) mild conditions on the common distribution of the random variables $\xi_{j,t,m}$, the moments $\EE[\mnorm{f}_{B^s_{p,q}(\RR^d)}^r]$ and the moment generating function $\EE[\exp(c\mnorm{f}_{B^s_{p,q}(\RR^d)}^r)]$ are finite (for some $c>0$); see \cref{thm:powers,thm:mgf}.
Thus, somewhat surprisingly, the condition that guarantees that $\mnorm{f}_{B^s_{p,q}(\RR^d)}$ is finite almost surely already implies the finiteness of $\EE[\mnorm{f}_{B^s_{p,q}(\RR^d)}^r]$ or of $\EE[\exp(c\mnorm{f}_{B^s_{p,q}(\RR^d)}^r)]$, if only the common distribution of the random variables $\xi_{j,t,m}$ is sufficiently \enquote{nice}.

A similar analysis for the moments $\EE[\mnorm{f}_{B^s_{p,q}(\RR^d)}^r]$ of the series in \cref{eq:expansion-0-bernoulli} is given in \cref{thm:powers-bernoulli}.
Recall that $c\mnorm{f}_{B^s_{p,q}(\RR^d)}^r\leq \exp(c\mnorm{f}_{B^s_{p,q}(\RR^d)}^r)$, so the condition $\EE[\exp(c\mnorm{f}_{B^s_{p,q}(\RR^d)}^r)]<\infty$ implies $\EE[\mnorm{f}_{B^s_{p,q}(\RR^d)}^r]<\infty$ which in turn implies $\mnorm{f}_{B^s_{p,q}(\RR^d)}\stackrel{a.s.}{<}\infty$.
Therefore, necessary conditions for the latter are also necessary for the former.
In other words, if $f$ is of the form \eqref{eq:expansion-0} and the distribution of the random variables $\xi_{j,t,m}$ is sufficiently well concentrated, we achieve a \emph{complete characterization} in terms of the parameters $d$, $s$, $p$, $q$, and $\alpha$, $\hf$, $\theta$ for each of the following conditions
\begin{itemize}[leftmargin=*,align=left,noitemsep]
\item{$f\in B^s_{p,q}(\RR^d)$ almost surely,}
\item{$\EE[\mnorm{f}_{B^s_{p,q}(\RR^d)}^r]<\infty$,}
\item{$\EE[\exp(c\mnorm{f}_{B^s_{p,q}(\RR^d)}^r)]<\infty$ for some $c>0$.}
\end{itemize}

\subsection{Motivation and related work}\label{chap:related-work}
While our investigations in the present paper are of theoretical nature, they are motivated by the Bayesian approach to inverse problems, where random Fourier series and random wavelet series are used as prior distributions for a stochastic interpretation of inverse problems.
There, the finiteness of the exponential moments plays a role in the study of discretization invariance and well-posedness of the Bayesian inverse problems; see \cite{LassasSaSi2009,DashtiStuart2012}.
Lassas, Saksman, and Siltanen prove in \cite[Lemma~2]{LassasSaSi2009} that for $s\in\RR$ and $p\in [1,\infty)$, the series $f(x)=\sum_{\ell=1}^\infty\ell^{-\frac{s}{d}+\frac{1}{2}-\frac{1}{p}}\xi_\ell\Psi_\ell(x)$ almost surely belongs to $B^{\tilde{s}}_{p,p}(\TT^d)$ iff $\EE[\exp(\frac{1}{2}\mnorm{f}_{B^{\tilde{s}}_{p,p}(\TT^d)}^p)]<\infty$ iff $\tilde{s}<s-\frac{d}{p}$.
Here $(\Psi_j)_{j\in\NN}$ denotes an enumeration (by increasing scale) of a compactly supported orthonormal wavelet basis of $L_2(\TT^d)$ on the torus $\TT^d$, and $(\xi_j)_{j\in\NN}$ denotes a family of i.i.d.\ real-valued random variables with probability density function proportional to $\exp(-\abs{x}^p)$.
The series $f=\sum_{\ell=1}^\infty \ell^{-\frac{s}{d}+\frac{1}{2}-\frac{1}{p}}\xi_\ell\psi_\ell$ induces a probability measure on $B^{\tilde{s}}_{p,p}(\TT^d)$ and is referred to as a \emph{Besov prior} in the inverse problem community.

The Besov prior has been extended by introducing an additional random coefficient into the wavelet expansion of $f$.
For example, the authors of \cite{KekkonenLaSaSi2023} choose the indices of the non-zero coefficients in the wavelet expansion according to a proper random tree generated by a Galton--Watson process, and in \cite{lan2024}, time-dependent random coefficients from a so called $q$-exponential process are investigated.
In \cite[Theorem~3.3]{KekkonenLaSaSi2023} and \cite[Theorem~4.2]{lan2024}, these authors prove that their priors almost surely belong to some Besov space depending on newly introduced parameters.

Apart from the applications in Bayesian inversion, wavelet series with randomly chosen coefficients have been studied in the literature for more than two decades now.
Aubry and Jaffard \cite{AubryJa2002} study the \emph{local} Hölder regularity of random wavelet series.
Because of the local nature of their analysis, the authors focus on the one-dimensional $1$-periodic case, i.e., functions $f:\TT\to\RR$ defined on the torus.
Besov regularity enters the discussion in \cite{AubryJa2002} through the \emph{scaling function} $\eta_f(p)\defeq\sup\setcond{s}{f\in B^{s/p}_{p,\infty}(\TT)}$, a function which is shown in \cite{AubryJa2002} to be closely related to histograms of wavelet coefficients at all scales.
Note that different from the setting of \cite{LassasSaSi2009}, the parameter $p$ in \cite{AubryJa2002} is an element of $(0,\infty)$.

Abramovich, Sapatinas, and Silverman \cite{Abramovich1998} investigate sparse random wavelet expansions on the interval $[0,1]$.
Specifically, the wavelet coefficients $d_{j,k}$ are chosen at random according to $d_{j,k}\sim (1-\pi_j)\delta_0+ \pi_j \mathcal{N}(0,\tau_j^2)$.
Here, $\pi_j\in [0,1]$ and $\tau_j>0$ are numbers, $\mathcal{N}(\mu,\sigma^2)$  is the normal distribution with mean $\mu$ and variance $\sigma^2$, and $\delta_0$ is the Dirac measure at $0$.
Special attention is paid to the case where $\pi_j=\min\setn{1,2^{-j\beta}C_2}$  and $\tau_j^2=2^{-j\alpha}C_1$ for some absolute constants $C_1$ and $C_2$, and parameters $\alpha$ and $\beta$, which are then subject to the characterization of the almost sure membership of the corresponding wavelet series in the Besov space $B^s_{p,q}([0,1])$, see \cite[Theorem~1]{Abramovich1998}.
Bochkina \cite{Bochkina2013} extends this setting to more general distributions of the random variables.

For Besov spaces $B^s_{p,q}(Q)$ with parameters $s\in\RR$ and $p,q\in (0,\infty)$ on domains $Q\subset\RR^d$ which admit an orthonormal basis of compactly supported wavelets, the authors of \cite{Cioica2014} choose products of Bernoulli variables (with success probabilities decaying exponentially in $j$) and Gaussians for the coefficients of a wavelet series.
In \cite[Theorem~5.4.7]{Cioica2014}, a characterization of the almost sure membership of such a wavelet series $g$ in $B^s_{p,q}(Q)$ is given in terms of inequalities involving $d$, $s$, $p$, $q$, and the parameters governing the random variables.
Furthermore, the same inequalities are shown to be sufficient for $\EE[\mnorm{g}_{B^s_{p,q}(Q)}^q]<\infty$.

Another account on random wavelet series on $\RR^d$ is given by Aziznejad and Fageot \cite[Proposition~3.4]{AziznejadFa2020}.
There it is shown that if the wavelet coefficients of $f$ are i.i.d.\ standard Gaussian random variables (which corresponds to $\alpha=\beta=\theta=0$ and $\xi_{j,t,m}\stackrel{i.i.d.}{\sim}\mathcal{N}(0,1)$ in \eqref{eq:expansion-0}) and $p\in (0,\infty]$, then $f$ almost surely belongs to a weighted Besov space $B^s_{p,p}(\RR^d,w_\sigma)$ if $s<-\frac{d}{2}$ and $\sigma<-\frac{d}{p}$, and almost surely does not belong to $B^s_{p,p}(\RR^d,w_\sigma)$ if $s\geq -\frac{d}{2}$ or $\sigma\geq -\frac{d}{p}$; see \cref{remark:weights} for a definition of $w_\sigma$.

Esser, Jaffard, and Vedel \cite{EsserJaVe2023} study regularity properties of wavelet series through randomization.
More precisely, for wavelet bases on the $d$-dimensional torus $\TT^d$ and on $\RR^d$, they compare the Hölder continuity and boundedness properties of a wavelet series before and after multiplying every coefficient with a member of a family of i.i.d.\ random variables. 
Their analysis shows that regarding the conditions on the random variables, there is an essential distinction between bounded and unbounded random variables.

In the paper at hand, we also provide necessary and sufficient conditions for random wavelet series to almost surely belong to a given Besov space (or for its Besov quasi-norm to have finite moments and finite moment-generating function.)
But, different from the existing literature, we simultaneously allow the the following conditions: (i)  we allow much more general distributions of the random coefficients, only imposing mild conditions such as finiteness of certain moments or of the moment generating function of certain powers of $\abs{\xi_{j,t,m}}$, (ii) we allow Besov spaces $B^s_{p,q}$ with $p\neq q$, (iii) we work on $\RR^d$ instead of $\TT^d$, and (iv) we consider the full range of exponents $p,q\in (0,\infty]$.

\subsection{Structure of the paper}
\cref{chap:preliminaries} is a review of the necessary preliminaries on Besov spaces and their characterization by wavelets.

In \cref{chap:sufficiency}, we show that under suitable conditions on the random variables $\xi_{j,t,m}$, the conditions \eqref{eq:property-a-0-1} and \eqref{eq:property-a-0-2} are \emph{sufficient} for having $f\in B^s_{p,q}(\RR^d)$ almost surely and $\EE[\mnorm{f}_{B^s_{p,q}(\RR^d)}^r]<\infty$ when $f$ is of the form \eqref{eq:expansion-0}.
Additionally, we show the sufficiency of an analog of \eqref{eq:property-a-0-1} and \eqref{eq:property-a-0-2} for having $\EE[\mnorm{f}_{B^s_{p,q}(\RR^d)}^r]<\infty$ for the series \eqref{eq:expansion-0-bernoulli}.
\cref{chap:sufficiency} is concluded with the proof of the sufficiency of \eqref{eq:property-a-0-1} and \eqref{eq:property-a-0-2} for having $\EE[\exp(c\mnorm{f}_{B^s_{p,q}(\RR^d)}^r)]<\infty$ for $c,r>0$ when $f$ is of the form \eqref{eq:expansion-0}.

In \cref{chap:necessity}, we show that \eqref{eq:property-a-0-1} and \eqref{eq:property-a-0-2} are \emph{necessary} for the series in \eqref{eq:expansion-0} to almost surely belong to the Besov space $B^s_{p,q}(\RR^d)$, given a  condition on the moments of the random variables $\xi_{j,t,m}$ in \eqref{eq:expansion-0}.
Additionally, we show the necessity of an analog of \eqref{eq:property-a-0-1} and \eqref{eq:property-a-0-2} for the series in \eqref{eq:expansion-0-bernoulli} to almost surely belong to the Besov space $B^s_{p,q}(\RR^d)$.

In \cref{chap:discussion}, we discuss the necessity of the conditions on the random variables in the theorems about the series \eqref{eq:expansion-0}.

Finally, \cref{chap:appendix} contains auxiliary statements and technical proofs.

\section{Preliminaries and notation}\label{chap:preliminaries}
\subsection{General notation}

For two sequences $(a_n)_{n\in\NN}$ and $(b_n)_{n\in\NN}$ of non-negative real numbers, we write $a_n\lesssim b_n$ if there exists a constant $C>0$ such that $a_n\leq Cb_n$ for all $n\in\NN$.
We write $a_n\gtrsim b_n$ if $b_n\lesssim a_n$, and $a_n\asymp b_n$ if both $a_n\lesssim b_n$ and $b_n\lesssim a_n$.
Unless mentioned explicitly, the symbol $\log$ refers to the natural logarithm.

For an at most countable set $I\neq\emptyset$, $x\in\RR^I$, and $p\in (0,\infty]$, we write
\begin{equation*}
\mnorm{x}_{\ell_p(I)}\defeq\begin{cases}\lr{\sum_{i\in I}\abs{x_i}^p}^{\frac{1}{p}}&\text{if }0<p<\infty,\\\sup_{i\in I}\abs{x_i}&\text{if }p=\infty.\end{cases}
\end{equation*}
We shall usually omit $I$ from the notation, as it will be clear from the context.
When $I=\setn{1,\ldots,d}$, we write $\mnorm{x}_p\defeq\mnorm{x}_{\ell_p(I)}$.

For a (measurable) function $f:\RR^d\to\RR$, we write
\begin{equation*}
\mnorm{f}_{L_p(\RR^d)}\defeq\begin{cases}\lr{\int_{\RR^d}\abs{f(x)}^p\dd x }^{\frac{1}{p}}&\text{if }0<p<\infty,\\\esssup_{x\in\RR^d}\abs{f(x)}&\text{if }p=\infty.\end{cases}
\end{equation*}
For $\gamma=(\gamma_1,\ldots,\gamma_d)\in\NN_0^d$ and sufficiently smooth $f:\RR^d\to\RR$, we denote the partial derivatives of $f$ by $D^\gamma f=\frac{\p^{\gamma_1+\ldots+\gamma_d}}{\p x_1^{\gamma_1}\ldots \p x_d^{\gamma_d}}f$.

\subsection{Wavelets and Besov spaces}
Our discussion of wavelet series and their Besov regularity relies on the wavelet characterization of Besov spaces.
He present a brief outline of this characterization here, where we follow \cite[Chapter~1]{Triebel2008} for the definitions of the Fourier transform and (weighted) Besov (sequence) spaces.
We will not go into the minute details in what sense the occurring integrals are well-defined, how the Schwartz space $\CalS(\RR^d)$ and its dual space $\CalS^\prime(\RR^d)$ are defined, and in which way the Fourier transform is extended from $\CalS(\RR^d)$ to $\CalS^\prime(\RR^d)$, see \cite[Section~2.3]{Triebel1997}.
Instead, our aim is to fix the notation and to avoid ambiguities regarding different conventions in the literature.

The \emph{Fourier transform} $\hat{f}$ of $f\in \CalS(\RR^d)$ is defined as
\begin{equation*}
\hat{f}(x)\defeq (2\piup)^{-\frac{d}{2}} \int_{\RR^d}\ee^{-\ii x^\top y}f(y)\dd y
\end{equation*}
for $x\in\RR^d$.
Here $x^\top y$ denotes the standard inner product of $x,y\in\RR^d$.
The inverse of the Fourier transform shall be denoted by $\check{f}$ or $f^\vee$; it is given by $\check{f}(x)=\hat{f}(-x)$.
\begin{Def}\label{def:fourier}
Let $d\in \NN$, $p,q\in (0,\infty]$, $s \in\RR$.

An \emph{admissible weight} \cite[Definition~1.22]{Triebel2008} is a function $w\in C^{\infty}(\RR^d,\RR)$ such that for every multiindex $\gamma\in\NN_0^d$, there exists a constant $c_\gamma>0$ such that the partial derivative $D^\gamma w$ of $w$ satisfies
\begin{equation*}
\abs{D^\gamma w(x)}\leq c_\gamma w(x)\qquad \text{for all}\qquad x\in\RR^d,
\end{equation*}
and there exist $c>0$ and $\alpha\geq 0$ such that
\begin{equation*}
0<w(x)\leq cw(y)(1+\mnorm{x-y}_2^2)^{\frac{\alpha}{2}} \qquad \text{for all}\qquad x,y\in\RR^d.
\end{equation*}

If $(\varphi_j)_{j\in\NN_0}$ is a suitable resolution of unity, cf.~\cite[(1.5), (1.6), (1.7)]{Triebel2008}, then the \emph{(weighted) Besov space} $B^s_{p,q}(\RR^d,w)$ is defined as the collection of all $f\in \CalS^\prime(\RR^d)$ such that
\begin{equation*}
\mnorm{f}_{B^s_{p,q}(\RR^d,w)}\defeq\lr{\sum_{j\in\NN_0} 2^{jsq}\mnorm{w\cdot(\varphi_j\hat{f})^\vee}_{L_p(\RR^d)}^q}^{\frac{1}{q}}<\infty,
\end{equation*}
with the usual modifications for $q=\infty$, see \cite[Definition~1.22]{Triebel2008}.
(This definition makes sense since $\varphi_j\hat{f}\in\CalS^\prime(\RR^d)$ is a compactly supported distribution so that $(\varphi_j\hat{f})^\vee\in C^\infty(\RR^d)$ is a (smooth) function by the Paley--Wiener theorem.) 

Define $T_0\defeq \setn{F}^d$, $T_j\defeq\setn{F,M}^d\setminus T_0$ for $j\in\NN$, 
\begin{equation*}
J\defeq\setcond{(j,t,m)}{j\in\NN_0,t\in T_j,m\in\ZZ^d},
\end{equation*}
 and for a sequence $a=(a_{j,t,m})_{(j,t,m)\in J}\in \RR^J$, following \cite[Definition~1.24]{Triebel2008} with a slight modification, we let
\begin{equation*}
\mnorm{a}_{b^s_{p,q}(\RR^d,w)}\defeq \mnorm{\lr{2^{j(s+\frac{d}{2}-\frac{d}{p})}\mnorm{\lr{w(2^{-(j-\delta_{j\neq 0})}m)\abs{a_{j,t,m}}}_{m\in\ZZ^d}}_{\ell_p}}_{j\in\NN_0,t\in T_j}}_{\ell_q},
\end{equation*}
where $\delta_{j\neq 0}=1$ if $j\neq 0$ and $\delta_{j\neq 0}=0$ if $j=0$.
The \emph{(weighted) Besov sequence space} $b^s_{p,q}(\RR^d,w)$ is defined as the collection of all sequences $(a_{j,t,m})_{(j,t,m)\in J}\in\RR^J$ such that $\mnorm{a}_{b^s_{p,q}(\RR^d,w)}<\infty$.
We write $B^s_{p,q}(\RR^d)\defeq B^s_{p,q}(\RR^d,w)$ and  $b^s_{p,q}(\RR^d)\defeq b^s_{p,q}(\RR^d,w)$ when $w(x)=1$ for all $x\in\RR^d$.
\end{Def}
It turns out that $(B^s_{p,q}(\RR^d,w),\mnorm{\bullet}_{B^s_{p,q}(\RR^d,w)})$ and $(b^s_{p,q}(\RR^d,w),\mnorm{\bullet}_{b^s_{p,q}(\RR^d),w})$ are quasi-Banach spaces, and different choices of the resolution of unity lead to equivalent quasi-norms, see \cite[Section~2.2]{HaroskeTr2004}.

The characterization of Besov spaces in terms of wavelet series is usually done using compactly supported wavelets.
Daubechies describes in \cite[Chapter~6]{Daubechies1992} the construction of a multiresolution analysis of $L_2(\RR)$ whose scaling function and wavelet function are sufficiently smooth and have sufficiently many vanishing moments, cf.\ \cite[Definition~1.49 and Remark~1.50]{Triebel2006} for the wavelet parlance.
We cite the existence result for such \enquote{nice} compactly supported wavelets from \cite[Theorem~10]{GrohsKlVo2023}.
\begin{Satz}\label{thm:compactly-supported-wavelet}
For each $k\in\NN$, there exists a multiresolution analysis $(V_j)_{j\in\ZZ}$ of $L_2(\RR)$ with scaling function $\psi_F\in L_2(\RR)$ and wavelet function $\psi_M\in L_2(\RR)$ such that the following hold:
\begin{enumerate}[label={(\roman*)},leftmargin=*,align=left,noitemsep]
\item{$\psi_F$ and $\psi_M$ are real-valued and compactly supported,}
\item{$\psi_F$, $\psi_M\in C^k(\RR)$,}
\item{$\widehat{\psi_F}(0)=(2\piup)^{-\frac{1}{2}}$,}
\item{$\int_\RR x^\ell \psi_M(x)\dd x =0$ for all $\ell\in\setn{0,\ldots,k}$.\label{moments}}
\end{enumerate}
\end{Satz}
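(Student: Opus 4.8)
The plan is to carry out Daubechies' construction of compactly supported orthonormal wavelets \cite[Chapter~6]{Daubechies1992}, keeping careful track of how the smoothness depends on the filter length. Fix $k\in\NN$. The point of departure is a low-pass filter, i.e.\ a trigonometric polynomial $m_0(\xi)=\sum_{n=0}^{L}h_n\,\ee^{-\ii n\xi}$ with $m_0(0)=1$, satisfying the quadrature-mirror-filter identity $\abss{m_0(\xi)}^2+\abss{m_0(\xi+\piup)}^2=1$, and factoring as $m_0(\xi)=\bigl(\tfrac{1+\ee^{-\ii\xi}}{2}\bigr)^{N}\mathcal{L}(\xi)$ with $\mathcal{L}$ a trigonometric polynomial and $N=N(k)\in\NN$ to be fixed later. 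Such an $m_0$ exists for every $N$: one solves the Bézout-type identity for the unique polynomial $P_N(y)=\sum_{j=0}^{N-1}\binom{N-1+j}{j}y^j$ that turns $\cos^{2N}(\xi/2)\,P_N(\sin^2(\xi/2))$ into a valid squared modulus $\abss{m_0(\xi)}^2$, and then extracts $m_0$ by the Fejér--Riesz spectral factorization (choosing, say, the minimal-phase factor).

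Given $m_0$, define the scaling function via the infinite product $\widehat{\psi_F}(\xi)\defeq(2\piup)^{-\frac12}\prod_{j=1}^{\infty}m_0(2^{-j}\xi)$, which converges locally uniformly because $m_0(0)=1$ and $m_0\in C^\infty$, and set $\widehat{\psi_M}(\xi)\defeq\ee^{-\ii\xi/2}\,\overline{m_0(\xi/2+\piup)}\,\widehat{\psi_F}(\xi/2)$. Property~(iii) holds by construction since $m_0(0)=1$ forces $\widehat{\psi_F}(0)=(2\piup)^{-\frac12}$. Property~(i) holds because $\widehat{\psi_F}$, being a locally uniform limit of entire functions of exponential type $\leq L\sum_{j\geq1}2^{-j}=L$ and lying in $L_2(\RR)$, is by the Paley--Wiener theorem the Fourier transform of a function supported in $[-L,L]$; likewise $\psi_M$ is compactly supported by its defining formula. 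Property~(iv) follows once $N\geq k+1$: since $m_0$ has a zero of order $N$ at $\xi=\piup$, the factor $\overline{m_0(\xi/2+\piup)}$ makes $\widehat{\psi_M}$ vanish to order $\geq N$ at $\xi=0$, and differentiating $\widehat{\psi_M}(\xi)=(2\piup)^{-\frac12}\int_\RR\psi_M(x)\,\ee^{-\ii x\xi}\,\dd x$ under the integral sign shows $\int_\RR x^\ell\psi_M(x)\,\dd x=0$ for $\ell\in\{0,\dots,N-1\}\supseteq\{0,\dots,k\}$.

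The main obstacle is property~(ii), $\psi_F,\psi_M\in C^k(\RR)$. By Fourier inversion it suffices to establish a decay estimate $\abss{\widehat{\psi_F}(\xi)}\lesssim(1+\abss{\xi})^{-\kappa(N)}$ with $\kappa(N)>k+1$; the same bound then transfers to $\psi_M$ since $\overline{m_0(\cdot/2+\piup)}$ is bounded. The estimate is the classical one: the $N$-th power of $\cos(\cdot/2)$ in $m_0$ contributes the decay $\abss{\xi}^{-N}$, while the product of the remaining factors $\mathcal{L}(2^{-j}\xi)$ is controlled, after grouping a fixed number of consecutive factors and exploiting the self-similar structure of the product, by a bound of the form $\abss{\xi}^{\log_2\beta}$ with $\beta$ a suitable iterated supremum of $\abss{\mathcal{L}}$; thus $\kappa(N)=N-\log_2\beta$. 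What remains is the quantitative heart of the matter, namely that $N-\log_2\beta\to\infty$ as $N\to\infty$. This is exactly Daubechies' smoothness estimate for the filters built from $P_N$ above --- in fact $\kappa(N)$ grows linearly in $N$ --- so choosing $N=N(k)$ large enough yields~(ii). (One could equally invoke the off-the-shelf statement \cite[Theorem~10]{GrohsKlVo2023} or the account in \cite[Remark~1.50]{Triebel2006} at this point.)

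Finally, the multiresolution analysis is $V_j\defeq\overline{\operatorname{span}\{\psi_F(2^j\cdot-m):m\in\ZZ\}}$. The inclusions $V_j\subseteq V_{j+1}$ and the dilation covariance $f\in V_j\Leftrightarrow f(2\,\cdot)\in V_{j+1}$ are immediate from the two-scale relation $\psi_F=\sqrt2\sum_n h_n\,\psi_F(2\cdot-n)$; that $\overline{\bigcup_{j}V_j}=L_2(\RR)$ and $\bigcap_j V_j=\{0\}$ follows from $\widehat{\psi_F}(0)\neq0$ together with the standard dominated-convergence arguments of \cite[Chapter~5]{Daubechies1992}. Orthonormality of the integer translates of $\psi_F$ --- equivalently $\sum_{n\in\ZZ}\abss{\widehat{\psi_F}(\xi+2\piup n)}^2\equiv(2\piup)^{-1}$ --- is the one point requiring care beyond the QMF identity: it holds under Cohen's non-vanishing condition on $m_0$, which is satisfied by the Daubechies filters because $m_0$ has no zero on $[-\piup/3,\piup/3]$. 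Granting this, $\{\psi_M(\cdot-m)\}_{m\in\ZZ}$ is an orthonormal basis of the orthogonal complement $V_1\ominus V_0$, whence $\{2^{j/2}\psi_M(2^j\cdot-m)\}_{(j,m)\in\ZZ^2}$ is an orthonormal wavelet basis of $L_2(\RR)$ subordinate to the multiresolution structure, completing the construction.
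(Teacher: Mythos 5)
The paper does not prove this theorem at all: it is imported verbatim as a citation of \cite[Theorem~10]{GrohsKlVo2023}, which in turn rests on Daubechies' construction in \cite[Chapter~6]{Daubechies1992}. Your sketch is a correct reconstruction of exactly that construction (and you yourself note at the end of the smoothness step that one could just invoke the cited off-the-shelf statement), so there is no divergence of approach to report; the only two places you leave implicit are that Fejér--Riesz factorization can be carried out with \emph{real} coefficients (so that $\psi_F,\psi_M$ are real-valued, as required by (i)) and that $\widehat{\psi_F}\in L_2(\RR)$ (needed for the Paley--Wiener step), both of which are standard and unproblematic.
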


The multiresolution analysis from \cref{thm:compactly-supported-wavelet} is then used to define an orthonormal basis $(\Psi_{j,t,m})_{(j,t,m)\in J}$ of $L_2(\RR^d)$ via 
\begin{equation}\label{eq:wavelet-tensorization}
\Psi_{j,t,m}:\RR^d\to\RR,\quad x\mapsto \begin{cases}\prod_{k=1}^d\psi_F(x_k-m_k)&\text{if }j=0,\\[6pt]
2^{(j-1)\frac{d}{2}}\prod_{k=1}^d \psi_{t_k}(2^{j-1}x_k-m_k)&\text{if }j\in\NN\end{cases}
\end{equation}
for $(j,t,m)\in J$, see \cite[Theorem~1.53]{Triebel2006}.

It turns out that mapping the coefficient sequence to the corresponding wavelet expansion constitutes an isomorphism between the Besov sequence space and the corresponding Besov function space.
This is the above-mentioned wavelet characterization of Besov spaces.
The following theorem makes this precise; it is taken from \cite[Theorem~1.26]{Triebel2008}.
We note that unlike Triebel in \cite{Triebel2008}, we use the $L_2$-normalized wavelet system $(\Psi_{j,t,m})_{(j,t,m)\in J}$ instead of the $L_\infty$-bounded wavelet system $(2^{-j\frac{d}{2}}\Psi_{j,t,m})_{(j,t,m)\in J}$ used in \cite{Triebel2008}.
This is compensated by the factor $2^{j\frac{d}{2}}$ in the definition of $b^s_{p,q}(\RR^d)$ which does not occur in \cite{Triebel2008}; see also \cref{bem:notation}.
\begin{Satz}\label{thm:wavelet-characterization}
Let $d\in \NN$, $p,q\in (0,\infty]$, $s \in\RR$.
Denote by $(\Psi_{j,t,m})_{(j,t,m)\in J}$ the orthonormal basis of $L_2(\RR^d)$ given in \eqref{eq:wavelet-tensorization}.
Assume that $k>\max\setn{s,d\lr{\frac{1}{p}-1}-s}$ if $p\in (0,1)$ and $k>\abs{s}$ else.
Let $f\in\CalS^\prime(\RR^d)$.
Then $f\in B^s_{p,q}(\RR^d,w)$ if and only if it can be represented as
\begin{equation}\label{eq:wavelet-characterization}
f=\sum_{(j,t,m)\in J} a_{j,t,m}  \Psi_{j,t,m}
\end{equation}
with $a=(a_{j,t,m})_{(j,t,m)\in J}\in b^s_{p,q}(\RR^d,w)$.
The convergence is unconditional in $\CalS^\prime(\RR^d)$.
Furthermore, the representation is unique, i.e., for given  $f\in B^s_{p,q}(\RR^d,w)$, the unique element $(a_{j,t,m})_{(j,t,m)\in J}\in b^s_{p,q}(\RR^d,w)$ such that \eqref{eq:wavelet-characterization} holds is given by $a_{j,t,m}=\skpr{f}{\Psi_{j,t,m}}$, where the duality pairing is suitably defined.
The map
\begin{equation*}
I: f\mapsto (\skpr{f}{\Psi_{j,t,m}})_{(j,t,m)\in J}
\end{equation*}
defines an isomorphism of quasi-Banach spaces $B^s_{p,q}(\RR^d,w)\to b^s_{p,q}(\RR^d,w)$, and the map
\begin{equation*}
S:  (a_{j,t,m})_{(j,t,m)\in J}\mapsto \sum_{(j,t,m)\in J}a_{j,t,m}\Psi_{j,t,m}
\end{equation*}
defines an isomorphism of quasi-Banach spaces $b^s_{p,q}(\RR^d,w)\to B^s_{p,q}(\RR^d,w)$.
\end{Satz}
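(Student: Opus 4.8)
The plan is to prove this along the well-trodden path of \emph{atomic and molecular decompositions} combined with the \emph{local means} characterization of Besov spaces; the argument is carried out in full in \cite[Theorem~1.26]{Triebel2008} (see also \cite[Section~1.5]{Triebel2006}), so in the paper we simply invoke that reference, but its structure is as follows. The key mechanism is that, since $\psi_F,\psi_M\in C^k$ are compactly supported and $\psi_M$ has $k{+}1$ vanishing moments (\cref{thm:compactly-supported-wavelet}), the dilated and translated functions $\Psi_{j,t,m}$ are, up to normalization, admissible atoms (for the synthesis direction) and admissible molecules (for the analysis direction) for $B^s_{p,q}(\RR^d,w)$ — which is exactly why the smoothness/vanishing-moment order $k$ must exceed $\max\{s,d(1/p-1)-s\}$ when $p<1$ and $\abs{s}$ otherwise. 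The admissibility inequalities $\abs{D^\gamma w}\le c_\gamma w$ and $w(x)\le c\,w(y)(1+\mnorm{x-y}_2^2)^{\alpha/2}$ for the weight, and the exceptional $j=0$ scaling in the definition of $\mnorm{\bullet}_{b^s_{p,q}(\RR^d,w)}$, are harmless bookkeeping that must nevertheless be tracked throughout.

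\emph{Synthesis (boundedness of $S$ and unconditional $\CalS^\prime$-convergence).} Given $a=(a_{j,t,m})\in b^s_{p,q}(\RR^d,w)$, I would first show that $\sum_{(j,t,m)\in J}a_{j,t,m}\Psi_{j,t,m}$ converges unconditionally in $\CalS^\prime(\RR^d)$: pairing with $\phi\in\CalS(\RR^d)$, the numbers $\abs{\skpr{\Psi_{j,t,m}}{\phi}}$ decay rapidly in $j$ and in $\mnorm{m}_\infty$ (compact support and $C^k$-smoothness of $\Psi$; vanishing moments against the Taylor polynomial of $\phi$), which dominates the mild growth of $a$ allowed by membership in $b^s_{p,q}$. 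For the norm estimate, put $g\defeq\sum a_{j,t,m}\Psi_{j,t,m}$ and bound, for each $\ell\in\NN_0$, the quantity $\mnorm{w\cdot(\varphi_\ell\widehat{g})^\vee}_{L_p}$ by $\sum_{j\ge 0}2^{-\abs{j-\ell}N}$ times the $\ell_p$-quasinorm of the weighted, rescaled coefficients at scale $j$, where $N$ may be taken arbitrarily large because a scale-$2^{-j}$ wavelet interacts with the $\ell$-th frequency annulus only through $\widehat{\Psi_{j,t,\cdot}}$, which vanishes to high order at the origin and decays fast at infinity (for $j=0$ only the latter is available, which still suffices); the weight is absorbed using its slow variation. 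Taking the $\ell_q$-quasinorm over $\ell$ and using the convolution embedding $\ell_{\min(1,q)}\ast\ell_q\hookrightarrow\ell_q$ for geometric sequences (Young's inequality if $q\ge 1$) yields $\mnorm{g}_{B^s_{p,q}(\RR^d,w)}\lesssim\mnorm{a}_{b^s_{p,q}(\RR^d,w)}$.

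\emph{Analysis (boundedness of $I$, identification of the coefficients, uniqueness).} For $f\in B^s_{p,q}(\RR^d,w)$, decompose $f=\sum_{\ell\ge 0}f_\ell$ with $f_\ell\defeq(\varphi_\ell\widehat{f})^\vee$ smooth and band-limited, so that $\skpr{f}{\Psi_{j,t,m}}=\sum_{\ell\ge 0}\skpr{f_\ell}{\Psi_{j,t,m}}$. I would estimate each term by two complementary moves: for $\ell\le j$, subtract the Taylor polynomial of $f_\ell$ at $2^{-j}m$ and use the $k{+}1$ vanishing moments of $\Psi_{j,t,m}$ to gain a factor $2^{-(j-\ell)(k+1)}$; for $\ell\ge j$, integrate by parts $k$ times and use the Bernstein inequality $\mnorm{D^\gamma f_\ell}_{L_\infty}\lesssim 2^{\ell\abs{\gamma}}\mnorm{f_\ell}_{L_\infty}$ to gain $2^{-(\ell-j)k}$. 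This produces an almost-diagonal estimate; feeding it into a Fefferman--Stein / Plancherel--Pólya--Nikol'skij maximal inequality in the sequence space — needed because the $\ell_p$-quasinorm over $m$ at scale $j$ must be dominated by a shifted, summed version of the analogous quantity at scale $\ell$ — gives $\mnorm{(\skpr{f}{\Psi_{j,t,m}})}_{b^s_{p,q}(\RR^d,w)}\lesssim\mnorm{f}_{B^s_{p,q}(\RR^d,w)}$. Finally, $\sum\skpr{f}{\Psi_{j,t,m}}\Psi_{j,t,m}=f$: this holds for $f$ in a dense subspace of $L_2(\RR^d)$ by orthonormality, and extends by continuity of both sides on $B^s_{p,q}(\RR^d,w)$ (using the synthesis step and the boundedness of $I$). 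Uniqueness then follows, since if $\sum a_{j,t,m}\Psi_{j,t,m}=0$ in $\CalS^\prime$, pairing term by term against $\Psi_{j^\prime,t^\prime,m^\prime}$ — legitimate thanks to the unconditional convergence established above — forces $a_{j^\prime,t^\prime,m^\prime}=0$.

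\emph{Conclusion and the main difficulty.} Combining the two steps, $I$ and $S$ are bounded, $S\circ I=\id_{B^s_{p,q}(\RR^d,w)}$ by the reconstruction identity, and $I\circ S=\id_{b^s_{p,q}(\RR^d,w)}$ by orthonormality and uniqueness; hence they are mutually inverse isomorphisms of quasi-Banach spaces, and the two-sided norm equivalence $\mnorm{f}_{B^s_{p,q}(\RR^d,w)}\asymp\mnorm{(\skpr{f}{\Psi_{j,t,m}})}_{b^s_{p,q}(\RR^d,w)}$ holds. The main obstacle is the analysis step: producing the almost-diagonal decay with enough margin to spare (which is precisely where the lower bound on $k$ enters) and then summing it up in the $b^s_{p,q}$-quasinorm when $\min\{p,q\}<1$, where the (quasi-)triangle inequality is too lossy and one must genuinely invoke maximal-function estimates; keeping the admissible weight $w$ and the $j=0$ exception consistent in every estimate is a persistent but comparatively minor nuisance.
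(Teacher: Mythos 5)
Your proposal is correct, and it agrees with the paper's approach: the paper itself does not prove this theorem but simply cites it from Triebel's monograph (\cite[Theorem~1.26]{Triebel2008}), which is exactly the reference you invoke, and your sketch of the atomic/molecular decomposition and local-means machinery faithfully reflects the structure of that cited proof. The only thing the paper adds beyond the citation is a remark (\cref{bem:notation}) reconciling the $L_2$-normalized wavelet system used here with Triebel's $L_\infty$-normalized convention, which you correctly flag as ``harmless bookkeeping'' via the extra factor $2^{jd/2}$ in the sequence-space quasi-norm.
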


\begin{Bem}\label{bem:notation}
Let us comment on the notational differences to \cite{Triebel2008}.
Triebel \cite[(1.89) and (1.90)]{Triebel2008} uses different index sets for the wavelet system, namely $G_0\defeq \setn{F,M}^d$, $G_j\defeq \setn{F,M}^d\setminus \setn{F}^d$.
The wavelet system \cite[(1.91)]{Triebel2008} then takes the form  
\begin{equation*}
\widetilde{\Psi}_{j,t,m}(x)=2^{j\frac{d}{2}}\prod_{k=1}^d \psi_{t_k}(2^j x_k-m_k).
\end{equation*}
If we let $\widetilde{J}\defeq \setcond{(j,t,m)}{j\in\NN_0,t\in G_j,m\in\ZZ^d}$, we note that Triebel's definition
\begin{equation*}
\mnorm{a}_{\widetilde{b}^s_{p,q}(\RR^d,w)}=\lr{\sum_{j\in\NN_0} 2^{jq(s-\frac{d}{p})}\sum_{t\in G_j} \lr{w(2^{-j}m)^p \abs{a_{j,t,m}}^p}^{\frac{q}{p}}}^{\frac{1}{q}}
\end{equation*}
 of the Besov sequence norm of $a=(a_{j,t,m})_{(j,t,m)\in\widetilde{J}}$ does not involve a factor of $2^{j\frac{d}{2}}$, cf.\  \cite[(1.119)]{Triebel2008}, but the wavelet characterization in \cite[Theorem~1.26]{Triebel2008}
does:
\begin{equation*}
\mnorm{f}_{B^s_{p,q}(\RR^d,w)}\asymp \mnorm{\lr{2^{j\frac{d}{2}}\skpr{f}{\widetilde{\Psi}_{j,t,m}}}_{(j,t,m)\in \widetilde{J}}}_{\widetilde{b}^s_{p,q}(\RR^d,w)}.
\end{equation*}
But note that we also have
\begin{equation*}
 \mnorm{\lr{2^{j\frac{d}{2}}\skpr{f}{\widetilde{\Psi}_{j,t,m}}}_{(j,t,m)\in \widetilde{J}}}_{\widetilde{b}^s_{p,q}(\RR^d,w)}\asymp  \mnorm{\lr{\skpr{f}{\Psi_{j,t,m}}}_{(j,t,m)\in J}}_{b^s_{p,q}(\RR^d,w)}.
\end{equation*}
\end{Bem}

Once we fix the orthonormal basis $(\Psi_{j,t,m})_{(j,t,m)\in J}$ of $L_2(\RR^d)$, we may consider random wavelet series $f$ as in \eqref{eq:expansion-0}.
In view of the application in Bayesian inverse problems as prior distribution, we say that $f$ follows a Besov prior.
Note that the following definition is slightly more general than \eqref{eq:expansion-0}, since we allow the additional parameter $\gamma$.
\begin{Def}\label{def:besov-prior}
Let $d,k\in\NN$, $\alpha,\hf,\lf,\theta\in\RR$, $\mu,\nu\in (-\infty,0]$, and let $X$ be a (real-valued) random variable.
\begin{enumerate}[label={(\alph*)},leftmargin=*,align=left,noitemsep]
\item{We say that $f$ \emph{follows a Besov prior with parameters $(\alpha,\hf,\lf,\theta,k)$ and template random variable $X$} if the orthonormal basis $(\Psi_{j,t,m})_{(j,t,m)\in J}$ of $L_2(\RR^d)$ is the one from \eqref{eq:wavelet-tensorization} (with $\psi_F,\psi_M\in C^k(\RR)$ and $\psi_M$ having $k$ vanishing moments as in \cref{thm:compactly-supported-wavelet}\label{moments}) and there are i.i.d.\ random variables $\xi_{j,t,m}\sim X$ for $(j,t,m)\in \NN_0\times \setn{F,M}^d\times \ZZ^d$ such that 
\begin{equation}\label{eq:expansion}
f=\sum_{(j,t,m)\in J}2^{j\alpha}(j+1)^\theta \lr{1+\frac{\mnorm{m}_\infty}{2^j}}^{\hf + (\lf-\hf)\one_{j=0}}\xi_{j,t,m}\Psi_{j,t,m},
\end{equation}
with unconditional convergence of the series in the sense of tempered distributions.}
\item{We say that $f$ \emph{follows a \priortwo{} prior with parameters $(\alpha,\hf,\lf,\mu,\nu,k)$ and template random variable $X$} if the orthonormal basis $(\Psi_{j,t,m})_{(j,t,m)\in J}$ of $L_2(\RR^d)$ is the one from \eqref{eq:wavelet-tensorization} (with $\psi_F,\psi_M\in C^k(\RR)$ and $\psi_M$ having $k$ vanishing moments), there are i.i.d.\ random variables $\xi_{j,t,m}\sim X$ for $(j,t,m)\in \NN_0\times \setn{F,M}^d\times \ZZ^d$, and
Bernoulli$(\varrho_{j,m})$-distributed random variables $\lambda_{j,t,m}$ for $(j,t,m)\in \NN_0\times \setn{F,M}^d\times \ZZ^d$, independent among each other and from all the $\xi_{j,t,m}$, with
\begin{equation*}
\varrho_{j,m}=2^{j\mu}\lr{1+\frac{\mnorm{m}_\infty}{2^j }}^\nu
\end{equation*}
such that
\begin{equation}\label{eq:expansion-bernoulli}
f=\sum_{(j,t,m)\in J}2^{j\alpha} \lr{1+\frac{\mnorm{m}_\infty}{2^j}}^{\hf + (\lf-\hf)\one_{j=0}}\lambda_{j,t,m}\xi_{j,t,m}\Psi_{j,t,m},
\end{equation}
with unconditional convergence of the series \eqref{eq:expansion-bernoulli} in the sense of tempered distributions.}
\end{enumerate}
Here $\one_{j=0}\defeq 1$ if $j=0$, and $\one_{j=0}\defeq 0$ else.
Furthermore, if $j=0$ for a summand in the expansion \eqref{eq:expansion}, then also $t=(F,\ldots,F)$ is fixed, and in that case we shorten notation by omitting both of them and only keep the relevant index $m\in\ZZ^d$, i.e., we write $\xi_m\defeq \xi_{0,(F,\ldots,F),m}$ and $\lambda_m\defeq\lambda_{0,(F,\ldots,F),m}$.
\end{Def}
The analysis for the functions and their coefficient sequences are not entirely the same; see the discussion before \cref{thm:target-statement-necessity-functions}.
Therefore it is appropriate to introduce analogous notions for the coefficient sequences.
The notational conventions from \cref{def:besov-prior} apply.
\begin{Def}\label{def:sequence-prior}
Let $d\in\NN$, $\alpha,\hf,\lf,\theta\in\RR$, $\mu,\nu\in (-\infty,0]$, and let $X$ be a (real-valued) random variable.
\begin{enumerate}[label={(\alph*)},leftmargin=*,align=left,noitemsep]
\item{We say that $a=(a_{j,t,m})_{(j,t,m)\in J}\in\RR^J$ \emph{follows a Besov sequence prior with parameters $(\alpha,\hf,\lf,\theta)$ and template random variable $X$} if there are i.i.d.\ random variables $\xi_{j,t,m}\sim X$ for $(j,t,m)\in \NN_0\times \setn{F,M}^d\times \ZZ^d$ such that 
\begin{equation}\label{eq:expansion-seq}
a_{j,t,m}=2^{j\alpha}(j+1)^\theta \lr{1+\frac{\mnorm{m}_\infty}{2^j}}^{\hf + (\lf-\hf)\one_{j=0}}\xi_{j,t,m}.
\end{equation}}
\item{We say that $a=(a_{j,t,m})_{(j,t,m)\in J}\in\RR^J$ \emph{follows a \priortwo{} sequence prior with parameters $(\alpha,\hf,\lf,\mu,\nu)$ and template random variable $X$} if there are i.i.d.\ random variables $\xi_{j,t,m}\sim X$ for $(j,t,m)\in \NN_0\times \setn{F,M}^d\times \ZZ^d$ and
Bernoulli$(\varrho_{j,m})$-distributed random variables $\lambda_{j,t,m}$ for $(j,t,m)\in \NN_0\times \setn{F,M}^d\times \ZZ^d$, independent among each other and from all the $\xi_{j,t,m}$, with
\begin{equation*}
\varrho_{j,m}=2^{j\mu}\lr{1+\frac{\mnorm{m}_\infty}{2^j }}^\nu
\end{equation*}
such that
\begin{equation}\label{eq:expansion-bernoulli-seq}
a_{j,t,m}=2^{j\alpha} \lr{1+\frac{\mnorm{m}_\infty}{2^j}}^{\hf + (\lf-\hf)\one_{j=0}}\lambda_{j,t,m}\xi_{j,t,m}.
\end{equation}}
\end{enumerate}
\end{Def}
Note that in \cref{def:besov-prior,def:sequence-prior}, we assume to have random variables $\xi_{j,t,m}$ and $\lambda_{j,t,m}$ for $(j,t,m)\in \NN_0\times \setn{F,M}^d\times \ZZ^d$, although only those for $(j,t,m)\in J$ are \enquote{used} in \eqref{eq:expansion}, \eqref{eq:expansion-bernoulli}, \eqref{eq:expansion-seq}, and \eqref{eq:expansion-bernoulli-seq}.
This is done for convenience in the proofs.
In our analysis, expectations $\EE$ and variances $\VV$ are taken with respect to these random variables $\xi_{j,t,m}$ and $\lambda_{j,t,m}$.

\cref{fig:besov,fig:bernoullibesov} show samples of the two priors defined in \cref{def:besov-prior} for different sets of parameters. (A Python implementation for generating these samples can be found at \url{https://github.com/AndreasHorst/RWS}.)
In both figures, we used standard normal variables for $\xi_{j,tm}$ and Daubechies-$10$ wavelets for $\Psi_{j,t,m}$.
We truncated the basis expansions at $j=10$.
Note that Daubechies shows in \cite[p.~226]{Daubechies1992} that this wavelet system has its scaling and wavelet function in $C^k$ for all $k<2.406$ and, more generally, that the Daubechies-$N$ wavelet system has their scaling and wavelet function in $C^{\mu N}$ with $\mu\approx 0.2075$ for sufficiently large $N$.

\begin{figure}[h!]
    \centering
\begin{tikzpicture}
\begin{scope}
\begin{axis}[height=5cm,width=4.5cm,xmin=-25,xmax=25,xtick={-20,0,20},ytick={-1,-0.5,0,0.5,1},ymin=-1.2,ymax=1.4,xlabel={$x$},title={$\gamma=\beta=-1$, $\alpha=-1$},label style={font=\tiny},tick label style={font=\tiny},title style={yshift=-1.5ex,font=\footnotesize}]
\addplot[line join=round,pyblue,mark=none] table [col sep=comma] {besov00.csv};
\end{axis}
\end{scope}

\begin{scope}[xshift=4.05cm]
\begin{axis}[height=5cm,width=4.5cm,xmin=-25,xmax=25,xtick={-20,0,20},ytick={-1,-0.5,0,0.5,1},ymin=-1.2,ymax=1.4,xlabel={$x$},title={$\gamma=\beta=-2$, $\alpha=-1$},label style={font=\tiny},tick label style={font=\tiny},title style={yshift=-1.5ex,font=\footnotesize}]
\addplot[line join=round,pyblue,mark=none] table [col sep=comma] {besov01.csv};
\end{axis}
\end{scope}

\begin{scope}[xshift=8.1cm]
\begin{axis}[height=5cm,width=4.5cm,xmin=-25,xmax=25,xtick={-20,0,20},ytick={-1,-0.5,0,0.5,1},ymin=-1.2,ymax=1.4,xlabel={$x$},title={$\gamma=\beta=-4$, $\alpha=-1$},label style={font=\tiny},tick label style={font=\tiny},title style={yshift=-1.5ex,font=\footnotesize}]
\addplot[line join=round,pyblue,mark=none] table [col sep=comma] {besov02.csv};
\end{axis}
\end{scope}

\begin{scope}[yshift=-4.8cm]
\begin{axis}[height=5cm,width=4.5cm,xmin=-25,xmax=25,xtick={-20,0,20},ytick={-1,-0.5,0,0.5,1},ymin=-1.5,ymax=1.5,xlabel={$x$},title={$\gamma=\beta=-1$, $\alpha=-1$},label style={font=\tiny},tick label style={font=\tiny},title style={yshift=-1.5ex,font=\footnotesize}]
\addplot[line join=round,pyblue,mark=none] table [col sep=comma] {besov10.csv};
\end{axis}
\end{scope}

\begin{scope}[xshift=4.05cm,yshift=-4.8cm]
\begin{axis}[height=5cm,width=4.5cm,xmin=-25,xmax=25,xtick={-20,0,20},ytick={-1,-0.5,0,0.5,1},ymin=-1.5,ymax=1.5,xlabel={$x$},title={$\gamma=\beta=-1$, $\alpha=-2$},label style={font=\tiny},tick label style={font=\tiny},title style={yshift=-1.5ex,font=\footnotesize}]
\addplot[line join=round,pyblue,mark=none] table [col sep=comma] {besov11.csv};
\end{axis}
\end{scope}

\begin{scope}[xshift=8.1cm,yshift=-4.8cm]
\begin{axis}[height=5cm,width=4.5cm,xmin=-25,xmax=25,xtick={-20,0,20},ytick={-1,-0.5,0,0.5,1},ymin=-1.5,ymax=1.5,xlabel={$x$},title={$\gamma=\beta=-1$, $\alpha=-4$},label style={font=\tiny},tick label style={font=\tiny},title style={yshift=-1.5ex,font=\footnotesize}]
\addplot[line join=round,pyblue,mark=none] table [col sep=comma] {besov12.csv};
\end{axis}
\end{scope}
\end{tikzpicture}
\caption{Samples of our Besov prior for different sets of parameters. The parameters $\beta$ and $\gamma$ impact the spatial decay of the function, whereas the smoothness changes with $\alpha$. In all panels, we set $\theta=0$.\label{fig:besov}}
\end{figure}
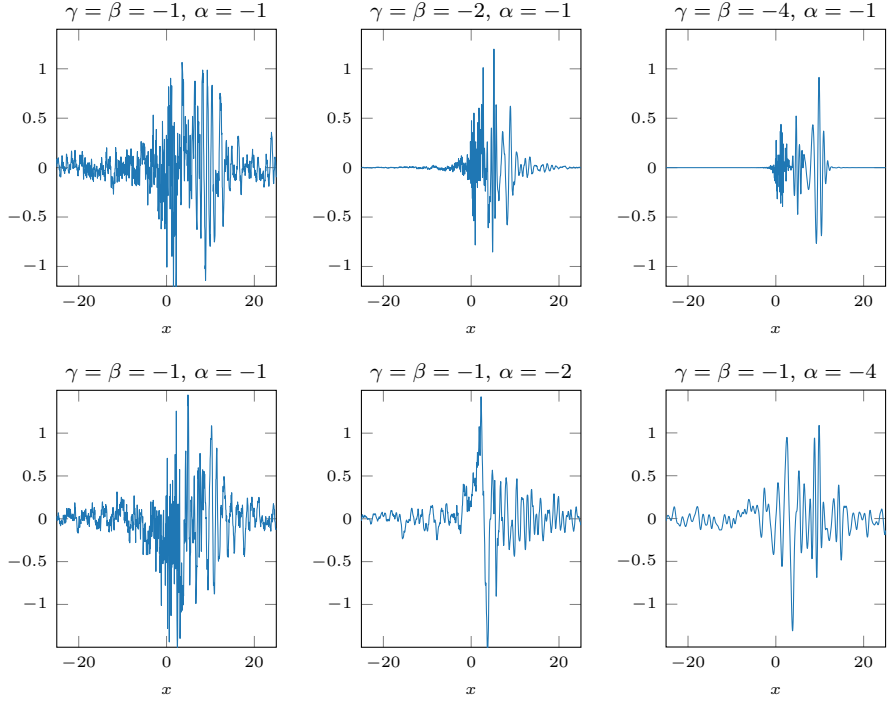

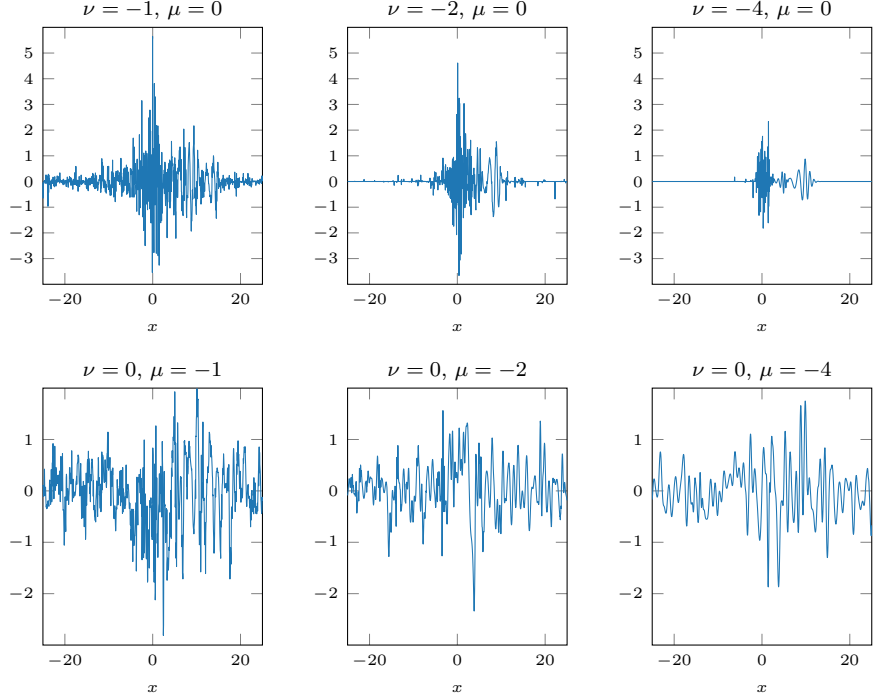
\begin{figure}[h!]
    \centering
\begin{tikzpicture}
\begin{scope}
\begin{axis}[height=5cm,width=4.5cm,xmin=-25,xmax=25,xtick={-20,0,20},ytick={-3,-2,-1,0,1,2,3,4,5},ymin=-4,ymax=6,xlabel={$x$},title={$\nu=-1$, $\mu=0$},label style={font=\tiny},tick label style={font=\tiny},title style={yshift=-1.5ex,font=\footnotesize}]
\addplot[line join=round,pyblue,mark=none] table [col sep=comma] {bernoulli00.csv};
\end{axis}
\end{scope}

\begin{scope}[xshift=4.05cm]
\begin{axis}[height=5cm,width=4.5cm,xmin=-25,xmax=25,xtick={-20,0,20},ytick={-3,-2,-1,0,1,2,3,4,5},ymin=-4,ymax=6,xlabel={$x$},title={$\nu=-2$, $\mu=0$},label style={font=\tiny},tick label style={font=\tiny},title style={yshift=-1.5ex,font=\footnotesize}]
\addplot[line join=round,pyblue,mark=none] table [col sep=comma] {bernoulli01.csv};
\end{axis}
\end{scope}

\begin{scope}[xshift=8.1cm]
\begin{axis}[height=5cm,width=4.5cm,xmin=-25,xmax=25,xtick={-20,0,20},ytick={-3,-2,-1,0,1,2,3,4,5},ymin=-4,ymax=6,xlabel={$x$},title={$\nu=-4$, $\mu=0$},label style={font=\tiny},tick label style={font=\tiny},title style={yshift=-1.5ex,font=\footnotesize}]
\addplot[line join=round,pyblue,mark=none] table [col sep=comma] {bernoulli02.csv};
\end{axis}
\end{scope}

\begin{scope}[yshift=-4.8cm]
\begin{axis}[height=5cm,width=4.5cm,xmin=-25,xmax=25,xtick={-20,0,20},ytick={-2,-1,0,1},ymin=-3,ymax=2,xlabel={$x$},title={$\nu=0$, $\mu=-1$},label style={font=\tiny},tick label style={font=\tiny},title style={yshift=-1.5ex,font=\footnotesize}]
\addplot[line join=round,pyblue,mark=none] table [col sep=comma] {bernoulli10.csv};
\end{axis}
\end{scope}

\begin{scope}[xshift=4.05cm,yshift=-4.8cm]
\begin{axis}[height=5cm,width=4.5cm,xmin=-25,xmax=25,xtick={-20,0,20},ytick={-2,-1,0,1},ymin=-3,ymax=2,xlabel={$x$},title={$\nu=0$, $\mu=-2$},label style={font=\tiny},tick label style={font=\tiny},title style={yshift=-1.5ex,font=\footnotesize}]
\addplot[line join=round,pyblue,mark=none] table [col sep=comma] {bernoulli11.csv};
\end{axis}
\end{scope}

\begin{scope}[xshift=8.1cm,yshift=-4.8cm]
\begin{axis}[height=5cm,width=4.5cm,xmin=-25,xmax=25,xtick={-20,0,20},ytick={-2,-1,0,1},ymin=-3,ymax=2,xlabel={$x$},title={$\nu=0$, $\mu=-4$},label style={font=\tiny},tick label style={font=\tiny},title style={yshift=-1.5ex,font=\footnotesize}]
\addplot[line join=round,pyblue,mark=none] table [col sep=comma] {bernoulli12.csv};
\end{axis}
\end{scope}
\end{tikzpicture}
\caption{Samples of our \priortwo{} prior for different sets of parameters. The parameter $\nu$ impacts the spatial decay, whereas the smoothness changes with $\mu$. In all panels, we set $\alpha=\gamma=\beta=-0.5$.\label{fig:bernoullibesov}}
\end{figure}

\begin{Bem}\label{remark:weights}
In the sequel, we will work with unweighted Besov spaces $B^s_{p,q}(\RR^d)$ and $b^s_{p,q}(\RR^d)$.
However, the entire analysis can be readily transferred to weighted Besov spaces with weight function $w_\sigma:\RR^d\to\RR$, $w_\sigma(x)\defeq (1+\mnorm{x}_2^2)^{\frac{\sigma}{2}}$. (For its admissibility, see \cref{thm:admissibility}.)
To this end, observe that for $j\in\NN_0$ and $m\in\ZZ^d$, we have
\begin{equation*}
w_\sigma(2^{-(j-\delta_{j\neq 0})} m)\asymp \lr{1+\frac{\mnorm{m}_2}{2^{j-\delta_{j\neq 0}}}}^\sigma\asymp  \lr{1+\frac{\mnorm{m}_\infty}{2^j}}^\sigma.
\end{equation*}
Now, if $f$ follows a Besov prior with parameters $(\alpha,\hf,\lf,\theta,k)$ and template random variable $X$, \cref{thm:wavelet-characterization} yields
\begin{align}
&\mnorm{f}_{B^s_{p,q}(\RR^d,w_\sigma)}\nonumber\\
&\asymp \mnorm{\lr{2^{j\alpha}(j+1)^\theta \lr{1+\frac{\mnorm{m}_\infty}{2^j}}^{\hf + (\lf-\hf)\one_{j=0}}\xi_{j,t,m}}_{(j,t,m)\in J}}_{b^s_{p,q}(\RR^d,w_\sigma)}\label{eq:weight}\\
&\asymp \mnorm{\lr{2^{j\alpha}(j+1)^\theta \lr{1+\frac{\mnorm{m}_\infty}{2^j}}^{\hf +\sigma+ (\lf+\sigma-(\hf+\sigma))\one_{j=0}}\xi_{j,t,m}}_{(j,t,m)\in J}}_{b^s_{p,q}(\RR^d)}\nonumber\\
&\asymp\mnorm{g}_{B^s_{p,q}(\RR^d)},\nonumber
\end{align}
where $g$ follows a Besov prior with parameters $(\alpha,\hf+\sigma,\lf+\sigma,\theta,k)$ and template random variable $X$.
This holds true under the assumption that \eqref{eq:weight} is finite. 
\end{Bem}

The terminology introduced in the following definitions will greatly simplify the statements and proofs of our main results.
\begin{Def}\label{def:property-a}
Let $d\in\NN$, $s\in\RR$, $p,q\in (0,\infty]$, $\alpha,\hf,\lf,\theta\in\RR$, $\mu,\nu\in (-\infty,0]$, and $r\in (0,\infty)$.
We say that \emph{Property A is satisfied} if
 \begin{equation*}
\lf <-\frac{d}{p},\qquad \hf<-\frac{d}{p},\qquad s+\frac{d}{2}+\alpha<0
\end{equation*}
or 
 \begin{equation*}
\lf <-\frac{d}{p},\qquad \hf<-\frac{d}{p},\qquad s+\frac{d}{2}+\alpha=0,\qquad \theta  <-\frac{1}{q}.
\end{equation*}
When $p=\infty$, the conditions  $\lf <-\frac{d}{p}$, $\hf<-\frac{d}{p}$ have to be replaced by  $\lf\leq 0$, $\hf\leq 0$.
When $q=\infty$, the condition $\theta  <-\frac{1}{q}$ has to be replaced by $\theta\leq 0$.

We say that \emph{Property A' is satisfied} if one of the following sets of conditions is satisfied
\begin{enumerate}[label={(\alph*)},leftmargin=*,align=left,noitemsep]
\item{$p\neq\infty$, $q\neq \infty$, $\lf +\frac{d+\nu}{p}<0$, $\hf +\frac{d+\nu}{p}< 0$, $s+\frac{d}{2}+\alpha+\frac{\mu}{p}< 0$;}
\item{$p\neq\infty$, $q= \infty$, $\lf +\frac{d+\nu}{p}<0$, $\hf +\frac{d+\nu}{p}< 0$, $s+\frac{d}{2}+\alpha+\frac{\mu}{p}\leq 0$;}
\item{$p=\infty$, $q\neq \infty$, $\lf\leq 0$, $\hf\leq 0$, $s+\frac{d}{2}+\alpha< 0$;}
\item{$p=\infty$, $q= \infty$, $\lf\leq 0$, $\hf\leq 0$, $s+\frac{d}{2}+\alpha\leq 0$.}
\end{enumerate}
We say that \emph{Property A'' is satisfied} if one of the following sets of conditions is satisfied
\begin{enumerate}[label={(\alph*)},leftmargin=*,align=left,noitemsep]
\item{$p\neq\infty$, $\lf +\frac{d}{p} +\frac{\nu }{\max\setn{r,p}}< 0$, $\hf  +\frac{d}{p}+\frac{\nu}{\max\setn{r,p}}< 0$, $s+\frac{d}{2}+\alpha+\frac{\mu}{\max\setn{r,p}}< 0$;}
\item{$p=\infty$, $\lf\leq 0$, $\hf\leq 0$, $s+\frac{d}{2}+\alpha\leq 0$.}
\end{enumerate}
\end{Def}

\begin{Def}\label{def:property-b}
Let $d,k\in\NN$, $s,\alpha\in\RR$, and $p\in (0,\infty]$.
We say that \emph{Property B is satisfied} if one of the following sets of conditions is satisfied
\begin{enumerate}[label={(\alph*)},leftmargin=*,align=left,noitemsep]
\item{$p\in (0,1)$, $k>\max\setn{s,d\lr{\frac{1}{p}-1}-s}$,}
\item{$p\in [1,\infty]$, $k>\abs{s}$.}
\end{enumerate}
We say that \emph{Property B' is satisfied} if one of the following sets of conditions is satisfied
\begin{enumerate}[label={(\alph*)},leftmargin=*,align=left,noitemsep]
\item{$p\in (0,1)$, $k>d\lr{\frac{1}{p}-1}+\frac{d}{2}+\alpha$,}
\item{$p\in [1,\infty]$, $k>\frac{d}{2}+\alpha$.}
\end{enumerate}
\end{Def}
Note that Properties B and B' are always satisfied if $k$ is large enough, depending on $d$, $p$, $s$, and $\alpha$.

\section{Sufficiency of Properties A and A''}\label{chap:sufficiency}
In this section, we show that Properties A or A'' are sufficient conditions for having $\EE[\mnorm{f}_{B^s_{p,q}(\RR^d)}^r]<\infty$ for some $r\in(0,\infty)$ if $f$ follows a Besov prior or a \priortwo{} prior, respectively.
(The number $r$ only depends on the concentration properties of the template random variable $X$.)
In case of the Besov prior, we also show the sufficiency of Property A for having $\EE[\exp(c\mnorm{f}_{B^s_{p,q}(\RR^d)}^r)]<\infty$ for some $c\in (0,\infty)$, where $r\in (0,\infty)$ is again given by the concentration properties of the template random variable $X$.
The assumptions imposed on the template random variable $X$ in \cref{def:besov-prior} when $p\neq \infty$ concern the finiteness of its moments.
On the other hand, our standing assumption on $X$ is essential boundedness when $p=\infty$, i.e., there exists a real number $R>0$ such that $\abs{X}\leq R$ almost surely.
This assumption makes the analysis relatively easy but also it cannot be neglected if we want to keep Property A as it is,  cf.\ \cref{thm:essential-boundedness,bem:essential-boundedness}.

The main observation for the proof are the following lemmas which show for $p\neq\infty$ that under Property A, the norm $\mnorm{f}_{B^s_{p,q}(\RR^d)}$ of a function $f$ following a Besov prior is bounded above by a random quantity $\Xi$ that does not depend on the parameters $(\alpha,\hf,\lf,\theta,k)$ but only on the template variable $X$ and on the parameters $p$ and $d$ governing the Besov space.
\begin{Lem}\label{thm:master-lemma}
Let $d\in\NN$, $p\in (0,\infty)$, and $\hf \in\RR$ such that $\hf <-\frac{d}{p}$.
Let further $(\xi_{j,t,m})_{(j,t,m)\in\NN_0\times\setn{F,M}^d\times\ZZ^d}$ be a family of random variables.
Then for all $j\in\NN_0$ and $t\in \setn{F,M}^d$, we have
\begin{equation}\label{eq:master-estimate}
Z_{j,t}\defeq \mnorm{\lr{\lr{1 + \frac{\mnorm{m}_\infty}{2^j}}^\hf \abs{\xi_{j,t,m}}}_{m\in\ZZ^d}}_{\ell_p}\lesssim 2^{j\frac{d}{p}} \cdot \Xi^{\frac{1}{p}},
\end{equation}
where the implied constant only depends on $d$, $\hf$, and $p$.
Here
\begin{align}
\Xi_t&\defeq \sup_{j \in \NN_0} \lr{\frac{1}{M_j} \sum_{\tau=1}^{M_j} \abs{\xi_{j,t,\phi(\tau)}}^p+\sup_{\ell\geq j} \frac{1}{N_\ell} \sum_{\tau=M_\ell + 1}^{M_{\ell+1}} \abs{\xi_{j,t,\phi(\tau)}}^p}\nonumber\\
\intertext{for $t\in\setn{F,M}^d$, and}
\Xi&\defeq \max_{t\in\setn{F,M}^d}\Xi_t\label{eq:master-quantity}
\end{align}
where $\phi:\NN\to \ZZ^d$ is a bijection such that $\tau\mapsto \mnorm{\phi(\tau)}_\infty$ is nondecreasing, and for $j\in\NN_0$, we set
\begin{align*}
N_j&\defeq\#\setcond{m\in\ZZ^d}{2^j<\mnorm{m}_\infty\leq 2^{j+1}},\\
\intertext{and}
M_j&\defeq \#\setcond{m\in\ZZ^d}{\mnorm{m}_{\infty}\leq 2^j}=(2^{j+1}+1)^d.
\end{align*}
\end{Lem}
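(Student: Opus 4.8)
The plan is to fix $j\in\NN_0$ and $t\in\setn{F,M}^d$ and estimate the $p$-th power $Z_{j,t}^p=\sum_{m\in\ZZ^d}\lr{1+\frac{\mnorm{m}_\infty}{2^j}}^{\hf p}\abs{\xi_{j,t,m}}^p$ directly, taking $p$-th roots only at the very end; since every quantity in sight is nonnegative, the estimate is meaningful (and trivially true) even when $\Xi=\infty$. First I would partition $\ZZ^d$ into the cube $C_j\defeq\setcond{m\in\ZZ^d}{\mnorm{m}_\infty\leq 2^j}$ and the dyadic annuli $A_\ell\defeq\setcond{m\in\ZZ^d}{2^\ell<\mnorm{m}_\infty\leq 2^{\ell+1}}$ for $\ell\geq j$; because $\mnorm{m}_\infty$ takes values only in $\NN_0$ and $2^j,2^{\ell+1}\in\NN$, these sets genuinely partition $\ZZ^d$. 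The role of $\Xi_t$ becomes clear here: since $\phi$ enumerates $\ZZ^d$ with $\tau\mapsto\mnorm{\phi(\tau)}_\infty$ nondecreasing, the sublevel set $\setcond{\tau}{\mnorm{\phi(\tau)}_\infty\leq 2^j}$ is an interval, hence equal to $\setn{1,\dots,M_j}$ as $M_j=\#C_j$; therefore $\setn{\phi(1),\dots,\phi(M_j)}=C_j$, and likewise $\setn{\phi(M_\ell+1),\dots,\phi(M_{\ell+1})}=A_\ell$ since $M_{\ell+1}-M_\ell=N_\ell$. Consequently $\frac{1}{M_j}\sum_{\tau=1}^{M_j}\abs{\xi_{j,t,\phi(\tau)}}^p$ and $\frac{1}{N_\ell}\sum_{\tau=M_\ell+1}^{M_{\ell+1}}\abs{\xi_{j,t,\phi(\tau)}}^p$ are precisely the averages of $\abs{\xi_{j,t,m}}^p$ over $C_j$ and over $A_\ell$, and each is $\leq\Xi_t\leq\Xi$ because in the definition of $\Xi_t$ both nonnegative summands are dominated by the outer supremum.

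On $C_j$ the weight satisfies $\lr{1+\frac{\mnorm{m}_\infty}{2^j}}^{\hf p}\leq 1$ (using $\hf\leq 0$), so that block of the sum is at most $\sum_{\tau=1}^{M_j}\abs{\xi_{j,t,\phi(\tau)}}^p=M_j\cdot\frac{1}{M_j}\sum_{\tau=1}^{M_j}\abs{\xi_{j,t,\phi(\tau)}}^p\leq M_j\,\Xi\lesssim 2^{jd}\Xi$, since $M_j=(2^{j+1}+1)^d\lesssim 2^{jd}$. On $A_\ell$ we have $1+\frac{\mnorm{m}_\infty}{2^j}>2^{\ell-j}$, hence (using $\hf p<0$, so that $x\mapsto x^{\hf p}$ is decreasing) the weight is $<2^{(\ell-j)\hf p}$, while the number of terms is $N_\ell\leq M_{\ell+1}\lesssim 2^{\ell d}$; thus the block over $A_\ell$ is at most $2^{(\ell-j)\hf p}\,N_\ell\cdot\frac{1}{N_\ell}\sum_{\tau=M_\ell+1}^{M_{\ell+1}}\abs{\xi_{j,t,\phi(\tau)}}^p\lesssim 2^{(\ell-j)\hf p}\,2^{\ell d}\,\Xi=2^{-j\hf p}\,2^{\ell(\hf p+d)}\,\Xi$. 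Summing over $\ell\geq j$ is where the hypothesis $\hf<-\frac{d}{p}$, i.e.\ $\hf p+d<0$, is used: the geometric series converges, $\sum_{\ell\geq j}2^{\ell(\hf p+d)}=\frac{2^{j(\hf p+d)}}{1-2^{\hf p+d}}$, with the factor $(1-2^{\hf p+d})^{-1}$ depending only on $d$, $\hf$, $p$. Hence the annular contribution is $\lesssim 2^{-j\hf p}\,2^{j(\hf p+d)}\,\Xi=2^{jd}\Xi$, and adding it to the cube contribution gives $Z_{j,t}^p\lesssim 2^{jd}\Xi$ with an implied constant depending only on $d$, $\hf$, $p$; extracting $p$-th roots yields \eqref{eq:master-estimate}.

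There is no real obstacle: once the dyadic splitting is set up, the computation is a routine geometric estimate. The one point requiring care is the combinatorial bookkeeping identifying the index ranges $\setn{1,\dots,M_j}$ and $\setn{M_\ell+1,\dots,M_{\ell+1}}$ under $\phi$ with $C_j$ and $A_\ell$ — which hinges on $\mnorm{\cdot}_\infty$ being integer-valued, so that no dyadic shell is only partially enumerated — together with verifying at each step that the implied constants stay independent of $j$, $t$, and the random variables $\xi_{j,t,m}$.
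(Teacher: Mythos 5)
Your argument is correct and follows essentially the same route as the paper's own proof: split $Z_{j,t}^p$ at the scale $\mnorm{m}_\infty\leq 2^j$ versus the dyadic annuli $2^\ell<\mnorm{m}_\infty\leq 2^{\ell+1}$, identify the corresponding blocks of the enumeration $\phi$ with $\{1,\dots,M_j\}$ and $\{M_\ell+1,\dots,M_{\ell+1}\}$, bound each block average by $\Xi$, and close with the geometric series, which converges precisely because $\hf p+d<0$. Your write-up is, if anything, slightly more explicit about the combinatorial step that the sublevel sets of $\tau\mapsto\mnorm{\phi(\tau)}_\infty$ are initial segments, which the paper leaves implicit.
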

\begin{proof}
We have $M_j\asymp 2^{jd}$, and also 
\begin{align*}
N_j&=M_{j+1}-M_j=(2^{j+2}+1)^d-(2^{j+1}+1)^d=d\cdot y^{d-1} ((2^{j+2}+1)-(2^{j+1}+1))\\
&\asymp 2^{(d-1)j}2^j=2^{jd}
\end{align*}
for some $y\in (2^{j+1}+1,2^{j+2}+1)$, due to the mean value theorem applied to $x\mapsto x^d$. 
Next note that
\begin{align*}
Z_{j,t}^p&= \sum_{m\in\ZZ^d} \lr{1 + \frac{\mnorm{m}_\infty}{2^j}}^{\hf p}  \abs{\xi_{j,t,m}}^p\nonumber \\
&= \sum_{\substack{m\in\ZZ^d\\\mnorm{m}_\infty\leq 2^j}}\lr{1 + \frac{\mnorm{m}_\infty}{2^j}}^{\hf p}  \abs{\xi_{j,t,m}}^p+\sum_{\substack{m\in\ZZ^d\\\mnorm{m}_\infty> 2^j}}\lr{1 + \frac{\mnorm{m}_\infty}{2^j}}^{\hf p}  \abs{\xi_{j,t,m}}^p \\
&\asymp \sum_{\substack{m\in\ZZ^d\\\mnorm{m}_\infty\leq 2^j}} \abs{\xi_{j,t,m}}^p+\sum_{\substack{m\in\ZZ^d\\\mnorm{m}_\infty> 2^j}}\lr{ \frac{\mnorm{m}_\infty}{2^j}}^{\hf p}  \abs{\xi_{j,t,m}}^p \\
&= \sum_{\tau=1}^{M_j} \abs{\xi_{j,t,\phi(\tau)}}^p+\sum_{\ell\geq j} \sum_{\tau=M_\ell+1}^{M_{\ell+1}}\lr{ \frac{\mnorm{\phi(\tau)}_\infty}{2^j}}^{\hf p}  \abs{\xi_{j,t,\phi(\tau)}}^p \\
&\asymp \sum_{\tau=1}^{M_j} \abs{\xi_{j,t,\phi(\tau)}}^p+2^{-j\hf p}\sum_{\ell\geq j} 2^{\ell d+\ell\hf p} \frac{1}{N_\ell}\sum_{\tau=M_\ell+1}^{M_{\ell+1}} \abs{\xi_{j,t,\phi(\tau)}}^p \\
&= M_j S_{j,t}+2^{-j\hf p}\sum_{\ell\geq j} 2^{\ell d+\ell\hf p} S_{j,t,\ell}\\
&\leq \lr{M_j + 2^{- j \hf p} \sum_{\ell\geq j} 2^{\ell (d + \hf p)} }\cdot\Xi_t\\
&\lesssim 2^{jd} \Xi_t\\
&\leq 2^{jd}\Xi
\end{align*}
since $d+\hf p<0$, where
\begin{equation*}
S_{j,t}\defeq \frac{1}{M_j}\sum_{\tau=1}^{M_j} \abs{\xi_{j,t,\phi(\tau)}}^p\leq \Xi_t \qquad \text{and} \qquad S_{j,t,\ell} \defeq \frac{1}{N_\ell} \sum_{\tau=M_\ell + 1}^{M_{\ell+1}} \abs{\xi_{j,t,\phi(\tau)}}^p\leq \Xi_t
\end{equation*}
for $\ell\geq j$.
 \end{proof}

\begin{Lem}\label{thm:norm-lemma}
Let $d\in\NN$, $s,\alpha,\hf,\lf,\theta \in\RR$, $p\in (0,\infty)$, and $q\in (0,\infty]$.
Let further $(\xi_{j,t,m})_{(j,t,m)\in\NN_0\times\setn{F,M}^d\times\ZZ^d}$ by a family of of random variables.
Set $a=(a_{j,t,m})_{(j,t,m)\in J}$ where
\begin{equation*}
a_{j,t,m}\defeq 2^{j\alpha} \lr{1+\frac{\mnorm{m}_\infty}{2^j}}^{\hf + (\lf-\hf)\one_{j=0}}\lambda_{j,t,m}\xi_{j,t,m}
\end{equation*}
for $(j,t,m)\in J$.
If Property A is satisfied, then for $\Xi$ as in \eqref{eq:master-quantity}, we have $\mnorm{a}_{b^s_{p,q}(\RR^d)}\lesssim\Xi^{\frac{1}{p}}$, where the implied constant only depends on $d$, $p$, $q$, and $\hf$.
\end{Lem}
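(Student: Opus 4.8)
The plan is to unwind the definition of the Besov sequence (quasi-)norm, reduce everything scale by scale to the pointwise estimate of \cref{thm:master-lemma}, and then observe that Property A is precisely what makes the resulting sum over scales converge. Writing $Y_{j,t}\defeq\mnorm{(a_{j,t,m})_{m\in\ZZ^d}}_{\ell_p}$, the definition of $b^s_{p,q}(\RR^d)$ with trivial weight $w\equiv 1$ gives
\[
\mnorm{a}_{b^s_{p,q}(\RR^d)}=\mnorm{\lr{2^{j(s+\frac{d}{2}-\frac{d}{p})}\,Y_{j,t}}_{j\in\NN_0,\,t\in T_j}}_{\ell_q},
\]
and since $\#T_j\le 2^d$ for every $j$, it suffices to control, for each fixed $t$, the $\ell_q$-norm over $j$ of $\bigl(2^{j(s+d/2-d/p)}Y_{j,t}\bigr)_j$ and then take the maximum over $t\in\setn{F,M}^d$. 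If a Bernoulli factor $\lambda_{j,t,m}$ is present in the definition of $a_{j,t,m}$, I simply drop it, using $\abs{\lambda_{j,t,m}}\le 1$.

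Next I would treat the coarse level $j=0$, which carries the exponent $\lf$, separately from the fine levels $j\ge 1$, which carry $\hf$. For $j=0$ one has $\abs{a_{0,m}}\le(1+\mnorm{m}_\infty)^{\lf}\abs{\xi_{0,m}}$, and since $\lf<-\frac{d}{p}$ (we are in the case $p<\infty$), \cref{thm:master-lemma} applied with $\lf$ in place of $\hf$ gives $Y_{0,(F,\dots,F)}\lesssim 2^{0\cdot d/p}\,\Xi^{1/p}=\Xi^{1/p}$, so the $j=0$ block contributes $\lesssim\Xi^{1/p}$. For $j\ge 1$ one has $\abs{a_{j,t,m}}\le 2^{j\alpha}(j+1)^{\theta}\,(1+\mnorm{m}_\infty/2^j)^{\hf}\abs{\xi_{j,t,m}}$, hence $Y_{j,t}\le 2^{j\alpha}(j+1)^{\theta}Z_{j,t}$ with $Z_{j,t}$ as in \eqref{eq:master-estimate}; since $\hf<-\frac{d}{p}$, \cref{thm:master-lemma} yields $Z_{j,t}\lesssim 2^{jd/p}\Xi^{1/p}$, so that
\[
2^{j(s+\frac{d}{2}-\frac{d}{p})}Y_{j,t}\lesssim 2^{j(s+\frac{d}{2}+\alpha)}(j+1)^{\theta}\,\Xi^{1/p}.
\]

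The last step is to take the $\ell_q$-norm over $j\ge 1$ of the right-hand side, and this is exactly where the rest of Property A is used. If $s+\frac{d}{2}+\alpha<0$, the geometric factor $2^{j(s+d/2+\alpha)}$ overwhelms the polynomial $(j+1)^{\theta}$, so $\mnorm{\bigl(2^{j(s+d/2+\alpha)}(j+1)^{\theta}\bigr)_{j\ge 1}}_{\ell_q}<\infty$ for every $q\in(0,\infty]$; if $s+\frac{d}{2}+\alpha=0$, the summands reduce to $(j+1)^{\theta}$, whose $\ell_q$-norm over $j\ge 1$ is finite precisely because $\theta<-\frac{1}{q}$ (respectively $\theta\le 0$ when $q=\infty$). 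Combining this with the $j=0$ estimate, the per-scale bound for $j\ge 1$, the (quasi-)triangle inequality in $\ell_q$ (harmless since only finitely many blocks are glued when $q<1$), and $\#T_j\le 2^d$, one obtains $\mnorm{a}_{b^s_{p,q}(\RR^d)}\lesssim\Xi^{1/p}$ with a finite deterministic constant. Given \cref{thm:master-lemma}, the argument is essentially routine; I expect the only mildly delicate points to be the separate handling of $j=0$ — where one genuinely needs $\lf<-\frac{d}{p}$ on its own, since the factor $2^{jd/p}$ from \cref{thm:master-lemma} provides no gain at scale zero — and the bookkeeping showing that the constant, which a priori depends on $s,\alpha,\lf,\theta$, can be absorbed into a dependence on $d,p,q,\hf$ because those parameters are constrained by Property A.
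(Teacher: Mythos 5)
Your proof is correct and follows essentially the same route as the paper's: split off the $j=0$ block (which carries $\lf$), apply \cref{thm:master-lemma} once with $\lf$ in place of $\hf$ and once for each $j\ge 1$ to bound $Z_{j,t}$, and then use the inequalities of Property A to sum the resulting geometric-times-polynomial factor $2^{j(s+d/2+\alpha)}(j+1)^\theta$ in $\ell_q$. You also correctly reconciled the stated formula for $a_{j,t,m}$ with the rest of the section (inserting the missing $(j+1)^\theta$ and discarding the spurious $\lambda_{j,t,m}$ via $\abs{\lambda_{j,t,m}}\le 1$); the one caveat is that the implied constant does genuinely depend on $s,\alpha,\theta,\lf$ as well — Property~A fixes only inequalities, not the distance from the boundary — but that is an imprecision in the lemma's statement itself, not in your argument.
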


\begin{proof}
Recall that $\xi_m\defeq \xi_{0,(F,\ldots,F),m}$ for $m\in\ZZ^d$, and let
\begin{align*}
\eta_1&\defeq\mnorm{\lr{(1+\mnorm{m}_\infty)^\lf \abs{\xi_m}}_{m\in\ZZ^d}}_{\ell_p},\\
\eta_2&\defeq\mnorm{\lr{2^{j(s+\frac{d}{2}-\frac{d}{p}+\alpha)}(j+1)^\theta Z_{j,t}}_{j\in\NN,t\in T_j}}_{\ell_q}
\end{align*}
with $Z_{j,t}$ as in \eqref{eq:master-estimate}.
Utilize \eqref{eq:master-estimate} from \cref{thm:master-lemma} with $\lf$ replacing $\hf$, and $j=0$, to conclude that $\eta_1\lesssim\Xi^{\frac{1}{p}}$.
Furthermore, we have
\begin{align*}
\eta_2&=\mnorm{\lr{2^{j(s+\frac{d}{2}-\frac{d}{p}+\alpha)}(j+1)^\theta Z_{j,t}}_{j\in\NN,t\in T_j}}_{\ell_q}\\
&\lesssim\mnorm{\lr{2^{j(s+\frac{d}{2}+\alpha)}(j+1)^\theta \Xi^{\frac{1}{p}}}_{j\in\NN,t\in T_j}}_{\ell_q}\\
&\lesssim\mnorm{\lr{2^{j(s+\frac{d}{2}+\alpha)}(j+1)^\theta }_{j\in\NN}}_{\ell_q}\Xi^{\frac{1}{p}}\\
&\lesssim\Xi^{\frac{1}{p}},
\end{align*}
thanks to Property A.
If $q\neq \infty$, then $\mnorm{a}_{b^s_{p,q}(\RR^d)}=(\eta_1^q+\eta_2^q)^\frac{1}{q}\lesssim\Xi^{\frac{1}{p}}$.
On the other hand, if $q=\infty$, then $\mnorm{a}_{b^s_{p,q}(\RR^d)}=\max \setn{\eta_1,\eta_2}\lesssim\Xi^{\frac{1}{p}}$.
\end{proof}

Now we are ready to prove the sufficiency of Property A for our three target conditions when $f$ follows a Besov prior.
We start with the almost sure membership of $f$ in $B^s_{p,q}(\RR^d)$.
\begin{Satz}\label{thm:target-statement-sufficiency}
Let $d,k\in\NN$, $s,\alpha,\hf,\lf,\theta\in\RR$, and $p,q\in (0,\infty]$.
Assume that $a$ follows a Besov sequence prior and $f$ follows a Besov prior, both with parameters $(\alpha,\hf,\lf,\theta,k)$ and template random variable $X$, satisfying
\begin{equation}\label{eq:finite-moments}
\begin{cases} \ex \eps>0:\,\EE\bigl[\abs{X}^{p(1+\eps)}\bigr]< \infty&\text{if }p\in (0,\infty),\\
\ex  R>0:\, \abs{X}\stackrel{a.s.}{\leq}R&\text{if }p=\infty.\end{cases}
\end{equation}
If Property A holds, then $a$ almost surely belongs to $b^s_{p,q}(\RR^d)$.
Moreover, if Properties A and B are satisfied, then $f$ belongs almost surely to $B^s_{p,q}(\RR^d)$.
\end{Satz}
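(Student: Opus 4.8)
The strategy is to first establish the sequence-space statement $a\in b^s_{p,q}(\RR^d)$ almost surely, and then transfer it to $f$ via the wavelet characterization \cref{thm:wavelet-characterization}, whose hypothesis on $k$ is precisely Property B. The cases $p\in(0,\infty)$ and $p=\infty$ have to be treated separately, since \cref{thm:master-lemma,thm:norm-lemma} are only available for $p<\infty$.

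For $p\in(0,\infty)$, \cref{thm:norm-lemma} reduces the claim (under Property A) to showing $\Xi<\infty$ almost surely, with $\Xi$ as in \eqref{eq:master-quantity}. Put $Y_{j,t,m}\defeq\abss{\xi_{j,t,m}}^p$; these are i.i.d.\ copies of $Y\defeq\abss{X}^p$ with $\EE[Y^{1+\eps}]<\infty$ by \eqref{eq:finite-moments}, and we may assume $\eps\in(0,1]$; if $\EE Y=0$ then $X=0$ almost surely and there is nothing to prove, so assume $c\defeq\EE Y\in(0,\infty)$. Since $\Xi$ is a supremum over $t$ and $j$ of the block averages $\tfrac1{M_j}\sum_{\tau=1}^{M_j}Y_{j,t,\phi(\tau)}$ and $\tfrac1{N_\ell}\sum_{\tau=M_\ell+1}^{M_{\ell+1}}Y_{j,t,\phi(\tau)}$ (the latter over $\ell\geq j$), it suffices to bound these averages uniformly, for which I would use a Borel--Cantelli argument. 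For a finite set $F\subseteq\ZZ^d$ of cardinality $n$, a standard moment bound for sums of independent centered variables (von Bahr--Esseen, valid since $1+\eps\in(1,2]$) gives $\EE\bigl[\abs{\sum_{m\in F}(Y_{j,t,m}-c)}^{1+\eps}\bigr]\lesssim n$, so that applying Markov's inequality to $\sum_{m\in F}(Y_{j,t,m}-c)$ raised to the power $1+\eps$ yields
\begin{equation*}
\PP\Bigl(\tfrac1n\sum_{m\in F}Y_{j,t,m}\geq 2c\Bigr)\lesssim n^{-\eps}.
\end{equation*}
With $F=\setconds{m}{\mnorm{m}_\infty\leq 2^j}$ one has $n=M_j\asymp 2^{jd}$, giving a bound $\lesssim 2^{-jd\eps}$ that is summable over $(j,t)\in\NN_0\times\setn{F,M}^d$; with $F=\setconds{m}{2^\ell<\mnorm{m}_\infty\leq 2^{\ell+1}}$ one has $n=N_\ell\asymp 2^{\ell d}$, giving a bound $\lesssim 2^{-\ell d\eps}$ that is summable over the triples $(j,t,\ell)$ with $0\leq j\leq\ell$, since there are only $\asymp\ell$ of them for each $\ell$. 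By Borel--Cantelli, almost surely only finitely many of these block averages exceed $2c$; the remaining finitely many are finite because the $\xi_{j,t,m}$ are real-valued. Hence all these block averages are bounded almost surely, so $\Xi<\infty$ almost surely, and \cref{thm:norm-lemma} gives $a\in b^s_{p,q}(\RR^d)$ almost surely.

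For $p=\infty$, \eqref{eq:finite-moments} gives $\abss{X}\leq R$ almost surely, and Property A forces $\hf\leq 0$, $\lf\leq 0$, so $\lr{1+\mnorm{m}_\infty/2^j}^{\hf+(\lf-\hf)\one_{j=0}}\leq 1$ for all $(j,t,m)\in J$, whence $\sup_{m\in\ZZ^d}\abss{a_{j,t,m}}\leq R\,2^{j\alpha}(j+1)^\theta$. Inserting this into the definition of $\mnorm{\bullet}_{b^s_{\infty,q}(\RR^d)}$ (where the inner $\ell_p$-norm becomes a supremum over $m$ and $d/p=0$) and absorbing the bounded number of $t\in T_j$, we get $\mnorm{a}_{b^s_{\infty,q}(\RR^d)}\lesssim R\,\mnorm{(2^{j(s+d/2+\alpha)}(j+1)^\theta)_{j\in\NN_0}}_{\ell_q}$, which is finite by Property A — the same elementary series estimate used at the end of the proof of \cref{thm:norm-lemma} (the sum converges when $s+d/2+\alpha<0$, and when $s+d/2+\alpha=0$ because $\theta<-1/q$, or $\theta\leq 0$ if $q=\infty$). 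Thus $a\in b^s_{\infty,q}(\RR^d)$, in fact deterministically.

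For the "moreover" part, a function $f$ following a Besov prior equals $\sum_{(j,t,m)\in J}a_{j,t,m}\Psi_{j,t,m}$ with convergence in $\CalS^\prime(\RR^d)$, where the coefficient sequence $a$ follows a Besov sequence prior with the same parameters; by the first part, $a\in b^s_{p,q}(\RR^d)$ almost surely. Since Property B is exactly the condition on $k$ in \cref{thm:wavelet-characterization}, that theorem applies: the synthesis map $S$ is an isomorphism $b^s_{p,q}(\RR^d)\to B^s_{p,q}(\RR^d)$, the series $\sum_{(j,t,m)\in J}a_{j,t,m}\Psi_{j,t,m}$ converges in $\CalS^\prime(\RR^d)$ to $S(a)\in B^s_{p,q}(\RR^d)$, and by uniqueness of limits in $\CalS^\prime(\RR^d)$ this limit equals $f$; hence $f\in B^s_{p,q}(\RR^d)$ almost surely. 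I expect the only genuinely nontrivial point to be the almost-sure finiteness of $\Xi$ for $p<\infty$ — the concentration/Borel--Cantelli estimate above, which is where the moment hypothesis \eqref{eq:finite-moments} is actually used; everything else is bookkeeping built on \cref{thm:norm-lemma,thm:wavelet-characterization}.
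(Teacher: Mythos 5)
Your proof is correct, and its decomposition (norm-lemma reduction, a.s.\ finiteness of $\Xi$, $p=\infty$ by hand, then \cref{thm:wavelet-characterization}) matches the paper's structure; but the crucial step — a.s.\ finiteness of $\Xi$ for $p\in(0,\infty)$ — is done by a genuinely different argument. The paper invokes \cref{thm:1+eps} with $\sigma=1$, an Etemadi-style truncation-plus-Chebyshev lemma that establishes the \emph{stronger} conclusion $\EE[\Xi_t]<\infty$ (hence $\Xi_t<\infty$ a.s.), and it reuses the very same lemma with $\sigma=\max\setn{1,r/p}$ in the proof of \cref{thm:powers}. You instead bound each block average directly: a von Bahr--Esseen $L^{1+\eps}$-moment estimate (valid since $1+\eps\in(1,2]$ after replacing $\eps$ by $\min\setn{\eps,1}$) followed by Markov gives a tail bound $\lesssim n^{-\eps}$ in the block size $n$, which is geometrically summable over $(j,t)$ respectively $(j,t,\ell)$ with $j\leq\ell$, and Borel--Cantelli finishes. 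This is shorter and more self-contained and avoids the bookkeeping of truncated means, but it only yields the a.s.\ finiteness that \cref{thm:target-statement-sufficiency} needs and would have to be supplemented by a quantitative argument (such as the paper's \cref{thm:1+eps}) to obtain the moment bound of \cref{thm:powers}. One small care point you handled correctly: Borel--Cantelli only says that all but finitely many block averages stay below $2\EE[\abs{X}^p]$; you must (and do) note that the finitely many exceptions are themselves a.s.\ finite sums, so that the supremum in the definition of $\Xi_t$ is finite. The $p=\infty$ case and the transfer to $f$ via \cref{thm:wavelet-characterization} under Property B are the same as in the paper.
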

\begin{proof}
\emph{Step 1: $p\neq\infty$.}
From \cref{thm:norm-lemma}, we obtain $\mnorm{a}_{b^s_{p,q}(\RR^d)}\lesssim \Xi^{\frac{1}{p}}$ with $\Xi$ as in \eqref{eq:master-quantity}.
Next we show that $\Xi\stackrel{a.s.}{<}\infty$.
To this end, for fixed $t\in\setn{F,M}^d$, apply \cref{thm:1+eps} to $X_{j,\tau}\defeq \abs{\xi_{j,t,\phi(\tau)}}^p$ and $\sigma\defeq 1$ , where again $\phi:\NN\to\ZZ^d$ is a bijection such that $\tau\mapsto \mnorm{\phi(\tau)}_\infty$ is nondecreasing.
We obtain $\EE[\Xi_t]<\infty$, which implies that $\Xi_t\stackrel{a.s.}{<}\infty$, whence also $\Xi=\max_{t\in\setn{F,M}^d}\Xi_t\stackrel{a.s.}{<}\infty$.\\[\baselineskip]
\emph{Step 2: $p=\infty$.} Note that $\abs{\xi_{j,t,m}}\stackrel{a.s.}{\leq} R$ and thus
\begin{equation*}
Z_{j,t}= \mnorm{\lr{\lr{1 + \frac{\mnorm{m}_\infty}{2^j}}^\hf \abs{\xi_{j,t,m}}}_{m\in\ZZ^d}}_{\ell_p}\stackrel{a.s.}{\leq}R
\end{equation*}
by $\hf\leq 0$ (which follows from Property A), and moreover
\begin{equation*}
\eta_1=\mnorm{((1+\mnorm{m}_\infty)^\lf \abs{\xi_m})_{m\in\ZZ^d}}_{\ell_p}\stackrel{a.s.}{\leq}R
\end{equation*}
by $\lf\leq 0$  (which follows from Property A).
Finally,
\begin{align*}
\eta_2&=\mnorm{(2^{j(s+\frac{d}{2}+\alpha)}(j+1)^\theta Z_{j,t})_{j\in\NN, t\in T_j}}_{\ell_q}\\
&\stackrel{a.s.}{\leq}R\mnorm{(2^{j(s+\frac{d}{2}+\alpha)}(j+1)^\theta)_{j\in\NN, t\in T_j}}_{\ell_q}<\infty,
\end{align*}
thanks to the conditions on $s$, $d$, $\alpha$, $\theta$, and $q$ from Property A.
This easily implies $\mnorm{a}_{b^s_{p,q}(\RR^d)}\stackrel{a.s.}{\lesssim} R<\infty$.

\emph{Step 3.} If Properties A and B are satisfied, then \cref{thm:wavelet-characterization} gives $\mnorm{f}_{B^s_{p,q}(\RR^d)}= \mnorm{S(a)}_{B^s_{p,q}(\RR^d)}\asymp \mnorm{a}_{b^s_{p,q}(\RR^d)}$ and hence $\mnorm{f}_{B^s_{p,q}(\RR^d)}<\infty$ almost surely.
\end{proof}
Next we show the sufficiency of Property A for the finiteness of the moments of $\mnorm{f}_{B^s_{p,q}(\RR^d)}$ s when $f$ follows a Besov prior.
\begin{Satz}\label{thm:powers}
Let $d,k\in\NN$, $s,\alpha,\hf,\lf,\theta\in\RR$, $p,q\in (0,\infty]$, and $r\in (0,\infty)$.
Assume that $a$ follows a Besov sequence prior and $f$ follows a Besov prior, both with parameters $(\alpha,\hf,\lf,\theta,k)$ and template random variable $X$, satisfying
\begin{equation}\label{eq:rv-powers}
\begin{cases}\ex \eps>0:\,\EE\left[\abs{X}^{(1+\eps)\max\setn{r,p}}\right]<\infty&\text{if }p\in (0,\infty),\\
\ex R>0:\,\abs{X}\stackrel{a.s.}{\leq} R&\text{if }p=\infty.\end{cases}
\end{equation}
If Property A holds, then we have $\EE[\mnorm{a}_{b^s_{p,q}(\RR^d)}^r]<\infty$.
Moreover, if Properties A and B are satisfied, then we have $\EE[\mnorm{f}_{B^s_{p,q}(\RR^d)}^r]<\infty$.
\end{Satz}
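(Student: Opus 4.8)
The plan is to reduce the statement to the deterministic bound of \cref{thm:norm-lemma} together with a single moment estimate for the random quantity $\Xi$ from \eqref{eq:master-quantity}, the latter being supplied by the auxiliary result \cref{thm:1+eps} --- the same lemma used in the proof of \cref{thm:target-statement-sufficiency}, but now invoked with a different exponent. For $p\in(0,\infty)$ I would start from $\mnorm{a}_{b^s_{p,q}(\RR^d)}\lesssim\Xi^{1/p}$, which holds under Property A by \cref{thm:norm-lemma} with an implied constant depending only on $d,p,q,\hf$, raise both sides to the power $r$, and thereby reduce to showing $\EE[\Xi^{r/p}]<\infty$. Since $\Xi=\max_{t\in\setn{F,M}^d}\Xi_t$ one has $\Xi^{r/p}\le\sum_{t\in\setn{F,M}^d}\Xi_t^{r/p}$, so by finiteness of $\setn{F,M}^d$ it is enough to bound $\EE[\Xi_t^{r/p}]$ for each fixed $t$.

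To bound $\EE[\Xi_t^{r/p}]$ I would fix $t\in\setn{F,M}^d$ and apply \cref{thm:1+eps} to the family $X_{j,\tau}\defeq\abs{\xi_{j,t,\phi(\tau)}}^p$ --- with $\phi$ the enumeration of $\ZZ^d$ from \cref{thm:master-lemma} --- and to the exponent $\sigma\defeq r/p$. The moment hypothesis demanded by that lemma, namely $\EE[X_{j,\tau}^{(1+\eps)\max\setn{\sigma,1}}]<\infty$, reads here $\EE[\abs{X}^{(1+\eps)\max\setn{r,p}}]<\infty$, which is exactly assumption \eqref{eq:rv-powers}. Since $\Xi_t$ is precisely the supremum over $j$ of the Ces\`aro-type averages of the $X_{j,\tau}$ that occur in the conclusion of \cref{thm:1+eps}, the lemma yields $\EE[\Xi_t^{r/p}]<\infty$; summing over the finitely many $t$ gives $\EE[\Xi^{r/p}]<\infty$, hence $\EE[\mnorm{a}_{b^s_{p,q}(\RR^d)}^r]<\infty$.

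For $p=\infty$ no probabilistic input is needed: Property A forces $\hf\le0$ and $\lf\le0$, and Step~2 of the proof of \cref{thm:target-statement-sufficiency} already produces the deterministic bound $\mnorm{a}_{b^s_{p,q}(\RR^d)}\stackrel{a.s.}{\lesssim}R$, whence $\EE[\mnorm{a}_{b^s_{p,q}(\RR^d)}^r]\lesssim R^r<\infty$. Finally, under Properties A and B the condition on $k$ in \cref{thm:wavelet-characterization} is satisfied, so $\mnorm{f}_{B^s_{p,q}(\RR^d)}=\mnorm{S(a)}_{B^s_{p,q}(\RR^d)}\asymp\mnorm{a}_{b^s_{p,q}(\RR^d)}$ with constants independent of the randomness, and taking $r$-th moments transfers $\EE[\mnorm{a}_{b^s_{p,q}(\RR^d)}^r]<\infty$ to $\EE[\mnorm{f}_{B^s_{p,q}(\RR^d)}^r]<\infty$.

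I expect the only step needing real care --- everything else being bookkeeping, since the analytic work is already contained in \cref{thm:norm-lemma} and \cref{thm:1+eps} --- to be the application of \cref{thm:1+eps} with exponent $\sigma=r/p$ in place of $\sigma=1$, and the check that the moment assumption \eqref{eq:rv-powers} (i.e.\ $(1+\eps)\max\setn{r,p}$ finite moments of $\abs{X}$) matches precisely the $(1+\eps)\max\setn{\sigma,1}$ finite moments of $X_{j,\tau}=\abs{\xi_{j,t,\phi(\tau)}}^p$ that the lemma requires. In particular, when $r\le p$ we have $\sigma\le1$, so only $L^{1+\eps}$-control of the $X_{j,\tau}$ (together with the a.s.\ finiteness and structure of $\Xi_t$) is used, whereas for $r>p$ genuinely more moments are needed; the two occurrences of $\max$ make this uniform in $r$.
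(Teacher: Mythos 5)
Your plan matches the paper's proof almost exactly: Step~1 reduces $\EE[\mnorm{a}_{b^s_{p,q}(\RR^d)}^r]$ to $\EE[\Xi^{r/p}]$ via \cref{thm:norm-lemma}, bounds each $\EE[\Xi_t^{r/p}]$ using \cref{thm:1+eps}, and sums over $t$; Step~2 ($p=\infty$) is the deterministic bound; Step~3 transfers to $f$ by \cref{thm:wavelet-characterization}. The one place where you deviate, and where the argument as written does not quite go through, is the invocation of \cref{thm:1+eps} with $\sigma\defeq r/p$: that lemma is stated only for $\sigma\in[1,\infty)$ and with moment hypothesis $\EE[X_{j,\tau}^{\sigma(1+\eps)}]<\infty$ — not $\EE[X_{j,\tau}^{(1+\eps)\max\setn{\sigma,1}}]<\infty$ as you write. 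So when $r<p$, i.e.\ $\sigma<1$, the lemma is not directly applicable. The paper handles this by applying the lemma with $\sigma=\max\setn{1,r/p}$ (so the hypothesis becomes exactly $\EE[\abs{X}^{(1+\eps)\max\setn{r,p}}]<\infty$, which is \eqref{eq:rv-powers}), obtaining $\EE[\Xi_t^{\max\setn{1,r/p}}]<\infty$, and then descending to $\EE[\Xi_t^{r/p}]<\infty$ via Jensen's inequality. Your closing remark about $r\le p$ using only $L^{1+\eps}$-control shows you sense this, but the fix — take $\sigma=\max\setn{1,r/p}$ in the lemma and then Jensen down — should be made explicit rather than smuggled into a restated (and strictly speaking incorrect) hypothesis for \cref{thm:1+eps}.
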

\begin{proof}
\emph{Step 1: $p\neq \infty$.}
From \cref{thm:norm-lemma} and the monotonicity of the expectation, we conclude $\EE[\mnorm{a}_{b^s_{p,q}(\RR^d)}^r]\lesssim\EE[\Xi^{\frac{r}{p}}]$ with $\Xi$ as in \eqref{eq:master-quantity}.
Next we show that $\EE[\Xi^{\frac{r}{p}}]<\infty$.

To this end, fix $t\in \setn{F,M}^d$ and apply \cref{thm:1+eps} to $X_{j,\tau}\defeq\abs{\xi_{j,t,\phi(\tau)}}^p$ and $\sigma\defeq \max\setn{1,\frac{r}{p}}$, where again $\phi:\NN\to\ZZ^d$ is a bijection such that $\tau\mapsto \mnorm{\phi(\tau)}_\infty$ is nondecreasing.
This lemma is applicable because
\begin{equation*}
\EE\left[X_{j,\tau}^{\sigma(1+\eps)}\right]=\EE\left[\abs{\xi_{j,t,\phi(\tau)}}^{\max\setn{p(1+\eps),r(1+\eps)}}\right]=\EE\left[\abs{X}^{(1+\eps)\max\setn{p,r}}\right]<\infty.
\end{equation*}
Hence, \cref{thm:1+eps} shows that $\EE[\Xi_t^\sigma]<\infty$ and thus also $\EE[\Xi_t^{\frac{r}{p}}]<\infty$ by Jensen's inequality.
It follows that also
\begin{equation*}
\EE\left[\Xi^{\frac{r}{p}}\right]=\EE\left[\max_{t\in\setn{F,M}^d}\Xi_t^{\frac{r}{p}}\right]\leq\EE\left[\sum_{t\in\setn{F,M}^d}\Xi_t^{\frac{r}{p}}\right]=\sum_{t\in\setn{F,M}^d}\EE\left[\Xi_t^{\frac{r}{p}}\right]<\infty.
\end{equation*}
Step 2: $p=\infty$.
As in Step 2 of the proof of \cref{thm:target-statement-sufficiency}, we have $\mnorm{a}_{b^s_{p,q}(\RR^d)}^r\stackrel{a.s.}{\lesssim} R^r<\infty$.\\[\baselineskip]
\emph{Step 3.} If Properties A and B are satisfied, then \cref{thm:wavelet-characterization} gives $\mnorm{f}_{B^s_{p,q}(\RR^d)}= \mnorm{S(a)}_{B^s_{p,q}(\RR^d)}\asymp \mnorm{a}_{b^s_{p,q}(\RR^d)}$ and hence 
\begin{equation*}
\EE[\mnorm{f}_{B^s_{p,q}(\RR^d)}^r]= \EE[\mnorm{S(a)}_{B^s_{p,q}(\RR^d)}^r]\asymp \EE[\mnorm{a}_{b^s_{p,q}(\RR^d)}^r]<\infty.\qedhere
\end{equation*}
\end{proof}

Next we show the sufficiency of Property A for the finiteness of the moment generating function $\EE[\exp(c\mnorm{f}_{B^s_{p,q}(\RR^d)}^r)]$ for small $c$ when $f$ follows a Besov prior.
If $c\in (-\infty,0]$, then $\exp(c\mnorm{f}_{B^s_{p,q}(\RR^d)}^r)\in (0,1]$ is bounded, which implies that $\EE[\exp(c\mnorm{f}_{B^s_{p,q}(\RR^d)}^r)]<\infty$.
The following theorem is about the non-trivial case where $c$ is positive.
\begin{Satz}\label{thm:mgf}
Let $d,k\in\NN$, $s,\alpha,\hf,\lf,\theta\in\RR$, $p,q\in (0,\infty]$, and $r\in (0,\infty)$.
Assume that $f$ follows a Besov prior with parameters $(\alpha,\hf,\lf,\theta,k)$ and template random variable $X$ with
\begin{equation*}
\begin{cases}\ex C>0:\,\EE[\exp(C\abs{X}^{\max\setn{r,p}})]<\infty&\text{if }p\in (0,\infty),\\
\ex R>0:\,\abs{X}\stackrel{a.s.}{\leq}R&\text{if }p=\infty.\end{cases}
\end{equation*}
If Properties A and B are satisfied, then there exists $c>0$ with $\EE[\exp(c\mnorm{f}_{B^s_{p,q}}^r)]<\infty$.
\end{Satz}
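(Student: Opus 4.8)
The plan is to repeat the reduction used in the proofs of \cref{thm:target-statement-sufficiency,thm:powers} and replace the polynomial-moment step by an exponential one. If $p=\infty$ there is nothing new: as in Step~2 of the proof of \cref{thm:target-statement-sufficiency}, Property~A forces $\hf\leq 0$ and $\lf\leq 0$, giving $\mnorm{a}_{b^s_{p,q}(\RR^d)}\stackrel{a.s.}{\lesssim}R$ with a \emph{deterministic} implied constant; together with \cref{thm:wavelet-characterization} and Property~B this makes $\mnorm{f}_{B^s_{p,q}(\RR^d)}$ almost surely bounded, so $\exp(c\mnorm{f}_{B^s_{p,q}(\RR^d)}^r)$ is bounded for every $c>0$. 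So assume $p\in(0,\infty)$. By \cref{thm:norm-lemma}, $\mnorm{a}_{b^s_{p,q}(\RR^d)}\lesssim\Xi^{1/p}$ for $\Xi$ as in \eqref{eq:master-quantity}, with implied constant depending only on $d,p,q,\hf$; and by \cref{thm:wavelet-characterization} (using Properties~A and~B) $\mnorm{f}_{B^s_{p,q}(\RR^d)}\asymp\mnorm{a}_{b^s_{p,q}(\RR^d)}$. Hence $\mnorm{f}_{B^s_{p,q}(\RR^d)}^r\lesssim\Xi^{r/p}$ with a deterministic constant, and it suffices to find $c'>0$ with $\EE[\exp(c'\Xi^{r/p})]<\infty$.

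Since $\Xi=\max_{t\in\setn{F,M}^d}\Xi_t$ is a maximum over finitely many terms, $\exp(c'\Xi^{r/p})\leq\sum_{t\in\setn{F,M}^d}\exp(c'\Xi_t^{r/p})$, so fix $t\in\setn{F,M}^d$ and bound $\EE[\exp(c'\Xi_t^{r/p})]$. Put $\sigma\defeq\max\setn{1,r/p}\geq 1$ and $X_{j,\tau}\defeq\abs{\xi_{j,t,\phi(\tau)}}^p$, where $\phi:\NN\to\ZZ^d$ is the bijection from \cref{thm:master-lemma}; then $X_{j,\tau}^{\sigma}=\abs{\xi_{j,t,\phi(\tau)}}^{\max\setn{p,r}}$, so the hypothesis on $X$ says exactly that $\EE[\exp(C X_{j,\tau}^{\sigma})]<\infty$ for some $C>0$, with one $C$ valid for all $(j,\tau)$ because the $\xi_{j,t,m}$ are i.i.d.\ copies of $X$. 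As $\sigma\geq r/p$, we have $\Xi_t^{r/p}\leq 1+\Xi_t^{\sigma}$, so it is enough to find $c_0>0$ with $\EE[\exp(c_0\Xi_t^{\sigma})]<\infty$; this is the exponential-moment analogue of the moment lemma \cref{thm:1+eps} used in the earlier proofs.

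To establish it, write $S_{j}\defeq\frac{1}{M_j}\sum_{\tau=1}^{M_j}X_{j,\tau}$ and $S_{j,\ell}\defeq\frac{1}{N_\ell}\sum_{\tau=M_\ell+1}^{M_{\ell+1}}X_{j,\tau}$, so that, by the definition of $\Xi_t$ and $\sup_j(a_j+b_j)\leq\sup_j a_j+\sup_j b_j$, one gets $\Xi_t\leq A+B$ with $A\defeq\sup_{j\in\NN_0}S_j$ and $B\defeq\sup_{\ell\in\NN_0}\max_{0\leq j\leq\ell}S_{j,\ell}$; consequently $\Xi_t^{\sigma}\leq 2^{\sigma}\max\setn{A^{\sigma},B^{\sigma}}$ and $\exp(c_0\Xi_t^{\sigma})\leq\exp(2^{\sigma}c_0 A^{\sigma})+\exp(2^{\sigma}c_0 B^{\sigma})$, reducing matters to finite exponential moments of $A^{\sigma}$ and $B^{\sigma}$ for sufficiently small $c_0>0$. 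Now each $S_j$ is an average of $M_j\asymp 2^{jd}$ i.i.d.\ copies of $X=\abs{\xi}^p$, so by convexity of $x\mapsto x^{\sigma}$ (recall $\sigma\geq 1$) and a Chernoff bound, $\PP[S_j^{\sigma}>u]\leq\PP\bigl[\tfrac{1}{M_j}\sum_{\tau=1}^{M_j}X_{j,\tau}^{\sigma}>u\bigr]\leq\ee^{-CM_j u}\bigl(\EE[\ee^{CX^{\sigma}}]\bigr)^{M_j}\leq\ee^{-CM_j u/2}$ once $u$ exceeds a threshold depending only on $C$ and $\EE[\ee^{CX^{\sigma}}]$; summing over $j$ and using $M_j\gtrsim 2^j$ gives $\PP[A^{\sigma}>u]\lesssim\ee^{-c_1 u}$ for large $u$, hence a finite exponential moment of $A^{\sigma}$. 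For $B$ the argument is identical up to an inner union bound over the at most $\ell+1$ indices $j\leq\ell$, yielding $\PP[\max_{0\leq j\leq\ell}S_{j,\ell}^{\sigma}>u]\leq(\ell+1)\ee^{-CN_\ell u/2}$ for large $u$; since $N_\ell\asymp 2^{\ell d}\gtrsim 2^{\ell}$, summing over $\ell$ absorbs the factor $\ell+1$ and again produces an exponentially decaying tail and a finite exponential moment of $B^{\sigma}$.

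The crux, and the only real obstacle, is the exponential-moment lemma of the last paragraph: one must extract enough decay from the geometric growth $M_j,N_\ell\asymp 2^{jd}$ of the block sizes to survive both the inner union bound over $j\leq\ell$ and the outer supremum over all dyadic scales, and one must keep the Chernoff estimate uniform in the block (which is automatic from the i.i.d.\ assumption). Everything else — the reduction via \cref{thm:norm-lemma}, the passage from coefficient sequences to functions via \cref{thm:wavelet-characterization} under Properties~A and~B, the finite maximum over $t$, and the elementary inequalities relating $\Xi$, $\Xi_t$, $\Xi_t^{r/p}$, and $\Xi_t^{\sigma}$ — is routine.
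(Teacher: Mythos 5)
Your proof is correct, and it reaches the same conclusion by a genuinely different route at the one place where something new has to happen. The reduction to bounding $\EE[\exp(c_0\Xi_t^\sigma)]$ — via \cref{thm:norm-lemma}, \cref{thm:wavelet-characterization}, the elementary inequalities $\Xi_t^{r/p}\leq 1+\Xi_t^\sigma$ and $\exp(c\max_t \Xi_t^\sigma)\leq\sum_t\exp(c\Xi_t^\sigma)$, and the trivial $p=\infty$ case — matches the paper. Where the paper's proof invokes its \cref{thm:1+eps-exp}, which is proved by a Jensen-type reduction (first bounding $\exp\bigl(\tfrac{C}{1+\eps}Y_j^\sigma\bigr)$ by the block average of $\exp\bigl(\tfrac{C}{1+\eps}X_{j,\tau}^\sigma\bigr)$, then applying the moment lemma \cref{thm:1+eps} to the transformed variables $\tilde X_{j,\tau}=\exp\bigl(\tfrac{C}{1+\eps}X_{j,\tau}^\sigma\bigr)$ with exponent $1$), you instead prove the exponential-moment estimate directly by Chernoff. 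You split $\Xi_t\leq A+B$, get $\PP[S_j^\sigma>u]\leq e^{-C M_j u/2}$ for $u$ above a threshold depending only on $C$ and $\EE[e^{CX^\sigma}]$, union-bound over dyadic scales (with the extra factor $\ell+1$ absorbed by $N_\ell\gtrsim 2^\ell$), and integrate the resulting exponentially decaying tail. The paper's route is shorter because it reuses \cref{thm:1+eps}; yours is more self-contained and actually yields the stronger explicit tail estimate $\PP[\Xi_t^\sigma>u]\lesssim e^{-cu}$ rather than just a finite exponential moment. Both correctly exploit the hypothesis only through $\EE[\exp(C|X|^{\max\{r,p\}})]<\infty$, so the argument holds uniformly across blocks. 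One small presentational caveat: the deterministic bound $\mnorm{f}_{B^s_{p,q}(\RR^d)}\leq\kappa\,\Xi^{1/p}$ from \cref{thm:wavelet-characterization} is valid on the almost-sure event $\{\Xi<\infty\}$; the paper states this explicitly, you gloss over it, but since the subsequent estimate shows $\Xi<\infty$ a.s., this does not affect the conclusion.
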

\begin{proof}
\emph{Step 1: $p\neq\infty$.}
For $\Xi$ as in \eqref{eq:master-quantity}, we have $\Xi^{\frac{r}{p}}\leq 1+\Xi^{\frac{\max\setn{r,p}}{p}}$.
\cref{thm:norm-lemma} gives the existence of a constant $\kappa_1=\kappa_1(d,\hf,p,q)>0$ with $\mnorm{a}_{b^s_{p,q}(\RR^d)}\leq \kappa_1 \Xi^{\frac{1}{p}}$ where $a$ is as in \eqref{eq:expansion-seq}.
Moreover, \cref{thm:wavelet-characterization} yields the existence of a constant $\kappa_2=\kappa_2(d,s,p,q,k)>0$ such that $\mnorm{f}_{B^s_{p,q}(\RR^d)}=\mnorm{S(a)}_{B^s_{p,q}(\RR^d)}\leq \kappa_2\mnorm{a}_{b^s_{p,q}(\RR^d)}\leq \kappa_1\kappa_2\Xi^{\frac{1}{p}}$ whenever $\Xi<\infty$.
Thus, for $c>0$ and $\kappa\defeq \kappa_1\kappa_2$, we have
\begin{equation*}
\exp(c \mnorm{f}_{B^s_{p,q}(\RR^d)}^r)\leq \exp(c \kappa^r \Xi^{\frac{r}{p}}) \lesssim \exp\lr{c\kappa^r \Xi^{\frac{\max\setn{r,p}}{p}}}.
\end{equation*}
The monotonicity of the expectation yields
\begin{equation*}
\EE[\exp(c \mnorm{f}_{B^s_{p,q}(\RR^d)}^r)]\lesssim \EE\left[\exp\lr{c\kappa^r \Xi^{\frac{\max\setn{r,p}}{p}}}\right].
\end{equation*}
Next we show that $\EE\left[\exp\lr{c\kappa^r \Xi^{\frac{\max\setn{r,p}}{p}}}\right]<\infty$ for sufficiently small $c>0$.
To this end, fix $t\in \setn{F,M}^d$ and apply \cref{thm:1+eps-exp} to $X_{j,\tau} \defeq\abs{\xi_{j,t,\phi(\tau)}}^p$ and $\sigma\defeq \frac{\max\setn{r,p}}{p}$, where again $\phi:\NN\to\ZZ^d$ is a bijection such that $\tau\mapsto \mnorm{\phi(\tau)}_\infty$ is nondecreasing.
This lemma is applicable by choosing $c=\frac{C}{2^{\sigma+1} (1+\eps)\kappa^r}$ for some $\eps>0$ and noting
\begin{equation*}
\EE\left[\exp\lr{CX_{j,\tau}^\sigma}\right]=\EE\left[\exp\lr{C\abs{\xi_{j,t,\phi(\tau)}}^{\max\setn{r,p}}}\right]=\EE\left[\exp\lr{C\abs{X}^{\max\setn{r,p}}}\right]<\infty.
\end{equation*}
 \cref{thm:1+eps-exp} now yields $\EE[\exp(c\kappa^r \Xi_t^\sigma)]=\EE[\exp(\frac{C}{2^{\sigma+1} (1+\eps)}\Xi_t^\sigma)]<\infty$.
It follows that
\begin{align*}
\EE\left[\exp\lr{c\kappa^r \Xi^\sigma}\right]&= \EE\left[\exp\lr{c\kappa^r \max_{t\in\setn{F,M}^d}\Xi_t^\sigma}\right]\leq \EE\left[\exp\lr{c\kappa^r \sum_{t\in\setn{F,M}^d}\Xi_t^\sigma}\right]\\
&= \EE\left[ \prod_{t\in\setn{F,M}^d}\exp\lr{c\kappa^r\Xi_t^\sigma}\right]=\prod_{t\in\setn{F,M}^d} \EE\left[ \exp\lr{c\kappa^r\Xi_t^\sigma}\right]<\infty.
\end{align*}
~\\[\baselineskip]
\emph{Step 2: $p=\infty$.}
As in Step 2 of the proof of \cref{thm:target-statement-sufficiency}, we have $\mnorm{a}_{b^s_{p,q}(\RR^d)}\lesssim R$ where $a$ is as in \eqref{eq:expansion-seq}.
By \cref{thm:wavelet-characterization}, it follows again that $\mnorm{f}_{B^s_{p,q}(\RR^d)}\lesssim R$, i.e., there exists a constant $\kappa>0$ with $\mnorm{f}_{B^s_{p,q}(\RR^d)}^r\leq \kappa R^r$ almost surely.
Then the monotonicity of the expectation directly gives $\EE[\exp(c \mnorm{f}_{B^s_{p,q}(\RR^d)}^r)]\leq \EE[\exp(c\kappa R^r)]<\infty$.
\end{proof}

We conclude this section by establishing the sufficiency of Property A'' for the finiteness of the moments of $\mnorm{f}_{B^s_{p,q}(\RR^d)}$  (and thus for the almost sure membership of $f$ in $B^s_{p,q}(\RR^d)$) when $f$ follows a \priortwo{} prior.
Let us start with the counterpart of \cref{thm:master-lemma}.
\begin{Lem}\label{thm:master-lemma-bernoulli}
Let $d\in\NN$, $p,r\in (0,\infty)$, $\hf \in\RR$, $\mu,\nu\in (-\infty,0]$, and $\delta\in (0,1)$ such that $\hf p+d+\frac{\nu p}{\max\setn{r,p}} (1-\delta)<0$.
Let further $(\xi_{j,t,m})_{(j,t,m)\in\NN_0\times\setn{F,M}^d\times\ZZ^d}$ and $(\lambda_{j,t,m})_{(j,t,m)\in\NN_0\times\setn{F,M}^d\times\ZZ^d}$ be two families of random variables with $\xi_{j,t,m}\in\RR$ and $\lambda_{j,.t,m}\in\setn{0,1}$ for $(j,t,m)\in\NN_0\times\setn{F,M}^d\times\ZZ^d$.
Finally, set
\begin{equation*}
\varrho_{j,m}\defeq 2^{j\mu}\lr{1+\frac{\mnorm{m}_\infty}{2^j }}^\nu
\end{equation*}
for $j\in\NN_0$ and $m\in\ZZ^d$.
Then for all $j\in\NN_0$ and $t\in \setn{F,M}^d$, we have
\begin{equation}\label{eq:master-estimate-bernoulli}
\widetilde{Z}_{j,t}\defeq \mnorm{\lr{\lr{1 + \frac{\mnorm{m}_\infty}{2^j}}^\hf  \lambda_{j,t,m}\abs{\xi_{j,t,m}}}_{m\in\ZZ^d}}_{\ell_p}\lesssim 2^{j\frac{d}{p}+j\frac{\mu}{\max\setn{r,p}}(1-\delta)}\widetilde{\Xi}^{\frac{1}{p}},
\end{equation}
where the implied constant only depends on $d$, $p$, $\hf$, $\mu$, $\nu$, $r$, and $\delta$.
Here 
\begin{align}
\widetilde{\Xi}_t&\defeq \sup_{j \in \NN_0} \Biggl(\frac{1}{M_j} \sum_{\tau=1}^{M_j} \varrho_{j,\phi(\tau)}^{\frac{p}{\max\setn{r,p}}(\delta-1)}\lambda_{j,t,\phi(\tau)}\abs{\xi_{j,t,\phi(\tau)}}^p\nonumber\\
&\hspace{2.5cm}+\sup_{\ell\geq j} \frac{1}{N_\ell} \sum_{\tau=M_\ell + 1}^{M_{\ell+1}}\varrho_{j,\phi(\tau)}^{\frac{p}{\max\setn{r,p}}(\delta-1)} \lambda_{j,t,\phi(\tau)}\abs{\xi_{j,t,\phi(\tau)}}^p\Biggr),\nonumber\\
\intertext{for $t\in\setn{F,M}^d$ and}
\widetilde{\Xi}&\defeq \max_{t\in\setn{F,M}^d}\widetilde{\Xi}_t,\label{eq:master-quantity-bernoulli}
\end{align}
where $\phi:\NN\to \ZZ^d$ is a bijection such that $\tau\mapsto \mnorm{\phi(\tau)}_\infty$ is nondecreasing, and for $j\in\NN_0$,
\begin{align*}
N_j&\defeq\#\setcond{m\in\ZZ^d}{2^j<\mnorm{m}_\infty\leq 2^{j+1}},\\
\intertext{and}
M_j&\defeq \#\setcond{m\in\ZZ^d}{\mnorm{m}_{\infty}\leq 2^j}=(2^{j+1}+1)^d.
\end{align*}
\end{Lem}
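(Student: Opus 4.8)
The plan is to follow the proof of \cref{thm:master-lemma} almost line by line, the only genuinely new ingredient being a factorization of the summands that simultaneously produces the extra decay factor $2^{j\frac{\mu}{\max\setn{r,p}}(1-\delta)}$ and shifts the effective weight exponent from $\hf p$ to $\hf p+\frac{\nu p}{\max\setn{r,p}}(1-\delta)$, so that the hypothesis $\hf p+d+\frac{\nu p}{\max\setn{r,p}}(1-\delta)<0$ takes over the role that $d+\hf p<0$ plays in \cref{thm:master-lemma}. Write $\zeta\defeq\frac{p}{\max\setn{r,p}}(1-\delta)\in(0,1)$, so that $\frac{p}{\max\setn{r,p}}(\delta-1)=-\zeta$ is exactly the exponent appearing on $\varrho_{j,\phi(\tau)}$ in the definition of $\widetilde{\Xi}_t$.

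As in \cref{thm:master-lemma}, the mean value theorem applied to $x\mapsto x^d$ yields $M_j\asymp 2^{jd}\asymp N_j$. Fixing $j\in\NN_0$ and $t\in\setn{F,M}^d$, I would start from
\[
\widetilde{Z}_{j,t}^p=\sum_{m\in\ZZ^d}\lr{1+\frac{\mnorm{m}_\infty}{2^j}}^{\hf p}\lambda_{j,t,m}\abs{\xi_{j,t,m}}^p
\]
and, using $\varrho_{j,m}>0$, rewrite each summand as $\lambda_{j,t,m}\abs{\xi_{j,t,m}}^p=\varrho_{j,m}^{\zeta}\bigl(\varrho_{j,m}^{-\zeta}\lambda_{j,t,m}\abs{\xi_{j,t,m}}^p\bigr)$. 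Since $\varrho_{j,m}^{\zeta}=2^{j\mu\zeta}\lr{1+\frac{\mnorm{m}_\infty}{2^j}}^{\nu\zeta}$, this gives
\[
\widetilde{Z}_{j,t}^p=2^{j\mu\zeta}\sum_{m\in\ZZ^d}\lr{1+\frac{\mnorm{m}_\infty}{2^j}}^{\hf p+\nu\zeta}\varrho_{j,m}^{-\zeta}\lambda_{j,t,m}\abs{\xi_{j,t,m}}^p.
\]
From this point on, the argument is a copy of the one in \cref{thm:master-lemma} with $\hf p$ replaced by $\hf p+\nu\zeta$: split the sum according to $\mnorm{m}_\infty\leq 2^j$ and $\mnorm{m}_\infty>2^j$, enumerate $\ZZ^d$ via the bijection $\phi$, group the second sum into the dyadic annuli $2^\ell<\mnorm{m}_\infty\leq 2^{\ell+1}$ with $\ell\geq j$, and use $\lr{1+\frac{\mnorm{m}_\infty}{2^j}}^{\hf p+\nu\zeta}\asymp 2^{(\ell-j)(\hf p+\nu\zeta)}$ on the $\ell$-th annulus together with $M_j\asymp 2^{jd}$ and $N_\ell\asymp 2^{\ell d}$. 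The normalized block sums that appear, namely $\frac{1}{M_j}\sum_{\tau=1}^{M_j}\varrho_{j,\phi(\tau)}^{-\zeta}\lambda_{j,t,\phi(\tau)}\abs{\xi_{j,t,\phi(\tau)}}^p$ and, for $\ell\geq j$, $\frac{1}{N_\ell}\sum_{\tau=M_\ell+1}^{M_{\ell+1}}\varrho_{j,\phi(\tau)}^{-\zeta}\lambda_{j,t,\phi(\tau)}\abs{\xi_{j,t,\phi(\tau)}}^p$, are by construction bounded by $\widetilde{\Xi}_t\leq\widetilde{\Xi}$.

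Putting these estimates together exactly as in \cref{thm:master-lemma} gives
\[
\widetilde{Z}_{j,t}^p\lesssim 2^{j\mu\zeta}\lr{M_j+2^{-j(\hf p+\nu\zeta)}\sum_{\ell\geq j}2^{\ell(d+\hf p+\nu\zeta)}}\widetilde{\Xi}\lesssim 2^{j\mu\zeta}\cdot 2^{jd}\,\widetilde{\Xi},
\]
where the geometric series converges precisely because $d+\hf p+\nu\zeta<0$ by hypothesis. Taking $p$-th roots and using $\zeta/p=\frac{1-\delta}{\max\setn{r,p}}$ yields $\widetilde{Z}_{j,t}\lesssim 2^{jd/p+j\frac{\mu}{\max\setn{r,p}}(1-\delta)}\widetilde{\Xi}^{1/p}$, which is \eqref{eq:master-estimate-bernoulli}; tracking how the implied constants depend on $d$, $p$, $\hf$, $\mu$, $\nu$, $r$, and $\delta$ is routine.

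I do not expect a real obstacle: once the factorization with $\varrho_{j,m}^{\pm\zeta}$ is in place, all of the remaining work (the $M_j$, $N_j$ asymptotics, the dyadic decomposition, the convergence of the geometric series, bounding the block averages by $\widetilde{\Xi}$) is a verbatim transcription of \cref{thm:master-lemma}. The only step that genuinely needs attention is matching the exponents — checking that the weight exponent becomes exactly $\hf p+\nu\zeta$ and that the power of $2^j$ extracted equals $2^{j\mu\zeta}$ with $\mu\zeta/p=\frac{\mu}{\max\setn{r,p}}(1-\delta)$ — so that the stated hypothesis $\hf p+d+\frac{\nu p}{\max\setn{r,p}}(1-\delta)<0$ is indeed the one required for the geometric series.
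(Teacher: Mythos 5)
Your proposal is correct and matches the paper's own proof in all essentials: the paper also inserts the factor $\varrho_{j,m}^{\pm\zeta}$ (using the equivalences $\varrho_{j,\phi(\tau)}\asymp 2^{j\mu}$ for $\tau\leq M_j$ and $\varrho_{j,\phi(\tau)}\asymp 2^{j\mu}2^{(\ell-j)\nu}$ on the $\ell$-th annulus), shifts the effective exponent to $\hf p+\nu\zeta$, and sums the geometric series under the hypothesis $\hf p+d+\nu\zeta<0$. The only cosmetic difference is that you factor $\varrho_{j,m}^{\zeta}$ out exactly before the dyadic split, whereas the paper splits first and then applies the block-wise asymptotics for $\varrho$; the exponent bookkeeping ($\zeta/p=\tfrac{1-\delta}{\max\{r,p\}}$, $\mu\zeta/p=\tfrac{\mu(1-\delta)}{\max\{r,p\}}$) is identical.
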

\begin{proof}
Let $j\in \NN_0$.
If $1\leq \tau\leq M_j$, then $0\leq\mnorm{\phi(\tau)}_\infty \leq 2^j$, i.e., $1\leq 1+\frac{\mnorm{\phi(\tau)}_\infty}{2^j}\leq 2$.
It follows that $\varrho_{j,\phi(\tau)}\asymp 2^{j\mu}$.
If, on the other hand, $\tau>M_j$, then there exists a unique number $\ell\in\NN_0$ with $\ell\geq j$ such that $M_\ell+1\leq \tau\leq M_{\ell+1}$.
Then $2^\ell< \mnorm{\phi(\tau)}_\infty\leq 2^{\ell+1}$ and
\begin{equation*}
2^{\ell-j}\leq 1+2^{\ell-j}\leq 1+\frac{\mnorm{\phi(\tau)}_\infty}{2^j}\leq 1+2^{\ell+1-j}\leq 3\cdot 2^{\ell-j},
\end{equation*}
i.e., $\varrho_{j,\phi(\tau)}\asymp 2^{j\mu}\cdot 2^{(\ell-j)\nu}$.
For $j\in \NN_0$ and $t\in T_j$, we thus obtain
\begin{align*}
\widetilde{Z}_{j,t}^p &=\sum_{m\in\ZZ^d} \lr{1+\frac{\mnorm{m}_\infty}{2^j}}^{\hf p}\lambda_{j,t,m}\abs{\xi_{j,t,m}}^p\\
&=\sum_{\tau=1}^{M_j}\lr{1+\frac{\mnorm{\phi(\tau)}_\infty}{2^j}}^{\hf p}\lambda_{j,t,\phi(\tau)}\abs{\xi_{j,t,\phi(\tau)}}^p\\
&\qquad+\sum_{\ell\geq j}\sum_{\tau=M_\ell+1}^{M_{\ell+1}}\lr{1+\frac{\mnorm{\phi(\tau)}_\infty}{2^j}}^{\hf p}\lambda_{j,t,\phi(\tau)}\abs{\xi_{j,t,\phi(\tau)}}^p\\
&\lesssim M_j\frac{1}{M_j}\sum_{\tau=1}^{M_j}\lambda_{j,t,\phi(\tau)}\abs{\xi_{j,t,\phi(\tau)}}^p+\sum_{\ell\geq j}N_\ell\frac{1}{N_\ell}\sum_{\tau=M_\ell+1}^{M_{\ell+1}}2^{(\ell -j)\hf p}\lambda_{j,t,\phi(\tau)}\abs{\xi_{j,t,\phi(\tau)}}^p.
\end{align*}
Next, we note 
\begin{align*}
&M_j\frac{1}{M_j}\sum_{\tau=1}^{M_j}\lambda_{j,t,\phi(\tau)}\abs{\xi_{j,t,\phi(\tau)}}^p\\
&\asymp 2^{jd}2^{j\mu \frac{p}{\max\setn{r,p}}(1-\delta)}\frac{1}{M_j}\sum_{\tau=1}^{M_j} \varrho_{j,\phi(\tau)}^{\frac{p}{\max\setn{r,p}}(\delta-1)}\lambda_{j,t,\phi(\tau)}\abs{\xi_{j,t,\phi(\tau)}}^p\\
&\lesssim  2^{jd} 2^{j\mu \frac{p}{\max\setn{r,p}}(1-\delta)} \widetilde{\Xi}_t.
\end{align*}
Finally, recall from the proof of \cref{thm:master-lemma} that $N_\ell \asymp 2^{\ell d}$, and hence from the assumptions of the present lemma that $\hf p+d+\frac{\nu p}{\max\setn{r,p}}(1-\delta)<0$, and hence
\begin{align*}
&\sum_{\ell\geq j}N_\ell\frac{1}{N_\ell}\sum_{\tau=M_\ell+1}^{M_{\ell+1}}2^{(\ell -j)\hf p}\lambda_{j,t,\phi(\tau)}\abs{\xi_{j,t,\phi(\tau)}}^p\\
&\asymp \sum_{\ell\geq j} 2^{(\ell -j)\hf p+\ell d }(2^{j\mu}2^{(\ell-j)\nu})^{\frac{p(1-\delta)}{\max\setn{r,p}}}\frac{1}{N_\ell}\sum_{\tau=M_\ell+1}^{M_{\ell+1}}\varrho_{j,\phi(\tau)}^{\frac{p(\delta-1)}{\max\setn{r,p}}}\lambda_{j,t,\phi(\tau)}\abs{\xi_{j,t,\phi(\tau)}}^p\\
&\lesssim \sum_{\ell\geq j} 2^{(\ell -j)\hf p+\ell d}(2^{j\mu}2^{(\ell-j)\nu})^{\frac{p(1-\delta)}{\max\setn{r,p}}}\widetilde{\Xi}_t\\
&=2^{-j\hf p+j\mu \frac{p}{\max\setn{r,p}}(1-\delta)-j\nu \frac{p(1-\delta)}{\max\setn{r,p}}}\sum_{\ell\geq j} 2^{\ell \hf p+\ell d +\ell  \nu\frac{p(1-\delta)}{\max\setn{r,p}}}\widetilde{\Xi}_t\\
&\lesssim 2^{-j\hf p+j\mu \frac{p}{\max\setn{r,p}}(1-\delta)-j\nu \frac{p(1-\delta)}{\max\setn{r,p}}} 2^{j(\hf p+d+\nu \frac{p(1-\delta)}{\max\setn{r,p}})}\widetilde{\Xi}_t\\
&= 2^{jd} 2^{j\mu \frac{p(1-\delta)}{\max\setn{r,p}}} \widetilde{\Xi}_t,
\end{align*}
Overall, we conclude $\widetilde{Z}_{j,t}^p\lesssim  2^{jd+j\mu\frac{p}{\max\setn{r,p}}(1-\delta)}\widetilde{\Xi}_t \leq   2^{jd+j\mu\frac{p}{\max\setn{r,p}}(1-\delta)}\widetilde{\Xi}$.
\end{proof}

Next, we show an analog of \cref{thm:norm-lemma}.
\begin{Lem}\label{thm:norm-lemma-bernoulli}
Let $d\in\NN$, $s,\alpha,\hf,\lf\in\RR$, $p,r\in (0,\infty)$, $q\in (0,\infty]$, and $\mu,\nu \in (-\infty,0]$.
Let further $(\xi_{j,t,m})_{(j,t,m)\in\NN_0\times\setn{F,M}^d\times\ZZ^d}$ and $(\lambda_{j,t,m})_{(j,t,m)\in\NN_0\times\setn{F,M}^d\times\ZZ^d}$ be two families of random variables with $\xi_{j,t,m}\in\RR$ and $\lambda_{j,t,m}\in\setn{0,1}$ for $(j,t,m)\in\NN_0\times\setn{F,M}^d\times\ZZ^d$.
Set
\begin{equation*}
\varrho_{j,m}\defeq 2^{j\mu}\lr{1+\frac{\mnorm{m}_\infty}{2^j }}^\nu
\end{equation*}
and $a=(a_{j,t,m})_{(j,t,m)\in J}$ where
\begin{equation*}
a_{j,t,m}\defeq 2^{j\alpha} \lr{1+\frac{\mnorm{m}_\infty}{2^j}}^{\hf + (\lf-\hf)\one_{j=0}}\lambda_{j,t,m}\xi_{j,t,m}
\end{equation*}
for $(j,t,m)\in J$.
If Property A'' is satisfied, then for $\widetilde{\Xi}$ as in \eqref{eq:master-quantity-bernoulli}, we have $\mnorm{a}_{b^s_{p,q}(\RR^d)}\lesssim\widetilde{\Xi}^{\frac{1}{p}}$ where the implied constant only depends on $\mu$, $\nu$, $\hf$, $\lf$, $p$, $d$, and $r$.
\end{Lem}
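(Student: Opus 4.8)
The plan is to mirror the proof of \cref{thm:norm-lemma}, replacing the appeal to \cref{thm:master-lemma} by an appeal to \cref{thm:master-lemma-bernoulli}. Expanding the (unweighted) Besov sequence quasi-norm and separating the term $j=0$ from the terms $j\in\NN$, one is led to
\begin{equation*}
\eta_1\defeq\mnorm{\lr{(1+\mnorm{m}_\infty)^\lf\lambda_m\abs{\xi_m}}_{m\in\ZZ^d}}_{\ell_p},\qquad \eta_2\defeq\mnorm{\lr{2^{j(s+\frac{d}{2}-\frac{d}{p}+\alpha)}\widetilde{Z}_{j,t}}_{j\in\NN,\,t\in T_j}}_{\ell_q}
\end{equation*}
with $\widetilde{Z}_{j,t}$ as in \eqref{eq:master-estimate-bernoulli}, so that $\mnorm{a}_{b^s_{p,q}(\RR^d)}\asymp\max\setn{\eta_1,\eta_2}$ (which equals $(\eta_1^q+\eta_2^q)^{1/q}$ if $q<\infty$). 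It thus suffices to bound each of $\eta_1$ and $\eta_2$ by a constant multiple of $\widetilde{\Xi}^{1/p}$.

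Next I would fix an admissible $\delta\in(0,1)$. Since $p\in(0,\infty)$, Property A'' is case~(a), which yields the \emph{strict} inequalities $\lf+\frac{d}{p}+\frac{\nu}{\max\setn{r,p}}<0$, $\hf+\frac{d}{p}+\frac{\nu}{\max\setn{r,p}}<0$, and $s+\frac{d}{2}+\alpha+\frac{\mu}{\max\setn{r,p}}<0$. Because $\mu,\nu\leq 0$, the three quantities $\lf p+d+\frac{\nu p}{\max\setn{r,p}}(1-\delta)$, $\hf p+d+\frac{\nu p}{\max\setn{r,p}}(1-\delta)$, and $s+\frac{d}{2}+\alpha+\frac{\mu}{\max\setn{r,p}}(1-\delta)$ depend continuously on $\delta$ and are strictly negative at $\delta=0$; hence they remain strictly negative for all sufficiently small $\delta\in(0,1)$. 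Fix such a $\delta$; it is the parameter entering \eqref{eq:master-quantity-bernoulli}.

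With this $\delta$, I would first invoke \cref{thm:master-lemma-bernoulli} at scale $j=0$ with $\lf$ playing the role of $\hf$ (admissible by the first of the above inequalities); since the exponent of $2$ in \eqref{eq:master-estimate-bernoulli} vanishes for $j=0$ and $\widetilde{\Xi}$ in \eqref{eq:master-quantity-bernoulli} does not depend on the weight exponent, this gives $\eta_1\lesssim\widetilde{\Xi}^{1/p}$. Then I would apply \cref{thm:master-lemma-bernoulli} with $\hf$ (admissible by the second inequality) to obtain $\widetilde{Z}_{j,t}\lesssim 2^{j\frac{d}{p}+j\frac{\mu}{\max\setn{r,p}}(1-\delta)}\widetilde{\Xi}^{1/p}$, insert this into $\eta_2$, absorb the finitely many $t\in T_j$ (there are $2^d-1$ of them), and reduce to
\begin{equation*}
\eta_2\lesssim\widetilde{\Xi}^{1/p}\,\mnorm{\lr{2^{j(s+\frac{d}{2}+\alpha+\frac{\mu}{\max\setn{r,p}}(1-\delta))}}_{j\in\NN}}_{\ell_q},
\end{equation*}
where the remaining $\ell_q$-norm is finite because the exponent of $2$ is strictly negative by the third inequality (a convergent geometric series for $q<\infty$, a bounded sequence for $q=\infty$). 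Combining the two estimates yields $\mnorm{a}_{b^s_{p,q}(\RR^d)}\lesssim\widetilde{\Xi}^{1/p}$.

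Apart from this bookkeeping, the statement is essentially a corollary of \cref{thm:master-lemma-bernoulli}. The main point requiring attention is the continuity argument: one must check that Property A'' leaves enough slack to pick a \emph{single} $\delta\in(0,1)$ that simultaneously makes \cref{thm:master-lemma-bernoulli} applicable with the coarse-scale weight exponent $\lf$ and the fine-scale weight exponent $\hf$, while still keeping the effective scale exponent $s+\frac{d}{2}+\alpha+\frac{\mu}{\max\setn{r,p}}(1-\delta)$ negative so that the $\ell_q$-sum converges. The fact that $\widetilde{\Xi}$ is independent of the weight exponent is what allows the same random quantity to dominate both the $j=0$ contribution and the contributions from $j\geq 1$.
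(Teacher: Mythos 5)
Your proposal is correct and matches the paper's proof: the same decomposition into the $j=0$ part $\eta_1$ and the $j\geq 1$ part $\eta_2$, the same choice of a single $\delta\in(0,1)$ small enough to make the three exponent inequalities strict, and the same two applications of \cref{thm:master-lemma-bernoulli} (once with $\lf$ at scale $j=0$, once with $\hf$ for $j\in\NN$). The only difference is that you spell out the continuity argument justifying the existence of $\delta$, which the paper asserts without comment.
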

\begin{proof}
Recall that $\lambda_m\defeq \lambda_{0,(F,\ldots,F),m}$ and $\xi_m\defeq \xi_{0,(F,\ldots,F),m}$ for $m\in\ZZ^d$, and let
\begin{align*}
\widetilde{\eta}_1&\defeq\mnorm{\lr{(1+\mnorm{m}_\infty)^\lf \lambda_m\abs{\xi_m}}_{m\in\ZZ^d}}_{\ell_p},\\
\widetilde{\eta}_2&\defeq\mnorm{\lr{2^{j(s+\frac{d}{2}-\frac{d}{p}+\alpha)} \widetilde{Z}_{j,t}}_{j\in\NN,t\in T_j}}_{\ell_q}
\end{align*}
with $\widetilde{Z}_{j,t}$ as in \eqref{eq:master-estimate-bernoulli}.

By Property A'', there exists $\delta\in (0,1)$ such that $\hf p+d+\nu \frac{p}{\max\setn{r,p}}(1-\delta)<0$,  $\lf p+d+\nu \frac{p}{\max\setn{r,p}}(1-\delta)<0$ and $s+\frac{d}{2}+\alpha+ \frac{\mu}{\max\setn{r,p}}(1-\delta)<0$.
\cref{thm:master-lemma-bernoulli} with $\lf$ replacing $\hf$ and with $j=0$ directly gives $\widetilde{\eta}_1 \lesssim \widetilde{\Xi}^{\frac{1}{p}}$.
Furthermore, again by \cref{thm:master-lemma-bernoulli}, we have
\begin{align*}
\widetilde{\eta}_2&=\mnorm{\lr{2^{j(s+\frac{d}{2}-\frac{d}{p}+\alpha)} \widetilde{Z}_{j,t}}_{j\in\NN,t\in T_j}}_{\ell_q}\\
&\lesssim\mnorm{\lr{2^{j(s+\frac{d}{2}+\alpha+\frac{\mu}{\max\setn{r,p}}(1-\delta))}\widetilde{\Xi}^{\frac{1}{p}}}_{j\in\NN,t\in T_j}}_{\ell_q}\\
&\lesssim\mnorm{\lr{2^{j(s+\frac{d}{2}+\alpha+\frac{\mu}{\max\setn{r,p}}(1-\delta))}}_{j\in\NN}}_{\ell_q}\widetilde{\Xi}^{\frac{1}{p}}\\
&\lesssim\widetilde{\Xi}^{\frac{1}{p}}
\end{align*}
since $s+\frac{d}{2}+\alpha+\frac{\mu}{\max\setn{r,p}}(1-\delta)<0$.
If $q\neq \infty$, then $\mnorm{a}_{b^s_{p,q}(\RR^d)}=(\widetilde{\eta}_1^q+\widetilde{\eta}_2^q)^\frac{1}{q}\lesssim\widetilde{\Xi}^{\frac{1}{p}}$.
On the other hand, if $q=\infty$, then $\mnorm{a}_{b^s_{p,q}(\RR^d)}=\max \setn{\widetilde{\eta}_1,\widetilde{\eta}_2}\lesssim\widetilde{\Xi}^{\frac{1}{p}}$.
\end{proof}

Now we are ready to show, analogously to \cref{thm:powers}, that Property A'' is sufficient to guarantee $\EE[\mnorm{f}_{B^s_{p,q}(\RR^d)}^r]<\infty$ if $f$ follows a \priortwo{} prior.
\begin{Satz}\label{thm:powers-bernoulli}
Let $d,k\in\NN$, $s,\alpha,\hf,\lf\in\RR$, $p,q\in (0,\infty]$, $r\in (0,\infty)$, and $\mu,\nu \in (-\infty,0]$.
Assume that $a$ follows a \priortwo{} prior and $f$ follows a \priortwo{} prior, both with parameters $(\alpha,\hf,\lf,\mu,\nu,k)$ and template random variable $X$, satisfying
\begin{equation}\label{eq:rv-powers-bernoulli}
\begin{cases} \ex \eps>0:\,\EE\bigl[\abs{X}^{(1+\eps)\max\setn{r,p}}\bigr]< \infty&\text{if }p\in (0,\infty),\\
\ex  R>0:\, \abs{X}\stackrel{a.s.}{\leq}R&\text{if }p=\infty.\end{cases}
\end{equation}
If Property A'' is satisfied, then $\EE[\mnorm{a}_{b^s_{p,q}(\RR^d)}^r]<\infty$.
If Properties A'' and B are satisfied, then $\EE[\mnorm{f}_{B^s_{p,q}(\RR^d)}^r]<\infty$.
In particular, $f$ belongs to $B^s_{p,q}(\RR^d)$ almost surely.
\end{Satz}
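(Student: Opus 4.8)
The plan is to follow the proof of \cref{thm:powers} almost verbatim, with \cref{thm:master-lemma} and \cref{thm:norm-lemma} replaced by their \priortwo{} counterparts \cref{thm:master-lemma-bernoulli} and \cref{thm:norm-lemma-bernoulli}. The first observation is that it suffices to prove $\EE[\mnorm{a}_{b^s_{p,q}(\RR^d)}^r]<\infty$: the coefficient sequence $I(f)$ of $f$ follows a \priortwo{} sequence prior with the same parameters and template variable, hence has the same distribution as $a$, and under Property B the wavelet characterization \cref{thm:wavelet-characterization} gives $\mnorm{f}_{B^s_{p,q}(\RR^d)}\asymp\mnorm{I(f)}_{b^s_{p,q}(\RR^d)}$ as $[0,\infty]$-valued random variables, so $\EE[\mnorm{f}_{B^s_{p,q}(\RR^d)}^r]\asymp\EE[\mnorm{a}_{b^s_{p,q}(\RR^d)}^r]$; the final clause follows because a nonnegative random variable with finite $r$-th moment is almost surely finite. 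I would then distinguish the cases $p\in(0,\infty)$ and $p=\infty$.

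For $p\in(0,\infty)$, I would first invoke Property A'' to fix $\delta\in(0,1)$ as in the proof of \cref{thm:norm-lemma-bernoulli}, i.e., such that $\hf p+d+\frac{\nu p}{\max\setn{r,p}}(1-\delta)<0$, $\lf p+d+\frac{\nu p}{\max\setn{r,p}}(1-\delta)<0$, and $s+\frac{d}{2}+\alpha+\frac{\mu}{\max\setn{r,p}}(1-\delta)<0$. Then \cref{thm:norm-lemma-bernoulli} yields $\mnorm{a}_{b^s_{p,q}(\RR^d)}\lesssim\widetilde{\Xi}^{1/p}$ with $\widetilde{\Xi}$ as in \eqref{eq:master-quantity-bernoulli}, hence $\EE[\mnorm{a}_{b^s_{p,q}(\RR^d)}^r]\lesssim\EE[\widetilde{\Xi}^{r/p}]$ by monotonicity of the expectation, and since $\widetilde{\Xi}=\max_{t\in\setn{F,M}^d}\widetilde{\Xi}_t$ ranges over a finite set, it is enough to bound $\EE[\widetilde{\Xi}_t^{r/p}]$ for each fixed $t$.

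The core step, and the one I expect to be the only non-routine point, is to apply \cref{thm:1+eps} (the auxiliary lemma already used in the proofs of \cref{thm:target-statement-sufficiency} and \cref{thm:powers}) to the independent family $X_{j,\tau}\defeq\varrho_{j,\phi(\tau)}^{\frac{p}{\max\setn{r,p}}(\delta-1)}\lambda_{j,t,\phi(\tau)}\abs{\xi_{j,t,\phi(\tau)}}^p$ (with $j\in\NN_0$, $\tau\in\NN$, and $\phi$ the enumeration of $\ZZ^d$ by nondecreasing $\mnorm{\cdot}_\infty$) and $\sigma\defeq\max\setn{1,r/p}$, observing that $\widetilde{\Xi}_t$ is precisely the quantity controlled by \cref{thm:1+eps} for this family. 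Verifying its hypothesis amounts to a uniform bound on $\EE[X_{j,\tau}^{\sigma(1+\eps)}]$. Using that $p\sigma=\max\setn{r,p}$, the mutual independence of all $\lambda$'s and $\xi$'s, and $\lambda_{j,t,\phi(\tau)}^{\sigma(1+\eps)}=\lambda_{j,t,\phi(\tau)}$ with $\EE[\lambda_{j,t,\phi(\tau)}]=\varrho_{j,\phi(\tau)}$, one gets
\begin{equation*}
\EE\bigl[X_{j,\tau}^{\sigma(1+\eps)}\bigr]=\varrho_{j,\phi(\tau)}^{\,\delta-(1-\delta)\eps}\,\EE\bigl[\abs{X}^{(1+\eps)\max\setn{r,p}}\bigr].
\end{equation*}
Since $\mu,\nu\leq 0$ imply $\varrho_{j,m}\in(0,1]$, the delicate point is to keep the exponent $\delta-(1-\delta)\eps$ nonnegative: this holds as soon as $\eps\leq\delta/(1-\delta)$, and then $\varrho_{j,\phi(\tau)}^{\,\delta-(1-\delta)\eps}\leq 1$. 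Choosing $\eps\in(0,\min\setn{\eps_0,\delta/(1-\delta)}]$, where $\eps_0>0$ is the exponent supplied by \eqref{eq:rv-powers-bernoulli}, yields $\sup_{j,\tau}\EE[X_{j,\tau}^{\sigma(1+\eps)}]\leq\EE[\abs{X}^{(1+\eps)\max\setn{r,p}}]<\infty$; then \cref{thm:1+eps} gives $\EE[\widetilde{\Xi}_t^\sigma]<\infty$, and Jensen's inequality (as $\sigma\geq r/p$) gives $\EE[\widetilde{\Xi}_t^{r/p}]<\infty$.

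For $p=\infty$, Property A''(b) provides $\hf\leq 0$, $\lf\leq 0$, $s+\frac{d}{2}+\alpha\leq 0$, and $\abs{X}\leq R$ almost surely; since $\abs{\lambda_{j,t,m}\xi_{j,t,m}}\leq\abs{\xi_{j,t,m}}\leq R$ almost surely, the Bernoulli factors only shrink the relevant suprema, so the estimates of Step~2 in the proof of \cref{thm:target-statement-sufficiency} apply unchanged and give $\mnorm{a}_{b^s_{\infty,q}(\RR^d)}\lesssim R$ almost surely, hence $\EE[\mnorm{a}_{b^s_{\infty,q}(\RR^d)}^r]\lesssim R^r<\infty$. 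As emphasized, the only genuine obstacle is the joint choice of $\delta$ (from Property A'') and of the room $\eps$ that keeps $\varrho_{j,\phi(\tau)}$ raised to a nonnegative power — without it, since $\varrho_{j,\phi(\tau)}\to 0$ as $j\to\infty$, the uniform moment bound required by \cref{thm:1+eps} would break down; everything else is bookkeeping (passing from $\widetilde{\Xi}$ to the $\widetilde{\Xi}_t$, and transferring back to $f$ via \cref{thm:wavelet-characterization}).
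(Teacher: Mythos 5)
Your plan is mathematically the same as the paper's in all but one step, and your computation of the $\sigma(1+\eps)$-moment of the bundled variables — yielding the factor $\varrho_{j,\phi(\tau)}^{\delta-(1-\delta)\eps}$ and the constraint $\eps\le\delta/(1-\delta)$, equivalently $\delta+\delta\eps-\eps\ge 0$ — is exactly the delicate point the paper also has to handle. But there is a genuine gap in the step you flag as the core one: you invoke \cref{thm:1+eps} for the family
\[
X_{j,\tau}\defeq\varrho_{j,\phi(\tau)}^{\frac{p}{\max\setn{r,p}}(\delta-1)}\lambda_{j,t,\phi(\tau)}\abs{\xi_{j,t,\phi(\tau)}}^p,
\]
whereas \cref{thm:1+eps} is stated for \emph{i.i.d.} families. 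Your $X_{j,\tau}$ are independent but not identically distributed: the deterministic prefactor $\varrho_{j,\phi(\tau)}^{\cdots}$ and the Bernoulli success probability $\varrho_{j,\phi(\tau)}$ both vary with $(j,\tau)$. So the hypothesis of \cref{thm:1+eps} is not met, and the lemma cannot be cited as is. This is precisely why the paper proves the separate \cref{thm:bernoulli-lemma}, which starts from an i.i.d.\ family $X_{j,\tau}=\abs{\xi_{j,t,\phi(\tau)}}^p$ and carries the $\varrho$-weights and Bernoulli $\lambda$'s internally, with hypotheses $\EE[X_{j,\tau}^{\sigma(1+\eps)}]<\infty$ and $\delta+\delta\eps-\eps\ge 0$. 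Your moment computation is essentially the verification step inside the proof of \cref{thm:bernoulli-lemma}, but you then feed it into the wrong lemma.

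The gap is small in the sense that the proof of \cref{thm:1+eps} uses the i.i.d.\ assumption only through $\EE[X_{j,\tau}^\sigma]$ and $\EE[X_{j,\tau}^{\sigma(1+\eps)}]$ being bounded uniformly in $(j,\tau)$, so one could restate it for independent non-negative families with a uniform $(1+\eps)$-moment bound and your argument would go through verbatim. But that restatement must be made (or \cref{thm:bernoulli-lemma} must be cited) — as written, the cited lemma's hypotheses fail. Everything else (the case $p=\infty$, the reduction from $f$ to $a$ via \cref{thm:wavelet-characterization}, the final \enquote{in particular} clause) matches the paper.
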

\begin{proof}
\emph{Step 1: $p\neq\infty$.}
Let $\eps_0>0$ such that $\EE[\abs{X}^{(1+\eps_0)\max\setn{r,p}}]<\infty$.
By Property A'', there exists $\delta\in (0,1)$ such that $\hf p+d+\nu \frac{p}{\max\setn{r,p}}(1-\delta)<0$,  $\lf p+d+\nu \frac{p}{\max\setn{r,p}}(1-\delta)<0$, and $s+\frac{d}{2}+\alpha+ \frac{\mu}{\max\setn{r,p}}(1-\delta)<0$.
Moreover, we choose $\eps\in (0,\eps_0)$ and $\delta+\delta\eps-\eps\geq 0$.
From \cref{thm:norm-lemma-bernoulli} and the monotonicity of the expectation, we obtain $\EE[\mnorm{a}_{b^s_{p,q}(\RR^d)}^r]\lesssim \EE[\widetilde{\Xi}^{\frac{r}{p}}]$ with $\widetilde{\Xi}$ as in \eqref{eq:master-quantity-bernoulli}.
Next we show that $\EE[\widetilde{\Xi}^{\frac{r}{p}}]\stackrel{a.s.}{<}\infty$.
To this end, for fixed $t\in\setn{F,M}^d$, apply \cref{thm:bernoulli-lemma} to $X_{j,\tau}\defeq\abs{\xi_{j,t,\phi(\tau)}}^p$ and $\sigma\defeq \max\setn{1,\frac{r}{p}}$, where again $\phi:\NN\to\ZZ^d$ is a bijection such that $\tau\mapsto \mnorm{\phi(\tau)}_\infty$ is nondecreasing.
This lemma is applicable because
\begin{equation*}
\EE\left[X_{j,\tau}^{\sigma(1+\eps)}\right]=\EE\left[\abs{X}^{(1+\eps)\max\setn{p,r}}\right]\lesssim \EE\left[\abs{X}^{(1+\eps_0)\max\setn{p,r}}\right]<\infty.
\end{equation*}
Hence, \cref{thm:bernoulli-lemma} shows that $\EE[\widetilde{\Xi}_t^\sigma]<\infty$ and thus also $\EE[\widetilde{\Xi}_t^{\frac{r}{p}}]<\infty$ by Jensen's inequality.
It follows that also
\begin{equation*}
\EE\left[\widetilde{\Xi}^{\frac{r}{p}}\right]=\EE\left[\max_{t\in\setn{F,M}^d}\widetilde{\Xi}_t^{\frac{r}{p}}\right]\leq\EE\left[\sum_{t\in\setn{F,M}^d}\widetilde{\Xi}_t^{\frac{r}{p}}\right]=\sum_{t\in\setn{F,M}^d}\EE\left[\widetilde{\Xi}_t^{\frac{r}{p}}\right]<\infty.
\end{equation*}
\emph{Step 2: $p=\infty$.} Note that $ \lambda_{j,t,m} \abs{\xi_{j,t,m}}\stackrel{a.s.}{\leq} R$ and thus
\begin{equation*}
\widetilde{Z}_{j,t}= \mnorm{\lr{\lr{1 + \frac{\mnorm{m}_\infty}{2^j}}^\hf \lambda_{j,t,m} \abs{\xi_{j,t,m}}}_{m\in\ZZ^d}}_{\ell_p}\stackrel{a.s.}{\leq}R
\end{equation*}
by $\hf\leq 0$ (which holds by Property A''),
\begin{equation*}
\widetilde{\eta}_1=\mnorm{((1+\mnorm{m}_\infty)^\lf \lambda_m\abs{\xi_m})_{m\in\ZZ^d}}_{\ell_p}\stackrel{a.s.}{\leq}R
\end{equation*}
by $\lf\leq 0$ (which holds by Property A''), and
\begin{equation*}
\widetilde{\eta}_2=\mnorm{(2^{j(s+\frac{d}{2}+\alpha)}\widetilde{Z}_{j,t})_{j\in\NN, t\in T_j}}_{\ell_q}\stackrel{a.s.}{\leq}R\mnorm{(2^{j(s+\frac{d}{2}+\alpha)})_{j\in\NN, t\in T_j}}_{\ell_q}\lesssim R,
\end{equation*}
since $s+\frac{d}{2}+\alpha\leq 0$ by Property A''.

\emph{Step 3.} If Properties A'' and B are satisfied, \cref{thm:wavelet-characterization} gives $\mnorm{f}_{B^s_{p,q}(\RR^d)}= \mnorm{S(a)}_{B^s_{p,q}(\RR^d)}\asymp  \mnorm{a}_{b^s_{p,q}(\RR^d)}$ and hence $\EE[\mnorm{f}_{B^s_{p,q}(\RR^d)}^r]\asymp \EE[\mnorm{a}_{b^s_{p,q}(\RR^d)}^r]<\infty$.
\end{proof}

\section{Necessity of Properties A and A'}\label{chap:necessity}
The weakest condition we consider is that $f\in B^s_{p,q}(\RR^d)$ almost surely.
In this section, we show that Property A is necessary for this when $f$ follows a Besov prior, and that Property A' is necessary if $f$ follows a \priortwo{} prior. 
For both priors, we start with the necessity for the finiteness of the Besov sequence norm of the wavelet coefficients of $f$, and then proceed with the necessity of for the finiteness of the Besov norm of $f$ itself.

\begin{Satz}\label{thm:target-statement-necessity}
Let $d\in\NN$, $s,\alpha,\hf,\lf,\theta \in\RR$, and $p,q\in (0,\infty]$.
Assume that $a$ follows a Besov sequence prior with parameters $(\alpha,\hf,\lf,\theta)$ and template random variable $X$ satisfying
\begin{equation}\label{eq:nontrivial}
\PP(X\neq 0)>0.
\end{equation}
If $a\in b^s_{p,q}(\RR^d)$ almost surely, then also Property A is satisfied.
\end{Satz}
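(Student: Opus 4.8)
The plan is to show the contrapositive: if Property A fails, then $a \notin b^s_{p,q}(\RR^d)$ with positive probability (in fact almost surely, by a zero–one law). Recall that
\begin{equation*}
\mnorm{a}_{b^s_{p,q}(\RR^d)}^q \gtrsim \sum_{j\in\NN,\,t\in T_j} 2^{j(s+\frac{d}{2}-\frac{d}{p})q}(j+1)^{\theta q}\,\mnorm{\lr{\lr{1+\tfrac{\mnorm{m}_\infty}{2^j}}^{\hf}\abs{\xi_{j,t,m}}}_{m}}_{\ell_p}^q
\end{equation*}
(with the obvious changes for $q=\infty$), so failure of membership should follow from lower bounds on the inner $\ell_p$-norms that hold because infinitely many of the i.i.d.\ variables $\xi_{j,t,m}$ are bounded away from $0$.

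First I would dispose of the $\ell_p$-side condition $\hf < -\frac{d}{p}$ (resp.\ $\hf\le 0$ when $p=\infty$, and analogously for $\lf$ at scale $j=0$). If $\hf \ge -\frac{d}{p}$ with $p<\infty$, then at any fixed $(j,t)$ the sum $\sum_{m}(1+\mnorm{m}_\infty/2^j)^{\hf p}\abs{\xi_{j,t,m}}^p$ has infinitely many essentially equal-sized blocks (over shells $2^\ell < \mnorm{m}_\infty \le 2^{\ell+1}$, each contributing weight $\asymp 2^{\ell d} 2^{(\ell-j)\hf p}$, which does not decay) times i.i.d.\ copies of $\abs{\xi}^p$; since $\PP(X\ne 0)>0$, a Borel–Cantelli argument shows the block sums are $\gtrsim 1$ infinitely often, so the inner $\ell_p$-norm is a.s.\ infinite already for a single $(j,t)$, hence $a\notin b^s_{p,q}$. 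The case $p=\infty$, $\hf>0$ is similar using $\sup_m (1+\mnorm{m}_\infty/2^j)^{\hf}\abs{\xi_{j,t,m}} = \infty$ a.s. This handles the necessity of $\hf<-\frac{d}{p}$ and, via the $j=0$ term (where $\lf$ appears), of $\lf<-\frac{d}{p}$.

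Next, assuming $\hf<-\frac{d}{p}$ (so the inner $\ell_p$-norms are finite and, crucially, are a.s.\ \emph{bounded below} by a positive constant along a subsequence — again because infinitely many $\abs{\xi_{j,t,m}}$ exceed some fixed $\epsilon>0$, now using that for each $j$ there are $\#T_j = 2^d-1 \ge 1$ choices of $t$ and the near-origin shells contribute $\gtrsim 1$), I would turn to the scale-side conditions. Write $c_j := \mnorm{(2^{j(s+\frac d2-\frac dp+\alpha)}(j+1)^\theta Z_{j,t})_{t\in T_j}}_{\ell_q}$-type quantities. A Paley–Zygmund / second-moment or direct Borel–Cantelli estimate gives $Z_{j,t}\gtrsim 2^{j d/p}$ infinitely often almost surely, hence $2^{j(s+\frac d2-\frac dp+\alpha)}(j+1)^\theta Z_{j,t} \gtrsim 2^{j(s+\frac d2+\alpha)}(j+1)^\theta$ infinitely often. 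If $s+\frac d2+\alpha>0$ this already diverges; if $s+\frac d2+\alpha=0$ and $\theta\ge -\frac1q$ (resp.\ $\theta>0$ when $q=\infty$) the $\ell_q$-sum $\sum_j (j+1)^{\theta q}$ diverges, so again $\mnorm{a}_{b^s_{p,q}}=\infty$ with positive probability. A Kolmogorov $0$–$1$ law upgrades "positive probability" to "almost surely", contradicting the hypothesis. Combining, Property A must hold.

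The main obstacle is the direction that derives the \emph{lower} bound $Z_{j,t}\gtrsim 2^{jd/p}$ (or its $p=\infty$ analogue) from below with enough uniformity to survive the $\ell_q$-summation over $j$: one needs that with probability one, for infinitely many $j$ the block of $\asymp 2^{jd}$ i.i.d.\ variables $\abs{\xi_{j,t,\phi(\tau)}}^p$ near the origin has average bounded below. This is a Borel–Cantelli estimate — the events "block average $\ge \epsilon^p\PP(\abs X>\epsilon)/2$" have probabilities bounded away from $0$ (by the weak law / Chebyshev) and, after passing to a sparse subsequence of scales $j$, become independent, so their intersection occurs infinitely often a.s. Care is needed to pick $\epsilon>0$ with $\PP(\abs X>\epsilon)>0$, which is exactly where \eqref{eq:nontrivial} enters, and to handle the $q=\infty$ case where one only needs a single divergent term rather than a divergent series.
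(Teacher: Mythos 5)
Your overall strategy is essentially the paper's: pick $R$ with $\PP(\tfrac1R\le\abs X\le R)>0$, truncate, use a second-moment/Paley--Zygmund bound to get $Z_{j,t}\gtrsim 2^{jd/p}$ with probability bounded below uniformly in $j$, and then force divergence of the resulting random $\ell_q$-series over $j$. Your shell-by-shell Borel--Cantelli argument for $\gamma,\hf<-\frac dp$ is a valid variant of the paper's direct appeal to \cref{thm:workhorse-lemma}.

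However, there is a real gap in the scale-side step when $q<\infty$, $s+\frac d2+\alpha=0$, and $-\frac1q\le\theta<0$. From the second-moment bound you get independent indicators $\chi_{j,t}=\one\{Z_{j,t}\gtrsim 2^{jd/p}\}$ with $\PP(\chi_{j,t}=1)\ge\kappa>0$, and you argue: ``$\chi_{j,t}=1$ infinitely often (converse Borel--Cantelli) and $\sum_j(j+1)^{\theta q}=\infty$, hence $\sum_j (j+1)^{\theta q}\chi_{j,t}=\infty$.'' That inference is false in general: a divergent series $\sum_j a_j$ with $a_j\to0$ can have a convergent subseries $\sum_{j\in I}a_j$ over an infinite but sparse random $I$ (take $a_j=(j+1)^{-1}$ and $I=\{2^k\}$). ``Infinitely often'' only suffices when the terms do not vanish, i.e.\ $s+\frac d2+\alpha>0$, or $s+\frac d2+\alpha=0$ with $\theta\ge 0$, or $q=\infty$. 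For the remaining regime you need the convergence criterion for sums of independent nonnegative bounded variables — \cref{thm:workhorse-lemma}, i.e.\ \cite[Prop.~5.14]{Kallenberg2021}, equivalently the divergence half of Kolmogorov's three-series theorem — applied to $\zeta_j=(j+1)^{\theta q}\chi_{j,t}\le1$: since $\sum_j\EE[\zeta_j]\ge\kappa\sum_j(j+1)^{\theta q}=\infty$, the series diverges almost surely. Replacing the Borel--Cantelli appeal by this lemma closes the gap and also makes the $0$--$1$ law you invoke redundant. Finally, your remark about ``passing to a sparse subsequence of scales $j$ to become independent'' is unnecessary and suggests a misconception: for distinct $(j,t)$ the families $(\xi_{j,t,m})_{m\in\ZZ^d}$ are disjoint, so the $\chi_{j,t}$ are already jointly independent.
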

\begin{proof}
For $j\in\NN_0$ and $t\in T_j$, we define the following (random) quantities
\begin{align}
Z_{j,t}&\defeq \mnorm{\lr{\lr{1 + \frac{\mnorm{m}_\infty}{2^j}}^\hf \abs{\xi_{j,t,m}}}_{m\in\ZZ^d}}_{\ell_p},\nonumber\\
\eta_1&\defeq\mnorm{\lr{(1+\mnorm{m}_\infty)^\lf \abs{\xi_m}}_{m\in\ZZ^d}}_{\ell_p},\nonumber\\
\eta_2&\defeq\mnorm{\lr{2^{j(s+\frac{d}{2}-\frac{d}{p}+\alpha)}(j+1)^\theta Z_{j,t}}_{j\in\NN, t\in T_j}}_{\ell_q}.\label{eq:high-frequency-part}
\end{align}
We have $\mnorm{a}_{b^s_{p,q}(\RR^d)}\stackrel{a.s.}{<}\infty$ if and only if $\eta_1\stackrel{a.s.}{<}\infty$ and $\eta_2\stackrel{a.s.}{<}\infty$.\\[\baselineskip]
Since $\PP(X\neq 0)>0$, there exists $R\geq 1$ such that $\PP(\frac{1}{R}\leq \abs{X}\leq R)>0$.
For $(j,t,m)\in J$, define $X_{j,t,m}\defeq \one_{\abs{\xi_{j,t,m}}\geq\frac{1}{R}}$.
We write $X_m\defeq X_{j,t,m}$ when $j=0$ (and consequently $t=(F,\ldots,F)$).\\[\baselineskip]
\emph{Step 1a: When $p\neq \infty$, we show that necessarily $\lf <-\frac{d}{p}$.}
Note that $\abs{\xi_m}^p\geq R^{-p} X_m$.
We thus obtain from
\begin{equation*}
\infty\stackrel{a.s.}{>}\eta_1^p\geq R^{-p} \sum_{m\in\ZZ^d}(1+\mnorm{m}_\infty)^{\lf p} X_m
\end{equation*}
and \cref{thm:workhorse-lemma} that
\begin{align*}
\infty &> \sum_{m\in\ZZ^d} \EE\left[\min\setn{1,(1+\mnorm{m}_\infty)^{\lf p}X_m}\right]\\
&=\sum_{m\in\ZZ^d} \EE\left[X_m \min\setn{1,(1+\mnorm{m}_\infty)^{\lf p}}\right]\\
&=\EE[X_0] \sum_{m\in\ZZ^d}  \min\setn{1,(1+\mnorm{m}_\infty)^{\lf p}}.
\end{align*}
Because of $\EE[X_0]>0$, this implies first of all that $  \min\setn{1,(1+\mnorm{m}_\infty)^{\lf p}}\to 0$ as $\mnorm{m}_\infty\to\infty$, and hence $\lf p<0$, so that $\min\setn{1,(1+\mnorm{m}_\infty)^{\lf p}}=(1+\mnorm{m}_\infty)^{\lf p}$.
Then \cref{thm:count-lattice-points} yields $\lf <-\frac{d}{p}$.\\[\baselineskip]
\emph{Step 1b: When $p=\infty$, we show that necessarily $\lf\leq 0$.}
If we assume $\lf >0$, then
\begin{equation*}
\infty\stackrel{a.s.}{>}\eta_1 \geq R^{-1}\sup_{m\in\ZZ^d}(1+\mnorm{m}_\infty)^\lf X_m
\end{equation*}
implies that almost surely, $X_m=1$ only holds for finitely many $m\in\ZZ^d$.
Since the $X_m$ are independent, the Borel--Cantelli lemma gives $\sum_{m\in\ZZ^d}\PP(X_m=1)<\infty$, which yields the desired contradiction since $\PP(X_m=1)=\PP(\abs{\xi_m}\geq\frac{1}{R})>0$ is independent of $m$.\\[\baselineskip]
\emph{Step 2a: When $p\neq\infty$, we show the remaining part of Property A.}
Let $\xi_{j,t,m}^\ast \defeq \xi_{j,t,m}\one_{\frac{1}{R}\leq\abs{\xi_{j,t,m}}\leq R}$ for $(j,t,m)\in J$ be the truncation of $\xi_{j,t,m}$.
Then the random variables $\xi_{j,t,m}^\ast$ for $(j,t,m)\in J$ are independent and identically distributed with $\EE[\abs{\xi_{j,t,m}^\ast}^r]<\infty$ for all $r\in (0,\infty)$, and
\begin{equation*}
\infty \stackrel{a.s.}{>}\eta_2\geq \mnorm{\lr{2^{j(s+\frac{d}{2}-\frac{d}{p}+\alpha)}(j+1)^\theta Y_{j,t}^\frac{1}{p}}_{j\in\NN,t\in T_j}}_{\ell_q}
\end{equation*}
where, for $j\in\NN$ and $t\in T_j$, we set
\begin{equation*}
Y_{j,t}\defeq \mnorm{\lr{\lr{1 + \frac{\mnorm{m}_\infty}{2^j}}^\hf \abs{\xi_{j,t,m}^\ast}}_{m\in\ZZ^d}}_{\ell_p}^p.
\end{equation*}
In particular, we see that almost surely, we have for  $t^\ast=(M,\ldots,M)$ that
\begin{equation*}
\infty >Y_{1,t^\ast}=\sum_{m\in\ZZ^d}\lr{1+\frac{\mnorm{m}_\infty}{2}}^{\hf p}\abs{\xi_{1,t^\ast,m}^\ast}^p,
\end{equation*}
so that \cref{thm:workhorse-lemma} shows for $X^\ast_{j,t,m}\defeq \one_{\frac{1}{R}\leq \abs{\xi_{j,t,m}}\leq R}$ that
\begin{align*}
\infty &>\sum_{m\in\ZZ^d}\EE\left[\min\setn{1,\lr{1+\frac{\mnorm{m}_\infty}{2}}^{\hf p}\abs{\xi_{1,t^\ast,m}^\ast}^p}\right]\\
&\gtrsim R^{-p}\sum_{m\in\ZZ^d}\EE\left[X^\ast_{1,t^\ast,m}\min\setn{1,(1+\mnorm{m}_{\infty})^{\hf p}}\right]\\
&\gtrsim \sum_{m\in\ZZ^d}\min\setn{1,(1+\mnorm{m}_{\infty})^{\hf p}}.
\end{align*}
Using a similar argument as in Step 1a, we obtain $\hf <-\frac{d}{p}$.

Now, the fact that the random variables $\xi_{j,t,m}$ are identically distributed and $\hf p<-d$ yields with \cref{thm:count-lattice-points} that
\begin{align}
\EE[Y_{j,t}]&=\sum_{m\in\ZZ^d}\lr{1+\frac{\mnorm{m}_\infty}{2^j}}^{\hf p}\EE[\abs{\xi_{j,t,m}^\ast}^p]\nonumber\\
&=\EE[\abs{\xi_0^\ast}^p]\sum_{m\in\ZZ^d}\lr{1+\frac{\mnorm{m}_\infty}{2^j}}^{\hf p}\asymp 2^{jd},\label{eq:e-cj}
\end{align}
uniformly with respect to $j\in\NN$ and $t\in T_j$.
Here, we used that $\PP(\abs{\xi_0^\ast}>0)=\PP(\frac{1}{R}\leq \abs{X}\leq R)>0$, so that $\EE[\abs{\xi_0^\ast}^p]>0$.

Now let $\tau\in (1,\infty)$ and let $\tau^\prime=\frac{\tau}{\tau-1}$ the conjugate exponent of $\tau$.
Since $\EE[\abs{\xi_0^\ast}^{p\tau}]<\infty$, \cref{thm:count-lattice-points} and Hölder's inequality give
\begin{align}
\EE[Y_{j,t}^\tau]&=\EE\left[\lr{\sum_{m\in\ZZ^d}\lr{1+\frac{\mnorm{m}_\infty}{2^j}}^{\hf p} \abs{\xi_{j,t,m}^\ast}^p}^\tau\right]\nonumber\\
&=\EE\left[\lr{\sum_{m\in\ZZ^d}\lr{1+\frac{\mnorm{m}_\infty}{2^j}}^{\frac{\hf p}{\tau}}\lr{1+\frac{\mnorm{m}_\infty}{2^j}}^{\frac{\hf p}{\tau^\prime}} \abs{\xi_{j,t,m}^\ast}^p}^\tau\right]\nonumber\\
&\leq\EE\left[ \lr{\sum_{m\in\ZZ^d}\lr{1+\frac{\mnorm{m}_\infty}{2^j}}^{\hf p}\abs{\xi_{j,t,m}^\ast}^{p\tau}}\lr{\sum_{m\in\ZZ^d}\lr{1+\frac{\mnorm{m}_\infty}{2^j}}^{\hf p}}^{\frac{\tau}{\tau^\prime}}\right]\nonumber\\
&=\EE\left[ \sum_{m\in\ZZ^d}\lr{1+\frac{\mnorm{m}_\infty}{2^j}}^{\hf p}\abs{\xi_{j,t,m}^\ast}^{p\tau}\right]\lr{\sum_{m\in\ZZ^d}\lr{1+\frac{\mnorm{m}_\infty}{2^j}}^{\hf p}}^{\frac{\tau}{\tau^\prime}}\nonumber\\
&\asymp\sum_{m\in\ZZ^d}\lr{1+\frac{\mnorm{m}_\infty}{2^j}}^{\hf p}\EE\left[\abs{\xi_0^\ast}^{p\tau}\right]2^{jd\frac{\tau}{\tau^\prime}}\nonumber\\
&\lesssim  2^{jd} 2^{jd\frac{\tau}{\tau^\prime}}\nonumber\\
&= 2^{jd\tau}.\label{eq:e-cj-tau}
\end{align}
Now let $\tau=2$ in \eqref{eq:e-cj} and \eqref{eq:e-cj-tau} and invoke \cref{thm:paley-zygmund} (Paley--Zygmund inequality) with $X=Y_{j,t}$ and $\sigma=\frac{1}{2}$.
We obtain
\begin{equation*}
\PP\lr{Y_{j,t}\geq \frac{1}{2}\EE[Y_{j,t}]}\geq \frac{1}{4}\frac{\EE[Y_{j,t}]^2}{\EE[Y_{j,t}^2]}\gtrsim 1,
\end{equation*}
which means that there exists a constant $\kappa \in (0,1]$ such that 
\begin{equation*}
\PP\lr{Y_{j,t}\geq \frac{1}{2}\EE[Y_{j,t}]}\geq\kappa
\end{equation*}
for all $j\in\NN$ and $t\in T_j$.

The random variables $\chi_{j,t}\defeq\one_{\setn{Y_{j,t}\geq \frac{1}{2}\EE[Y_{j,t}]}}$ with $j\in\NN$ and $t\in T_j$ are independent and Bernoulli$(p_{j,t})$-distributed, for some $p_{j,t}\geq \kappa$.
Using \eqref{eq:e-cj}, we obtain
\begin{align*}
\infty&\stackrel{a.s}{>}\mnorm{\lr{2^{j(s+\frac{d}{2}-\frac{d}{p}+\alpha)}(j+1)^\theta Y_{j,t}^{\frac{1}{p}}}_{j\in\NN,t\in T_j}}_{\ell_q}\\
&\geq \mnorm{\lr{2^{j(s+\frac{d}{2}-\frac{d}{p}+\alpha)}(j+1)^\theta \lr{\frac{1}{2}\EE[Y_{j,t}]}^{\frac{1}{p}} \chi_{j,t}}_{j\in\NN,t\in T_j}}_{\ell_q}\\
&\gtrsim \mnorm{\lr{2^{j(s+\frac{d}{2}-\frac{d}{p}+\alpha)}(j+1)^\theta  \lr{2^{jd}}^{\frac{1}{p}} \chi_{j,t}}_{j\in\NN,t\in T_j}}_{\ell_q}\\
&\gtrsim \mnorm{\lr{2^{j(s+\frac{d}{2}+\alpha)}(j+1)^\theta  \chi_{j,t}}_{j\in\NN,t\in T_j}}_{\ell_q}.
\end{align*}
Let $t^\ast=(M,\ldots,M)$.
If $q=\infty$, note that the events $E_j\defeq\setn{\chi_{j,t^\ast}=1}$ for $j\in \NN$ are independent, and it holds $\sum_{j\in\NN}\PP(E_j)\geq \sum_{j\in\NN}\kappa =\infty$. 
The Borel--Cantelli lemma thus yields that almost surely, the (random) set $I\defeq\setcond{j\in\NN}{\chi_{j,t^\ast}=1}$ is an infinite set and 
\begin{equation*}
\infty \stackrel{a.s.}{>} \mnorm{\lr{2^{j(s+\frac{d}{2}+\alpha)}(j+1)^\theta  \chi_{j,t^\ast}}_{j\in\NN}}_{\ell_q}\geq \sup_{j\in I}2^{j(s+\frac{d}{2}+\alpha)}(j+1)^\theta.
\end{equation*}
This shows that $s+\frac{d}{2}+\alpha \leq 0$ and in case of $s+\frac{d}{2}+\alpha=0$ also that $\theta\leq 0$.
On the other hand, if $q\neq \infty$,  it follows from \cref{thm:workhorse-lemma} that
\begin{align*}
\infty&>\sum_{j\in\NN}\sum_{t\in T_j}\EE\left[\min\setn{1,2^{jq(s+\frac{d}{2}+\alpha)}(j+1)^{\theta q}  \chi_{j,t}}\right]\\
&=\sum_{j\in\NN}\sum_{t\in T_j}\EE\left[\chi_{j,t}\min\setn{1,2^{jq(s+\frac{d}{2}+\alpha)}(j+1)^{\theta q} }\right]\\
&=\sum_{j\in\NN}\sum_{t\in T_j} \EE[\chi_{j,t}]\min\setn{1,2^{jq(s+\frac{d}{2}+\alpha)}(j+1)^{\theta q}  }\\
&\geq\kappa\sum_{j\in\NN} \min\setn{1,2^{jq(s+\frac{d}{2}+\alpha)}(j+1)^{\theta q} },
\end{align*}
i.e., $s+\frac{d}{2}+\alpha \leq 0$ and moreover $\theta q <-1$ in case that $s+\frac{d}{2}+\alpha=0$.\\[\baselineskip]
\emph{Step 2b: When $p=\infty$, we show the remaining part of Property A.}
Suppose $\hf>0$.
Note because of $\eta_2\stackrel{a.s.}{<}\infty$ that
\begin{equation*}
\infty\stackrel{a.s.}{>}Z_{1,t^\ast} \geq R^{-1}\sup_{m\in\ZZ^d}\lr{1+\frac{\mnorm{m}_\infty}{2}}^\hf X_{1,t^\ast,m}.
\end{equation*}
But then it holds almost surely that $X_{1,t^\ast,m}=1$ for only finitely many $m\in\ZZ^d$.
The Borel--Cantelli lemma shows that $\sum_{m\in\ZZ^d}\PP(X_{1,t^\ast,m}=1)<\infty$, which is absurd since $\PP(X_{1,t^\ast,m}=1)=\PP(\abss{\xi_{1,t^\ast,m}}\geq \frac{1}{R})>0$ is independent of $m\in \ZZ^d$.
Hence $\hf \leq 0$.

Now that we know $\hf\leq 0$, we conclude
\begin{align*}
\infty \stackrel{a.s.}{>}\eta_2\geq \mnorm{ \lr{2^{j(s+\frac{d}{2}+\alpha)}(j+1)^\theta\xi_{j,t^\ast,0}}_{j\in\NN}  }_{\ell_q}.
\end{align*}
If $q=\infty$, then the Borel--Cantelli lemma implies as in Step 1b that $s+\frac{d}{2}+\alpha\leq 0$ and that $\theta\leq 0$ in case of $s+\frac{d}{2}+\alpha=0$.

On the other hand, if $q\neq \infty$, we know that with $t^\ast=(M,\ldots,M)$, we have
\begin{align*}
\infty&\stackrel{a.s.}{>}\mnorm{\lr{2^{j(s+\frac{d}{2}+\alpha)}(j+1)^\theta \xi_{j,t^\ast,0}}_{j\in\NN}}_{\ell_q}^q\\
&= \sum_{j\in\NN}2^{jq(s+\frac{d}{2}+\alpha)}(j+1)^{\theta q}\abs{\xi_{j,t^\ast,0}}^q\\
&\geq R^{-q}\sum_{j\in\NN}2^{jq(s+\frac{d}{2}+\alpha)}(j+1)^{\theta q}X_{j,t^\ast,0}.
\end{align*}
The random variables $X_{j,t^\ast,0}$ are independent as well as identically Bernoulli$(\varrho)$-distributed with some parameter $\varrho\in (0,1]$.
\cref{thm:workhorse-lemma} now gives
\begin{align*}
\infty&>\sum_{j\in\NN}\EE\left[\min\setn{1,2^{jq(s+\frac{d}{2}+\alpha)}(j+1)^{\theta q}X_{j,t^\ast,0}}\right]\\
&=\sum_{j\in\NN}\EE\left[X_{j,t^\ast,0}\min\setn{1,2^{jq(s+\frac{d}{2}+\alpha)}(j+1)^{\theta q}}\right]\\
&=\varrho\sum_{j\in\NN}\min\setn{1,2^{jq(s+\frac{d}{2}+\alpha)}(j+1)^{\theta q}}
\end{align*}
This first of all implies that $\lim_{j\to\infty}\min\setn{1,2^{jq(s+\frac{d}{2}+\alpha)}(j+1)^{\theta q}}=0$, and hence $s+\frac{d}{2}+\alpha\leq 0$.
In case $s+\frac{d}{2}+\alpha=0$, it also follows that $\sum_{j\in\NN}(j+1)^{\theta q}<\infty$, i.e., $\theta q<-1$.
\end{proof}
The subtle distinction between the conditions of \cref{thm:target-statement-necessity} and the following \cref{thm:target-statement-necessity-functions} is ignored in most of the literature, where it is simply pretended that if $f=S(a)$, then the conditions $f\in B^s_{p,q}(\RR^d)$ and $a\in b^s_{p,q}(\RR^d)$ are equivalent.
However, one only knows that 
\begin{enumerate}[label={(\roman*)},leftmargin=*,align=left,noitemsep]
\item{if $a,\overline{a}\in b^s_{p,q}(\RR^d)$ with $S(a)=f=S(\overline{a})$, then $a=\overline{a}$,}
\item{if $f\in B^s_{p,q}(\RR^d)$, then $f=S(\overline{a})$ for some $\overline{a}\in b^s_{p,q}(\RR^d)$.}
\end{enumerate}
Without knowing anything about the decay of the coefficient sequence $a$, it is not clear that this implies $a=\overline{a} \in b^s_{p,q}(\RR^d)$.
\begin{Satz}\label{thm:target-statement-necessity-functions}
Let $d,k\in\NN$, $s,\alpha,\hf,\lf,\theta \in\RR$, and $p,q\in (0,\infty]$.
Assume that $f$ follows a Besov prior with parameters $(\alpha,\hf,\lf,\theta,k)$ and template random variable $X$ with 
\begin{equation*}
\begin{cases} \ex \eps>0:\,\EE\bigl[\abs{X}^{p(1+\eps)}\bigr]< \infty&\text{if }p\in (0,\infty),\\
\ex  R>0:\, \abs{X}\stackrel{a.s.}{\leq}R&\text{if }p=\infty.\end{cases}
\end{equation*}
and
\begin{equation*}
\PP(X\neq 0)>0.
\end{equation*}
If $f \in B^s_{p,q}(\RR^d)$ almost surely and if Properties B and B' are satisfied, then also Property A is satisfied.
\end{Satz}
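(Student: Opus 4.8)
The plan is to reduce the statement to \cref{thm:target-statement-necessity} by proving that the (random) coefficient sequence $a=(a_{j,t,m})_{(j,t,m)\in J}$ occurring in the expansion \eqref{eq:expansion} of $f$ almost surely belongs to $b^s_{p,q}(\RR^d)$. Once this is shown, the claim follows: $a$ follows a Besov sequence prior with parameters $(\alpha,\hf,\lf,\theta)$, and the hypothesis $\PP(X\neq 0)>0$ of \cref{thm:target-statement-necessity} is assumed, so that theorem gives Property~A. The delicate point is that \cref{thm:wavelet-characterization} only provides uniqueness of the coefficient sequence \emph{within} $b^s_{p,q}(\RR^d)$; hence from $f\in B^s_{p,q}(\RR^d)$ we cannot immediately infer that the formally prescribed $a$ equals the genuine wavelet coefficient sequence $I(f)$ of $f$. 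We bridge this gap via an auxiliary weighted Besov sequence space to which $a$ belongs a priori.

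\textbf{Step 1 (an a priori space for $a$).} Choose $\sigma_0\leq 0$ so negative that $\hf+\sigma_0<-\frac{d}{p}$ and $\lf+\sigma_0<-\frac{d}{p}$ (for $p=\infty$: so that $\hf+\sigma_0\leq 0$ and $\lf+\sigma_0\leq 0$); recall $w_{\sigma_0}$ is admissible, see \cref{remark:weights}. Choose $s_0<-\frac{d}{2}-\alpha$, and, invoking Property~B', choose it close enough to $-\frac{d}{2}-\alpha$ that $k$ satisfies the hypothesis of \cref{thm:wavelet-characterization} for the space $b^{s_0}_{p,\infty}(\RR^d,w_{\sigma_0})$ --- this is possible because that hypothesis concerns the size of $k$ relative to the smoothness $-\frac{d}{2}-\alpha$, governed by $\alpha$ and not by the target parameter $s$, which is exactly what Property~B' controls. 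Now repeat the estimate from the proof of \cref{thm:norm-lemma} with the weight absorbed: using $w_{\sigma_0}(2^{-(j-\delta_{j\neq 0})}m)\asymp\lr{1+\frac{\mnorm{m}_\infty}{2^j}}^{\sigma_0}$ from \cref{remark:weights} and applying \cref{thm:master-lemma} with $\hf+\sigma_0$ in place of $\hf$ (and with $\lf+\sigma_0$ and $j=0$ for the block $j=0$), one obtains $\mnorm{a}_{b^{s_0}_{p,\infty}(\RR^d,w_{\sigma_0})}\lesssim\Xi^{1/p}$ with $\Xi$ as in \eqref{eq:master-quantity}; here the (freely arranged) inequalities $\hf+\sigma_0<-\frac dp$, $\lf+\sigma_0<-\frac dp$, and $s_0+\frac d2+\alpha<0$ play the role that Property~A plays in \cref{thm:norm-lemma}. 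Since $\EE[\abs{X}^{p(1+\eps)}]<\infty$, the argument in Step~1 of the proof of \cref{thm:target-statement-sufficiency} (an application of \cref{thm:1+eps}) shows $\Xi<\infty$ almost surely; hence $a\in b^{s_0}_{p,\infty}(\RR^d,w_{\sigma_0})$ almost surely. For $p=\infty$ this membership is immediate from $\abs{\xi_{j,t,m}}\stackrel{a.s.}{\leq}R$, $\hf+\sigma_0\leq 0$, $\lf+\sigma_0\leq 0$, and $s_0+\frac d2+\alpha<0$.

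\textbf{Step 2 (identification and conclusion).} Work on the almost sure event on which both $f\in B^s_{p,q}(\RR^d)$ and $a\in b^{s_0}_{p,\infty}(\RR^d,w_{\sigma_0})$ hold. By \cref{thm:wavelet-characterization} applied to $B^s_{p,q}(\RR^d)$ --- applicable by Property~B --- we have $f=S(\bar a)$ with $\bar a\defeq I(f)\in b^s_{p,q}(\RR^d)$, the series converging in $\CalS^\prime(\RR^d)$; and $f=S(a)$ with convergence in $\CalS^\prime(\RR^d)$ by \eqref{eq:expansion}. Thus $S(a-\bar a)=0$ in $\CalS^\prime(\RR^d)$. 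Put $s_1\defeq\min\setn{s,s_0}$. The elementary embeddings $b^s_{p,q}(\RR^d)\hookrightarrow b^{s_1}_{p,\infty}(\RR^d,w_{\sigma_0})$ and $b^{s_0}_{p,\infty}(\RR^d,w_{\sigma_0})\hookrightarrow b^{s_1}_{p,\infty}(\RR^d,w_{\sigma_0})$ --- valid since $s_1\leq s$, $s_1\leq s_0$, $q\leq\infty$, and $w_{\sigma_0}\leq 1$ pointwise --- give $a,\bar a\in b^{s_1}_{p,\infty}(\RR^d,w_{\sigma_0})$, and Properties~B and~B' together ensure that $k$ satisfies the hypothesis of \cref{thm:wavelet-characterization} for this space as well. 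Since $S$ is then an isomorphism $b^{s_1}_{p,\infty}(\RR^d,w_{\sigma_0})\to B^{s_1}_{p,\infty}(\RR^d,w_{\sigma_0})$, in particular injective, and $a-\bar a$ lies in its domain with $S(a-\bar a)=0$, we conclude $a=\bar a\in b^s_{p,q}(\RR^d)$. As this holds on an almost sure event, $a\in b^s_{p,q}(\RR^d)$ almost surely, and \cref{thm:target-statement-necessity} yields Property~A.

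I expect the main obstacle to be the identification $a=I(f)$ in Step~2. It genuinely requires the a priori membership established in Step~1, and that step works only because \cref{thm:master-lemma} yields the sharp estimate $Z_{j,t}\lesssim 2^{jd/p}\,\Xi^{1/p}$ (with no superfluous power of $2^j$), which permits taking $s_0$ arbitrarily close to $-\frac d2-\alpha$; a coarser pointwise bound on the $\xi_{j,t,m}$ via the Borel--Cantelli lemma would lose an additional $\frac dp$ of smoothness and destroy the matching with Property~B'.
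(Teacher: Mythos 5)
Your proof is correct and follows essentially the same route as the paper's. Both arguments first establish a priori membership of the coefficient sequence $a$ in an auxiliary weighted Besov sequence space $b^{\tilde s}_{p,\tilde q}(\RR^d,w_\sigma)$ (using the moment hypothesis on $X$), then invoke the injectivity of $S$ on that common space via \cref{thm:wavelet-characterization} to identify $a$ with the true wavelet coefficient sequence of $f$, and finally apply \cref{thm:target-statement-necessity}; the paper routes the a priori membership through \cref{thm:target-statement-sufficiency} and \cref{remark:weights}, whereas you rerun the \cref{thm:master-lemma}/\cref{thm:norm-lemma} estimate directly with the weight absorbed and use $\tilde q=\infty$ instead of $\tilde q=q$, but these are cosmetic differences. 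One small imprecision: you describe $s_0$ as "close enough to $-\frac{d}{2}-\alpha$," but the actual constraint (stated explicitly in the paper via the choice $\widetilde s\in(-k,s)$, resp.\ $\widetilde s\in(d(1/p-1)-k,s)$) is that $s_0$ must lie in the interval where \cref{thm:wavelet-characterization} applies, which Property~B' guarantees to be nonempty; when $-\frac{d}{2}-\alpha$ is large this interval need not abut $-\frac{d}{2}-\alpha$, so "close" is not quite the right word, though the argument you intend goes through.
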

\begin{proof}
From Property B, it follows that $d\lr{\frac{1}{p}-1}-k<s<k$ if $p\in (0,1)$ and $-k<s<k$ otherwise.
Taking Property B' into account, we may choose $\widetilde{s}\in\RR$ such that $d\lr{\frac{1}{p}-1}-k<\widetilde{s}<s$ and $\widetilde{s}+\frac{d}{2}+\alpha<0$ when $p\in (0,1)$.
On the other hand, if $p\in [1,\infty]$, we may choose  $\widetilde{s}\in\RR$ such that $-k<\widetilde{s}<s$ and $\widetilde{s}+\frac{d}{2}+\alpha<0$.
Note that in each of the two cases Property B is satisfied with $\widetilde{s}$ in place of $s$.
Choose $\sigma<0$ small enough such that $\lf+\sigma <-\frac{d}{p}$ and $\hf+\sigma<-\frac{d}{p}$ when $p\neq \infty$, and $\lf+\sigma \leq 0$ and $\hf+\sigma\leq 0$ when $p=\infty$.
Since \eqref{eq:finite-moments} holds, we conclude from \cref{thm:target-statement-sufficiency} that $\bar{a}=(\bar{a}_{j,t,m})_{(j,t,m)\in J}\in b^{\widetilde{s}}_{p,q}(\RR^d)$ almost surely where $\xi_{j,t,m}\sim X$ are the i.i.d.\ random variables from \eqref{eq:expansion} and
\begin{equation*}
\bar{a}_{j,t,m}\defeq 2^{j\alpha}(j+1)^\theta \lr{1+\frac{\mnorm{m}_\infty}{2^j}}^{\hf+\sigma +(\lf-\hf)\one_{j=0}}\xi_{j,t,m}
\end{equation*}
With $w:\RR^d\to\RR$, $w_\sigma(x)=(1+\mnorm{x}_2^2)^{\frac{\sigma}{2}}$, we then see in view of \cref{remark:weights} that $a=(a_{j,t,m})_{(j,t,m)\in J}\in b^{\widetilde{s}}_{p,q}(\RR^d,w_\sigma)$ where
\begin{equation*}
a_{j,t,m}=2^{j\alpha}(j+1)^\theta \lr{1+\frac{\mnorm{m}_\infty}{2^j}}^{\hf+(\lf-\hf)\one_{j=0}}\xi_{j,t,m}.
\end{equation*}
If $f\in B^s_{p,q}(\RR^d)$ almost surely, then almost surely $f=S(\widetilde{a})$ for some $\widetilde{a}\in b^s_{p,q}(\RR^d)$ by  \cref{thm:wavelet-characterization}.
From $\sigma<0$ and $s>\widetilde{s}$, we get the embeddings
\begin{equation*}
b^s_{p,q}(\RR^d)\hookrightarrow b^s_{p,q}(\RR^d,w_\sigma)\hookrightarrow b^{\widetilde{s}}_{p,q}(\RR^d,w_\sigma).
\end{equation*} 
But this implies $a,\widetilde{a}\in  b^{\widetilde{s}}_{p,q}(\RR^d,w_\sigma)$ almost surely with $S(a)=f=S(\widetilde{a})$, from which it follows that $a=\widetilde{a}\in b^s_{p,q}(\RR^d)$ almost surely by \cref{thm:wavelet-characterization}.
From \cref{thm:target-statement-necessity}, we conclude that Property A holds.
\end{proof}
We now show that Property A' is necessary for the coefficient sequence in \eqref{eq:expansion-bernoulli} to almost surely belong to the Besov space $b^s_{p,q}(\RR^d)$. 
Our a priori assumptions  $\mu\geq -d$ and $\nu\geq -d$ in the following theorem are parallel to the assumption $\beta\leq 1$ in \cite[Theorem~1]{Abramovich1998}.
\begin{Satz}\label{thm:necessity-bernoulli}
Let $d\in\NN$, $s,\alpha,\hf,\lf \in\RR$, $p,q\in (0,\infty]$, and $\mu,\nu \in [-d,0]$.
Assume that $a$ follows a \priortwo{} sequence prior parameters $(\alpha,\hf,\lf,\mu,\nu)$ with template variable $X$ satisfying
\begin{equation*}
\PP(X\neq 0)>0.
\end{equation*}
If $a \in b^s_{p,q}(\RR^d)$ almost surely, then also Property A' is satisfied.
\end{Satz}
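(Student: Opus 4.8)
The plan is to mimic the proof of \cref{thm:target-statement-necessity}, the only new ingredient being the thinning variables $\lambda_{j,t,m}$. Since $\PP(X\neq 0)>0$, fix $R\geq 1$ with $\pi\defeq\PP(\tfrac1R\leq\abs X\leq R)>0$ and set $\widetilde\lambda_{j,t,m}\defeq\lambda_{j,t,m}\one_{\{\frac1R\leq\abs{\xi_{j,t,m}}\leq R\}}$; these are independent Bernoulli$(\pi\varrho_{j,m})$ variables and satisfy $\abs{\lambda_{j,t,m}\xi_{j,t,m}}\geq R^{-1}\widetilde\lambda_{j,t,m}$ and $\abs{\lambda_{j,t,m}\xi_{j,t,m}}^p\geq R^{-p}\widetilde\lambda_{j,t,m}$. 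Decompose $\mnorm{a}_{b^s_{p,q}(\RR^d)}$ into a coarse part $\eta_1\defeq\mnorm{((1+\mnorm m_\infty)^\lf\lambda_m\abs{\xi_m})_{m\in\ZZ^d}}_{\ell_p}$ (the $j=0$ contribution) and a fine part $\eta_2\defeq\mnorm{(2^{j(s+\frac d2-\frac dp+\alpha)}\widetilde{Z}_{j,t})_{j\in\NN,\,t\in T_j}}_{\ell_q}$ with $\widetilde{Z}_{j,t}\defeq\mnorm{((1+2^{-j}\mnorm m_\infty)^\hf\lambda_{j,t,m}\abs{\xi_{j,t,m}})_{m\in\ZZ^d}}_{\ell_p}$; then $\mnorm{a}_{b^s_{p,q}(\RR^d)}<\infty$ almost surely iff $\eta_1<\infty$ and $\eta_2<\infty$ almost surely, and I would handle the coarse and fine parts separately, distinguishing $p<\infty$ from $p=\infty$ in each.

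For the coarse part with $p<\infty$: from $\eta_1<\infty$ a.s.\ and \cref{thm:workhorse-lemma} applied to the independent summands $(1+\mnorm m_\infty)^{\lf p}\widetilde\lambda_m$ (where $\varrho_{0,m}=(1+\mnorm m_\infty)^\nu$), one gets $\sum_{m\in\ZZ^d}(1+\mnorm m_\infty)^\nu\min\{1,(1+\mnorm m_\infty)^{\lf p}\}<\infty$; since $\nu\geq-d$ this first forces $\lf p<0$ and then, by \cref{thm:count-lattice-points}, $\lf p+\nu<-d$, i.e.\ $\lf+\tfrac{d+\nu}{p}<0$. For $p=\infty$: if $\lf>0$ then $\eta_1<\infty$ forces $\widetilde\lambda_m=1$ for only finitely many $m$, so by independence and Borel--Cantelli $\sum_m\PP(\widetilde\lambda_m=1)=\pi\sum_m(1+\mnorm m_\infty)^\nu$ would be finite, contradicting $\nu\geq-d$; hence $\lf\leq0$. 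This establishes the conditions on $\lf$ in all four cases of Property A'.

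For the fine part with $p<\infty$: applying the same \cref{thm:workhorse-lemma}/\cref{thm:count-lattice-points} argument to $\widetilde{Z}_{1,t^\ast}^p$ with $t^\ast=(M,\dots,M)$ gives $\hf p+\nu<-d$, i.e.\ $\hf+\tfrac{d+\nu}{p}<0$ (in particular $\hf p<0$). Next, with the truncation $\xi^\ast_{j,t,m}\defeq\xi_{j,t,m}\one_{\{\frac1R\leq\abs{\xi_{j,t,m}}\leq R\}}$ and $\widetilde Y_{j,t}\defeq\sum_{m}(1+2^{-j}\mnorm m_\infty)^{\hf p}\lambda_{j,t,m}\abs{\xi^\ast_{j,t,m}}^p\leq\widetilde{Z}_{j,t}^p$, the independence of the summands together with $\lambda^2=\lambda$, the bounds $\hf p+\nu<-d$ and $2\hf p+\nu<-d$, and \cref{thm:count-lattice-points}, yield $\EE[\widetilde Y_{j,t}]\asymp2^{j(d+\mu)}$ and $\EE[\widetilde Y_{j,t}^2]\lesssim2^{j(d+\mu)}+2^{2j(d+\mu)}\asymp 2^{2j(d+\mu)}\asymp\EE[\widetilde Y_{j,t}]^2$ (using $\mu\geq-d$). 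The Paley--Zygmund inequality \cref{thm:paley-zygmund} then gives $\kappa>0$ with $\PP(\widetilde Y_{j,t}\geq\tfrac12\EE[\widetilde Y_{j,t}])\geq\kappa$ uniformly in $j,t$; the resulting independent indicators $\chi_{j,t}$ satisfy $\widetilde Z_{j,t}\gtrsim 2^{j(d+\mu)/p}\chi_{j,t}$ and hence $\mnorm{a}_{b^s_{p,q}(\RR^d)}\gtrsim\mnorm{(2^{j(s+\frac d2+\alpha+\frac\mu p)}\chi_{j,t})_{j\in\NN,\,t\in T_j}}_{\ell_q}$ (the exponent collapses since $-\tfrac dp+\tfrac{d+\mu}{p}=\tfrac\mu p$). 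Finiteness of this quasi-norm forces $s+\tfrac d2+\alpha+\tfrac\mu p<0$ if $q<\infty$ (via \cref{thm:workhorse-lemma}) and $s+\tfrac d2+\alpha+\tfrac\mu p\leq0$ if $q=\infty$ (via Borel--Cantelli applied to $\chi_{j,t^\ast}$), which are the remaining conditions in cases (a) and (b). For $p=\infty$: as before $\hf\leq0$; then for $\mnorm m_\infty\leq2^j$ we have $(1+2^{-j}\mnorm m_\infty)^\hf\geq2^\hf$ and $\varrho_{j,m}\geq2^\nu2^{j\mu}$, so $\chi_{j,t}\defeq\one_{\{\exists m:\ \mnorm m_\infty\leq2^j,\ \widetilde\lambda_{j,t,m}=1\}}$ satisfies $\PP(\chi_{j,t}=1)\geq1-\exp(-cM_j2^{j\mu})\geq1-\exp(-c')>0$ because $M_j2^{j\mu}\asymp2^{j(d+\mu)}\gtrsim1$ (as $\mu\geq-d$); these are independent over $(j,t)$ and $\widetilde Z_{j,t}\gtrsim\chi_{j,t}$, so $\mnorm{a}_{b^s_{p,q}(\RR^d)}\gtrsim\mnorm{(2^{j(s+\frac d2+\alpha)}\chi_{j,t})_{j,t}}_{\ell_q}$ and one concludes $s+\tfrac d2+\alpha<0$ (if $q<\infty$) or $\leq0$ (if $q=\infty$), giving cases (c) and (d).

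The main obstacle is the second-moment estimate $\EE[\widetilde Y_{j,t}^2]\asymp\EE[\widetilde Y_{j,t}]^2$ in the $p<\infty$ fine part: one must check that the random sparsity does not spoil the concentration, which rests on both lattice-point sums $\sum_m(1+2^{-j}\mnorm m_\infty)^{\hf p+\nu}$ and $\sum_m(1+2^{-j}\mnorm m_\infty)^{2\hf p+\nu}$ being $\asymp2^{jd}$ and on $d+\mu\geq0$. This is precisely where the a priori hypotheses $\mu,\nu\in[-d,0]$ (rather than merely $\leq0$) enter: they make these counting/summation bounds, and the ``at least one surviving coefficient in the ball $\mnorm m_\infty\leq2^j$'' bound in the $p=\infty$ case, come out with exactly the exponents needed to pin down Property A'.
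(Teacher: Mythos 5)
Your proof is correct and reaches exactly the conclusions of Property A', but the technical route for the step that forces $s+\tfrac d2+\alpha+\tfrac\mu p\leq 0$ (respectively $<0$) when $p<\infty$ is genuinely different from the paper's. You work with the full-lattice sum $\widetilde Y_{j,t}=\sum_{m\in\ZZ^d}(1+2^{-j}\mnorm m_\infty)^{\hf p}\lambda_{j,t,m}\abs{\xi^\ast_{j,t,m}}^p$ and compute its first and second moments directly via independence (separating diagonal and off-diagonal terms, using $\lambda^2=\lambda$ and applying \cref{thm:count-lattice-points} to the exponents $\hf p+\nu$ and $2\hf p+\nu$), so that Paley--Zygmund is applied to $\widetilde Y_{j,t}$ itself. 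The paper instead restricts to the inner ball $\mnorm m_\infty\leq 2^j$ to get $U_{j,t}$, couples $\lambda_{j,t,m}X^\ast_{j,t,m}$ pointwise to an i.i.d.\ Bernoulli$(2^{j\mu+\nu}\rho)$ family so that $U_{j,t}\gtrsim A_{j,t}\sim\mathrm{Bin}(M_j,2^{j\mu+\nu}\rho)$, and then invokes the auxiliary binomial-moment bound \cref{thm:binomial} before applying Paley--Zygmund to $A_{j,t}$. Your variant avoids both the stochastic domination step and \cref{thm:binomial}, at the price of an explicit (but elementary) second-moment estimate; both routes need precisely $\mu\geq-d$ to make $\EE[\widetilde Y_{j,t}^2]\lesssim\EE[\widetilde Y_{j,t}]^2$ (respectively to make the binomial bound collapse to $2^{j\tau(\mu+d)}$), and $\nu\geq-d$ for the earlier $\lf$ and $\hf$ steps, which in your sketch match the paper's Step~1a/1b and the opening of Step~2a essentially verbatim. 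Your $p=\infty$ fine-part argument is also equivalent but organized differently: you bound $\PP(\chi_{j,t}=1)\geq1-\exp(-cM_j2^{j\mu})$ directly from the product structure and $\mu\geq-d$, whereas the paper recycles the Paley--Zygmund lower bound from $A_{j,t}$ via $\PP(\widetilde U_{j,t^\ast}=1)\geq\PP(A_{j,t^\ast}\neq0)\geq\kappa$.
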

\begin{proof}
For $j\in\NN_0$ and $t\in T_j$, we define the following (random) quantities
\begin{align*}
\widetilde{Z}_{j,t}&\defeq \mnorm{\lr{\lr{1 + \frac{\mnorm{m}_\infty}{2^j}}^\hf \lambda_{j,t,m}\abs{\xi_{j,t,m}}}_{m\in\ZZ^d}}_{\ell_p},\\
\widetilde{\eta}_1&\defeq\mnorm{\lr{(1+\mnorm{m}_\infty)^\lf \lambda_m\abs{\xi_m}}_{m\in\ZZ^d}}_{\ell_p},\\
\widetilde{\eta}_2&\defeq\mnorm{\lr{2^{j(s+\frac{d}{2}-\frac{d}{p}+\alpha)} \widetilde{Z}_{j,t}}_{j\in\NN, t\in T_j}}_{\ell_q}.
\end{align*}
We have $\mnorm{a}_{b^s_{p,q}(\RR^d)}\stackrel{a.s.}{<}\infty$ if and only if $\widetilde{\eta_1}\stackrel{a.s.}{<}\infty$ and $\widetilde{\eta_2}\stackrel{a.s.}{<}\infty$.\\[\baselineskip]
Since $\PP(X\neq 0)>0$, there exists $R\geq 1$ such that $\PP(\frac{1}{R}\leq \abs{X}\leq R)>0$.
For $(j,t,m)\in\NN_0\times\setn{F,M}^d\times \ZZ^d$, define $X_{j,t,m}\defeq \one_{\abs{\xi_{j,t,m}}\geq\frac{1}{R}}$.
We write $X_m\defeq X_{j,t,m}$, $\lambda_m\defeq \lambda_{j,t,m}$, and $\xi_m\defeq \xi_{j,t,m}$ when $j=0$ (and consequently $t=(F,\ldots,F)$).\\[\baselineskip]
\emph{Step 1a: When $p\neq \infty$, we show that necessarily $\nu+\lf p<-d$.}
Note that $\abs{\xi_m}^p\geq R^{-p} X_m$.
We thus obtain from
\begin{equation*}
\infty\stackrel{a.s.}{>}\widetilde{\eta}_1^p\geq R^{-p} \sum_{m\in\ZZ^d}(1+\mnorm{m}_\infty)^{\lf p}\lambda_m X_m
\end{equation*}
and \cref{thm:workhorse-lemma} that
\begin{align*}
\infty &> \sum_{m\in\ZZ^d} \EE\left[\min\setn{1,(1+\mnorm{m}_\infty)^{\lf p}\lambda_m X_m}\right]\\
&=\sum_{m\in\ZZ^d} \EE\left[\lambda_m X_m \min\setn{1,(1+\mnorm{m}_\infty)^{\lf p}}\right]\\
&=\EE[X_0] \sum_{m\in\ZZ^d} \EE[\lambda_m]  \min\setn{1,(1+\mnorm{m}_\infty)^{\lf p}}\\
&=\EE[X_0] \sum_{m\in\ZZ^d} (1+\mnorm{m}_\infty)^\nu  \min\setn{1,(1+\mnorm{m}_\infty)^{\lf p}}\\
&\asymp \sum_{m\in\ZZ^d} (1+\mnorm{m}_\infty)^{\min\setn{\nu,\nu+\lf p}}.
\end{align*}
Then \cref{thm:count-lattice-points} yields $\min\setn{\nu,\nu+\lf p}<-d$ or, equivalently, $\min\setn{0,\gamma}+\frac{d+\nu}{p}<0$.
Since $\nu\geq -d$, we obtain $\gamma<0$, i.e., $0>\min\setn{0,\gamma}+\frac{d+\nu}{p}=\gamma+\frac{d+\nu}{p}$.\\[\baselineskip]
\emph{Step 1b: When $p= \infty$, we show that necessarily $\lf\leq 0$.}
Let us assume towards a contradiction that $\lf >0$.
Then
\begin{equation*}
\infty\stackrel{a.s.}{>}\widetilde{\eta}_1 \geq R^{-1}\sup_{m\in\ZZ^d}(1+\mnorm{m}_\infty)^\lf \lambda_mX_m
\end{equation*}
implies that almost surely, $\lambda_mX_m=1$ only holds for finitely many $m\in\ZZ^d$.
The Borel--Cantelli lemma gives 
\begin{align*}
\sum_{m\in\ZZ^d}\PP(\lambda_m X_m=1)&=\sum_{m\in\ZZ^d}\PP(\lambda_m=1)\PP(X_m=1)\\
&=\sum_{m\in\ZZ^d}(1+\mnorm{m}_\infty)^\nu \PP(X_m=1)<\infty,
\end{align*}
which yields the desired contradiction since $\PP(X_m=1)=\PP(\abs{\xi_m}\geq\frac{1}{R})>0$ is independent of $m$, and $\nu\geq -d$, see \cref{thm:count-lattice-points}.\\[\baselineskip]
\emph{Step 2a: When $p\neq\infty$, we show the remaining part of Property A'.}
Let $\xi_{j,t,m}^\ast \defeq \xi_{j,t,m}\one_{\frac{1}{R}\leq\abs{\xi_{j,t,m}}\leq R}$ for $(j,t,m)\in J$ be the truncation of $\xi_{j,t,m}$.
Then the random variables $\xi_{j,t,m}^\ast$ are independent and identically distributed with $\EE[\abs{\xi_{j,t,m}^\ast}^r]<\infty$ for all $r\in (0,\infty)$, and
\begin{equation*}
\infty \stackrel{a.s.}{>}\widetilde{\eta}_2\geq \mnorm{\lr{2^{j(s+\frac{d}{2}-\frac{d}{p}+\alpha)} \widetilde{Y}_{j,t}^\frac{1}{p}}_{j\in\NN,t\in T_j}}_{\ell_q}
\end{equation*}
where, for $j\in\NN$ and $t\in T_j$, we set
\begin{equation*}
\widetilde{Y}_{j,t}\defeq \mnorm{\lr{\lr{1 + \frac{\mnorm{m}_\infty}{2^j}}^\hf \lambda_{j,t,m}\abs{\xi_{j,t,m}^\ast}}_{m\in\ZZ^d}}_{\ell_p}^p.
\end{equation*}
In particular, we see that almost surely, we have
\begin{equation*}
\infty >\widetilde{Y}_{1,t^\ast}=\sum_{m\in\ZZ^d}\lr{1+\frac{\mnorm{m}_\infty}{2}}^{\hf p}\lambda_{1,t^\ast,m}\abs{\xi_{1,t^\ast,m}^\ast}^p,
\end{equation*}
so that \cref{thm:workhorse-lemma} shows for $X^\ast_{j,t,m}\defeq \one_{\frac{1}{R}\leq \abs{\xi_{j,t,m}}\leq R}$ that
\begin{align*}
\infty &>\sum_{m\in\ZZ^d}\EE\left[\min\setn{1,\lr{1+\frac{\mnorm{m}_\infty}{2}}^{\hf p}\lambda_{1,t^\ast,m}\abs{\xi_{1,t^\ast,m}^\ast}^p}\right]\\
&\gtrsim R^{-p}\sum_{m\in\ZZ^d}\EE\left[\lambda_{1,t^\ast,m} X^\ast_{1,t^\ast,m}\min\setn{1,(1+\mnorm{m}_{\infty})^{\hf p}}\right]\\
&\gtrsim \sum_{m\in\ZZ^d}\EE\left[\lambda_{1,t^\ast,m}\right]\min\setn{1,(1+\mnorm{m}_{\infty})^{\hf p}}\\
&\gtrsim \sum_{m\in\ZZ^d}\min\setn{(1+\mnorm{m}_{\infty})^\nu,(1+\mnorm{m}_{\infty})^{\nu+\hf p}}.
\end{align*}
Using a similar argument as in Step 1a, we obtain $\min\setn{\nu,\nu+\hf p}<-d$, which in turn because of $\nu\geq -d$ implies $\hf+\frac{d+\nu}{p}<0$, and, in particular, $\hf<0$.
Further we have
\begin{equation*}
\infty \stackrel{a.s.}{>}\widetilde{\eta}_2\geq \mnorm{\lr{2^{j(s+\frac{d}{2}-\frac{d}{p}+\alpha)} U_{j,t}^\frac{1}{p}}_{j\in\NN,t\in T_j}}_{\ell_q}
\end{equation*}
where, for $j\in\NN$ and $t\in T_j$, we set
\begin{equation*}
U_{j,t}\defeq \sum_{\substack{m\in\ZZ^d:\\\mnorm{m}_\infty\leq 2^j}}\lr{1 + \frac{\mnorm{m}_\infty}{2^j}}^{\hf p} \lambda_{j,t,m}\abs{\xi_{j,t,m}^\ast}^p.
\end{equation*}
In particular, we see that
\begin{equation*}
\infty \stackrel{a.s.}{>} U_{j,t}\asymp \sum_{\substack{m\in\ZZ^d:\\\mnorm{m}_\infty\leq 2^j}}\lambda_{j,t,m}\abs{\xi_{j,t,m}^\ast}^p\asymp \sum_{\substack{m\in\ZZ^d:\\\mnorm{m}_\infty\leq 2^j}}\lambda_{j,t,m}X_{j,t,m}^\ast.
\end{equation*}
Since the random variables $X_{j,t,m}^\ast$ are i.i.d.\ Bernoulli($\rho$)-distributed for some $\rho\in (0,1]$, we conclude that $\lambda_{j,t,m}X_{j,t,m}^\ast$ is Bernoulli($2^{j\mu}(1+\frac{\mnorm{m}_\infty}{2^j})^\nu \rho$)-distributed.

If $\mnorm{m}_\infty\leq 2^j$, we have 
\begin{equation*}
2^{j\mu}\lr{1+\frac{\mnorm{m}_\infty}{2^j}}^\nu \rho\geq 2^{j\mu+\nu} \rho.
\end{equation*}
Thus there exists a Bernoulli($2^{j\mu+\nu} \rho$)-distributed random variable $B_{j,t,m}$ such that $\lambda_{j,t,m}X_{j,t,m}^\ast\geq B_{j,t,m}$ and such that the family $(B_{j,t,m})_{(j,t,m)}$ is independent.
We conclude
\begin{equation}\label{eq:def-ajt}
\infty \stackrel{a.s.}{>}U_{j,t}\asymp\sum_{\substack{m\in\ZZ^d:\\\mnorm{m}_\infty\leq 2^j}}\lambda_{j,t,m}X_{j,t,m}^\ast\geq \sum_{\substack{m\in\ZZ^d:\\\mnorm{m}_\infty\leq 2^j}} B_{j,t,m}\eqdef A_{j,t}
\end{equation}
with $A_{j,t}\sim\mathrm{Bin}((1+2^{j+1})^d,2^{j\mu+\nu}\rho)$ following a binomial distribution.
\cref{thm:binomial} yields for $\tau\geq 1$ because of $\mu\geq -d$ that
\begin{align*}
\EE[A_{j,t}^\tau]&\lesssim \max\setn{(1+2^{j+1})^d 2^{j\mu+\nu} \rho,((1+2^{j+1})^d 2^{j\mu+\nu} \rho)^\tau}\\
&\asymp \max\setn{2^{j(\mu+d)},2^{j\tau (\mu+d)}}= 2^{j\tau (\mu+d)}.
\end{align*}

Set $\tau=2$ and invoke \cref{thm:paley-zygmund} (Paley--Zygmund inequality) with $X=A_{j,t}$ and $\sigma=\frac{1}{2}$.
We obtain
\begin{equation*}
\PP\lr{A_{j,t}\geq \frac{1}{2}\EE[A_{j,t}]}\geq \frac{1}{4}\frac{\EE[A_{j,t}]^2}{\EE[A_{j,t}^2]}\gtrsim 1,
\end{equation*}
which means that there exists a constant $\kappa >0$ such that 
\begin{equation}\label{eq:paley-zygmund-bernoulli}
\PP\lr{A_{j,t}\geq \frac{1}{2}\EE[A_{j,t}]}\geq\kappa
\end{equation}
for all $j\in\NN$ and $t\in T_j$.

The random variables $\chi_{j,t}\defeq\one_{\setn{A_{j,t}\geq \frac{1}{2}\EE[A_{j,t}]}}$ with $j\in\NN$ and $t\in T_j$ are independent and Bernoulli$(p_{j,t})$-distributed, for some $p_{j,t}\geq \kappa$.
Recall that $U_{j,t}\gtrsim A_{j,t}$. 
Hence,
\begin{align*}
\infty&\stackrel{a.s}{>}\mnorm{\lr{2^{j(s+\frac{d}{2}-\frac{d}{p}+\alpha)}A_{j,t}^{\frac{1}{p}}}_{j\in\NN,t\in T_j}}_{\ell_q}\\
&\geq \mnorm{\lr{2^{j(s+\frac{d}{2}-\frac{d}{p}+\alpha)}\lr{\frac{1}{2}\EE[A_{j,t}]}^{\frac{1}{p}} \chi_{j,t}}_{j\in\NN,t\in T_j}}_{\ell_q}\\
&\gtrsim \mnorm{\lr{2^{j(s+\frac{d}{2}-\frac{d}{p}+\alpha)} \lr{2^{j(\mu+d)}}^{\frac{1}{p}} \chi_{j,t}}_{j\in\NN,t\in T_j}}_{\ell_q}\\
&\gtrsim \mnorm{\lr{2^{j(s+\frac{d}{2}+\alpha+\frac{\mu}{p})} \chi_{j,t}}_{j\in\NN,t\in T_j}}_{\ell_q}.
\end{align*}
If $q=\infty$, let $t^\ast=(M,\ldots,M)$ and note that the events $E_j\defeq\setn{\chi_{j,t^\ast}=1}$ for $j\in\NN$ are independent.
Furthermore, it holds $\sum_{j\in\NN}\PP(E_j)\geq \sum_{j\in\NN}\kappa =\infty$. 
The Borel--Cantelli lemma thus yields that almost surely, the set $I\defeq\setcond{j\in\NN}{\chi_{j,t^\ast}=1}$ is an infinite set and 
\begin{equation*}
\infty \stackrel{a.s.}{>} \mnorm{\lr{2^{j(s+\frac{d}{2}+\alpha+\frac{\mu}{p})} \chi_{j,t}}_{j\in\NN,t\in T_j}}_{\ell_q}\geq \sup_{j\in I}2^{j(s+\frac{d}{2}+\alpha+\frac{\mu}{p})}.
\end{equation*}
This shows that $s+\frac{d}{2}+\alpha +\frac{\mu}{p}\leq 0$.
On the other hand, if $q\neq \infty$, it follows from \cref{thm:workhorse-lemma} that
\begin{align*}
\infty&>\sum_{j\in\NN}\sum_{t\in T_j}\EE\left[\min\setn{1,2^{jq(s+\frac{d}{2}+\alpha+\frac{\mu}{p})} \chi_{j,t}}\right]\\
&\geq\sum_{j\in\NN}\sum_{t\in T_j}\EE\left[\chi_{j,t}\min\setn{1,2^{jq(s+\frac{d}{2}+\alpha+\frac{\mu}{p})} }\right]\\
&=\sum_{j\in\NN}\sum_{t\in T_j} \EE[\chi_{j,t}]\min\setn{1,2^{jq(s+\frac{d}{2}+\alpha+\frac{\mu}{p})} }\\
&\geq\kappa\sum_{j\in\NN} \min\setn{1,2^{jq(s+\frac{d}{2}+\alpha+\frac{\mu}{p})} },
\end{align*}
and thus $s+\frac{d}{2}+\alpha+\frac{\mu}{p}< 0$.\\[\baselineskip]
\emph{Step 2b: When $p=\infty$, we show the remaining part of Property A'.}
Suppose towards a contradiction that $\hf>0$.
Note because of $\widetilde{\eta}_2\stackrel{a.s.}{<}\infty$ that with $t^\ast=(M,\ldots,M)$, we have
\begin{equation*}
\infty\stackrel{a.s.}{>}\widetilde{Z}_{1,t^\ast} \geq R^{-1}\sup_{m\in\ZZ^d}\lr{1+\frac{\mnorm{m}_\infty}{2}}^\hf \lambda_{1,t^\ast,m} X_{1,t^\ast,m}.
\end{equation*}
But then it holds almost surely that $\lambda_{1,t^\ast,m} X_{1,t^\ast,m}=1$ for only finitely many $m\in\ZZ^d$.
Since the random variables $\lambda_{1,t^\ast,m} X_{1,t^\ast,m}$ for $m\in\ZZ^d$ are independent, the Borel--Cantelli lemma thus shows that 
\begin{align*}
\infty&>\sum_{m\in\ZZ^d}\PP(\lambda_{1,t^\ast,m} X_{1,t^\ast,m}=1)\\
&=\sum_{m\in\ZZ^d}\PP(\lambda_{1,t^\ast,m}=1)\PP(X_{1,t^\ast,m}=1)\\
&=\sum_{m\in\ZZ^d}2^{\mu}\lr{1+\frac{\mnorm{m}_\infty}{2}}^\nu \PP(X_{1t^\ast,m}=1),
\end{align*}
which is absurd since $\PP(X_{1,t^\ast,m}=1)=\PP(|\xi_{1,t^\ast,m}|\geq \frac{1}{R})>0$ is independent of $m\in\ZZ^d$, and $\nu\geq -d$, see \cref{thm:count-lattice-points}.
Hence $\hf \leq 0$.

Further with $X^\ast_{j,t,m}\defeq\one_{\frac{1}{R}\leq\abs{\xi_{j,t,m}}\leq R}$ as before, we have for all $j\in\NN$ and $t\in T_j$ that
\begin{align*}
\widetilde{Z}_{j,t}&\geq \sup_{\substack{m\in\ZZ^d:\\\mnorm{m}_\infty\leq 2^j}}\lr{1+\frac{\mnorm{m}_\infty}{2^j}}^\hf \lambda_{j,t,m}\abs{\xi_{j,t,m}} \\
&\gtrsim \sup_{\substack{m\in\ZZ^d:\\\mnorm{m}_\infty\leq 2^j}}\lambda_{j,t,m}X_{j,t,m}^\ast \eqdef \widetilde{U}_{j,t}\in \setn{0,1},
\end{align*}
which in turn implies
\begin{equation}\label{eq:p-infty-bernoulli-necessary}
\infty \stackrel{a.s.}{>}\widetilde{\eta}_2\geq \mnorm{\lr{2^{j(s+\frac{d}{2}+\alpha)}\widetilde{U}_{j,t^\ast}}_{j\in\NN}}_{\ell_q}
\end{equation}
where again $t^\ast=(M,\ldots,M)$.
Let us note that \eqref{eq:paley-zygmund-bernoulli} and \eqref{eq:def-ajt} yield the existence of $c>0$ such that
\begin{equation*}
\EE[\widetilde{U}_{j,t^\ast}]=\PP(\widetilde{U}_{j,t^\ast}=1)\geq \PP(A_{jt^\ast}\neq 0)\geq \PP(A_{j,t^\ast}\geq c 2^{j(\mu+ d)}) \geq \kappa>0.
\end{equation*}
If $q=\infty$, then the Borel--Cantelli lemma and \eqref{eq:p-infty-bernoulli-necessary} imply as in Step 1b that $s+\frac{d}{2}+\alpha\leq 0$.
On the other hand, if $q\neq \infty$, then \eqref{eq:p-infty-bernoulli-necessary} also implies $\infty \stackrel{a.s.}{>}\mnorm{\lr{2^{j(s+\frac{d}{2}+\alpha)}\widetilde{U}_{j,t^\ast}}_{j\in\NN}}_{\ell_\infty}$ which in turn yields $s+\frac{d}{2}+\alpha\leq 0$.
Moreover, \cref{thm:workhorse-lemma} gives
\begin{align*}
\infty >\sum_{j\in\NN}\EE\left[\min\setn{1,2^{jq(s+\frac{d}{2}+\alpha)}\widetilde{U}_{j,t^\ast}}\right]=\sum_{j\in\NN} 2^{j(s+\frac{d}{2}+\alpha)}\EE[\widetilde{U}_{j,t^\ast}].
\end{align*}
As $\EE[\widetilde{U}_{j,t^\ast}]\geq\kappa$ is independent of $j\in\NN_0$, it follows that $s+\frac{d}{2}+\alpha<0$.
\end{proof}

\begin{Satz}\label{thm:necessity-bernoulli-functions}
Let $d,k\in\NN$, $s,\alpha,\hf,\lf \in\RR$, $p,q\in (0,\infty]$, and $\mu,\nu \in [-d,0]$.
Assume that $f$ follows the \priortwo{} prior with parameters $(\alpha,\hf,\lf,\mu,\nu,k)$ and template random variable $X$ with
\begin{equation*}
\begin{cases} \ex \eps>0:\,\EE\bigl[\abs{X}^{p(1+\eps)}\bigr]< \infty&\text{if }p\in (0,\infty),\\
\ex  R>0:\, \abs{X}\stackrel{a.s.}{\leq}R&\text{if }p=\infty\end{cases}
\end{equation*}
and
\begin{equation*}
\PP(X\neq 0)>0.
\end{equation*}
If $f \in B^s_{p,q}(\RR^d)$ almost surely and if Properties B and B' are satisfied, then also Property A' is satisfied.
\end{Satz}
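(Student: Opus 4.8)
The plan is to mirror the proof of \cref{thm:target-statement-necessity-functions}, substituting \cref{thm:powers-bernoulli} for \cref{thm:target-statement-sufficiency} and \cref{thm:necessity-bernoulli} for \cref{thm:target-statement-necessity}. The idea is to transfer the almost sure membership $f\in B^s_{p,q}(\RR^d)$ to the level of the coefficient sequence, i.e.\ to show that the coefficient sequence $a$ of $f$ lies in $b^s_{p,q}(\RR^d)$ almost surely, and then to invoke \cref{thm:necessity-bernoulli}.

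First I would use Property B to get $d(\frac1p-1)-k<s<k$ if $p\in(0,1)$ and $-k<s<k$ otherwise, and then, taking Property B' into account, choose $\widetilde s<s$ with $\widetilde s+\frac d2+\alpha<0$ such that Property B still holds with $\widetilde s$ in place of $s$ (exactly as in \cref{thm:target-statement-necessity-functions}). Next I would fix $r\in(0,p]$ when $p<\infty$, so that $\max\setn{r,p}=p$ and the hypothesis \eqref{eq:rv-powers-bernoulli} reduces to the assumed $\EE[\abs{X}^{p(1+\eps)}]<\infty$, and any $r\in(0,\infty)$ when $p=\infty$. Then I would pick $\sigma<0$ small enough that $\hf+\sigma+\frac{d+\nu}{p}<0$ and $\lf+\sigma+\frac{d+\nu}{p}<0$ when $p\neq\infty$, resp.\ $\hf+\sigma\le0$ and $\lf+\sigma\le0$ when $p=\infty$. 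Since $\mu\le0$ and $\widetilde s+\frac d2+\alpha<0$, Property A'' then holds for the parameter tuple $(\alpha,\hf+\sigma,\lf+\sigma,\mu,\nu)$ with $\widetilde s$ in place of $s$ and the chosen $r$, so \cref{thm:powers-bernoulli} gives $\EE[\mnorm{\bar a}_{b^{\widetilde s}_{p,q}(\RR^d)}^r]<\infty$, hence $\bar a\in b^{\widetilde s}_{p,q}(\RR^d)$ almost surely, where $\bar a_{j,t,m}\defeq 2^{j\alpha}\lr{1+\frac{\mnorm{m}_\infty}{2^j}}^{\hf+\sigma+(\lf-\hf)\one_{j=0}}\lambda_{j,t,m}\xi_{j,t,m}$ and $\lambda_{j,t,m},\xi_{j,t,m}$ are the random variables from \eqref{eq:expansion-bernoulli}.

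Then I would invoke the analog of \cref{remark:weights} for \priortwo{} priors, which holds verbatim since the Bernoulli factors $\lambda_{j,t,m}$ are untouched by the weight and $w_\sigma(2^{-(j-\delta_{j\neq 0})}m)\asymp\lr{1+\frac{\mnorm{m}_\infty}{2^j}}^\sigma$: the coefficient sequence $a=(a_{j,t,m})_{(j,t,m)\in J}$ of $f$, i.e.\ with the original exponents $\hf,\lf$, lies in $b^{\widetilde s}_{p,q}(\RR^d,w_\sigma)$ almost surely. On the other hand, if $f\in B^s_{p,q}(\RR^d)$ almost surely, then \cref{thm:wavelet-characterization} (applicable by Property B) gives $f=S(\widetilde a)$ for some $\widetilde a\in b^s_{p,q}(\RR^d)$ almost surely; from $\sigma<0$ and $s>\widetilde s$ we get $b^s_{p,q}(\RR^d)\hookrightarrow b^s_{p,q}(\RR^d,w_\sigma)\hookrightarrow b^{\widetilde s}_{p,q}(\RR^d,w_\sigma)$, so $a,\widetilde a\in b^{\widetilde s}_{p,q}(\RR^d,w_\sigma)$ with $S(a)=f=S(\widetilde a)$, and the uniqueness part of \cref{thm:wavelet-characterization} (for the weighted space, again using Property B with $\widetilde s$) forces $a=\widetilde a\in b^s_{p,q}(\RR^d)$ almost surely. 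Finally, since $a$ follows a \priortwo{} sequence prior with parameters $(\alpha,\hf,\lf,\mu,\nu)$, $\PP(X\neq0)>0$, and $\mu,\nu\in[-d,0]$, \cref{thm:necessity-bernoulli} yields Property A'.

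I expect no serious obstacle: the proof is an assembly of the sufficiency result for \priortwo{} priors, the weighted wavelet characterization, and the sequence-level necessity result \cref{thm:necessity-bernoulli}. The only point requiring care is the verification that Property A'' survives the perturbation $\hf\mapsto\hf+\sigma$, $\lf\mapsto\lf+\sigma$, $s\mapsto\widetilde s$ — this is where one uses $\mu\le0$ together with the strict inequality $\widetilde s+\frac d2+\alpha<0$ secured in the first step, as well as the choice $r\le p$ that keeps \eqref{eq:rv-powers-bernoulli} at the level of the assumed moment bound on $X$.
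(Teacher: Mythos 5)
Your proposal is correct and takes essentially the same approach as the paper: it mirrors the proof of \cref{thm:target-statement-necessity-functions}, substituting \cref{thm:powers-bernoulli} (with $r=p$, or more generally $r\le p$, so that the moment hypothesis \eqref{eq:rv-powers-bernoulli} reduces to the assumed $\EE[\abs{X}^{p(1+\eps)}]<\infty$) for \cref{thm:target-statement-sufficiency} and \cref{thm:necessity-bernoulli} for \cref{thm:target-statement-necessity}, with the intermediate shift $\hf\mapsto\hf+\sigma$, $\lf\mapsto\lf+\sigma$, $s\mapsto\widetilde s$, and the weight trick from \cref{remark:weights}. Your explicit verification that Property A'' survives the perturbation (using $\mu\le 0$ together with $\widetilde s+\frac d2+\alpha<0$) is exactly the point the paper relies on as well.
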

\begin{proof}
From Property B, it follows that $d\lr{\frac{1}{p}-1}-k<s<k$ if $p\in (0,1)$ and $-k<s<k$ else.
Taking Property B' into account, we may choose $\widetilde{s}\in\RR$ such that $d\lr{\frac{1}{p}-1}-k<\widetilde{s}<s$ and $\widetilde{s}+\frac{d}{2}+\alpha<0$ when $p\in (0,1)$.
On the other hand, if $p\in [1,\infty]$, we may choose  $\widetilde{s}\in\RR$ such that $-k<\widetilde{s}<s$ and $\widetilde{s}+\frac{d}{2}+\alpha<0$.
Note that then Property B is satisfied with $\widetilde{s}$ in place of $s$.
Choose $\sigma<0$ small enough such that $\lf+\sigma +\frac{d+\nu}{p} <0$ and $\hf+\sigma+\frac{d+\nu}{p}<0$.
Since \eqref{eq:finite-moments} holds, we conclude from \cref{thm:powers-bernoulli} with $r=p$ that $\bar{a}=(\bar{a}_{j,t,m})_{(j,t,m)\in J}\in b^{\widetilde{s}}_{p,q}(\RR^d)$ almost surely where $\lambda_{j,t,m}$ and $\xi_{j,t,m}$ are the random variables from \eqref{eq:expansion-bernoulli} and
\begin{equation*}
\bar{a}_{j,t,m}\defeq 2^{j\alpha} \lr{1+\frac{\mnorm{m}_\infty}{2^j}}^{\hf+\sigma +(\lf-\hf)\one_{j=0}}\lambda_{j,t,m}\xi_{j,t,m}.
\end{equation*}
With $w:\RR^d\to\RR$, $w_\sigma(x)=(1+\mnorm{x}_2^2)^{\frac{\sigma}{2}}$, we then see in view of \cref{remark:weights} that $a=(a_{j,t,m})_{(j,t,m)\in J}\in b^{\widetilde{s}}_{p,q}(\RR^d,w_\sigma)$ where
\begin{equation*}
a_{j,t,m}=2^{j\alpha} \lr{1+\frac{\mnorm{m}_\infty}{2^j}}^{\hf+(\lf-\hf)\one_{j=0}}\lambda_{j,t,m}\xi_{j,t,m}.
\end{equation*}
If $f\in B^s_{p,q}(\RR^d)$ almost surely, then almost surely $f=S(\widetilde{a})$ for some $\widetilde{a}\in b^s_{p,q}(\RR^d)$ by \cref{thm:wavelet-characterization}.
From $\sigma<0$ and $s>\widetilde{s}$, we get the embeddings
\begin{equation*}
b^s_{p,q}(\RR^d)\hookrightarrow b^s_{p,q}(\RR^d,w_\sigma)\hookrightarrow b^{\widetilde{s}}_{p,q}(\RR^d,w_\sigma).
\end{equation*} 
But this implies $a,\widetilde{a}\in  b^{\widetilde{s}}_{p,q}(\RR^d,w_\sigma)$ almost surely with $S(a)=f=S(\widetilde{a})$, from which it follows that almost surely $a=\widetilde{a}\in b^s_{p,q}(\RR^d)$ by \cref{thm:wavelet-characterization}.
From \cref{thm:necessity-bernoulli}, we conclude that Property A' holds.
\end{proof}

\section{Discussion of the conditions on the random variables}\label{chap:discussion}
When $p\neq\infty$, the condition \eqref{eq:finite-moments} in \cref{thm:target-statement-sufficiency} can be replaced by a weaker condition.
In the following, let $\log_2^+(x)\defeq \max\setn{0,\log(x)}$.
\begin{Satz}\label{thm:sufficiency-log}
Let $d,k\in\NN$, $s,\alpha,\hf,\lf,\theta\in\RR$, and $p,q\in (0,\infty]$.
Assume that $a$ follows a Besov sequence prior and $f$ follows a Besov prior, both with parameters $(\alpha,\hf,\lf,\theta,k)$ and template random variable $X$, satisfying
\begin{equation}\label{eq:log-condition}
\begin{cases} \ex \eps>0:\,\EE\bigl[\abs{X}^p \cdot \log_2^+(\abs{X})\bigr]< \infty&\text{if }p\in (0,\infty),\\
\ex  R>0:\, \abs{X}\stackrel{a.s.}{\leq}R&\text{if }p=\infty.\end{cases}
\end{equation}
If Property A holds, then $a$ almost surely belongs to $b^s_{p,q}(\RR^d)$.
Moreover, if Properties A and B are satisfied, then $f$ belongs almost surely to $B^s_{p,q}(\RR^d)$.
\end{Satz}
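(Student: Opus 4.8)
The plan is to re-run the proof of \cref{thm:target-statement-sufficiency}, isolating the single step where the moment hypothesis \eqref{eq:finite-moments} was used, and to show that there the weaker hypothesis \eqref{eq:log-condition} already suffices. The case $p=\infty$ is literally unchanged: Property~A forces $\hf\leq 0$ and $\lf\leq 0$, so $Z_{j,t}\stackrel{a.s.}{\leq}R$, $\eta_1\stackrel{a.s.}{\leq}R$, and $\eta_2\stackrel{a.s.}{\leq}R\cdot\mnorm{(2^{j(s+\frac{d}{2}+\alpha)}(j+1)^\theta)_{j\in\NN,t\in T_j}}_{\ell_q}<\infty$ by the remaining conditions of Property~A, whence $\mnorm{a}_{b^s_{p,q}(\RR^d)}\stackrel{a.s.}{<}\infty$; if moreover Property~B holds, then \cref{thm:wavelet-characterization} gives $\mnorm{f}_{B^s_{p,q}(\RR^d)}\asymp\mnorm{a}_{b^s_{p,q}(\RR^d)}<\infty$ almost surely. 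So the only work lies in the case $p\in(0,\infty)$.

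For $p\in(0,\infty)$, \cref{thm:norm-lemma} gives $\mnorm{a}_{b^s_{p,q}(\RR^d)}\lesssim\Xi^{1/p}$ with $\Xi=\max_{t\in\setn{F,M}^d}\Xi_t$ as in \eqref{eq:master-quantity}, so since $\setn{F,M}^d$ is finite it suffices to prove $\EE[\Xi_t]<\infty$ for each fixed $t$; this yields $\Xi<\infty$ and hence $a\in b^s_{p,q}(\RR^d)$ almost surely, and under Property~B also $f\in B^s_{p,q}(\RR^d)$ almost surely by \cref{thm:wavelet-characterization}. Writing $Y_{j,\tau}\defeq\abs{\xi_{j,t,\phi(\tau)}}^p$, an i.i.d.\ family of copies of $\abs{X}^p$ indexed by $(j,\tau)\in\NN_0\times\NN$, we have
\[
\Xi_t=\sup_{j\in\NN_0}\lr{\frac{1}{M_j}\sum_{\tau=1}^{M_j}Y_{j,\tau}+\sup_{\ell\geq j}\frac{1}{N_\ell}\sum_{\tau=M_\ell+1}^{M_{\ell+1}}Y_{j,\tau}}.
\]
In \cref{thm:target-statement-sufficiency} the bound $\EE[\Xi_t]<\infty$ came from \cref{thm:1+eps} with $\sigma=1$, which required $\EE[\abs{X}^{p(1+\eps)}]<\infty$. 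Instead I would establish an $L\log L$ version of \cref{thm:1+eps} in the case $\sigma=1$, namely: if $(X_{j,\tau})_{(j,\tau)\in\NN_0\times\NN}$ are i.i.d.\ nonnegative with $\EE[X_{j,\tau}\log_2^+(X_{j,\tau})]<\infty$, then $\EE[\Xi_t]<\infty$. Applying this with $X_{j,\tau}=Y_{j,\tau}\sim\abs{X}^p$ and using $\log_2^+(\abs{X}^p)=p\log_2^+(\abs{X})$ --- so that $\EE[\abs{X}^p\log_2^+(\abs{X}^p)]<\infty$ is exactly \eqref{eq:log-condition} up to a constant --- then finishes the proof.

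The main obstacle is this $L\log L$ refinement of \cref{thm:1+eps}: one must revisit the proof of \cref{thm:1+eps} and verify that the $(1+\eps)$-moment hypothesis there can be weakened to the $L\log L$ hypothesis. The governing phenomenon is that $L\log L$ integrability --- and not mere $L^1$ integrability --- is precisely what forces the maximal function of Cesàro averages of i.i.d.\ nonnegative variables to have finite expectation. The delicate part is that $\Xi_t$ is not literally such a maximal function: it combines the \enquote{diagonal} averages $M_j^{-1}\sum_{\tau\leq M_j}Y_{j,\tau}$ over all scales $j$ (a fresh i.i.d.\ sample per scale, with block length $M_j\asymp 2^{jd}\to\infty$) with the \enquote{tail} term $\sup_j\sup_{\ell\geq j}N_\ell^{-1}\sum_{M_\ell<\tau\leq M_{\ell+1}}Y_{j,\tau}$, which after reindexing is a supremum over the independent blocks $\setcond{(j,\ell)}{0\leq j\leq\ell}$ of averages of $N_\ell\asymp 2^{\ell d}$ terms; one must check that the resulting $L\log L$ tail estimates stay summable over these countably many independent pieces. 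Granting this refinement of \cref{thm:1+eps}, everything else proceeds verbatim as in \cref{thm:target-statement-sufficiency}.
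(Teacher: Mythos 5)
Your high-level plan correctly isolates where the argument has to change ($p=\infty$ is unaffected; for $p<\infty$ one must replace the use of \cref{thm:1+eps} at $\sigma=1$), but the refinement you propose is both left unproven and, as stated, aimed at the wrong target. You want to show $\EE[\Xi_t]<\infty$ under the $L\log L$ hypothesis by re-running the proof of \cref{thm:1+eps}. Look at where the $(1+\eps)$-moment was actually used there: the only place is the tail bound $\EE\bigl[X^\sigma\one_{X^\sigma>\kappa_{j,\tau}}\bigr]\leq \kappa_{j,\tau}^{-\eps}\EE\bigl[X^{\sigma(1+\eps)}\bigr]$ with $\kappa_{j,\tau}\asymp 2^{jd/3}$, which produces a geometrically summable contribution $\lesssim 2^{-jd\eps/3}$. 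Under $L\log L$ the best corresponding bound is $\EE[X\one_{X>\kappa}]\leq\EE[X\log_2^+ X]/\log_2\kappa\asymp 1/j$, and $\sum_j 1/j$ diverges. So the proof of \cref{thm:1+eps} does not survive the weakening, and the ``$L\log L$ refinement'' you defer to is not simply a matter of checking; the truncation level and the whole estimation strategy have to change. It is also not clear that the stronger conclusion $\EE[\Xi_t]<\infty$ is even true under $L\log L$, and the paper does not claim it.

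What the paper actually proves is only the weaker, sufficient statement $\Xi_t\stackrel{a.s.}{<}\infty$, and it does so by a genuinely different argument: \cref{thm:UniformLLN-new} truncates at the much larger level $M_j$ (resp.\ $M_\ell$), uses Chebyshev to bound $\PP(S_j^\ast\geq C+E)$ and $\PP(S_{j,\ell}^\ast\geq C+E)$ by expressions that are summable precisely because of $L\log L$, and then invokes Borel--Cantelli (in the spirit of Etemadi's proof of the strong law) together with a second Borel--Cantelli step showing the truncation agrees with the original variables for all but finitely many $(j,\tau)$. This yields a.s.\ finiteness of $\Xi_t$ without any claim on $\EE[\Xi_t]$. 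So you have identified the correct obstruction, and even its qualitative nature (the mix of ``diagonal'' block averages over growing blocks with the ``off-diagonal'' suprema), but the route you sketch would not close the gap; the paper replaces \cref{thm:1+eps} rather than refines it.
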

\begin{proof}
The proof follows the same lines as the one of \cref{thm:target-statement-sufficiency} but we need a new argument for $\Xi\stackrel{a.s.}{<}\infty$, with $\Xi$ from \cref{thm:master-lemma} when $p\neq \infty$.
This is done by invoking \cref{thm:UniformLLN-new} for $X_{j,\tau}=\abs{\xi_{j,t,\phi(\tau)}}^p$, where again $\phi:\NN\to\ZZ^d$ is a bijection such that $\tau\mapsto \mnorm{\phi(\tau)}_\infty$ is nondecreasing.
Regarding the applicability of \cref{thm:UniformLLN-new}, note that
\begin{equation*}
\EE\left[X_{j,\tau}\cdot\log_2^+(X_{j,\tau})\right]=\EE\left[\abs{\xi_{j,t,\phi(\tau)}}^p\cdot\log_2^+(\abs{\xi_{j,t,\phi(\tau)}}^p)\right]=p\EE\left[\abs{X}^p\cdot\log_2^+(\abs{X})\right].
\end{equation*}
\end{proof}

The preceding theorem raises the question of the sharpness of conditions \eqref{eq:finite-moments} and \eqref{eq:log-condition}.
We address the case $p\neq\infty$ first.
He we only consider the condition $a=(a_{j,t,m})_{(j,t,m)\in J}\in b^s_{p,q}(\RR^d)$ almost surely, with $a_{j,t,m}$ as defined in \eqref{eq:expansion-seq}, instead of the condition $f\in B^s_{p,q}(\RR^d)$ almost surely,
See the remark preceding \cref{thm:target-statement-necessity-functions} for a discussion of the relation between these two conditions.
\begin{Prop}\label{thm:sharpness}
Let $d\in\NN$, $s,\alpha,\hf,\lf,\theta\in\RR$, $p\in (0,\infty)$, and $q\in (0,\infty]$.
Assume that $=(a_{j,t,m})_{(j,t,m)\in J}$ follows a Besov sequence prior with parameters $(\alpha,\hf,\lf,\theta)$ and template random variable $X$ satisfying $\PP(X\neq 0)>0$.
If $a\in b^s_{p,q}(\RR^d)$ almost surely, then $\gamma<0$ and $\EE[\abs{X}^{-\frac{d}{\lf}}]<\infty$.
\end{Prop}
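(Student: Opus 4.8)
The plan is to extract everything from the $j=0$ block of the Besov sequence quasi-norm, exactly as in the proof of \cref{thm:target-statement-necessity}. First I would observe that for any $q\in(0,\infty]$ the $\ell_q$-quasi-norm dominates each of its coordinates, so that
$\mnorm{a}_{b^s_{p,q}(\RR^d)}\ge\mnorm{\bigl((1+\mnorm m_\infty)^\lf\abs{\xi_m}\bigr)_{m\in\ZZ^d}}_{\ell_p}$, where $\xi_m\defeq\xi_{0,(F,\ldots,F),m}$ are i.i.d.\ copies of $X$. Hence the assumption $a\in b^s_{p,q}(\RR^d)$ a.s.\ forces $\sum_{m\in\ZZ^d}(1+\mnorm m_\infty)^{\lf p}\abs{\xi_m}^p<\infty$ almost surely. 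The summands $W_m\defeq(1+\mnorm m_\infty)^{\lf p}\abs{\xi_m}^p$ are independent and non-negative, so \cref{thm:workhorse-lemma} converts this into $\sum_{m\in\ZZ^d}\EE\bigl[\min\setn{1,\bigl((1+\mnorm m_\infty)^\lf\abs X\bigr)^p}\bigr]<\infty$. That $\lf<0$ is already supplied by \cref{thm:target-statement-necessity}, which (its hypotheses being met) gives Property A and hence $\lf<-\tfrac dp<0$ as $p<\infty$; alternatively one reruns the Borel--Cantelli argument from Step~1a of its proof.

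Next I would deliberately throw away the exponent $p$, using only $\min\setn{1,y^p}\ge\one_{y\ge1}$ for $y\ge0$: since $\lf<0$, the inequality $(1+\mnorm m_\infty)^\lf\abs X\ge1$ is equivalent to $\abs X\ge(1+\mnorm m_\infty)^{-\lf}$, so the previous display yields $\sum_{m\in\ZZ^d}\PP\bigl(\abs X\ge(1+\mnorm m_\infty)^{-\lf}\bigr)<\infty$. By Tonelli's theorem this sum equals $\EE\bigl[\#\setcond{m\in\ZZ^d}{(1+\mnorm m_\infty)^{-\lf}\le\abs X}\bigr]$, so it remains to bound this expected lattice-point count from below by $\EE[\abs X^{-d/\lf}]$, up to an additive constant.

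For that last step, since $\rho\mapsto\rho^{-\lf}$ is increasing (because $-\lf>0$), the event $\setn{(1+\mnorm m_\infty)^{-\lf}\le\abs X}$ coincides with $\setn{\mnorm m_\infty\le\abs X^{-1/\lf}-1}$, so the count equals $(2\floor{\abs X^{-1/\lf}-1}+1)^d$ on $\setn{\abs X\ge1}$ and vanishes otherwise. On the event $\setn{\abs X\ge 4^{-\lf}}$ one has $\abs X^{-1/\lf}\ge4$, hence $\floor{\abs X^{-1/\lf}-1}\ge\abs X^{-1/\lf}-2\ge\tfrac12\abs X^{-1/\lf}$, and therefore the count is $\ge(2\cdot\tfrac12\abs X^{-1/\lf}+1)^d\ge\abs X^{-d/\lf}$ there; since $\abs X^{-d/\lf}\le(4^{-\lf})^{-d/\lf}=4^d$ on the complementary event, one concludes $\EE[\abs X^{-d/\lf}]\le 4^d+\EE\bigl[\#\setcond{m\in\ZZ^d}{(1+\mnorm m_\infty)^{-\lf}\le\abs X}\bigr]<\infty$. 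The only real obstacle here is bookkeeping: keeping the three competing exponents ($\lf p$, $-\lf$, $-d/\lf$) straight and handling the small-$\abs X$ regime where the lattice-point count degenerates; the rest is a direct transcription of the $j=0$ reasoning already used in \cref{thm:target-statement-necessity}.
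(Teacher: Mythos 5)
Your proof is correct, but the key estimate goes a genuinely different route from the paper's. Both you and the paper reduce to the $j=0$ block, pass through \cref{thm:workhorse-lemma} to get $\sum_m \EE\bigl[\min\setn{1,(1+\mnorm{m}_\infty)^{\lf p}\abs{X}^p}\bigr]<\infty$, and establish $\lf<0$. After that the arguments diverge. The paper keeps the random factor by using $\min\setn{1,y}\geq y\,\one_{y\le 1}$, which leads to $\EE\bigl[\abs{X}^p\one_{\abs{X}\geq 1}\sum_{\mnorm{m}_\infty\geq\abs{X}^{-1/\lf}-1}(1+\mnorm{m}_\infty)^{\lf p}\bigr]<\infty$; computing that weighted tail sum requires the asymptotic $\asymp\abs{X}^{-(\lf p+d)/\lf}$ from \eqref{eq:justify}, which only holds when $\lf p+d<0$, so the paper needs a separate case analysis for $\lf p+d\geq 0$ (where the sum diverges and forces $\abs{X}<1$ a.s.). You instead discard $p$ via $\min\setn{1,y^p}\geq\one_{y\geq1}$, land on the expected lattice-point count $\EE\bigl[\#\setcond{m}{\mnorm{m}_\infty\leq\abs{X}^{-1/\lf}-1}\bigr]<\infty$, and bound that count below by $\abs{X}^{-d/\lf}$ directly, with a crude split at $\abs{X}=4^{-\lf}$ to handle the small-$\abs{X}$ degeneracy. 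This is cleaner: it needs no dichotomy in $\lf p+d$, no Hölder-type manipulation, and no appeal to \eqref{eq:justify}. The only blemish is cosmetic: your ``alternatively one reruns the Borel--Cantelli argument from Step~1a'' misattributes the argument, since Step~1a of \cref{thm:target-statement-necessity} uses \cref{thm:workhorse-lemma} and \cref{thm:count-lattice-points} rather than Borel--Cantelli (that appears in Step~1b for $p=\infty$); your primary route, invoking \cref{thm:target-statement-necessity} to get $\lf<-d/p<0$, is sound and in fact gives slightly more than the claimed $\lf<0$.
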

\begin{proof}
Let $\widetilde{\lf}\defeq \min\setn{0,\lf}$.
Since $a\in b^s_{p,q}(\RR^d)$ almost surely, we know that also
\begin{equation*}
\mnorm{\lr{(1+\mnorm{m}_\infty)^{\widetilde{\lf}} \abs{\xi_m}}_{m\in\ZZ^d}}_{\ell_p}\leq \mnorm{\lr{(1+\mnorm{m}_\infty)^\lf \abs{\xi_m}}_{m\in\ZZ^d}}_{\ell_p}\stackrel{a.s.}{<}\infty.
\end{equation*}
By \cref{thm:workhorse-lemma}, this implies
\begin{equation*}
\infty >\sum_{m\in\ZZ^d}\EE\left[\min\setn{1,\lr{1+\mnorm{m}_\infty}^{\widetilde{\lf} p}\abs{\xi_m}^p}\right].
\end{equation*}
If we had $\widetilde{\lf}=0$, then this would imply
\begin{equation*}
\infty>\sum_{m\in\ZZ^d}\EE\left[\min{1,\abs{\xi_m}^p}\right]=\sum_{m\in\ZZ^d}\EE\left[\min{1,\abs{X}^p}\right]=\infty
\end{equation*}
since $\PP(\min\setn{1,\abs{X}^p}>0)=\PP(X\neq 0)>0$.
Hence $0\neq\widetilde{\lf}$ thus $\widetilde{\lf}=\lf<0$.
Next, we get
\begin{align}
\infty&>\sum_{m\in\ZZ^d}\EE\left[\min\setn{1,\lr{1+\mnorm{m}_\infty}^{\lf p}\abs{\xi_m}^p}\right]\nonumber\\
&\geq \sum_{m\in\ZZ^d}\EE\left[\lr{1+\mnorm{m}_\infty}^{\lf p} \abs{\xi_m}^p\one_{\abs{\xi_m}^p\leq \lr{1+\mnorm{m}_\infty}^{-\lf p}}\right]\nonumber\\
&= \sum_{m\in\ZZ^d}\EE\left[\lr{1+\mnorm{m}_\infty}^{\lf p} \abs{X}^p\one_{\abs{X}^p\leq \lr{1+\mnorm{m}_\infty}^{-\lf p}}\right]\nonumber\\
&= \EE\left[\abs{X}^p\sum_{m\in\ZZ^d}\lr{1+\mnorm{m}_\infty}^{\lf p} \one_{\abs{X}^p\leq \lr{1+\mnorm{m}_\infty}^{-\lf p}}\right]\nonumber\\
&\geq \EE\left[\abs{X}^p\one_{\abs{X}^p\geq 1}\sum_{\substack{m\in\ZZ^d\\ \mnorm{m}_\infty \geq \abs{X}^{-\frac{1}{\lf}}-1}}\lr{1+\mnorm{m}_\infty}^{\lf p}\right]\nonumber\\
&\asymp\EE\left[\abs{X}^p\one_{\abs{X}^p\geq 1} \lr{\abs{X}^{-\frac{1}{\lf}}}^{\lf p+d}\right]\label{eq:rv-necessary}\\
&=\EE\left[\abs{X}^{-\frac{d}{\lf}}\one_{\abs{X}^p\geq 1}\right].\nonumber
\end{align}
Since $\lf<0$, this implies $\EE[\abs{X}^{-\frac{d}{\lf}}]<\infty$.
The step \eqref{eq:rv-necessary} is justified by the following computation, valid for all $N\in\NN_0$, cf.\ also the proof of \cref{thm:count-lattice-points}:
\begin{align}
\sum_{\substack{m\in\ZZ^d\\\mnorm{m}_\infty\geq N}}(1+\mnorm{m}_\infty)^{\lf p}&=\sum_{\ell\geq N}(1+\ell)^{\lf p}\#\setcond{m\in\ZZ^d}{\mnorm{m}_\infty=\ell}\nonumber\\
&=\sum_{\ell\geq N}(1+\ell)^{\lf p} ((2\ell+1)^d-(2\ell-1)^d)\nonumber\\
&\asymp \sum_{\ell\geq N} (1+\ell)^{\lf p+d-1}\nonumber\\
&=\sum_{\ell\geq N}\int_{\ell}^{\ell+1}(1+\ell)^{\lf p+d-1}\dd x\nonumber\\
&\asymp \sum_{\ell\geq N}\int_{\ell}^{\ell+1}(1+x)^{\lf p+d-1}\dd x\nonumber\\
&= \int_N^\infty (1+x)^{\lf p+d-1}\dd x\nonumber\\
&= \begin{cases} \frac{(N+1)^{\lf p+d}}{\lf p+d}&\text{if }\lf p+d<0\\\infty&\text{else.}\end{cases}\label{eq:justify}
\end{align}
Now, regarding \eqref{eq:rv-necessary}, this computation shows that if $\gamma p+d\geq 0$, then
\begin{equation*}
\infty >\EE\left[\abs{X}^p\one_{\abs{X}^p\geq 1}\sum_{\substack{m\in\ZZ^d\\ \mnorm{m}_\infty \geq \abs{X}^{-\frac{1}{\lf}}-1}}\lr{1+\mnorm{m}_\infty}^{\lf p}\right] =\infty \cdot \EE\left[\abs{X}^p\one_{\abs{X}^p\geq 1}\right], 
\end{equation*}
and hence $\EE\left[\abs{X}^p\one_{\abs{X}^p\geq 1}\right]=0$ which shows that $\abs{X}<1$ almost surely, and hence \eqref{eq:rv-necessary} holds.
Of otherwise $\gamma p+d<0$, then the computation in \eqref{eq:justify} justifies \eqref{eq:rv-necessary} directly.
\end{proof}
\begin{Bem}\label{bem:sharpness-almost-sure-convergence}
Let $d\in\NN$, $s,\alpha,\hf,\lf,\theta\in\RR$, and $p,q\in (0,\infty]$.
Assume that $a$ follows a Besov sequence prior with parameters $(\alpha,\hf,\lf,\theta)$ and template random variable $X$ satisfying $\PP(X\neq 0)>0$.
If $a\in b^s_{p,q}(\RR^d)$ almost surely whenever Property A is satisfied, then \cref{thm:sharpness} implies that $\EE[\abs{X}^{-\frac{d}{\lf}}]<\infty$ whenever $\lf p<-d$, which in turn yields $\EE[\abs{X}^\rho]<\infty$ for all $\rho\in (0,p)$.
This shows that the conditions \eqref{eq:finite-moments} and \eqref{eq:log-condition} are quite sharp if $p\neq \infty$.
\end{Bem}

Next, we address the sharpness of the conditions \eqref{eq:finite-moments} and \eqref{eq:log-condition} when $p=\infty$.
\begin{Prop}\label{thm:essential-boundedness}
Let $d\in\NN$, $s,\alpha,\hf,\lf,\theta\in\RR$, $p=\infty$, and $q\in (0,\infty]$.
Assume that $a$ follows a Besov sequence prior with parameters $(\alpha,\hf,\lf,\theta)$ and template random variable $X$.
Suppose that $X$ is such that $a\in b^s_{\infty,q}(\RR^d)$ almost surely.
If $\hf=0$ or $\lf=0$, then there exists $R>0$ such that $\abs{X}\leq R$ almost surely.
\end{Prop}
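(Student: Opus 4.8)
The strategy is the standard Borel–Cantelli argument adapted to whichever of $\hf$ or $\lf$ vanishes. I will only treat the case $\hf=0$ in detail; the case $\lf=0$ is handled completely analogously, using the $j=0$ component $\eta_1$ of the Besov sequence norm in place of the $j\geq 1$ component.

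\emph{Step 1: Reduce to a single fixed scale and type.} Suppose $\hf=0$. Recall from \cref{thm:target-statement-necessity} that $\mnorm{a}_{b^s_{\infty,q}(\RR^d)}\stackrel{a.s.}{<}\infty$ forces (in particular) the high-frequency quantity $\eta_2$ to be almost surely finite. For a fixed scale $j=1$ and fixed type $t^\ast=(M,\ldots,M)\in T_1$, finiteness of $\eta_2$ implies
\begin{equation*}
\infty\stackrel{a.s.}{>} Z_{1,t^\ast}=\mnorm{\lr{\lr{1+\tfrac{\mnorm{m}_\infty}{2}}^{\hf}\abs{\xi_{1,t^\ast,m}}}_{m\in\ZZ^d}}_{\ell_\infty}=\sup_{m\in\ZZ^d}\abs{\xi_{1,t^\ast,m}},
\end{equation*}
using $\hf=0$ in the last equality. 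Thus the supremum of infinitely many i.i.d.\ copies of $\abs{X}$ is almost surely finite.

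\emph{Step 2: Conclude essential boundedness from a supremum of i.i.d.\ copies.} Set $Y_m\defeq\abs{\xi_{1,t^\ast,m}}$ for $m\in\ZZ^d$; these are i.i.d.\ copies of $\abs{X}$ and $\ZZ^d$ is countably infinite. Suppose, for contradiction, that $X$ is not essentially bounded, i.e., $\PP(\abs{X}>R)>0$ for every $R>0$. Then for each $n\in\NN$, the events $\{Y_m>n\}$, $m\in\ZZ^d$, are independent with common probability $\PP(\abs{X}>n)>0$, so $\sum_{m\in\ZZ^d}\PP(Y_m>n)=\infty$. By the second Borel–Cantelli lemma, almost surely $Y_m>n$ for infinitely many $m$, hence $\sup_{m\in\ZZ^d}Y_m\geq n$ almost surely. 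Intersecting over $n\in\NN$ (a countable intersection of almost sure events) gives $\sup_{m\in\ZZ^d}Y_m=\infty$ almost surely, contradicting Step 1. Therefore there exists $R>0$ with $\abs{X}\leq R$ almost surely.

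\emph{The case $\lf=0$ and the main obstacle.} When $\lf=0$ instead, one uses that $\eta_1=\mnorm{\lr{(1+\mnorm{m}_\infty)^{\lf}\abs{\xi_m}}_{m\in\ZZ^d}}_{\ell_\infty}=\sup_{m\in\ZZ^d}\abs{\xi_m}\stackrel{a.s.}{<}\infty$, and the identical Borel–Cantelli argument applies to the i.i.d.\ family $(\xi_m)_{m\in\ZZ^d}$. There is essentially no obstacle here: the only point requiring a little care is making sure that finiteness of the full Besov sequence quasi-norm genuinely forces finiteness of the relevant one-scale slice ($Z_{1,t^\ast}$ or $\eta_1$), which is immediate from the definition of $\mnorm{\cdot}_{b^s_{p,q}(\RR^d)}$ since that quasi-norm dominates each of its constituent $\ell_p$-blocks. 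Note also that the assumption $\PP(X\neq 0)>0$ is not needed in this proposition — it is automatically implied, since if $X=0$ almost surely then trivially $\abs{X}\leq R$ for any $R>0$.
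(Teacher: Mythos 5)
Your proof is correct and follows essentially the same route as the paper's: extract a single $\ell_\infty$ block of the Besov sequence quasi-norm (the paper uses the $j=0$ slice and the $\lf=0$ case, you use the $j=1$, $t^\ast=(M,\ldots,M)$ slice and the $\hf=0$ case — a purely cosmetic switch), observe that under the vanishing exponent this block is $\sup_m\abs{\xi_m}$ over an i.i.d.\ family, and invoke the second Borel--Cantelli lemma together with a countable intersection to show this supremum is a.s.\ infinite whenever $X$ is not essentially bounded. The reduction that the full quasi-norm dominates each constituent $\ell_\infty$-block, and the closing remark that $\PP(X\neq 0)>0$ is not needed, are both correct.
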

\begin{proof}
Assume that $\gamma=0$ and that for all $R>0$, we have $\PP(\abs{X}\geq R)>0$.
Then $\sum_{m\in\ZZ^d} \PP(\abs{\xi_m}\geq R)=\infty$.
Thus, the Borel--Cantelli lemma shows for every $R\in\NN$ that the event $A_R$ on which $\sup_{m\in\ZZ^d}\abs{\xi_m}\geq R$ has probability $\PP(A_R)=1$.
Hence also $\PP(A)=1$ for $A\defeq\bigcap_{R\in\RR}A_R$.
But on the event $A$ we have $\sup_{m\in\ZZ^d}\abs{\xi_m}=\infty$, from which $\mnorm{a}_{b^s_{\infty,q}(\RR^d)}=\infty$ follows.
Similar arguments apply when $\hf=0$.
\end{proof}
The following remark shows that when $p$ is infinite but $q$ is not, then the essential boundedness of the template variable also cannot be simply droppend in case of $s+\frac{d}{2}+\alpha=0$.
\begin{Bem}\label{bem:essential-boundedness}
Let $d\in\NN$, $s,\alpha,\hf,\lf,\theta\in\RR$, $p=\infty$, and $q\in (0,\infty]$.
Let further $X\sim \mathcal{N}(0,1)$ is standard normal random variable, and assume that $a$ follows a Besov sequence prior with parameters $(\alpha,\hf,\lf,\theta)$ and template variable $X$.
Suppose that $s+\frac{d}{2}+\alpha=0$ and $\theta\geq -\frac{1}{2}-\frac{1}{q}$.
Then almost surely, we have $a\notin b^s_{\infty,q}(\RR^d)$.

This shows that even for $\hf,\lf\ll 0$, Property A (which in the present setting reduces to $\hf\leq 0$, $\lf\leq 0$,
 and $\theta <-\frac{1}{q}$) is in general \emph{not} sufficient to ensure that $a\in b^s_{p,q}(\RR^d)$ almost surely in the case $p=\infty$ if one drops condition \eqref{eq:finite-moments}.
\end{Bem}
\begin{proof}
Fix $t^\ast=(M,\ldots,M)$.
It follows from \cref{thm:SupSTDNorm} that there exists an $\NN$-valued random variable $N$ satisfying 
\begin{equation*}
\sup_{\substack{m\in\ZZ^d:\\\mnorm{m}_\infty\leq 2^j}}\abs{\xi_{j,t^\ast,m}}\geq \sqrt{\log((2^{j+1}+1)^d)}\geq c\sqrt{j}
\end{equation*}
for all $j\geq N$ and some absolute constant $c>0$.
Hence, with 
\begin{equation*}
\eta_2\defeq \mnorm{(2^{j(s+\frac{d}{2}+\alpha)}(j+1)^\theta Z_{j,t})_{j\in\NN, t\in T_j}}_{\ell_q},
\end{equation*}
we have
\begin{align*}
\mnorm{a}_{b^s_{\infty,q}(\RR^d)}^q\gtrsim \eta_2^q&\geq \sum_{j\in\NN} (j+1)^{\theta q}\sup_{\substack{m\in\ZZ^d:\\\mnorm{m}_\infty\leq 2^j}}\lr{1+\frac{\mnorm{m}_\infty}{2^j}}^{\hf q}\abs{\xi_{j,t^\ast,m}}^q\\
&\gtrsim \sum_{j\in\NN} (j+1)^{\theta q}\lr{\sup_{\substack{m\in\ZZ^d:\\\mnorm{m}_\infty\leq 2^j}}\abs{\xi_{j,t^\ast,m}}}^q\\
&\gtrsim \sum_{j\geq N}j^{(\theta+\frac{1}{2})q}\\
&=\infty,
\end{align*}
because $\theta\geq -\frac{1}{2}-\frac{1}{q}$.
\end{proof}

Similarly to \cref{bem:sharpness-almost-sure-convergence}, we can discuss the sharpness of the condition \eqref{eq:rv-powers} when $p\neq\infty$.
\begin{Bem}
Let $d\in\NN$, $s,\alpha,\hf,\lf,\theta\in\RR$, $p,r\in (0,\infty)$, and $q\in (0,\infty]$.
Let further $X$ be a random variable with $\PP(X\neq 0)>0$, and assume that $a$ follows a Besov sequence prior with parameters $(\alpha,\hf,\lf,\theta)$ and template random variable $X$.
Suppose that $X$ is such that $\EE[\mnorm{a}_{b^s_{p,q}(\RR^d)}^r]<\infty$ whenever Property A is satisfied.
On the one hand, this implies that $\mnorm{a}_{b^s_{p,q}(\RR^d)}< \infty$ almost surely whenever Property A is satisfied.
As seen in \cref{bem:sharpness-almost-sure-convergence}, this gives $\EE[\abs{X}^\rho]<\infty$  for all $\rho\in (0,p)$.
On the other hand, it holds that $\mnorm{a}_{b^s_{p,q}(\RR^d)}\gtrsim\abs{\xi_0}$.
Thus, if $\EE[\mnorm{a}_{b^s_{p,q}(\RR^d)}^r]<\infty$, then necessarily $\EE[\abs{\xi_0}^r]<\infty$.
Overall, this shows that $\EE[\abs{X}^{\max\setn{r,\rho}}]<\infty$ for all $\rho\in (0,p)$.
Hence \eqref{eq:rv-powers} is quite sharp if $p\neq\infty$.
\end{Bem}

\appendix
\section{Auxiliary statements}\label{chap:appendix}

The following lemma shows the admissibility of the function $w_\sigma:\RR^d\to\RR$ given by $w_\sigma(x)\defeq (1+\mnorm{x}_2^2)^{\frac{\sigma}{2}}$ as defined in \cref{def:fourier}.
\begin{Lem}\label{thm:admissibility}
Let $\sigma\in\RR$ and $w_\sigma:\RR^d\to\RR$, $w_\sigma(x)\defeq (1+\mnorm{x}_2^2)^{\frac{\sigma}{2}}$.
\begin{enumerate}[label={(\alph*)},leftmargin=*,align=left,noitemsep]
\item{For every multiindex $\beta\in\NN_0^d$, we have
\begin{equation*}
D^\beta (1+\mnorm{x}_2^2)^{\frac{\sigma}{2}}=p_{\sigma,\beta}(x)(1+\mnorm{x}_2^2)^{\frac{\sigma}{2}-\mnorm{\beta}_1}
\end{equation*}
where $p_{\sigma,\beta}(x)$ is a polynomial of degree at most $\mnorm{\beta}_1$.
In particular, we have $\abs{D^\beta (1+\mnorm{x}_2^2)^{\frac{\sigma}{2}}}\lesssim (1+\mnorm{x}_2^2)^{\frac{\sigma}{2}}$.\label{derivatives}}
\item{We have $w_\sigma(x)\lesssim w_{\abs{\sigma}}(x-y)w_\sigma(y)$ for all $x,y\in\RR^d$.\label{upper-bound}}
\end{enumerate}
\end{Lem}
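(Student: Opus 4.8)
The plan is to handle the two parts separately: part (a) by induction on the order $\mnorm{\beta}_1$ of the multiindex, and part (b) via Peetre's inequality; together these establish that $w_\sigma$ is an admissible weight in the sense of \cref{def:fourier}.

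For part (a), the base case $\beta=0$ holds with $p_{\sigma,0}\equiv 1$, which has degree $0=\mnorm{0}_1$. For the inductive step I would assume the formula for some $\beta$ with $\mnorm{\beta}_1=n$ and differentiate once more in a coordinate direction $x_k$ (with $e_k$ the $k$-th standard unit vector), so that the product rule, after factoring out $(1+\mnorm{x}_2^2)^{\frac{\sigma}{2}-n-1}$, yields
\[
\partial_{x_k}\!\bigl[p_{\sigma,\beta}(x)(1+\mnorm{x}_2^2)^{\frac{\sigma}{2}-n}\bigr]=\Bigl[(\partial_{x_k}p_{\sigma,\beta})(x)(1+\mnorm{x}_2^2)+(\sigma-2n)\,x_k\,p_{\sigma,\beta}(x)\Bigr](1+\mnorm{x}_2^2)^{\frac{\sigma}{2}-n-1}.
\]
Thus $p_{\sigma,\beta+e_k}(x)\defeq(\partial_{x_k}p_{\sigma,\beta})(x)(1+\mnorm{x}_2^2)+(\sigma-2n)x_k p_{\sigma,\beta}(x)$, whose degree is at most $\max\setn{(n-1)+2,\,1+n}=n+1=\mnorm{\beta+e_k}_1$, and whose associated power $\tfrac{\sigma}{2}-(n+1)$ equals $\tfrac{\sigma}{2}-\mnorm{\beta+e_k}_1$, completing the induction. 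For the pointwise estimate I would use that a polynomial of degree at most $n$ satisfies $\abs{p_{\sigma,\beta}(x)}\lesssim(1+\mnorm{x}_2)^n\lesssim(1+\mnorm{x}_2^2)^{n/2}$, whence $\abs{D^\beta(1+\mnorm{x}_2^2)^{\sigma/2}}\lesssim(1+\mnorm{x}_2^2)^{\frac{\sigma}{2}-\frac{n}{2}}\leq(1+\mnorm{x}_2^2)^{\frac{\sigma}{2}}$, the last step using $n\geq 0$ and $1+\mnorm{x}_2^2\geq 1$.

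For part (b), I would first record Peetre's inequality: from $\mnorm{x}_2\leq\mnorm{x-y}_2+\mnorm{y}_2$ one obtains $1+\mnorm{x}_2^2\leq 1+2\mnorm{x-y}_2^2+2\mnorm{y}_2^2\leq 2(1+\mnorm{x-y}_2^2)(1+\mnorm{y}_2^2)$. If $\sigma\geq 0$, raising this to the power $\sigma/2$ gives $w_\sigma(x)\leq 2^{\sigma/2}w_{\abs{\sigma}}(x-y)w_\sigma(y)$ directly. If $\sigma<0$, I would apply the same inequality with $x$ and $y$ interchanged, rearrange it to $(1+\mnorm{x}_2^2)^{-1}\leq 2(1+\mnorm{x-y}_2^2)(1+\mnorm{y}_2^2)^{-1}$, and raise to the power $\abs{\sigma}/2$, which again yields $w_\sigma(x)\leq 2^{\abs{\sigma}/2}w_{\abs{\sigma}}(x-y)w_\sigma(y)$. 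Combined with part (a), this gives admissibility of $w_\sigma$.

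I do not expect a genuine obstacle here; the only points requiring attention are the degree bookkeeping in the induction for part (a) and keeping track of the sign of $\sigma$ in part (b) — the latter being precisely the reason the weight $w_{\abs{\sigma}}$, rather than $w_\sigma$, must appear on the right-hand side.
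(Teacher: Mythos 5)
Your proof is correct and follows essentially the same route as the paper: the same induction on $\mnorm{\beta}_1$ for part (a), with the identical recursion for $p_{\sigma,\beta+e_k}$, and Peetre's inequality for part (b). The only cosmetic difference is that in (b) you work directly with $1+\mnorm{x}_2^2$ (obtaining the constant $2$), whereas the paper uses $1+\mnorm{x}_2$ and translates via $w_\sigma(x)\asymp(1+\mnorm{x}_2)^\sigma$; both are fine.
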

\begin{proof}
We show \ref{derivatives} by induction.
If $\beta=0$, then we can choose $p_{\sigma,\beta}(x)=1$ which is indeed a polynomial of degree at most $\mnorm{\beta}_1=0$.
Now assume the claim holds true for $\beta$.
We show that it holds true for $\beta+e_k$ in place of $\beta$, where $e_k\in\RR^d$ denotes the vector whose entries are $0$ except for the $k$th, which is $1$.
Note that
\begin{align*}
&D^{\beta+e_k}(1+\mnorm{x}_2^2)^{\frac{\sigma}{2}}\\
&=D^{e_k}\lr{p_{\sigma,\beta}(x)(1+\mnorm{x}_2^2)^{\frac{\sigma}{2}-\mnorm{\beta}_1}}\\
&=\lr{D^{e_k}p_{\sigma,\beta}(x)}(1+\mnorm{x}_2^2)^{\frac{\sigma}{2}-\mnorm{\beta}_1}+p_{\sigma,\beta}(x)\lr{D^{e_k}(1+\mnorm{x}_2^2)^{\frac{\sigma}{2}-\mnorm{\beta}_1}}\\
&=\lr{D^{e_k}p_{\sigma,\beta}(x)}(1+\mnorm{x}_2^2)^{\frac{\sigma}{2}-\mnorm{\beta}_1}+p_{\sigma,\beta}(x)\lr{\frac{\sigma}{2}-\mnorm{\beta}_1}2x_k(1+\mnorm{x}_2^2)^{\frac{\sigma}{2}-\mnorm{\beta}_1-1}\\
&=(1+\mnorm{x}_2^2)^{\frac{\sigma}{2}-\mnorm{\beta}_1-1}\lr{\lr{D^{e_k}p_{\sigma,\beta}(x)}(1+\mnorm{x}_2^2)+p_{\sigma,\beta}(x)\lr{\frac{\sigma}{2}-\mnorm{\beta}_1}2x_k},
\end{align*}
and $\lr{D^{e_k}p_{\sigma,\beta}(x)}(1+\mnorm{x}_2^2)+p_{\sigma,\beta}(x)\lr{\frac{\sigma}{2}-\mnorm{\beta}_1}2x_k$ is a polynomial of degree at most $\mnorm{\beta+e_k}_1=\mnorm{\beta}_1+1$.

For the proof of \ref{upper-bound}, note that the case $\sigma=0$ is trivial. 
Further, we observe that
\begin{equation*}
1+\mnorm{x}_2\leq 1+ \mnorm{x-y}_2+\mnorm{y}_2\leq \lr{1+\mnorm{x-y}_2}\lr{1+\mnorm{y}_2}.
\end{equation*}
If $\sigma>0$, it follows that
\begin{equation*}
w_\sigma(x)\lesssim (1+\mnorm{x}_2)^\sigma\leq (1+\mnorm{x-y}_2)^\sigma (1+\mnorm{y}_2)^\sigma\lesssim w_\sigma(x-y)w_\sigma(y).
\end{equation*}
On the other hand, if $\sigma<0$, then
\begin{align*}
w_\sigma(x)&\asymp (1+\mnorm{x}_2)^\sigma =((1+\mnorm{x}_2)^{-1})^{-\sigma}\\
&\leq ((1+\mnorm{x-y}_2)(1+\mnorm{y}_2)^{-1})^{-\sigma}\\
&=(1+\mnorm{x-y}_2)^{-\sigma}(1+\mnorm{y}_2)^\sigma\\
&\asymp w_{-\sigma}(x-y)(1+\mnorm{y}_2)^\sigma\\
&\asymp w_{\abs{\sigma}}(x-y)w_\sigma(y).\qedhere
\end{align*}
\end{proof}

The following lemma is taken from \cite[Proposition~5.14]{Kallenberg2021}.
We use it in in the proofs of \cref{thm:target-statement-necessity,thm:necessity-bernoulli}.
\begin{Lem}\label{thm:workhorse-lemma}
Let $(\xi_k)_{k\in\NN}$ be a sequence of independent nonnegative (finite) random variables.
Then $\sum_{k\in\NN} \xi_k<\infty$ almost surely if and only if
\begin{equation*}
\sum_{k\in\NN} \EE[\min\setn{\xi_k,1}]<\infty.
\end{equation*}
\end{Lem}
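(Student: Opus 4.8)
The plan is to reduce everything to the bounded truncations $\zeta_k\defeq\min\setn{\xi_k,1}\in[0,1]$, which remain independent, and to prove each implication for the $\zeta_k$. The one elementary inequality I will use is $e^{-u}\leq 1-(1-e^{-1})u$ for $u\in[0,1]$ (convexity of $e^{-u}$ below its chord on $[0,1]$), together with $1-v\leq e^{-v}$ for all $v\in\RR$.

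\emph{Sufficiency of the series condition.} Suppose $\sum_{k\in\NN}\EE[\min\setn{\xi_k,1}]<\infty$. By monotone convergence $\EE[\sum_{k\in\NN}\zeta_k]=\sum_{k\in\NN}\EE[\zeta_k]<\infty$, so $\sum_{k\in\NN}\zeta_k<\infty$ almost surely. Since $\one_{\setn{\xi_k>1}}\leq\zeta_k$ pointwise, we also get $\sum_{k\in\NN}\PP(\xi_k>1)\leq\sum_{k\in\NN}\EE[\zeta_k]<\infty$, so by the (first) Borel--Cantelli lemma, almost surely $\xi_k>1$ for only finitely many $k$. Decomposing $\xi_k=\zeta_k+(\xi_k-1)\one_{\setn{\xi_k>1}}$ and using that each $\xi_k$ is finite, we conclude $\sum_{k\in\NN}\xi_k<\infty$ almost surely.

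\emph{Necessity of the series condition.} Suppose $\sum_{k\in\NN}\xi_k<\infty$ almost surely. Then $\sum_{k\in\NN}\zeta_k\leq\sum_{k\in\NN}\xi_k<\infty$ almost surely, hence $\exp(-\sum_{k\in\NN}\zeta_k)>0$ almost surely and therefore $\EE[\exp(-\sum_{k\in\NN}\zeta_k)]>0$. On the other hand, by independence (dominated convergence for the decreasing partial products, all bounded by $1$) together with the two inequalities above,
\[
\EE\Bigl[\exp\Bigl(-\sum_{k\in\NN}\zeta_k\Bigr)\Bigr]=\prod_{k\in\NN}\EE[e^{-\zeta_k}]\leq\prod_{k\in\NN}\bigl(1-(1-e^{-1})\EE[\zeta_k]\bigr)\leq\exp\Bigl(-(1-e^{-1})\sum_{k\in\NN}\EE[\zeta_k]\Bigr).
\]
Comparing the two estimates forces $\sum_{k\in\NN}\EE[\zeta_k]<\infty$, i.e.\ the series condition.

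The only genuinely non-routine step is the necessity direction: the Laplace-transform (product) argument above is the clean way to extract summability of the truncated expectations from almost-sure convergence, and it sidesteps the centering that a Kolmogorov three-series approach would require. Everything else is Borel--Cantelli and monotone convergence.
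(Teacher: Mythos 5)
Your proof is correct, but note that the paper does not actually prove this lemma at all—it is cited verbatim from Kallenberg (\emph{Foundations of Modern Probability}, Proposition~5.14) and used as a black box throughout the necessity arguments, so there is no in-paper argument to compare against. Your self-contained proof is sound: the sufficiency direction (monotone convergence for $\sum\zeta_k$ plus Borel--Cantelli for the finitely many overshoots $\xi_k>1$) is the routine part, and for the necessity direction the Laplace-transform argument $\EE[\exp(-\sum\zeta_k)]=\prod_k\EE[e^{-\zeta_k}]\leq\exp(-(1-e^{-1})\sum_k\EE[\zeta_k])$ is indeed the clean way to turn almost-sure finiteness into summability of the truncated means; the chord inequality $e^{-u}\leq 1-(1-e^{-1})u$ on $[0,1]$ and the bound $1-v\leq e^{-v}$ are both applied correctly, and the interchange $\EE[\exp(-\sum_k\zeta_k)]=\prod_k\EE[e^{-\zeta_k}]$ is justified since the partial products decrease and are bounded by $1$. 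This product/Laplace approach is in fact essentially how such zero--one dichotomies for nonnegative independent series are established in the standard references, so while the paper gives no proof, yours matches the expected route and avoids the centering bookkeeping a Kolmogorov three-series argument would require.
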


The following calculation is used in the proofs of \cref{thm:target-statement-necessity,thm:necessity-bernoulli}.
\begin{Lem}\label{thm:count-lattice-points}
Let $d,a\in\NN$, $\beta\in\RR$, and $p\in (0,\infty)$.
We have
\begin{equation*}
\sum_{m\in\ZZ^d}\lr{1+\frac{\mnorm{m}_\infty}{a}}^{\hf p}<\infty
\end{equation*}
if and only if $\hf< -\frac{d}{p}$, in which case $\sum_{m\in\ZZ^d}\lr{1+\frac{\mnorm{m}_\infty}{a}}^{\hf p}\asymp a^d$, with the implied constants only depending on $\beta$, $p$, and $d$. 
\end{Lem}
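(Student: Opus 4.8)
The plan is to organize the sum according to the $\ell_\infty$-shells of the lattice $\ZZ^d$. Writing $\ell=\mnorm{m}_\infty$, the number of $m\in\ZZ^d$ with $\mnorm{m}_\infty=\ell$ equals $(2\ell+1)^d-(2\ell-1)^d$ for $\ell\geq 1$ and equals $1$ for $\ell=0$; by the mean value theorem applied to $x\mapsto x^d$, this count is $\asymp \ell^{d-1}$ uniformly for $\ell\geq 1$, with an implied constant depending only on $d$ (this is the same computation as in \eqref{eq:justify}). Hence
\begin{equation*}
\sum_{m\in\ZZ^d}\lr{1+\frac{\mnorm{m}_\infty}{a}}^{\hf p}\asymp 1+\sum_{\ell=1}^\infty \lr{1+\frac{\ell}{a}}^{\hf p}\ell^{d-1},
\end{equation*}
and it suffices to analyze the right-hand side.

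Next I would split the sum at $\ell=a$. For $1\leq \ell\leq a$ one has $1\leq 1+\ell/a\leq 2$, so $\lr{1+\ell/a}^{\hf p}\asymp 1$ (implied constants depending on $\hf$ and $p$), and therefore $\sum_{\ell=1}^a\lr{1+\ell/a}^{\hf p}\ell^{d-1}\asymp \sum_{\ell=1}^a\ell^{d-1}\asymp a^d$ by comparison with $\int_0^a x^{d-1}\dd x$ (using $d\geq 1$). For $\ell>a$ one has $1+\ell/a\asymp \ell/a$, so
\begin{equation*}
\sum_{\ell>a}\lr{1+\frac{\ell}{a}}^{\hf p}\ell^{d-1}\asymp a^{-\hf p}\sum_{\ell>a}\ell^{\hf p+d-1},
\end{equation*}
and comparing this last sum with $\int_a^\infty x^{\hf p+d-1}\dd x$ — exactly the computation carried out in \eqref{eq:justify} — shows that it is finite if and only if $\hf p+d<0$, in which case it is $\asymp a^{\hf p+d}$, so that the tail is $\asymp a^{-\hf p}a^{\hf p+d}=a^d$.

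Combining the two regimes finishes the proof: if $\hf<-\tfrac dp$, both the head ($\ell\leq a$) and the tail ($\ell>a$) are $\asymp a^d$ (and the additive $1$ is absorbed since $a\geq 1$), giving $\sum_{m\in\ZZ^d}(1+\mnorm{m}_\infty/a)^{\hf p}\asymp a^d$; if $\hf\geq -\tfrac dp$, the tail already diverges, so the whole sum is infinite. There is essentially no genuine obstacle here; the only point requiring a little care is the sum–integral comparison for $\sum_{\ell>a}\ell^{\hf p+d-1}$ when $\hf p+d$ is close to $0$, but this is handled precisely as in \eqref{eq:justify}, and the rest of the argument amounts to bookkeeping the dependence of the implied constants on $\hf$, $p$, and $d$.
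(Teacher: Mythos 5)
Your proof is correct and follows essentially the same approach as the paper: decompose $\ZZ^d$ into $\ell_\infty$-shells, use the mean value theorem to count $\asymp \ell^{d-1}$ points per shell, split the sum at $\ell=a$ where $1+\ell/a$ transitions from $\asymp 1$ to $\asymp\ell/a$, and finish with sum--integral comparison. The only cosmetic difference is that you reference the computation in \eqref{eq:justify} (which appears in the proof of \cref{thm:sharpness}) rather than carrying it out inline as the paper does here.
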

\begin{proof}
Note for $\ell\in\NN$ that
\begin{align*}
&\#\setcond{m\in\ZZ^d}{\mnorm{m}_\infty=\ell}\\
&=\#\setcond{m\in\ZZ^d}{\mnorm{m}_\infty\leq\ell}-\#\setcond{m\in\ZZ^d}{\mnorm{m}_\infty\leq\ell-1}\\
&=(2\ell+1)^d-(2(\ell-1)+1)^d=(2\ell+1)^d-(2\ell-1)^d.
\end{align*}
Therefore, we see
\begin{align*}
\sum_{m\in\ZZ^d}\lr{1+\frac{\mnorm{m}_\infty}{a}}^{\hf p}&=1+\sum_{\ell\in\NN}\sum_{\substack{m\in\ZZ^d\\\mnorm{m}_\infty=\ell}}\lr{1+\frac{\mnorm{m}_\infty}{a}}^{\hf p}\\
&=1+\sum_{\ell\in\NN} \lr{(2\ell+1)^d-(2\ell-1)^d}\lr{1+\frac{\ell}{a}}^{\hf p}.
\end{align*}
Next, note that thanks to the mean value theorem, there exists a number $y=y_\ell\in (2\ell-1,2\ell+1)$ such that $(2\ell+1)^d-(2\ell-1)^d=d\cdot y_\ell^{d-1}\cdot 2\asymp \ell^{d-1}$.
This entails the estimate
\begin{align*}
&\sum_{\ell\in\NN} ((2\ell+1)^d-(2\ell-1)^d)\lr{1+\frac{\ell}{a}}^{\hf p}\\
&\asymp \sum_{\ell\in\NN} \ell^{d-1}\lr{1+\frac{\ell}{a}}^{\hf p}\\
&=\sum_{\ell=1}^{a-1}\ell^{d-1}\lr{1+\frac{\ell}{a}}^{\hf p}+\sum_{\ell\geq a} \ell^{d-1}\lr{1+\frac{\ell}{a}}^{\hf p}.
\end{align*}
In the first sum, we have $1\leq 1+\frac{\ell}{a}\leq 2$ and thus $\sum_{\ell=1}^{a-1}\ell^{d-1}\lr{1+\frac{\ell}{a}}^{\hf p}\asymp\sum_{\ell=1}^{a-1}\ell^{d-1}$.
In the second sum, we have $\frac{\ell}{a}\leq 1+\frac{\ell}{a}\leq 2\frac{\ell}{a}$ and thus $\sum_{\ell\geq a} \ell^{d-1}\lr{1+\frac{\ell}{a}}^{\hf p}\asymp \sum_{\ell\geq a} a^{-\hf p}\ell^{d-1+\hf p}$.
Next, note for any $\nu\in\RR$ and $\ell\in\NN$, that if $\ell\leq x\leq \ell +1$, then $\ell\leq x\leq 2\ell$.
Hence $x\asymp \ell$ and also $x^\nu\asymp \ell^\nu$.
It follows that
\begin{equation*}
\sum_{\ell=1}^{a-1}\ell^\nu=\sum_{\ell=1}^{a-1}\int_{\ell}^{\ell+1}\ell^\nu \dd x\asymp\sum_{\ell=1}^{a-1}\int_{\ell}^{\ell+1}x^\nu \dd x=\int_1^{a}x^\nu\dd x
\end{equation*}
and, similarly, 
\begin{equation*}
\sum_{\ell\geq a}\ell^\nu=\sum_{\ell\geq a}\int_{\ell}^{\ell+1}\ell^\nu \dd x\asymp\sum_{\ell\geq a}\int_{\ell}^{\ell+1}x^\nu \dd x=\int_a^\infty x^\nu\dd x.
\end{equation*}
Summarizing, we see
\begin{align*}
&\sum_{\ell=1}^{a-1}\ell^{d-1}\lr{1+\frac{\ell}{a}}^{\hf p}+\sum_{\ell\geq a} \ell^{d-1}\lr{1+\frac{\ell}{a}}^{\hf p}\\
&\asymp \sum_{\ell=1}^{a-1}\ell^{d-1} +\sum_{\ell=a}^\infty a^{-\hf p}\ell^{d-1+\hf p}\\
&\asymp \int_1^ax^{d-1}\dd x +a^{-\hf p}\int_a^\infty x^{d-1+\hf p}\dd x,
\end{align*}
where the right-hand side is finite if and only if $\hf p <-d$.
In this case, we get
\begin{equation*}
 \int_1^ax^{d-1}\dd x +a^{-\hf p}\int_a^\infty x^{d-1+\hf p}\dd x=\frac{a^d-1}{d}-a^{-\hf p}\frac{a^{d+\hf p}}{d+\hf p}\asymp a^d
\end{equation*}
so that $\sum_{m\in\ZZ^d}\lr{1+\frac{\mnorm{m}_\infty}{a}}^{\hf p}\asymp 1+a^d\asymp a^d$.
\end{proof}

A proof of the Paley--Zygmund inequality stated below can be found in \cite[Lemma~5.1]{Kallenberg2021}.
We use it in the proofs of \cref{thm:target-statement-necessity,thm:necessity-bernoulli}.
\begin{Lem}[Paley--Zygmund inequality]\label{thm:paley-zygmund}
Let $X\geq 0$ a random variable with $0<\EE[X^2]<\infty$, and let $\sigma\in [0,1]$.
Then
\begin{equation*}
\PP(X>\sigma \EE[X])\geq (1-\sigma)^2\frac{\EE[X]^2}{\EE[X^2]}.
\end{equation*}
\end{Lem}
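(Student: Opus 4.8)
The plan is to prove the Paley--Zygmund inequality by the standard two-line argument that combines a crude truncation estimate with the Cauchy--Schwarz inequality. First I would dispose of the degenerate situations: if $\sigma=1$ the asserted lower bound equals $0$ and there is nothing to prove, so we may assume $\sigma\in[0,1)$; moreover $\EE[X]>0$ in the remaining case, since $\EE[X]=0$ together with $X\geq 0$ would force $X=0$ almost surely, contradicting $\EE[X^2]>0$.

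Next, I would decompose $\EE[X]$ over the complementary events $\setns{X\leq\sigma\EE[X]}$ and $\setns{X>\sigma\EE[X]}$ and bound the contribution of the first one trivially:
\begin{equation*}
\EE[X]=\EE\bigl[X\,\one_{\setns{X\leq\sigma\EE[X]}}\bigr]+\EE\bigl[X\,\one_{\setns{X>\sigma\EE[X]}}\bigr]\leq \sigma\,\EE[X]+\EE\bigl[X\,\one_{\setns{X>\sigma\EE[X]}}\bigr],
\end{equation*}
where the estimate of the first term uses $X\leq\sigma\EE[X]$ on that event together with $\one_{\setns{X\leq\sigma\EE[X]}}\leq 1$. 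Rearranging gives $(1-\sigma)\,\EE[X]\leq \EE[X\,\one_{\setns{X>\sigma\EE[X]}}]$.

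Finally, I would apply the Cauchy--Schwarz inequality to the right-hand side,
\begin{equation*}
\EE\bigl[X\,\one_{\setns{X>\sigma\EE[X]}}\bigr]\leq \EE[X^2]^{1/2}\cdot\EE\bigl[\one_{\setns{X>\sigma\EE[X]}}^2\bigr]^{1/2}=\EE[X^2]^{1/2}\cdot\PP\bigl(X>\sigma\EE[X]\bigr)^{1/2},
\end{equation*}
so that $(1-\sigma)\,\EE[X]\leq \EE[X^2]^{1/2}\,\PP(X>\sigma\EE[X])^{1/2}$. Both sides are nonnegative, so squaring and dividing by $\EE[X^2]>0$ yields the claim. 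There is no genuine obstacle in this argument; the only points requiring a moment of care are the degenerate cases above and the fact that Cauchy--Schwarz must be applied to $\EE[X\,\one_A]$ rather than to $\EE[X]$ itself, so that the factor $\PP(A)$ (and not $1$) appears. Since this is precisely the proof found in standard references, in the paper it suffices to cite \cite[Lemma~5.1]{Kallenberg2021}.
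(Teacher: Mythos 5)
Your proof is correct and is the standard two-step argument (truncation plus Cauchy--Schwarz) behind the Paley--Zygmund inequality; the paper itself gives no proof but simply cites \cite[Lemma~5.1]{Kallenberg2021}, which uses precisely this argument, so your write-out fills in the reference faithfully.
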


The following lemma provides bounds for the raw moments for the binomial distribution.
We use it in the proof of \cref{thm:necessity-bernoulli}.
A related bound can be found in \cite[(4.5)]{Knoblauch2008}.
\begin{Lem}\label{thm:binomial}
Let $n\in\NN$, $\rho\in (0,1)$, $\sigma\in (0,\infty)$, and $X\sim \mathrm{Bin}(n,\rho)$ a binomially distributed random variable.
Then $\EE[X^\sigma]\lesssim \max\setn{n\rho,(n\rho)^\sigma}$, where the implied constant only depends on $\sigma$.
\end{Lem}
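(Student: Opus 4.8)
The plan is to reduce the whole estimate to the case of integer moments, where the factorial‑moment structure of the binomial distribution makes everything transparent. Throughout I write $m\defeq n\rho=\EE[X]$ and note $m>0$.

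\emph{Integer moments.} First I would show that for every $k\in\NN$ there is a constant $C_k>0$ with $\EE[X^k]\le C_k\max\setn{m,m^k}$. For this, use the polynomial identity $x^k=\sum_{j=0}^k S(k,j)\,x(x-1)\cdots(x-j+1)$ with the Stirling numbers of the second kind $S(k,j)\ge 0$, so that $\EE[X^k]=\sum_{j=0}^k S(k,j)\,\EE\bigl[X(X-1)\cdots(X-j+1)\bigr]$. The $j$-th factorial moment of $\mathrm{Bin}(n,\rho)$ equals $n(n-1)\cdots(n-j+1)\,\rho^j$ (and vanishes once $j>n$), hence lies in $[0,m^j]$. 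Since $S(k,0)=0$ for $k\ge 1$, and since $m^j\le\max\setn{m,m^k}$ for every $1\le j\le k$ (the map $t\mapsto m^t$ is monotone, so $m^j$ lies between $m$ and $m^k$), one gets $\EE[X^k]\le\bigl(\sum_{j=1}^k S(k,j)\bigr)\max\setn{m,m^k}$, i.e.\ $C_k\defeq\sum_{j=1}^k S(k,j)$ (the $k$-th Bell number) works; only its dependence on $k$ matters.

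\emph{Small $\sigma$.} If $\sigma\le 1$, then $x\mapsto x^\sigma$ is concave on $[0,\infty)$, so Jensen's inequality gives $\EE[X^\sigma]\le(\EE[X])^\sigma=m^\sigma\le\max\setn{m,m^\sigma}$, and the claim holds with implied constant $1$.

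\emph{General $\sigma>1$.} Put $\ell\defeq\floor{\sigma}\ge 1$ and $k\defeq\ceil{\sigma}$. If $\sigma$ is an integer, the integer‑moment bound above already finishes the proof. Otherwise $\ell<\sigma<k=\ell+1$; set $\theta\defeq\sigma-\ell\in(0,1)$. Hölder's inequality with conjugate exponents $\tfrac1{1-\theta}$ and $\tfrac1\theta$ applied to the factorization $X^\sigma=X^{\ell(1-\theta)}\cdot X^{k\theta}$ yields $\EE[X^\sigma]\le\EE[X^\ell]^{1-\theta}\,\EE[X^k]^\theta$. Substituting the integer‑moment bounds gives $\EE[X^\sigma]\le C_\ell^{1-\theta}C_k^\theta\cdot\max\setn{m,m^\ell}^{1-\theta}\max\setn{m,m^k}^\theta$, and a short case distinction ($m\le 1$ versus $m\ge 1$, using $\ell,k\ge 1$ and $\ell(1-\theta)+k\theta=\sigma\ge 1$) shows $\max\setn{m,m^\ell}^{1-\theta}\max\setn{m,m^k}^\theta=\max\setn{m,m^\sigma}$. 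Hence $\EE[X^\sigma]\le\max\setn{C_\ell,C_k}\max\setn{m,m^\sigma}$, with a constant depending only on $\sigma$, which is exactly the assertion.

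\emph{Main obstacle.} There is no real obstacle here; the only points that need a little care are the computation of the factorial moments of the binomial (standard, but worth writing out once) and bookkeeping in the interpolation step — the identity $\max\setn{m,m^\ell}^{1-\theta}\max\setn{m,m^k}^\theta=\max\setn{m,m^\sigma}$ is clean but its verification requires the case split on whether $m\le1$ or $m\ge1$.
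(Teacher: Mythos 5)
Your proof is correct and follows essentially the same strategy as the paper's: Stirling numbers and factorial moments for integer exponents, Jensen/concavity for $\sigma\leq 1$, and $L^p$-interpolation (Hölder) between the two neighboring integer exponents when $\sigma>1$ is not an integer. The only cosmetic differences are that the paper splits into the cases $n\rho\leq 1$ and $n\rho>1$ up front whereas you carry a uniform bound $\EE[X^k]\lesssim\max\setn{n\rho,(n\rho)^k}$ through and resolve the maximum only at the end, and you use Jensen's inequality for $\sigma\leq 1$ where the paper invokes $X^\sigma\leq X$ for integer-valued $X$.
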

\begin{proof}
If $\sigma\leq 1$, we trivially have $\EE[X^\sigma]\leq \EE[X]=n\rho$.
Hence we may assume that $\sigma>1$ in what follows. 
If $n\rho\leq 1$, let $c\defeq \ceil{\sigma}\in\NN$.
With $S(c,k)$ and $n^{\underline{k}}$ denoting the Stirling numbers of the second kind and falling factorials, respectively, we obtain from \cite[(2.9) and (3.1)]{Knoblauch2008} and because of $S(c,0)=0$ that
\begin{equation*}
\EE[X^\sigma]\leq \EE[X^c]=\sum_{k=1}^c S(c,k) n^{\underline{k}}\rho^k\leq\sum_{k=1}^c S(c,k) (n \rho)^k\leq\sum_{k=1}^c S(c,k) n \rho\lesssim n \rho.
\end{equation*}
On the other hand, if $n\rho>1$, let $c\in\NN$ such that $c\leq \sigma < c+1$.
Thus there exists $\theta \in [0,1]$ such that $\frac{1}{\sigma}=\frac{\theta}{c+1}+\frac{1-\theta}{c}$.
Using \cite[(2.9) and (3.1)]{Knoblauch2008}, we obtain
\begin{equation*}
\EE[X^c]=\sum_{k=1}^c S(c,k)n^{\underline{k}}\rho^k\leq \sum_{k=1}^c S(c,k) (n\rho)^k\leq \sum_{k=1}^c S(c,k) (n\rho)^c\lesssim (n\rho)^c.
\end{equation*}
Thus  $\mnorm{X}_{L_c}=\EE[X^c]^{\frac{1}{c}} \lesssim n\rho$ and, similarly, $\mnorm{X}_{L_{c+1}}=\EE[X^{c+1}]^{\frac{1}{c+1}} \lesssim n\rho$.

Now \cite[Proposition~6.10]{Folland1999} gives
\begin{equation*}
\EE[X^\sigma]=\mnorm{X}_{L_\sigma}^\sigma \leq \lr{\mnorm{X}_{L_c}^{1-\theta}\mnorm{X}_{L_{c+1}}^{\theta}}^{\sigma}\lesssim (n\rho)^{\sigma}
\end{equation*}
and the proof is complete.
\end{proof}

The following lemma is the technical part of the proofs of \cref{thm:target-statement-sufficiency,thm:powers}.
Its proof is loosely based on Etemadi's proof of the strong law of large numbers in \cite[\textsection~12]{BauerProbabilityTheory}.
\begin{Lem}\label{thm:1+eps}
Let $(X_{j,\tau})_{j\in\NN_0,\tau\in\NN}$ be a family of non-negative i.i.d.\ random variables, $d\in\NN$, $\eps\in (0,\infty)$, and $\sigma\in [1,\infty)$.
Suppose that
\begin{equation*}
\EE[X_{j,\tau}^{\sigma(1+\eps)}]<\infty.
\end{equation*}
Let
\begin{align*}
M_j&\defeq \#\setcond{m\in\ZZ^d}{\mnorm{m}_{\infty}\leq 2^j}=(2^{j+1}+1)^d,\\
N_j&\defeq\#\setcond{m\in\ZZ^d}{2^j<\mnorm{m}_\infty\leq 2^{j+1}}=M_{j+1}-M_j
\end{align*}
for $j\in\NN_0$.
Then, for
\begin{equation*}
\Xi\defeq \sup_{j \in \NN_0} \lr{\frac{1}{M_j} \sum_{\tau=1}^{M_j} X_{j,\tau}+\sup_{\ell\geq j} \frac{1}{N_\ell} \sum_{\tau=M_\ell + 1}^{M_{\ell+1}} X_{j,\tau}},
\end{equation*} 
we have $\EE[\Xi^\sigma]<\infty$.
\end{Lem}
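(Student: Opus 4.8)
The plan is to split $\Xi$ into two \enquote{maximal average} terms, bound each of them in $L^\sigma$ by centering at the mean, and then recombine. First I would note that, since the supremum of a sum is at most the sum of the suprema, $\Xi\le A+B$ with $A\defeq\sup_{j\in\NN_0}\frac{1}{M_j}\sum_{\tau=1}^{M_j}X_{j,\tau}$ and $B\defeq\sup_{j\in\NN_0}\sup_{\ell\geq j}\frac{1}{N_\ell}\sum_{\tau=M_\ell+1}^{M_{\ell+1}}X_{j,\tau}$. Because $\sigma\geq 1$, we have $(a+b)^\sigma\le 2^{\sigma-1}(a^\sigma+b^\sigma)$ for $a,b\geq 0$, so it suffices to show $\EE[A^\sigma]<\infty$ and $\EE[B^\sigma]<\infty$ separately. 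Swapping the two suprema in $B$ and using $M_{\ell+1}-M_\ell=N_\ell$, I would rewrite $B=\sup_{\ell\in\NN_0}\frac{1}{N_\ell}\max_{0\le j\le\ell}T_{j,\ell}$, where $T_{j,\ell}\defeq\sum_{\tau=M_\ell+1}^{M_{\ell+1}}X_{j,\tau}$ is, for each fixed pair $(j,\ell)$, distributed as a sum of $N_\ell$ i.i.d.\ copies of the common template variable $X$ (no independence across $j$ is needed).

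Next I would center at the mean. Writing $\mu\defeq\EE[X]$ and $r\defeq\sigma(1+\eps)>1$, one has $\mu\le(\EE[X^r])^{1/r}<\infty$ and $\EE[\abs{X-\mu}^r]<\infty$, and $A\le\mu+V$, $B\le\mu+W$ with $V\defeq\sup_{j\in\NN_0}\abs{\frac{1}{M_j}\sum_{\tau=1}^{M_j}X_{j,\tau}-\mu}$ and $W\defeq\sup_{\ell\in\NN_0}\max_{0\le j\le\ell}\abs{\frac{1}{N_\ell}T_{j,\ell}-\mu}$. The key analytic input is a moment estimate for the centered averages: applying the Marcinkiewicz--Zygmund inequality (with exponent $r$), together with subadditivity of $x\mapsto x^{r/2}$ when $r\le 2$ respectively its convexity when $r\ge 2$, gives for any sum $S_n$ of $n$ i.i.d.\ copies of $X$ the bound $\EE[\abs{\frac1n S_n-\mu}^r]\lesssim n^{-\theta}$, where $\theta\defeq\min\setn{r-1,\tfrac r2}>0$ and the implied constant depends only on $r$ and $\EE[\abs{X-\mu}^r]$.

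Then I would run a union-bound plus Markov plus layer-cake argument. Since $M_j\asymp 2^{jd}$ (as in the proof of \cref{thm:master-lemma}), $\PP(V>t)\le\sum_{j\in\NN_0}t^{-r}\EE[\abs{\tfrac1{M_j}\sum_\tau X_{j,\tau}-\mu}^r]\lesssim t^{-r}\sum_{j\in\NN_0}2^{-jd\theta}\lesssim t^{-r}$ for $t>0$, while trivially $\PP(V>t)\le 1$; hence $\EE[V^\sigma]=\int_0^\infty\sigma t^{\sigma-1}\PP(V>t)\,\dd t\lesssim 1+\int_1^\infty t^{\sigma-1-r}\,\dd t<\infty$ because $r>\sigma$. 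For $W$ the same estimates apply, the only change being an additional factor $\ell+1$ in the union bound over $j\in\setn{0,\dots,\ell}$, which is harmless against the geometric decay $N_\ell^{-\theta}\asymp 2^{-\ell d\theta}$, so that $\EE[W^\sigma]<\infty$ as well. Combining, $\EE[A^\sigma]\le 2^{\sigma-1}(\mu^\sigma+\EE[V^\sigma])<\infty$, $\EE[B^\sigma]\le 2^{\sigma-1}(\mu^\sigma+\EE[W^\sigma])<\infty$, and therefore $\EE[\Xi^\sigma]<\infty$.

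The main obstacle is conceptual rather than computational: one cannot bound $\EE[\Xi^\sigma]$ termwise by $\sum_j\EE[(\cdots)^\sigma]$, since by the law of large numbers each average tends to the positive constant $\mu$, making that series diverge. The remedy — subtract $\mu$ first and exploit geometric decay of the fluctuations along the index set $\setn{M_j}$ — forces one to quantify the fluctuations, and here a mere second-moment (Chebyshev) estimate would only handle $\sigma<2$; the full $r=\sigma(1+\eps)$-th moment hypothesis is genuinely needed to make the tail integral $\int_1^\infty t^{\sigma-1-r}\,\dd t$ converge, and it is precisely the strict inequality $r>\sigma$, i.e.\ the slack $\eps>0$, that delivers this convergence. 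I expect verifying the Marcinkiewicz--Zygmund-based moment bound uniformly across the two cases $r\le 2$ and $r\ge 2$ to be the only place demanding genuine care.
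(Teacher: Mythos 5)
Your argument is correct, but it follows a genuinely different route from the one in the paper. The paper's proof is an Etemadi-style argument: it first pushes the $\sigma$-th power \emph{inside} via $\bigl(\frac1n\sum X_{j,\tau}\bigr)^\sigma\le\frac1n\sum X_{j,\tau}^\sigma$, then truncates $X_{j,\tau}^\sigma$ at a scale-dependent level $\kappa_{j,\tau}\asymp 2^{jd/3}$ (respectively $2^{\ell d/3}$), bounds $\EE[\Xi^\sigma]$ directly by $\sum_j \EE[\,\abs{\cdot}\,]+\sum_{j}\sum_{\ell\geq j}\EE[\,\abs{\cdot}\,]+O(1)$, and controls each centered term by Cauchy--Schwarz, relying only on the elementary bound $\VV[X^\sigma\one_{X^\sigma\le\kappa}]\le\kappa^2$ for the body and the Markov-type tail estimate $\EE[X^\sigma\one_{X^\sigma>\kappa}]\le\kappa^{-\eps}\EE[X^{\sigma(1+\eps)}]$ for the tail; the $\eps$-slack enters through the latter. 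Your approach instead keeps the power outside, centers at $\mu=\EE[X]$ (not at the truncated mean), invokes Marcinkiewicz--Zygmund to obtain the $r$-th moment decay $\EE[\abs{\tfrac1n S_n-\mu}^r]\lesssim n^{-\theta}$ with $\theta=\min\setn{r-1,r/2}>0$, and converts this into a tail bound $\PP(V>t)\lesssim\min\setn{1,t^{-r}}$ by union bound and Markov, finishing with the layer-cake integral $\int_1^\infty t^{\sigma-1-r}\,\dd t$; the $\eps$-slack enters through $r=\sigma(1+\eps)>\sigma$. Both proofs are sound; the paper's is more self-contained (only second moments of bounded truncated variables, no sharp moment inequality for sums), while yours is arguably cleaner conceptually (no truncation, a single moment estimate applied uniformly), at the cost of invoking Marcinkiewicz--Zygmund, which for $1<r<2$ is itself a nontrivial theorem. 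One small point worth making explicit: in bounding $W$, the extra factor $\ell+1$ from the union over $j\in\setn{0,\dots,\ell}$ is absorbed by the geometric decay $\sum_\ell(\ell+1)2^{-\ell d\theta}<\infty$, exactly as you claim; and your reduction $B\le\mu+W$ requires only that for each fixed pair $(j,\ell)$ the relevant $X_{j,\tau}$ are i.i.d., which holds, so no independence across $j$ is needed.
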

\begin{proof}
Define
\begin{align*}
E&\defeq \EE[X_{0,1}^\sigma],\\
\kappa_{j,\tau}&\defeq \begin{cases}2^{\frac{jd}{3}}&\text{if }1\leq \tau \leq M_j,\\2^{\frac{\ell d}{3}}&\text{if }M_\ell+1\leq \tau \leq M_{\ell+1}\text{ for some (unique) }\ell\geq j,\end{cases}\\
E_{j,\tau}&\defeq \EE[X_{j,\tau}^\sigma\one_{X_{j,\tau}^\sigma\leq\kappa_{j,\tau}}].
\end{align*}
From
\begin{align*}
\Xi^\sigma &\leq\lr{\sup_{j\in\NN_0}\frac{1}{M_j}\sum_{\tau=1}^{M_j}X_{j,\tau}+\sup_{j\in\NN_0}\sup_{\ell\geq j}\frac{1}{N_\ell}\sum_{\tau=M_\ell+1}^{M_{\ell+1}}X_{j,\tau}}^\sigma\\
&\lesssim \lr{\sup_{j\in\NN_0}\frac{1}{M_j}\sum_{\tau=1}^{M_j}X_{j,\tau}}^\sigma+\lr{\sup_{j\in\NN_0}\sup_{\ell\geq j}\frac{1}{N_\ell}\sum_{\tau=M_\ell+1}^{M_{\ell+1}}X_{j,\tau}}^\sigma\\
&=\sup_{j\in\NN_0}\lr{\frac{1}{M_j}\sum_{\tau=1}^{M_j}X_{j,\tau}}^\sigma+\sup_{j\in\NN_0}\sup_{\ell\geq j}\lr{\frac{1}{N_\ell}\sum_{\tau=M_\ell+1}^{M_{\ell+1}}X_{j,\tau}}^\sigma\\
&\leq \sup_{j\in\NN_0} \frac{1}{M_j}\sum_{\tau=1}^{M_j}X_{j,\tau}^\sigma +\sup_{j\in\NN_0}\sup_{\ell\geq j}\frac{1}{N_\ell}\sum_{\tau=M_\ell+1}^{M_{\ell+1}}X_{j,\tau}^\sigma\\
&\leq\sup_{j\in\NN_0}\frac{1}{M_j}\sum_{\tau=1}^{M_j}(X_{j,\tau}^\sigma-E)+\sup_{j\in\NN_0}\sup_{\ell\geq j}\frac{1}{N_\ell}\sum_{\tau=M_\ell+1}^{M_{\ell+1}}(X_{j,\tau}^\sigma-E)+2E\\
&\leq\sup_{j\in\NN_0}\frac{1}{M_j}\sum_{\tau=1}^{M_j}(X_{j,\tau}^\sigma-E_{j,\tau})+\sup_{j\in\NN_0}\sup_{\ell\geq j}\frac{1}{N_\ell}\sum_{\tau=M_\ell+1}^{M_{\ell+1}}(X_{j,\tau}^\sigma-E_{j,\tau})+2E\\
&\leq\sum_{j\in\NN_0}\abs{\frac{1}{M_j}\sum_{\tau=1}^{M_j}(X_{j,\tau}^\sigma-E_{j,\tau})}+\sum_{j\in\NN_0}\sum_{\ell\geq j}\abs{\frac{1}{N_\ell}\sum_{\tau=M_\ell+1}^{M_{\ell+1}}(X_{j,\tau}^\sigma-E_{j,\tau})}+2E,
\end{align*}
we obtain
\begin{align*}
\EE[\Xi^\sigma]&\lesssim 2E+\sum_{j\in\NN_0}\EE\left[\abs{\frac{1}{M_j}\sum_{\tau=1}^{M_j}(X_{j,\tau}^\sigma-E_{j,\tau})}\right]\\
&\qquad\quad+\sum_{j\in\NN_0}\sum_{\ell\geq j}\EE\left[\abs{\frac{1}{N_\ell}\sum_{\tau=M_\ell+1}^{M_{\ell+1}}(X_{j,\tau}^\sigma-E_{j,\tau})}\right].
\end{align*}
In order to show that $\EE[\Xi^\sigma]<\infty$, we show that the two infinite sums in the last expression are finite.
Let $X_\ast\defeq X_{0,1}$.
First, we see that
\begin{align*}
&\EE\left[\abs{\frac{1}{M_j}\sum_{\tau=1}^{M_j}(X_{j,\tau}^\sigma-E_{j,\tau})}\right]\\
&=\EE\left[\abs{\frac{1}{M_j}\sum_{\tau=1}^{M_j}(X_{j,\tau}^\sigma\one_{X_{j,\tau}^\sigma\leq \kappa_{j,\tau}}-E_{j,\tau})+\frac{1}{M_j}\sum_{\tau=1}^{M_j}X_{j,\tau}^\sigma\one_{X_{j,\tau}^\sigma>\kappa_{j,\tau}}}\right]\\
&\leq\EE\left[\abs{ \frac{1}{M_j}\sum_{\tau=1}^{M_j}(X_{j,\tau}^\sigma\one_{X_{j,\tau}^\sigma\leq \kappa_{j,\tau}}-E_{j,\tau})}\right]+\frac{1}{M_j}\sum_{\tau=1}^{M_j}\EE\left[\abs{X_{j,\tau}^\sigma\one_{X_{j,\tau}^\sigma>\kappa_{j,\tau}}}\right]\\
&\leq \sqrt{\EE\left[\lr{\frac{1}{M_j}\sum_{\tau=1}^{M_j}(X_{j,\tau}^\sigma\one_{X_{j,\tau}^\sigma\leq \kappa_{j,\tau}}-E_{j,\tau})}^2\right]}+\frac{1}{M_j}\sum_{\tau=1}^{M_j}\kappa_{j,\tau}^{-\eps}\EE\left[\abs{X_{j,\tau}^\sigma}^{1+\eps}\right]\\
&= \sqrt{\frac{1}{M_j^2}\sum_{\tau=1}^{M_j}\VV\left[X_{j,\tau}^\sigma\one_{X_{j,\tau}^\sigma\leq \kappa_{j,\tau}}\right]}+2^{-\frac{jd\eps}{3}}\EE\left[\abs{X_\ast^\sigma}^{1+\eps}\right]\\
&\leq \sqrt{\frac{1}{M_j^2}\sum_{\tau=1}^{M_j}\kappa_{j,\tau}^2}+2^{-\frac{jd\eps}{3}}\EE\left[\abs{X_\ast^\sigma}^{1+\eps}\right]\\
&\leq \sqrt{\frac{2^{\frac{2jd}{3}}}{M_j}}+2^{-\frac{jd\eps}{3}}\EE\left[\abs{X_\ast^\sigma}^{1+\eps}\right]\\
&\lesssim 2^{-\frac{jd}{6}}+2^{-\frac{jd\eps}{3}}.
\end{align*}
Here we used that if $Z_1,\ldots,Z_n$ are independent random variables, then
\begin{equation*}
\EE\left[\lr{\sum_{j=1}^n (Z_j-\EE[Z_j])}^2\right]=\VV\left[\sum_{j=1}^n Z_j\right]=\sum_{j=1}^n V[Z_j].
\end{equation*}
Similarly, we conclude
\begin{align*}
&\EE\left[\abs{\frac{1}{N_\ell}\sum_{\tau=M_\ell+1}^{M_{\ell+1}}(X_{j,\tau}^\sigma-E_{j,\tau})}\right]\\
&=\EE\left[\abs{\frac{1}{N_\ell}\sum_{\tau=M_\ell+1}^{M_{\ell+1}}(X_{j,\tau}^\sigma\one_{X_{j,\tau}^\sigma\leq \kappa_{j,\tau}}-E_{j,\tau})+\frac{1}{N_\ell}\sum_{\tau=M_\ell+1}^{M_{\ell+1}}X_{j,\tau}^\sigma\one_{X_{j,\tau}^\sigma>\kappa_{j,\tau}}}\right]\\
&\leq\EE\left[\abs{\frac{1}{N_\ell}\sum_{\tau=M_\ell+1}^{M_{\ell+1}}(X_{j,\tau}^\sigma\one_{X_{j,\tau}^\sigma\leq \kappa_{j,\tau}}-E_{j,\tau})}\right]+\frac{1}{N_\ell}\sum_{\tau=M_\ell+1}^{M_{\ell+1}}\EE\left[\abs{X_{j,\tau}^\sigma\one_{X_{j,\tau}^\sigma>\kappa_{j,\tau}}}\right]\\
&\leq \sqrt{\EE\left[\lr{\frac{1}{N_\ell}\sum_{\tau=M_\ell+1}^{M_{\ell+1}}(X_{j,\tau}^\sigma\one_{X_{j,\tau}^\sigma\leq \kappa_{j,\tau}}-E_{j,\tau})}^2\right]}+\frac{1}{N_\ell}\sum_{\tau=M_\ell+1}^{M_{\ell+1}}\kappa_{j,\tau}^{-\eps}\EE\left[\abs{X_{j,\tau}^\sigma}^{1+\eps}\right]\\
&= \sqrt{\frac{1}{N_\ell^2}\sum_{\tau=M_\ell+1}^{M_{\ell+1}}\VV\left[X_{j,\tau}^\sigma\one_{X_{j,\tau}^\sigma\leq \kappa_{j,\tau}}\right]}+2^{-\frac{\ell d\eps}{3}}\EE\left[\abs{X_\ast^\sigma}^{1+\eps}\right]\\
&\leq \sqrt{\frac{1}{N_\ell^2}\sum_{\tau=M_\ell+1}^{M_{\ell+1}}\kappa_{j,\tau}^2}+2^{-\frac{\ell d\eps}{3}}\EE\left[\abs{X_\ast^\sigma}^{1+\eps}\right]\\
&\leq \sqrt{\frac{2^{\frac{2\ell d}{3}}}{N_\ell}}+2^{-\frac{\ell d\eps}{3}}\EE\left[\abs{X_\ast^\sigma}^{1+\eps}\right]\\
&\lesssim  2^{-\frac{\ell d}{6}}+2^{-\frac{\ell d\eps}{3}}.
\end{align*}
Here, we used $M_j\asymp N_j\asymp 2^{jd}$, see the proof of \cref{thm:master-lemma}.
It follows that 
\begin{equation*}
\sum_{j\in\NN_0}\EE\left[\abs{\frac{1}{M_j}\sum_{\tau=1}^{M_j}(X_{j,\tau}^\sigma-E_{j,\tau})}\right]\lesssim \sum_{j\in\NN_0}\lr{2^{-\frac{jd}{6}}+2^{-\frac{jd\eps}{3}}}<\infty
\end{equation*}
and
\begin{align*}
\sum_{j\in\NN_0}\sum_{\ell\geq j}\EE\left[\abs{\frac{1}{N_\ell}\sum_{\tau=M_\ell+1}^{M_{\ell+1}}(X_{j,\tau}^\sigma-E_{j,\tau})}\right]&\lesssim \sum_{j\in\NN_0}\sum_{\ell\geq j} \lr{2^{-\frac{\ell d}{6}}+2^{-\frac{\ell d\eps}{3}}}\\
&\lesssim \sum_{j\in\NN_0}\lr{2^{-\frac{j d}{6}}+2^{-\frac{j d\eps}{3}}}<\infty.
\end{align*}
This completes the proof.
\end{proof}

The following lemma is the technical part of the proof of \cref{thm:mgf}.
\begin{Lem}\label{thm:1+eps-exp}
Let $(X_{j,\tau})_{j\in\NN_0,\tau\in\NN}$ be a family of non-negative i.i.d.\ random variables, $d\in\NN$, $C,\eps\in (0,\infty)$, and $\sigma\in [1,\infty)$.
Suppose that
\begin{equation*}
\EE[\exp(CX_{j,\tau}^\sigma)]<\infty.
\end{equation*}
Fix $d\in\NN$ and let
\begin{align*}
M_j&\defeq \#\setcond{m\in\ZZ^d}{\mnorm{m}_{\infty}\leq 2^j}=(2^{j+1}+1)^d\\
N_j&\defeq\#\setcond{m\in\ZZ^d}{2^j<\mnorm{m}_\infty\leq 2^{j+1}}=M_{j+1}-M_j\\
\end{align*}
for $j\in\NN_0$.
Then, for
\begin{equation*}
\Xi\defeq \sup_{j \in \NN_0} \lr{\frac{1}{M_j} \sum_{\tau=1}^{M_j} X_{j,\tau}+\sup_{\ell\geq j} \frac{1}{N_\ell} \sum_{\tau=M_\ell + 1}^{M_{\ell+1}} X_{j,\tau}},
\end{equation*} 
we have $\EE\left[\exp\lr{\frac{C}{2^{\sigma+1}(1+\eps)} \Xi^\sigma}\right]<\infty$.
\end{Lem}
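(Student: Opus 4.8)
The plan is to reduce the statement, by an elementary splitting and the Cauchy--Schwarz inequality, to an exponential tail estimate for two ``nonlinear'' suprema of empirical averages, and to obtain those estimates from a union bound combined with a Chernoff bound at the full parameter $C$.

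First I would set $A_j\defeq\frac{1}{M_j}\sum_{\tau=1}^{M_j}X_{j,\tau}$ and $B_j\defeq\sup_{\ell\geq j}\frac{1}{N_\ell}\sum_{\tau=M_\ell+1}^{M_{\ell+1}}X_{j,\tau}$, so that $\Xi=\sup_{j\in\NN_0}(A_j+B_j)$. Since $\sigma\geq 1$, convexity of $t\mapsto t^\sigma$ gives $(A_j+B_j)^\sigma\leq 2^{\sigma-1}(A_j^\sigma+B_j^\sigma)$ and hence $\Xi^\sigma\leq 2^{\sigma-1}(\sup_j A_j^\sigma+\sup_j B_j^\sigma)$. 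Writing $\lambda\defeq\frac{C}{2^{\sigma+1}(1+\eps)}$ and $\mu\defeq\frac{C}{2(1+\eps)}$, so that $2^{\sigma-1}\lambda=\frac{C}{4(1+\eps)}$, the Cauchy--Schwarz inequality yields
\[
\EE\bigl[\exp(\lambda\Xi^\sigma)\bigr]\leq \sqrt{\EE\bigl[\exp(\mu\sup\nolimits_j A_j^\sigma)\bigr]}\cdot\sqrt{\EE\bigl[\exp(\mu\sup\nolimits_j B_j^\sigma)\bigr]},
\]
so it suffices to prove that each factor is finite. Putting $Y_{j,\tau}\defeq X_{j,\tau}^\sigma$ (again i.i.d.\ and non-negative, with $\Delta\defeq\EE[\exp(CY_{0,1})]-1\in[0,\infty)$ by hypothesis), Jensen's inequality for $t\mapsto t^\sigma$ gives $A_j^\sigma\leq\frac{1}{M_j}\sum_{\tau=1}^{M_j}Y_{j,\tau}$ and $B_j^\sigma\leq\sup_{\ell\geq j}\frac{1}{N_\ell}\sum_{\tau=M_\ell+1}^{M_{\ell+1}}Y_{j,\tau}$. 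Hence it is enough to show $\EE[\exp(\mu U)]<\infty$ and $\EE[\exp(\mu V)]<\infty$, where $U\defeq\sup_{j}\frac{1}{M_j}\sum_{\tau=1}^{M_j}Y_{j,\tau}$ and $V\defeq\sup_j\sup_{\ell\geq j}\frac{1}{N_\ell}\sum_{\tau=M_\ell+1}^{M_{\ell+1}}Y_{j,\tau}$.

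For $U$, a union bound over $j$ together with the Chernoff estimate $\PP\bigl(\sum_{\tau=1}^{M_j}Y_{j,\tau}>M_jx\bigr)\leq e^{-CM_jx}\,\EE[e^{CY_{0,1}}]^{M_j}=q(x)^{M_j}$ with $q(x)\defeq e^{-Cx}(1+\Delta)$ gives $\PP(U>x)\leq\sum_{j\geq 0}q(x)^{M_j}$. Because $M_0=3^d$ and $(M_j)$ is a strictly increasing sequence of integers, $M_j\geq M_0+j$, so for $q(x)<1$ we obtain $\PP(U>x)\leq q(x)^{M_0}/(1-q(x))\lesssim e^{-CM_0x}$ for $x$ large. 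Since $\mu<C\leq CM_0$, the layer-cake identity $\EE[e^{\mu U}]=1+\int_0^\infty\mu e^{\mu x}\PP(U>x)\,\dd x$ is finite. For $V$ the argument is identical with a double union bound: for each $\ell$ there are $\ell+1$ indices $j\leq\ell$, each contributing $q(x)^{N_\ell}$, and since $(N_\ell)$ is also strictly increasing with $N_0=M_1-M_0=5^d-3^d\geq 2$, one gets $\PP(V>x)\leq\sum_{\ell\geq 0}(\ell+1)q(x)^{N_\ell}\leq q(x)^{N_0}/(1-q(x))^2\lesssim e^{-CN_0x}$; again $\mu<C\leq CN_0$, so $\EE[e^{\mu V}]<\infty$.

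The main obstacle is that the naive bound $\exp(\sup_j(\cdot))\leq\sum_j\exp(\cdot)$ is useless here: with $t=\mu/M_j\to 0$ one only gets $\EE[e^{tY_{0,1}}]^{M_j}\to e^{\mu\,\EE[Y_{0,1}]}$, a constant bounded away from $0$, so the series over $j$ diverges. The resolution is to estimate the tail of each average directly, and the decisive gain is that \emph{already at the coarsest scale} $j=0$ there are $M_0=3^d\geq 3$ (respectively $N_0\geq 2$) independent summands, so the Chernoff bound produces a tail $e^{-CM_0 x}$ with rate strictly larger than $\mu$, and this rate survives the summation of the geometrically decaying contributions over the dyadic scales. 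The only routine bookkeeping left is the strict monotonicity of $(M_j)$ (immediate) and of $(N_\ell)$ (which follows from strict convexity of $x\mapsto x^d$ when $d\geq 2$, and is immediate for $d=1$), together with the trivial facts $M_0,N_0\geq 1$.
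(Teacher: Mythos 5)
Your proof is correct, and it takes a genuinely different route from the paper's. The paper combines $(a+b)^\sigma\leq 2^\sigma(a^\sigma+b^\sigma)$ with $\exp(a+b)\leq e^{2a}+e^{2b}$, then applies Jensen's inequality to the convex map $y\mapsto\exp\lr{\tfrac{C}{1+\eps}y^\sigma}$ so that the averages turn into averages of $\tilde X_{j,\tau}=\exp\lr{\tfrac{C}{1+\eps}X_{j,\tau}^\sigma}$, which have a finite $(1+\eps)$-th moment; the claim then follows by invoking \cref{thm:1+eps} (the Etemadi-style truncation/variance lemma) with $\sigma=1$. You instead split with Cauchy--Schwarz, apply Jensen only to $t\mapsto t^\sigma$ (so the random variables become $Y_{j,\tau}=X_{j,\tau}^\sigma$), and then prove the exponential moment \emph{directly} via a Chernoff bound at the full parameter $C$, a union bound over scales, and the layer-cake formula. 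Your observation that the naive sum $\sum_j\EE\bigl[e^{\mu S_j}\bigr]$ diverges, and that the right fix is the tail-probability route exploiting the fact that already $M_0=3^d$ and $N_0\geq 2$ make the Chernoff decay rate $CM_0,CN_0$ strictly exceed $\mu=\tfrac{C}{2(1+\eps)}$, is exactly the crux and is correct. The trade-off: the paper's argument is shorter and reuses the machinery of \cref{thm:1+eps}, whereas yours is fully self-contained, avoids the truncation argument entirely, and makes the source of the numerical loss in the constant $\tfrac{C}{2^{\sigma+1}(1+\eps)}$ transparent. One small imprecision: the strict monotonicity of $(N_\ell)$ does not follow from strict convexity of $x\mapsto x^d$ as stated (the arguments $2^{\ell+1},2^{\ell+2},2^{\ell+3}$ are geometrically spaced, not arithmetically, and for $d=1$ strict convexity fails); one should instead use convexity of $t\mapsto(t+1)^d$ together with its monotonicity, or simply the bound $N_\ell\asymp 2^{\ell d}$ already established in the proof of \cref{thm:master-lemma}, which more than suffices for $\sum_\ell(\ell+1)q(x)^{N_\ell}\lesssim q(x)^{N_0}$. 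This is a cosmetic gap, not a substantive one.
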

\begin{proof}
First note that $(a+b)^\sigma\leq (2\max\setn{a,b})^\sigma\leq 2^\sigma(a^\sigma+b^\sigma)$ and
\begin{equation*}
\exp(a+b)=\exp(a)\exp(b)=\frac{\exp(a)^2}{2}+\frac{\exp(b)^2}{2}\leq \exp(2a)+\exp(2b)
\end{equation*}
for all  $a,b\in\RR$.
Together with Jensen's inequality, we obtain for 
\begin{equation*}
Y_j\defeq\frac{1}{M_j}\sum_{\tau=1}^{M_j}X_{j,\tau}\qquad\text{and}\qquad Y_{j,\ell}\defeq\frac{1}{N_\ell}\sum_{\tau=M_\ell+1}^{M_{\ell+1}}X_{j,\tau}
\end{equation*}
 that
\begin{align*}
&\exp\lr{\frac{C}{2^{\sigma+1}(1+\eps)}  \Xi^\sigma}\\
&\leq \exp\lr{\frac{C}{2^{\sigma+1}(1+\eps)} \lr{\sup_{j\in\NN_0}Y_j+\sup_{j\in\NN_0}\sup_{\ell\geq j}Y_{j,\ell}}^\sigma}\\
&\leq \exp\left(\frac{C}{2^{\sigma+1}(1+\eps)} 2^\sigma\lr{\sup_{j\in\NN_0}Y_j}^\sigma+\frac{C}{2^{\sigma+1}(1+\eps)} 2^\sigma\lr{\sup_{j\in\NN_0}\sup_{\ell\geq j}Y_{j,\ell}}^\sigma\right)\\
&\leq\exp\lr{\frac{C}{1+\eps}\sup_{j\in\NN_0}Y_j^\sigma}+\exp\lr{\frac{C}{1+\eps}\sup_{j\in\NN_0}\sup_{\ell\geq j}Y_{j,\ell}^\sigma}\\
&= \sup_{j\in\NN_0} \exp\lr{\frac{C}{1+\eps} Y_j^\sigma}+\sup_{j\in\NN_0}\sup_{\ell\geq j}\exp\lr{\frac{C}{1+\eps}Y_{j,\ell}^\sigma}\\
&\leq \sup_{j\in\NN_0} \frac{1}{M_j}\sum_{\tau=1}^{M_j}\exp\lr{\frac{C}{1+\eps} X_{j,\tau}^\sigma}+\sup_{j\in\NN_0}\sup_{\ell\geq j}\frac{1}{N_\ell}\sum_{\tau=M_\ell+1}^{M_{\ell+1}}\exp\lr{\frac{C}{1+\eps} X_{j,\tau}^\sigma}.
\end{align*}
Let $\tilde{X}_{j,\tau}\defeq \exp\lr{\frac{C}{1+\eps}  X_{j,\tau}^\sigma}$.
Then $\EE[\tilde{X}_{j,\tau}^{1+\eps}]=\EE[\exp\lr{C X_{j,\tau}^\sigma}]<\infty$, and the monotonicity of the expectation yields
\begin{equation*}
\EE\left[\exp\lr{\frac{C}{2^{\sigma+1}(1+\eps)} \Xi^\sigma}\right]\leq  \EE\left[\sup_{j\in\NN_0} \frac{1}{M_j}\sum_{\tau=1}^{M_j}\tilde{X}_{j,\tau}+\sup_{j\in\NN_0}\sup_{\ell\geq j}\frac{1}{N_\ell}\sum_{\tau=M_\ell+1}^{M_{\ell+1}}\tilde{X}_{j,\tau}\right].
\end{equation*}
Now \cref{thm:1+eps} applied to the $(\tilde{X}_{j,\tau})_{j\in\NN_0,\tau\in\NN}$ and $\sigma=1$ shows that the right-hand side is finite.
\end{proof}

The following lemma is the technical part of the proof of \cref{thm:powers-bernoulli}.
\begin{Lem}\label{thm:bernoulli-lemma}
Let $(X_{j,\tau})_{j\in\NN_0,\tau\in\NN}$ be a family of non-negative i.i.d.\ random variables, $\eps,\delta \in (0,\infty)$, $\mu,\nu\in (-\infty,0]$, and $\sigma\in [1,\infty)$.
Fix $d\in\NN$ and let
\begin{align*}
M_j&\defeq \#\setcond{m\in\ZZ^d}{\mnorm{m}_{\infty}\leq 2^j}=(2^{j+1}+1)^d,\\
N_j&\defeq\#\setcond{m\in\ZZ^d}{2^j<\mnorm{m}_\infty\leq 2^{j+1}}=M_{j+1}-M_j
\end{align*}
for $j\in\NN_0$.
Let further $(\lambda_{j,\tau})_{j\in\NN_0,\tau\in\NN}$ be a family of independent random variables, independent of $(X_{j,\tau})_{j\in\NN_0,\tau\in\NN}$ and such that $\lambda_{j,\tau}$ is Bernoulli distributed with success probability
\begin{equation*}
\varrho_{j,\tau}\defeq 2^{j\mu}\lr{1+\frac{\mnorm{\phi(\tau)}_\infty}{2^j }}^\nu,
\end{equation*}
where $\phi:\NN\to \ZZ^d$ is a bijection such that $\tau\mapsto\mnorm{\phi(\tau)}_\infty$ is nondecreasing.
If
\begin{equation}\label{eq:bernoulli-lemma-rv}
\EE[X_{j,\tau}^{\sigma(1+\eps)}]< \infty\qquad\text{and}\qquad \delta+\delta\eps-\eps\geq 0,
\end{equation}
then
\begin{equation*}
\tilde{\Xi}\defeq \sup_{j\in\NN_0}\lr{\frac{1}{M_j}\sum_{\tau=1}^{M_j}\varrho_{j,\tau}^{\frac{\delta-1}{\sigma}}\lambda_{j,\tau}X_{j,\tau}+\sup_{\ell\geq j}\frac{1}{N_\ell}\sum_{\tau=M_\ell+1}^{M_{\ell+1}}\varrho_{j,\tau}^{\frac{\delta-1}{\sigma}}\lambda_{j,\tau}X_{j,\tau}}
\end{equation*}
satisfies $\EE[\tilde{\Xi}^\sigma]<\infty$.
\end{Lem}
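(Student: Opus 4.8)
The plan is to run the Etemadi-type argument from the proof of \cref{thm:1+eps}, this time keeping track of the Bernoulli factors $\lambda_{j,\tau}$ and the weights $\varrho_{j,\tau}^{(\delta-1)/\sigma}$. Writing $W_{j,\tau}\defeq\varrho_{j,\tau}^{(\delta-1)/\sigma}\lambda_{j,\tau}X_{j,\tau}$, I would first use $(a+b)^\sigma\leq 2^\sigma(a^\sigma+b^\sigma)$ and the commutation of $x\mapsto x^\sigma$ with suprema of nonnegative quantities to obtain
\[
\tilde{\Xi}^\sigma\lesssim \sup_{j\in\NN_0}\Bigl(\tfrac{1}{M_j}\sum_{\tau=1}^{M_j}W_{j,\tau}\Bigr)^\sigma+\sup_{j\in\NN_0}\sup_{\ell\geq j}\Bigl(\tfrac{1}{N_\ell}\sum_{\tau=M_\ell+1}^{M_{\ell+1}}W_{j,\tau}\Bigr)^\sigma ,
\]
and then, since $\sigma\geq 1$, Jensen's inequality applied to the normalized sums, together with $\lambda_{j,\tau}^\sigma=\lambda_{j,\tau}$, reduces the problem to bounding $\EE$ of $\sup_{j}\tfrac1{M_j}\sum_{\tau=1}^{M_j}Y_{j,\tau}+\sup_j\sup_{\ell\geq j}\tfrac1{N_\ell}\sum_{\tau=M_\ell+1}^{M_{\ell+1}}Y_{j,\tau}$, where $Y_{j,\tau}\defeq W_{j,\tau}^\sigma=\varrho_{j,\tau}^{\delta-1}\lambda_{j,\tau}X_{j,\tau}^\sigma$. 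The $Y_{j,\tau}$ are independent (being functions of independent variables) but \emph{not} identically distributed, so \cref{thm:1+eps} cannot be invoked directly.

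The crucial point is that the first and the $(1+\eps)$-th moments of $Y_{j,\tau}$ are bounded \emph{uniformly} in $(j,\tau)$. Since $\varrho_{j,\tau}\in(0,1]$ (as $\mu,\nu\leq 0$), $\lambda_{j,\tau}$ and $X_{j,\tau}$ are independent with $\EE[\lambda_{j,\tau}]=\varrho_{j,\tau}$, and $\delta>0$, we have $\EE[Y_{j,\tau}]=\varrho_{j,\tau}^{\delta}\EE[X_{0,1}^\sigma]\leq E\defeq\EE[X_{0,1}^\sigma]$; and, by the hypothesis $\delta+\delta\eps-\eps\geq0$, i.e.\ $(\delta-1)(1+\eps)+1\geq0$,
\[
\EE[Y_{j,\tau}^{1+\eps}]=\varrho_{j,\tau}^{(\delta-1)(1+\eps)+1}\,\EE[X_{0,1}^{\sigma(1+\eps)}]\leq \EE[X_{0,1}^{\sigma(1+\eps)}]<\infty .
\]
This is exactly where the assumption on $\delta$ and $\eps$ enters: when $\delta<1$ the weight $\varrho_{j,\tau}^{\delta-1}$ may be large, but the extra factor $\varrho_{j,\tau}=\EE[\lambda_{j,\tau}]$ supplied by the Bernoulli variable keeps the $(1+\eps)$-th moment under control.

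With these uniform bounds in hand, I would transcribe the estimates from the proof of \cref{thm:1+eps} almost verbatim, replacing $X_{j,\tau}^\sigma$ by $Y_{j,\tau}$, replacing the constant $E$ there by the uniform upper bound $\EE[Y_{j,\tau}]\leq E$, and using the same truncation levels $\kappa_{j,\tau}$ ($\kappa_{j,\tau}=2^{jd/3}$ for $1\leq\tau\leq M_j$ and $\kappa_{j,\tau}=2^{\ell d/3}$ for $M_\ell+1\leq\tau\leq M_{\ell+1}$). Setting $E_{j,\tau}\defeq\EE[Y_{j,\tau}\one_{Y_{j,\tau}\leq\kappa_{j,\tau}}]\leq E$ and splitting $Y_{j,\tau}-E_{j,\tau}$ into a centered truncated part and a tail part, the centered part is controlled via $\VV[Y_{j,\tau}\one_{Y_{j,\tau}\leq\kappa_{j,\tau}}]\leq\kappa_{j,\tau}^2$ (only independence is used here) and the tail part via $\kappa_{j,\tau}^{-\eps}\EE[Y_{j,\tau}^{1+\eps}]\leq\kappa_{j,\tau}^{-\eps}\EE[X_{0,1}^{\sigma(1+\eps)}]$. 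Using $M_j\asymp N_j\asymp 2^{jd}$ (cf.\ the proof of \cref{thm:master-lemma}), the resulting bounds decay geometrically, giving $\EE[\tilde{\Xi}^\sigma]<\infty$.

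I expect the main obstacle to be precisely the check that the weights do not destroy the moment estimates, i.e.\ that $\varrho_{j,\tau}^{(\delta-1)(1+\eps)+1}\leq1$ — which is what the hypothesis $\delta+\delta\eps-\eps\geq0$ provides. Once this and the uniform bound $\EE[Y_{j,\tau}]\leq E$ are established, the rest is a routine adaptation of the Etemadi-type argument, now run with uniform rather than exact moment bounds because $(Y_{j,\tau})_{j,\tau}$ is no longer i.i.d.
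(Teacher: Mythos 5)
Your proposal is correct and matches the paper's own proof essentially step for step: you isolate the same two uniform moment bounds—$\EE[Y_{j,\tau}]\leq\EE[X_{0,1}^\sigma]$ and $\EE[Y_{j,\tau}^{1+\eps}]\leq\EE[X_{0,1}^{\sigma(1+\eps)}]$ via $\varrho_{j,\tau}\leq 1$ and $(\delta-1)(1+\eps)+1=\delta+\delta\eps-\eps\geq 0$—and then rerun the Etemadi-type truncation argument from \cref{thm:1+eps} with these uniform (rather than exact) bounds, exactly as the paper does. The only difference is purely notational (the paper writes $Y_{j,\tau}$ for what you call $W_{j,\tau}$), so there is nothing further to compare.
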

\begin{proof}
Let 
\begin{align*}
E&\defeq \EE[X_{0,1}^\sigma],\\
Y_{j,\tau}&\defeq \varrho_{j,\tau}^{\frac{\delta-1}{\sigma}}\lambda_{j,\tau}X_{j,\tau},\\
\kappa_{j,\tau}&\defeq\begin{cases}2^{\frac{jd}{3}}&\text{if }1\leq\tau\leq M_j,\\2^{\frac{\ell d}{3}}&\text{if }M_\ell +1\leq \tau\leq M_ {\ell+1}\text{ for some (unique) }\ell\geq j,\end{cases}\\
E_{j,\tau}&\defeq \EE[Y_{j,\tau}^\sigma\one_{Y_{j,\tau}^\sigma\leq \kappa_{j,\tau}}]
\end{align*}
for $j\in\NN_0$ and $\tau\in\NN$.

Let $j\in \NN_0$.
If $1\leq \tau\leq M_j$, then $0\leq\mnorm{\phi(\tau)}_\infty \leq 2^j$, i.e., $1\leq 1+\frac{\mnorm{\phi(\tau)}_\infty}{2^j}\leq 2$.
It follows that $\varrho_{j,\tau}\asymp 2^{j\mu}$,
and
\begin{align*}
\EE[Y_{j,\tau}^{\sigma(1+\eps)}]&=\varrho_{j,\tau}^{(1+\eps)(\delta-1)}\EE[\lambda_{j,\tau}]\EE[X_{j,\tau}^{\sigma(1+\eps)}]\lesssim \varrho_{j,\tau}^{(1+\eps)(\delta-1)+1}\\
&= \varrho_{j,\tau}^{\delta+\delta\eps+\eps}\asymp 2^{j\mu(\delta+\delta\eps-\eps)}\leq 1
\end{align*}
since $\mu\geq 0$ and $\delta+\delta\eps+\eps\geq 0$.

If, on the other hand, $\tau>M_j$, then there exists a unique natural number $\ell\geq j$ such that $M_\ell+1\leq \tau\leq M_{\ell+1}$.
Then $2^\ell< \mnorm{\phi(\tau)}_\infty\leq 2^{\ell+1}$ and
\begin{equation*}
2^{\ell-j}\leq 1+2^{\ell-j}\leq 1+\frac{\mnorm{\phi(\tau)}_\infty}{2^j}\leq 1+2^{\ell+1-j}\leq 3\cdot 2^{\ell-j},
\end{equation*}
i.e., $\varrho_{j,\tau}\asymp 2^{j\mu}\cdot 2^{(\ell-j)\nu}$.
It follows as above that 
\begin{equation*}
\EE[Y_{j,\tau}^{\sigma(1+\eps)}]\lesssim \varrho_{j,\tau}^{\delta+\delta\eps+\eps}\asymp (2^{j\mu}\cdot 2^{(\ell-j)\nu})^{\delta+\delta\eps-\eps}\leq 1.
\end{equation*}
Further note because of $\varrho_{j,t}\leq 1$ and $\delta\geq 0$ that
\begin{equation*}
E\geq \varrho_{j,\tau}^{\delta} \EE[X_{j,\tau}^\sigma] = \varrho_{j,\tau}^{\delta-1}\EE[\lambda_{j,\tau}]\EE[X_{j,\tau}^\sigma]=\EE[Y_{j,\tau}^\sigma]\geq E_{j,\tau}.
\end{equation*}
From
\begin{align*}
\tilde{\Xi}^\sigma&\leq\lr{\sup_{j\in\NN_0}\frac{1}{M_j}\sum_{\tau=1}^{M_j}Y_{j,\tau}+\sup_{j\in\NN_0}\sup_{\ell\geq j}\frac{1}{N_\ell}\sum_{\tau=M_\ell+1}^{M_{\ell+1}}Y_{j,\tau}}^\sigma\\
&\lesssim \lr{\sup_{j\in\NN_0}\frac{1}{M_j}\sum_{\tau=1}^{M_j}Y_{j,\tau}}^\sigma+\lr{\sup_{j\in\NN_0}\sup_{\ell\geq j}\frac{1}{N_\ell}\sum_{\tau=M_\ell+1}^{M_{\ell+1}}Y_{j,\tau}}^\sigma\\
&= \sup_{j\in\NN_0}\lr{\frac{1}{M_j}\sum_{\tau=1}^{M_j}Y_{j,\tau}}^\sigma+\sup_{j\in\NN_0}\sup_{\ell\geq j}\lr{\frac{1}{N_\ell}\sum_{\tau=M_\ell+1}^{M_{\ell+1}}Y_{j,\tau}}^\sigma\\
&\leq  \sup_{j\in\NN_0}\frac{1}{M_j}\sum_{\tau=1}^{M_j}Y_{j,\tau}^\sigma +\sup_{j\in\NN_0}\sup_{\ell\geq j}\frac{1}{N_\ell}\sum_{\tau=M_\ell+1}^{M_{\ell+1}}Y_{j,\tau}^\sigma\\
&\leq\sup_{j\in\NN_0}\frac{1}{M_j}\sum_{\tau=1}^{M_j}(Y_{j,\tau}^\sigma-E)+\sup_{j\in\NN_0}\sup_{\ell\geq j}\frac{1}{N_\ell}\sum_{\tau=M_\ell+1}^{M_{\ell+1}}(Y_{j,\tau}^\sigma-E)+2E\\
&\leq\sup_{j\in\NN_0}\frac{1}{M_j}\sum_{\tau=1}^{M_j}(Y_{j,\tau}^\sigma-E_{j,\tau})+\sup_{j\in\NN_0}\sup_{\ell\geq j}\frac{1}{N_\ell}\sum_{\tau=M_\ell+1}^{M_{\ell+1}}(Y_{j,\tau}^\sigma-E_{j,\tau})+2E\\
&\leq\sum_{j\in\NN_0}\abs{\frac{1}{M_j}\sum_{\tau=1}^{M_j}(Y_{j,\tau}^\sigma-E_{j,\tau})}+\sum_{j\in\NN_0}\sum_{\ell\geq j}\abs{\frac{1}{N_\ell}\sum_{\tau=M_\ell+1}^{M_{\ell+1}}(Y_{j,\tau}^\sigma-E_{j,\tau})}+2E,
\end{align*}
we obtain
\begin{align*}
&\EE[\tilde{\Xi}^\sigma]\\
&\lesssim 2E+\sum_{j\in\NN_0}\EE\left[\abs{\frac{1}{M_j}\sum_{\tau=1}^{M_j}(Y_{j,\tau}^\sigma-E_{j,\tau})}\right]+\sum_{j\in\NN_0}\sum_{\ell\geq j}\EE\left[\abs{\frac{1}{N_\ell}\sum_{\tau=M_\ell+1}^{M_{\ell+1}}(Y_{j,\tau}^\sigma-E_{j,\tau})}\right].
\end{align*}
In order to show that $\EE[\tilde{\Xi}^\sigma]<\infty$, it suffices to show that the two infinite sums in the previous sum are finite.
To this end, we first observe that
\begin{align*}
&\EE\left[\abs{\frac{1}{M_j}\sum_{\tau=1}^{M_j}(Y_{j,\tau}^\sigma-E_{j,\tau})}\right]\\
&=\EE\left[\abs{\frac{1}{M_j}\sum_{\tau=1}^{M_j}(Y_{j,\tau}^\sigma\one_{Y_{j,\tau}^\sigma\leq \kappa_{j,\tau}}-E_{j,\tau})+\frac{1}{M_j}\sum_{\tau=1}^{M_j}Y_{j,\tau}^\sigma\one_{Y_{j,\tau}^\sigma>\kappa_{j,\tau}}}\right]\\
&\leq\EE\left[\abs{ \frac{1}{M_j}\sum_{\tau=1}^{M_j}(Y_{j,\tau}^\sigma\one_{Y_{j,\tau}^\sigma\leq \kappa_{j,\tau}}-E_{j,\tau})}\right]+\frac{1}{M_j}\sum_{\tau=1}^{M_j}\EE\left[\abs{Y_{j,\tau}^\sigma\one_{Y_{j,\tau}^\sigma>\kappa_{j,\tau}}}\right]\\
&\leq \sqrt{\EE\left[\lr{\frac{1}{M_j}\sum_{\tau=1}^{M_j}(Y_{j,\tau}^\sigma\one_{Y_{j,\tau}^\sigma\leq \kappa_{j,\tau}}-E_{j,\tau})}^2\right]}+\frac{1}{M_j}\sum_{\tau=1}^{M_j}\kappa_{j,\tau}^{-\eps}\EE\left[\abs{Y_{j,\tau}^\sigma}^{1+\eps}\right]\\
&\leq \sqrt{\frac{1}{M_j^2}\sum_{\tau=1}^{M_j}\VV\left[Y_{j,\tau}^\sigma\one_{Y_{j,\tau}^\sigma\leq \kappa_{j,\tau}}\right]}+2^{-\frac{j d\eps}{3}}\\
&\leq \sqrt{\frac{2^{\frac{2jd}{3}}}{M_j}}+2^{-\frac{j d\eps}{3}}\\
&\lesssim 2^{-\frac{jd}{6}}+2^{-\frac{j d\eps}{3}}.
\end{align*}
Similarly, we conclude
\begin{align*}
&\EE\left[\abs{\frac{1}{N_\ell}\sum_{\tau=M_\ell+1}^{M_{\ell+1}}(Y_{j,\tau}^\sigma-E_{j,\tau})}\right]\\
&=\EE\left[\abs{\frac{1}{N_\ell}\sum_{\tau=M_\ell+1}^{M_{\ell+1}}(Y_{j,\tau}^\sigma\one_{Y_{j,\tau}^\sigma\leq \kappa_{j,\tau}}-E_{j,\tau})+\frac{1}{N_\ell}\sum_{\tau=M_\ell+1}^{M_{\ell+1}}Y_{j,\tau}^\sigma\one_{Y_{j,\tau}^\sigma>\kappa_{j,\tau}}}\right]\\
&\leq\EE\left[\abs{\frac{1}{N_\ell}\sum_{\tau=M_\ell+1}^{M_{\ell+1}}(Y_{j,\tau}^\sigma\one_{Y_{j,\tau}^\sigma\leq \kappa_{j,\tau}}-E_{j,\tau})}\right]+\frac{1}{N_\ell}\sum_{\tau=M_\ell+1}^{M_{\ell+1}}\EE\left[\abs{Y_{j,\tau}^\sigma\one_{Y_{j,\tau}^\sigma>\kappa_{j,\tau}}}\right]\\
&\leq \sqrt{\EE\left[\lr{\frac{1}{N_\ell}\sum_{\tau=M_\ell+1}^{M_{\ell+1}}(Y_{j,\tau}^\sigma\one_{Y_{j,\tau}^\sigma\leq \kappa_{j,\tau}}-E_{j,\tau})}^2\right]}+\frac{1}{N_\ell}\sum_{\tau=M_\ell+1}^{M_{\ell+1}}\kappa_{j,\tau}^{-\eps}\EE\left[\abs{Y_{j,\tau}^\sigma}^{1+\eps}\right]\\
&\leq \sqrt{\frac{1}{N_\ell^2}\sum_{\tau=M_\ell+1}^{M_{\ell+1}}\VV\left[Y_{j,\tau}^\sigma\one_{Y_{j,\tau}^\sigma\leq \kappa_{j,\tau}}\right]}+2^{-\frac{\ell d\eps}{3}}\\
&\leq \sqrt{\frac{2^{\frac{2\ell d}{3}}}{N_\ell}}+2^{-\frac{\ell d\eps}{3}}\\
&\lesssim  2^{-\frac{\ell d}{6}}+2^{-\frac{\ell d\eps}{3}}.
\end{align*}
It follows that 
\begin{align*}
\sum_{j\in\NN_0}\EE\left[\abs{\frac{1}{M_j}\sum_{\tau=1}^{M_j}(Y_{j,\tau}^\sigma-E_{j,\tau})}\right]&\lesssim \sum_{j\in\NN_0}\lr{2^{-\frac{jd}{6}}+2^{-\frac{j d\eps}{3}}}<\infty
\end{align*}
and
\begin{align*}
\sum_{j\in\NN_0}\sum_{\ell\geq j}\EE\left[\abs{\frac{1}{N_\ell}\sum_{\tau=M_\ell+1}^{M_{\ell+1}}(Y_{j,\tau}^\sigma-E_{j,\tau})}\right]&\lesssim \sum_{j\in\NN_0}\sum_{\ell\geq j} \lr{2^{-\frac{\ell d}{6}}+2^{-\frac{\ell d\eps}{3}}}\\
&\lesssim \sum_{j\in\NN_0}\lr{2^{-\frac{jd}{6}}+2^{-\frac{j d\eps}{3}}}<\infty.\qedhere
\end{align*}
\end{proof}

The following lemma is the technical part of the proof of \cref{thm:sufficiency-log}.

\begin{Lem}\label{thm:UniformLLN-new}
Let $(X_{j,\tau})_{j \in \NN_0, \tau \in \NN}$ be a family of non-negative i.i.d.\ random variables.
Suppose that
\begin{equation}\label{eq:xlogx}
\EE[X_{j,\tau} \cdot \log_2^+ (X_{j,\tau})] < \infty.
\end{equation}
Fix $d\in\NN$ and let
\begin{align*}
M_j&\defeq \#\setcond{m\in\ZZ^d}{\mnorm{m}_{\infty}\leq 2^j}=(2^{j+1}+1)^d,\\
N_j&\defeq\#\setcond{m\in\ZZ^d}{2^j<\mnorm{m}_\infty\leq 2^{j+1}}=M_{j+1}-M_j
\end{align*}
for $j\in\NN_0$.
Then the (random) quantity 
\begin{equation*}
\Xi\defeq \sup_{j \in \NN_0} \lr{\frac{1}{M_j} \sum_{\tau=1}^{M_j} X_{j,\tau}+\sup_{\ell\geq j} \frac{1}{N_\ell} \sum_{\tau=M_\ell + 1}^{M_{\ell+1}} X_{j,\tau}}
\end{equation*} 
is almost surely finite.
\end{Lem}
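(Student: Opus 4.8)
The plan is to mimic the proof of \cref{thm:1+eps}, but to replace the $L^2$-type moment estimate by a Borel--Cantelli argument, since the hypothesis \eqref{eq:xlogx} is strictly weaker than the moment condition there. Recall from the proof of \cref{thm:master-lemma} that $M_j\asymp N_j\asymp 2^{jd}$. Since
\begin{equation*}
\Xi\leq A+B,\qquad A\defeq\sup_{j\in\NN_0}\frac{1}{M_j}\sum_{\tau=1}^{M_j}X_{j,\tau},\qquad B\defeq\sup_{j\in\NN_0}\sup_{\ell\geq j}\frac{1}{N_\ell}\sum_{\tau=M_\ell+1}^{M_{\ell+1}}X_{j,\tau},
\end{equation*}
it suffices to show that $A$ and $B$ are almost surely finite. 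A preliminary remark: \eqref{eq:xlogx} implies $\mu\defeq\EE[X_{0,1}]<\infty$ (split the expectation over $\setn{X_{0,1}\leq\ee}$ and its complement, on which $X_{0,1}\leq X_{0,1}\log X_{0,1}$). Moreover, a dyadic decomposition of $\setn{X_{0,1}\geq 1}$ turns \eqref{eq:xlogx} into $\sum_{k\geq 1}k\,2^k\,\PP(2^k\leq X_{0,1}<2^{k+1})<\infty$, which is the form in which I will use it.

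The key idea is to truncate each block at a level comparable to its own length. For the first group of sums I would set $X_{j,\tau}'\defeq X_{j,\tau}\one_{X_{j,\tau}\leq M_j}$ for $1\leq\tau\leq M_j$; for the nested group, $X_{j,\tau}'\defeq X_{j,\tau}\one_{X_{j,\tau}\leq N_\ell}$ for $M_\ell+1\leq\tau\leq M_{\ell+1}$ (with $\ell\geq j$ the unique index). Then the argument splits into two parts. First, \emph{truncation is negligible}: by the union bound and the dyadic decomposition above,
\begin{equation*}
\sum_{j\in\NN_0}M_j\,\PP(X_{0,1}>M_j)\asymp\sum_{j\in\NN_0}2^{jd}\,\PP(X_{0,1}>2^{jd})\asymp\EE[X_{0,1}]<\infty,
\end{equation*}
and, crucially, the nested sum produces an extra logarithmic weight,
\begin{equation*}
\sum_{j\in\NN_0}\sum_{\ell\geq j}N_\ell\,\PP(X_{0,1}>N_\ell)\asymp\sum_{\ell\in\NN_0}(\ell+1)\,2^{\ell d}\,\PP(X_{0,1}>2^{\ell d})\asymp\EE\bigl[X_{0,1}\log_2^+(X_{0,1})\bigr]+\EE[X_{0,1}]<\infty.
\end{equation*}
Hence, by the Borel--Cantelli lemma, almost surely all but finitely many blocks satisfy $X_{j,\tau}=X_{j,\tau}'$ throughout. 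Second, \emph{the truncated block averages are uniformly bounded off a null event}: each truncated average has expectation at most $\mu$, so by independence within a block and Chebyshev's inequality,
\begin{equation*}
\PP\lr{\frac{1}{M_j}\sum_{\tau=1}^{M_j}X_{j,\tau}'>2\mu}\leq\frac{1}{\mu^2 M_j}\,\EE\bigl[X_{0,1}^2\one_{X_{0,1}\leq M_j}\bigr],
\end{equation*}
and analogously with $N_\ell$ in place of $M_j$. Using the elementary bound $x^2\one_{x\leq t}\leq t\,x\,\one_{x\leq t}$ and the same dyadic decomposition, one obtains $\sum_j M_j^{-1}\EE[X_{0,1}^2\one_{X_{0,1}\leq M_j}]\asymp\mu<\infty$ and $\sum_j\sum_{\ell\geq j}N_\ell^{-1}\EE[X_{0,1}^2\one_{X_{0,1}\leq N_\ell}]\asymp\EE[X_{0,1}\log_2^+(X_{0,1})]+\mu<\infty$; again the Borel--Cantelli lemma gives that almost surely only finitely many blocks have truncated average exceeding $2\mu$.

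Combining the two parts, almost surely there is a (random) finite set of blocks outside of which each block sum agrees with its truncated version and the corresponding average is at most $2\mu$; on the finitely many exceptional blocks, the average is a finite sum of finite nonnegative random variables. Therefore $A<\infty$ and $B<\infty$ almost surely, whence $\Xi<\infty$ almost surely. The main obstacle, and the only place where the full strength of \eqref{eq:xlogx} is needed rather than merely $\EE[X_{0,1}]<\infty$, is the nested sum over the pairs $(j,\ell)$ with $j\leq\ell$: summing the $\ell$-th term over the $\ell+1$ admissible values of $j$ introduces the factor $(\ell+1)$, which after dyadic decomposition is exactly the $\log_2^+$-weight appearing in the hypothesis. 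One must choose the truncation level comparable to the block length (and not to its cube root, as in \cref{thm:1+eps}) precisely so that this extra factor is absorbed.
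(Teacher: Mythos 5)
Your proof is correct and takes essentially the same route as the paper's: truncate each variable at a level comparable to its block length (the paper truncates the nested blocks at $M_\ell$, you at $N_\ell$, but $M_\ell\asymp N_\ell\asymp 2^{\ell d}$ so this is cosmetic), bound the deviation of the truncated block averages by Chebyshev, sum over blocks and apply Borel--Cantelli, show the truncation is negligible by a second Borel--Cantelli, and observe that summing the nested family over $(j,\ell)$ with $j\leq\ell$ is exactly where the $x\log_2^+x$ hypothesis enters. One small caution on wording: the pointwise bound $x^2\one_{x\leq t}\leq tx\one_{x\leq t}$ alone gives only $M_j^{-1}\EE[X^2\one_{X\leq M_j}]\leq\mu$, which is not summable in $j$; what actually makes the single sum converge is Fubini together with the geometric decay $\sum_{j:M_j\geq x}M_j^{-1}\asymp x^{-1}$ (as in the paper's step leading to $\EE[X_\ast^2/X_\ast]$), and your phrase ``and the same dyadic decomposition'' indicates you have that in mind, but it deserves to be the headline rather than the elementary pointwise bound.
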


\begin{proof}

\emph{Step 1.} We prove the desired properties for the truncations
\begin{equation*}
Y_{j,\tau}=\begin{cases}X_{j,\tau}\one_{X_{j,\tau}\leq M_j}&\text{if }1\leq \tau \leq M_j,\\X_{j,\tau}\one_{X_{j,\tau}\leq M_\ell}&\text{if }M_\ell+1\leq \tau \leq M_{\ell+1}\text{ for some (unique) }\ell\geq j\end{cases}
\end{equation*}
in place of $X_{j,\tau}$, i.e., we show for $E\defeq \EE[X_{j,\tau}]$ and arbitrary $C>0$ that
\begin{equation*}
\sum_{j\in\NN_0} \PP(S_j^\ast\geq C+E)< \infty
\qquad\text{and}\qquad
\sum_{j\in\NN_0} \sum_{\ell\geq j}\PP(S_{j,\ell}^\ast \geq C+E)< \infty,
\end{equation*}
where 
\begin{equation*}
S_j^\ast\defeq\frac{1}{M_j}\sum_{\tau=1}^{M_j}Y_{j,\tau}\qquad\text{and}\qquad S_{j,\ell}^\ast\defeq\frac{1}{N_\ell}\sum_{\tau=M_\ell+1}^{M_{\ell+1}}Y_{j,\tau}.
\end{equation*}
This then implies by the Borel--Cantelli lemma
\begin{equation*}
\Xi^\ast\defeq \sup_{j\in\NN_0} \lr{S_j^\ast+\sup_{\ell\geq j}S_{j,\ell}^\ast}<\infty
\end{equation*}
almost surely.

Let $X_\ast\defeq X_{0,1}$ and recall that $X_{j,\tau}\stackrel{iid}{\sim}X_\ast$.
Using $E\geq \EE[Y_{j,\tau}]$, Chebyshev's inequality, and the independence of the $Y_{j,\tau}$, we obtain
\begin{align*}
&\PP(S_j^\ast\geq C+E)\\
&=\PP\lr{\frac{1}{M_j}\sum_{\tau=1}^{M_j}(Y_{j,\tau}-E)\geq C}\leq \PP\lr{\frac{1}{M_j}\sum_{\tau=1}^{M_j}(Y_{j,\tau}-\EE[Y_{j,\tau}])\geq C}\\
&\leq\frac{1}{C^2}\VV\left[\frac{1}{M_j}\sum_{\tau=1}^{M_j}(Y_{j,\tau}-\EE[Y_{j,\tau}])\right]=\frac{1}{C^2 M_j^2}\sum_{\tau=1}^{M_j}\VV[Y_{j,\tau}-\EE[Y_{j,\tau}]]\\
&=\frac{1}{C^2M_j^2}\sum_{\tau=1}^{M_j}\VV[Y_{j,\tau}]=\frac{1}{C^2 M_j^2}\sum_{\tau=1}^{M_j}\lr{\EE[Y_{j,\tau}^2]-\EE[Y_{j,\tau}]^2}\\
&\leq\frac{1}{C^2 M_j^2}\sum_{\tau=1}^{M_j}\EE[Y_{j,\tau}^2]=\frac{1}{C^2 M_j^2}\sum_{\tau=1}^{M_j}\EE[X_{j,\tau}^2\one_{X_{j,\tau}\leq M_j}]\\
&=\frac{1}{C^2 M_j^2}\sum_{\tau=1}^{M_j}\EE[X_\ast^2\one_{X_\ast\leq M_j}]=\frac{1}{C^2 M_j}\EE[X_\ast^2\one_{X_\ast\leq M_j}].
\end{align*}
Thus, by the monotone convergence theorem,
\begin{align}
\sum_{j\in\NN_0} \PP(S_j^\ast\geq C+E)&\leq  \frac{1}{C^2}\sum_{j\in\NN_0} \frac{1}{M_j}\EE[X_\ast^2\one_{X_\ast\leq M_j}]\nonumber\\
&=\frac{1}{C^2}\EE\left[X_\ast^2 \sum_{j\in\NN_0} \frac{1}{M_j}\one_{X_\ast\leq M_j}\right]\nonumber\\
&=\frac{1}{C^2}\EE\left[X_\ast^2\one_{X_\ast >0} \sum_{j\in\NN_0} \frac{1}{M_j}\one_{X_\ast\leq M_j}\right]\label{eq:insert-indicator}\\
&\leq\frac{1}{C^2}\EE\left[X_\ast^2\one_{X_\ast >0} \sum_{j\in\NN_0} 2^{-jd}\one_{X_\ast\leq 2^{(j+2)d}}\right]\label{eq:avoid-mj}\\
&\leq\frac{1}{C^2}\EE\left[X_\ast^2\one_{X_\ast >0} \sum_{j\geq\ceil{\frac{\log_2(X_\ast)}{d}-2}} {2^{-jd}}\right].\label{eq:lemma-step1-part2}
\end{align}
Recall that we are allowed to insert the indicator function $\one_{X_\ast >0}$ in \eqref{eq:insert-indicator} because by assumption $X_{j,\tau}$ are nonnegative random variables and because the factor $X_\ast^2$ vanishes if $X_\ast=0$.
In \eqref{eq:avoid-mj}, we used that $2^{jd}\leq M_j \leq 2^{(j+2)d}$, whence $\frac{1}{M_j}\one_{X_\ast\leq M_j}\leq 2^{-jd}\one_{X_\ast\leq 2^{(j+2)d}}$.
For \eqref{eq:lemma-step1-part2}, note that $X_\ast\leq 2^{(j+2)d}$ is equivalent to $j\geq \ceil{\frac{\log_2(X_\ast)}{d}-2}$. 

Next, perform the index shift $\ell=j+2$ to see
\begin{align*}
\sum_{j\geq\ceil{\frac{\log_2(X_\ast)}{d}-2}} {2^{-jd}}&=\sum_{\ell\geq\ceil{\frac{\log_2(X_\ast)}{d}}} {2^{-(\ell-2)d}}\lesssim \sum_{\ell\geq\ceil{\frac{\log_2(X_\ast)}{d}}} {2^{-\ell d}}\\
&=\frac{2^d}{2^d-1}2^{-d\ceil{\frac{\log_2(X_\ast)}{d}}}\lesssim 2^{-d\ceil{\frac{\log_2(X_\ast)}{d}}}.
\end{align*}
Thus, it suffices to prove that the following quantity is finite:
\begin{align}
&\EE\left[X_\ast^2\one_{X_\ast >0}\frac{1}{2^{d\ceil{\frac{\log_2(X_\ast)}{d}}}}\right]\leq\EE\left[X_\ast^2\one_{X_\ast >0}\frac{1}{X_\ast}\right]\leq \EE[X_\ast]<\infty.\label{eq:first-step}
\end{align}
Here we used that $x\log_2^+(x)\geq x$ for $x\geq 3$, which shows that \eqref{eq:xlogx} implies $\EE[X_\ast]<\infty$.

To estimate the second series, note that by $E\geq \EE[Y_{j,\tau}]$ and Chebyshev's inequality
\begin{align*}
\PP(S_{j,\ell}^\ast \geq C+E)&=\PP\lr{\frac{1}{N_\ell}\sum_{\tau=M_\ell+1}^{M_{\ell+1}}(Y_{j,\tau}-E)\geq C}\\
&\leq \PP\lr{\frac{1}{N_\ell}\sum_{\tau=M_\ell+1}^{M_{\ell+1}}(Y_{j,\tau}-\EE[Y_{j,\tau}])\geq C}\\
&\leq \frac{1}{C^2}\VV\left[\frac{1}{N_\ell}\sum_{\tau=M_\ell+1}^{M_{\ell+1}}(Y_{j,\tau}-\EE[Y_{j,\tau}])\right]\\
&= \frac{1}{C^2 N_\ell^2}\sum_{\tau=M_\ell+1}^{M_{\ell+1}}\VV[Y_{j,\tau}-\EE[Y_{j,\tau}]]\\
&\leq \frac{1}{C^2 N_\ell^2}\sum_{\tau=M_\ell+1}^{M_{\ell+1}}\EE[Y_{j,\tau}^2].
\end{align*}
Next, since $M_\ell\asymp N_\ell\asymp 2^{\ell d}$ (see the proof of \cref{thm:master-lemma}, there exist $\kappa_1,\kappa_2,\kappa_3>0$ only depending on $d$ such that if $X_\ast\leq M_\ell$ and $M_\ell+1\leq\tau\leq M_{\ell+1}$, then $\kappa_1 \tau\leq N_\ell \leq \kappa_2 \tau$, and $\kappa_3\tau\leq M_\ell\leq \tau$, as well as $X_\ast\leq\tau$.
We conclude
\begin{align*}
\sum_{\ell\geq j}\PP(S_{j,\ell}^\ast \geq C+E)&\leq \frac{1}{C^2} \sum_{\ell\geq j} \frac{1}{N_\ell^2} \sum_{\tau=M_\ell+1}^{M_{\ell+1}}\EE[Y_{j,\tau}^2]\\
&=\frac{1}{C^2} \sum_{\ell\geq j} \sum_{\tau=M_\ell+1}^{M_{\ell+1}}\EE\left[\frac{X_\ast^2}{N_\ell^2}\one_{X_\ast\leq M_\ell}\right]\\
&\leq\frac{1}{C^2} \sum_{\ell\geq j} \sum_{\tau=M_\ell+1}^{M_{\ell+1}}\EE\left[\frac{X_\ast^2}{\kappa_1^2\tau^2}\one_{X_\ast\leq \tau}\right]\\
&\lesssim \sum_{\tau\geq M_j+1}\EE\left[\frac{X_\ast^2}{\tau^2}\one_{X_\ast\leq \tau}\right]\\
&=\EE\left[X_\ast^2\sum_{\tau\geq M_j+1}\frac{\one_{X_\ast\leq \tau}}{\tau^2}\right]\eqdef a_j.
\end{align*}
For $N\geq 3$, there exists $n=n(N)\in\NN$ such that $2^n+1\leq N\leq 2^{n+1}$, whence
\begin{align}
\sum_{\tau\geq N} \frac{\log_2(\tau)}{\tau^2}&\leq \sum_{\ell\geq n} \sum_{\tau=2^\ell+1}^{2^{\ell+1}}\frac{\log_2(\tau)}{\tau^2}\leq \sum_{\ell\geq n} \sum_{\tau=2^\ell+1}^{2^{\ell+1}}\frac{\ell+1}{(2^\ell)^2}\nonumber\\
&= \sum_{\ell\geq n} \frac{\ell+1}{2^\ell}=\frac{n+1}{2^n}\sum_{\ell\geq n}\frac{\ell+1}{(n+1)2^{\ell-n}}\nonumber\\
&\leq\frac{n+1}{2^n}\sum_{s\in\NN_0}\frac{s+1}{2^s}\lesssim \frac{n+1}{2^n}\lesssim \frac{1+\log_2(N)}{N}.\label{eq:logaux}
\end{align}
Here we used $\frac{\ell+1}{n+1}=\frac{\ell-n+n+1}{n+1}\leq 1+\ell-n$.

Also, it is not hard to check that $M_j+1\leq \tau$ is equivalent to $\frac{\log_2(\tau-1)}{d}\geq \log_2(2^{j+1}+1)$.
Using the monotonicity of the logarithm, this implies $\frac{\log_2(\tau)}{d}\geq j+1\geq j$.
Hence
\begin{align*}
\sum_{j\in\NN_0}\sum_{\ell\geq j}\PP(S_{j,\ell}^\ast\geq C+E)&\lesssim\sum_{j\in\NN_0} a_j\\
&=\EE\left[X_\ast^2\sum_{j\in\NN_0}\sum_{\tau\geq M_j+1}\frac{\one_{X_\ast\leq\tau}}{\tau^2} \right]\\
&=\EE\left[X_\ast^2\sum_{\tau\geq 3^d}\sum_{j\in\NN_0} \one_{M_j+1\leq \tau}\frac{\one_{X_\ast\leq \tau}}{\tau^2} \right]\\
&\leq\EE\left[X_\ast^2\sum_{\tau\geq 3^d}\frac{\one_{X_\ast\leq \tau}}{\tau^2} \sum_{j=1}^{\floor{\frac{\log_2(\tau)}{d}}} 1\right]\\
&\leq\EE\left[X_\ast^2 \sum_{\tau\geq \max\setn{3^d,\ceil{X_\ast}}} \frac{\log_2(\tau)}{d\cdot\tau^2}\right] \\
&\stackrel{\eqref{eq:logaux}}{\lesssim} \EE\left[X_\ast^2 \frac{1+\log_2^+(\max\setn{3^d,\ceil{X_\ast}})}{\max\setn{3^d,\ceil{X_\ast}}}\right]\\
&\lesssim \EE\left[X_\ast^2 \frac{1+\log_2^+(X_\ast)}{X_\ast}\right]\\
&=\EE\left[X_\ast\cdot (1+\log_2^+(X_\ast))\right]\\
&=\EE\left[X_\ast]+\EE[X_\ast\log_2^+(X_\ast)\right]<\infty,
\end{align*}
as in \eqref{eq:first-step}.\\[\baselineskip]
\emph{Step 2.} Next we show $\sum_{j\in\NN_0} \sum_{\tau\in\NN} \PP(X_{j,\tau}\neq Y_{j,\tau})<\infty$.
The Borel--Cantelli lemma then implies $\PP(X_{j,\tau}\neq Y_{j,\tau}\text{ for infinitely many }(j,\tau))=0$.
That is, the event $A$ of all $\omega\in \Omega$ for which $X_{j,\tau}= Y_{j,\tau}$ happens for all but finitely many $(j,\tau)$ has probability $1$.
By Step 1, we know that 
\begin{equation*}
\Xi^\ast\defeq \sup_{j\in\NN_0} \lr{S_j^\ast+\sup_{\ell\geq j}S_{j,\ell}^\ast}<\infty
\end{equation*}
holds in an almost sure event $B$.
It is then easy to see that $\Xi<\infty$ in the almost sure event $A\cap B$.

In order to show that $\sum_{j\in \NN_0} \sum_{\tau\in \NN} \PP(X_{j,\tau}\neq Y_{j,\tau})<\infty$, first note that $X_{j,\tau}>M_\ell$ and $M_\ell+1\leq \tau\leq M_{\ell+1}$ imply $\kappa_4X_{j,\tau}\geq \frac{M_{\ell+1}}{M_\ell}X_{j,\tau}>M_{\ell+1}\geq \tau$ for some $\kappa_4>0$ which only depends on $d$ (which is irrelevant for the convergence of the series).
The random variables $\kappa_4 X_{j,\tau}$ all have the same distribution as the random variable $Y\defeq \kappa_4 X_\ast$.
Hence, 
\begin{align}
\sum_{j\in \NN}\sum_{\tau\in \NN_0} \PP(X_{j,\tau}\neq Y_{j,\tau})&\leq \sum_{j\in \NN_0}\lr{\sum_{\tau=1}^{M_j}\PP(X_{j,\tau}>M_j)+\sum_{\ell\geq j} \sum_{\tau=M_\ell+1}^{M_{\ell+1}}\PP(X_{j,\tau}>M_\ell)}\nonumber\\
&\leq \sum_{j\in \NN_0}\lr{\sum_{\tau=1}^{M_j}\PP(X_\ast>M_j)+\sum_{\ell\geq j} \sum_{\tau=M_\ell+1}^{M_{\ell+1}}\PP(Y>\tau)}\nonumber\\
&= \sum_{j\in \NN_0}\lr{M_j\PP(X_\ast>M_j)+\sum_{\tau\geq M_j+1}\PP(Y>\tau)}.\label{eq:step2}
\end{align}

Let us show that the two sums in \eqref{eq:step2} are finite.
First, observe thanks to the monotone convergence lemma that
\begin{align*}
\sum_{j\in \NN_0} M_j\PP(X_\ast>M_j)&=\sum_{j\in \NN_0} M_j \EE[\one_{X_\ast>M_j}]=\EE\left[\sum_{j\in \NN_0} M_j\one_{X_\ast>M_j}\right]\\
&\leq \EE \left[\sum_{j=1}^{\floor{\frac{\log_2(X_\ast)}{d}}}M_j\right]\lesssim \EE \left[\sum_{j=1}^{\floor{\frac{\log_2(X_\ast)}{d}}}2^{jd}\right]\\
&\lesssim \EE \left[ 2^{d\floor{\frac{\log_2(X_\ast)}{d}}}\right]\leq \EE[X_\ast]<\infty.
\end{align*}
Here we used that $X_\ast>M_j$ implies $X_\ast>2^{jd}$, which in turn yields $j\leq \frac{\log_2(X_\ast)}{d}$.
Second, we have
\begin{align*}
\sum_{j\in \NN_0}\sum_{\tau\geq M_j+1} \PP(Y>\tau)&=\sum_{j\in \NN_0}\sum_{\tau\geq 3} \one_{\tau\geq M_j+1}\PP(Y> \tau)\\
&\leq\sum_{\tau\geq 3} \PP(Y>\tau)\sum_{j=1}^{\floor{\frac{\log_2(\tau)}{d}}}1\\
&=\EE\left[\sum_{\tau\geq 3} \one_{Y>\tau}\floor{\frac{\log_2(\tau)}{d}}\right]\\
&\leq\EE\left[\one_{Y>3}\sum_{\tau=3}^{\floor{Y}}\log_2(\tau)\right]\\
&\leq\EE\left[\one_{Y>3}\sum_{\tau=3}^{\floor{Y}}\log_2^+(\floor{Y})\right]\\
&\leq\EE\left[\floor{Y}\log_2^+(\floor{Y})\right]\\
&\leq\EE [Y\log_2^+(Y)]\\
&\leq\kappa_4\EE [X_\ast(\log_2^+(\kappa_4)+\log_2^+(X_\ast))]\\
&=\kappa_4\log_2^+(\kappa_4)\EE[X_\ast]+\kappa_4\EE[X_\ast\log_2^+(X_\ast)]<\infty,
\end{align*}
as in \eqref{eq:first-step}.
\end{proof}

The following auxiliary statement is used in the proof of \cref{bem:essential-boundedness}.
\begin{Lem} \label{thm:SupSTDNorm}
For every $n\in\NN$, let $X^{(n)}=(X^{(n)}_1,\ldots,X^{(n)}_n)$ be a random vector whose entries $X^{(n)}_1,\ldots,X^{(n)}_n\sim\mathcal{N}(0,1)$ are i.i.d.\ standard normal random variables.
Then the following event has probability $1$: 
\begin{equation*}
\mnorm{X^{(n)}}_\infty\geq \sqrt{\log(n)}\text{ holds for all but finitely many }n\in\NN.
\end{equation*}
\end{Lem}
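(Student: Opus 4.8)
\emph{Approach.}
The plan is to apply the Borel--Cantelli lemma to the events $A_n\defeq\setn{\mnorm{X^{(n)}}_\infty<\sqrt{\log n}}$ for $n\in\NN$. The complement of the event in the statement is exactly $\limsup_{n\to\infty}A_n$, so it suffices to prove $\sum_{n\in\NN}\PP(A_n)<\infty$; since $A_1=\emptyset$ (because $\sqrt{\log 1}=0$ and $\mnorm{X^{(1)}}_\infty\geq 0$), and removing any further finite initial segment does not affect convergence of the series, we only need to control $\PP(A_n)$ for large $n$.

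\emph{Bounding $\PP(A_n)$.}
By independence of the coordinates, and because each $X_i^{(n)}\sim\mathcal{N}(0,1)$, we have
\begin{equation*}
\PP(A_n)=\PP\lr{\abs{X_1^{(n)}}<\sqrt{\log n}}^n=\lr{1-2\,\PP\lr{X_1^{(n)}\geq\sqrt{\log n}}}^n.
\end{equation*}
Next I would invoke the classical lower bound for the standard Gaussian tail, namely $\PP(X_1^{(n)}\geq t)\geq\frac{1}{\sqrt{2\piup}}\cdot\frac{t}{1+t^2}\,\ee^{-t^2/2}$, valid for all $t>0$ (an elementary calculus fact: the difference of the two sides vanishes as $t\to\infty$ and has negative derivative). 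Specializing to $t=\sqrt{\log n}$ and using $\ee^{-t^2/2}=n^{-1/2}$ produces a constant $c_0>0$ with $\PP(X_1^{(n)}\geq\sqrt{\log n})\geq c_0\,(n\log n)^{-1/2}$ for all $n\geq 2$.

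\emph{Summability and conclusion.}
Since $2\PP(X_1^{(n)}\geq\sqrt{\log n})<1$, the inequality $1-x\leq\ee^{-x}$ gives
\begin{equation*}
\PP(A_n)\leq\exp\lr{-2n\,\PP(X_1^{(n)}\geq\sqrt{\log n})}\leq\exp\lr{-2c_0\,\sqrt{n}/\sqrt{\log n}}
\end{equation*}
for $n\geq 2$. Because $\log n\leq\sqrt n$ for all sufficiently large $n$, we get $\sqrt n/\sqrt{\log n}\geq n^{1/4}$ eventually, hence $\PP(A_n)\leq\exp(-2c_0\,n^{1/4})$ for large $n$, so that $\sum_{n\in\NN}\PP(A_n)<\infty$. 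The Borel--Cantelli lemma then yields $\PP(\limsup_{n\to\infty}A_n)=0$, which is precisely the claim.

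\emph{Main obstacle.}
There is no substantial difficulty here; the only points requiring a little care are keeping the constant in the Gaussian tail estimate uniform in $n$ (hence the harmless restriction to $n\geq 2$) and noting that $\sqrt n/\sqrt{\log n}$ grows like a positive power of $n$, so that the resulting series converges. If one prefers to avoid quoting the Mills-ratio inequality, the estimate $\PP(X_1\geq t)\gtrsim\ee^{-t^2/2}/t$ for $t\geq 1$ can instead be obtained directly from $\PP(X_1\geq t)\geq\frac{1}{\sqrt{2\piup}}\int_t^{t+1/t}\ee^{-x^2/2}\dd x\geq\frac{1}{t\sqrt{2\piup}}\,\ee^{-(t+1/t)^2/2}$, which yields the same conclusion.
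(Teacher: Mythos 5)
Your proof is correct, and it takes a noticeably more direct route than the paper's. Both arguments apply the Borel--Cantelli lemma to the complements $A_n=\setns{\mnorm{X^{(n)}}_\infty<\sqrt{\log n}}$, but they estimate $\PP(A_n)$ quite differently. The paper works with the threshold $a_n=\sqrt{2\log(2n)-K\log\log n}$ (which is close to the sharp $\sqrt{2\log n}$ asymptotic for the maximum of $n$ i.i.d.\ Gaussians), establishes $\PP(\mnorm{X^{(n)}}_\infty\leq a_n)\leq n^{-2}$ through a careful error-function/Mills-ratio computation, and only at the very end observes that $a_n\geq\sqrt{\log n}$ for large $n$ to conclude. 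You instead exploit the slack in the stated threshold $\sqrt{\log n}$ directly: a crude standard Gaussian tail lower bound $\PP(X\geq t)\geq\frac{1}{\sqrt{2\piup}}\frac{t}{1+t^2}\ee^{-t^2/2}$ combined with $1-x\leq\ee^{-x}$ yields the much stronger decay $\PP(A_n)\lesssim\exp\lr{-c\,n^{1/4}}$, and summability is immediate. The trade-off is that the paper's argument actually proves a finer statement (almost-sure eventual domination by $a_n$, close to the true growth rate), whereas your argument buys simplicity precisely because $\sqrt{\log n}$ is far below the sharp threshold. For the lemma as stated, your proof is entirely adequate and arguably preferable for its brevity.
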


Let us emphasize that we do not assume that the random vectors $X^{(1)}, X^{(2)},\ldots$ are independent. 
It is only assumed that the components of $X^{(n)}$ for fixed $n\in\NN$ are jointly independent.

\begin{proof}
We start by estimating the tail probability of $\mnorm{X^{(n)}}_{\infty}$.
Let $(a_n)_{n\in\NN}$ be a sequence of positive numbers.
Then
\begin{equation*}
\PP\lr{\mnorm{X^{(n)}}_{\infty}\leq a_n}=\PP\lr{\abs{X^{(n)}_1}\leq a_n}^n = \erf\lr{\frac{a_n}{\sqrt{2}}}^n,
\end{equation*}
where $ \erf(x)=\frac{2}{\sqrt{\piup}}\int_0^x\ee^{-t^2}\dd t$ is the error function.
To estimate the quantity $\PP(\mnorm{X^{(n)}}_{\infty}\leq a_n)$ further, we first need to establish an inequality for the error function.
An inequality for the so called Mills ratio for standard normal random variables \cite[7.1.13]{Abramowitz1964} provides for $x\geq 0$ that
\begin{equation*}
0\leq\exp(x^2)\frac{\sqrt{\piup}}{2}(1-\erf(x))> \frac{1}{x+\sqrt{x^2+2}}.
\end{equation*}
This in turn implies that for $x\geq 0$, we have
\begin{equation*}
0\leq \erf(x)< 1-\frac{2}{\sqrt{\piup}}\cdot\frac{\exp(-x^2)}{x+\sqrt{x^2+2}}.
\end{equation*}
Using this inequality and the monotonicity of $x^n$ on the positive reals, we get 
\begin{align*}
\erf\lr{\frac{a_n}{\sqrt{2}}}&< \lr{1-\frac{2}{\sqrt{\piup}}\cdot\frac{\exp\lr{-\frac{a_n^2}{2}}}{\frac{a_n}{\sqrt{2}}+\sqrt{\frac{a_n^2}{2}+2}}}\\
&\leq\lr{1-\frac{2}{\sqrt{\piup}}\frac{\exp\lr{-\frac{a_n^2}{2}}}{\sqrt{2}(a_n+1)}},
\end{align*}
where the last inequality follows by the fact that $\lr{\frac{a_n}{\sqrt{2}}+\sqrt{2}}^2\geq \frac{a_n^2}{2}+2$.

We now choose $a_n\defeq \sqrt{2\log(2n)-\delta_n}$ with $0<\delta_n<2\log(2n)$, which gives gives the following estimate
\begin{align*}
\erf\lr{\frac{a_n}{\sqrt{2}}}&< 1-\frac{2\exp\lr{-\log(2n)+\frac{\delta_n}{2}}}{\sqrt{2\piup}\lr{\sqrt{2\log(2n)-\delta_n}+1}}\\
&=1-\frac{1}{\sqrt{2\piup}n\lr{\sqrt{2\log(2n)-\delta_n}+1}}\exp\lr{\frac{\delta_n}{2}}\\
&\leq 1-\frac{1}{\sqrt{2\piup}n\lr{\sqrt{2\log(2n)}+1}}\exp\lr{\frac{\delta_n}{2}}\eqdef 1+x
\end{align*}
Since $x\geq -1$, we conclude
\begin{equation*}
\erf\lr{\frac{a_n}{\sqrt{2}}}^{n}\leq (1+x)^n\leq \exp(xn)= \exp\lr{-\frac{\exp\lr{\frac{\delta_n}{2}}}{\sqrt{2\piup}\lr{\sqrt{2\log(2n)}+1}}},
\end{equation*}
where the last inequality follows from the elementary estimate .
Choosing $\delta_n = K\log(\log(n))$ with $K>0$ and noting that $\delta<2\log(2n)$ for $n\geq n_0=n_0(K)$ yields
\begin{equation} \label{eq:BoundSup}
\PP\lr{\mnorm{X^{(n)}}_\infty\leq a_n}< \exp\lr{-\frac{\log(n)^{\frac{K}{2}}}{\sqrt{2\piup}\lr{\sqrt{2\log(2n)}+1}}}.
\end{equation}
Note that $\sqrt{2\log(2n)}+1\leq \sqrt{2\log(n)}+\sqrt{2\log(2)}+1\leq C\sqrt{\log(n)}$ for some absolute constant $C>0$ and $n\geq 3$.
Therefore,
\begin{equation*}
\frac{\log(n)^{\frac{K}{2}}}{\sqrt{2\piup}\lr{\sqrt{2\log(2n)}+1}}\geq \frac{1}{\sqrt{2\piup}C}\log(n)^{\frac{K-1}{2}}\geq 2\log(n)
\end{equation*}
for fixed $K>3$ and $n\geq n_1=n_1(K)\geq n_0(K)$.
Combined with \eqref{eq:BoundSup} this implies
\begin{equation*}
\PP\lr{\mnorm{X^{(n)}}_\infty\leq a_n}\leq \exp\lr{-2\log(n)}=n^{-2}.
\end{equation*}
Thus
\begin{equation*}
\sum_{n=1}^\infty \PP\lr{\mnorm{X^{(n)}}_{\infty}\leq a_n}\leq \sum_{n=1}^{n_1-1}1+\sum_{n\geq n_1}\frac{1}{n^2}<\infty.
\end{equation*}
The Borel--Cantelli lemma thus shows that almost surely, there are only finitely many $n\in\NN$ with $\mnorm{X^{(n)}}_\infty\leq a_n$.
Further note that
\begin{equation*}
a_n=\sqrt{2\log(2n)-K\log(\log(n))}\geq \sqrt{\log(n)}
\end{equation*}
for all but finitely many $n\in\NN$.
Hence, we see that almost surely, we have $\mnorm{X^{(n)}}_\infty\geq \sqrt{\log(n)}$ for all but finitely many $n\in\NN$.
\end{proof}

\textbf{Acknowledgement.} The authors thank Jakob Lemvig for raising the question considered in this paper, and Andrei Caragea, Dorothee Haroske, and Dominik Stöger for their helpful comments.

\footnotesize
\bibliographystyle{plain}
\bibliography{references.bib}

\end{document}